\newcommand\redsout{\bgroup\markoverwith{\textcolor{red}{\rule[0.5ex]{2pt}{0.4pt}}}\ULon}
\newcommand{\E}{\mathbb{E}}
\newcommand{\N}{\mathbb{N}}
\newcommand{\Q}{\mathbb{Q}}
\newcommand{\R}{\mathbb{R}}
\newcommand{\Pb}{\mathbb{P}}
\newcommand{\tp}{t^{\prime}}
\newcommand{\ps}{s^{\prime}}
\newcommand{\ve}{\varepsilon}
\newcommand{\esp}{\epsilon}
\def\={{\;\mathop{=}\limits^{\text{(law)}}\;}}
\newtheorem{theorem}{Theorem}[section]
\newtheorem{prop}[theorem]{Proposition}
\numberwithin{equation}{section}
\title[\texttt{Existence of strong solutions of fractional Brownian sheet driven SDEs with integrable drift.}]
{Existence of strong solutions of fractional Brownian sheet driven SDEs with integrable drift.
}
\author[A.-M. Bogso]{Antoine-Marie Bogso}
\address{University of Yaounde I,\\
	Faculty of Sciences, Department of Mathematics,\\
	P.O. Box 812, Yaounde, Cameroon\\
	AIMS Ghana, P.O. Box LGDTD 20046, Summerhill Estates, Eat Legon Hills, Santoe, Acrra}
\email{antoine.bogso@facsciences-uy1.cm}
\author[O. Menoukeu-Pamen]{Olivier Menoukeu-Pamen}
\address{Institute for Financial and Actuarial Mathematics (IFAM), \\
	Department of Mathematical Sciences, University of Liverpool, \\
	Liverpool L69 7ZL, UK\\
	AIMS Ghana, P.O. Box LGDTD 20046, Summerhill Estates, East Legon Hills, Santoe, Acrra}
\email{menoukeu@liverpool.ac.uk}
\author[F. Proske]{Frank Proske}
\address{University of Oslo \\
	CMA, Department of Mathematical Sciences, University of Oslo \\
	Moltke Moes Vei 35, Po Box 1053, Blindern, 0316 Oslo, Norway}
\email{proske@math.uio.no}
\newtheorem{thm}{Theorem}
\newtheorem{lem}[thm]{Lemma}%
\newtheorem{cor}[thm]{Corollary}%
\newtheorem{rem}{Remark}%
\begin{document}

\title[Fractional Brownian sheet driven SDEs with integrable drift.]{Strong solutions of fractional Brownian sheet driven SDEs with integrable drift.}

	\keywords{Plane SDEs, Sectorial local nondeterminism, fractional Brownian sheet, sectorial local nondeterminism, Malliavin calculus}

\begin{abstract}
	We prove the existence of a unique  Malliavin  differentiable strong solution to a stochastic differential equation on the plane with merely integrable coefficients driven by the fractional Brownian sheet with Hurst parameters less than 1/2. The proof of this result relies on a compactness criterion for square integrable Wiener functionals from Malliavin calculus ([Da Prato, Malliavin and Nualart, 1992]), variational techniques developed in the case of fractional Brownian motion ([Ba\~nos, Nielssen, and Proske, 2020]) and the concept of sectorial local nondeterminism (introduced in [Khoshnevisan and Xiao, 2007]). The latter concept enable us to improve the bound of the Hurst parameter (compare with [Ba\~nos, Nielssen, and Proske, 2020]).	 
\end{abstract}

\maketitle

\section{Introduction}
\label{intro}
In this paper we aim at studying solutions $X^{x}=(X_{s,t}^{x},(s,t)\in
\lbrack 0,T]^{2})$ of the stochastic differential equation (SDE) on the
plane 
\begin{equation} 
	X_{s,t}^{}=x+\int_{0}^{s}\int_{0}^{t}b(s_{1},t_{1},X_{s_{1},t_{1}}^{})\,%
	\mathrm{d}t_{1}\mathrm{d}s_{1}+W_{s,t}^{H},\quad 0\leq s,t\leq T,\,x\in 
	\mathbb{R}^{d},  \label{MainPb}
\end{equation}%
where $W^{H}=(W_{s,t}^{H},(s,t)\in \lbrack 0,T]^{2})$ is a $d$-dimensional
fractional Brownian sheet with Hurst index $H=(H_{1},H_{2})\in (0,\frac{1}{2}%
)^{2}$ and where $b:\,[0,T]^{2}\times \mathbb{R}^{d}\rightarrow \mathbb{R}%
^{d}$ is a Borel-measurable function satisfying some conditions that we will
specified later. We recall that $W^{H}$ coincides with the Wiener sheet,
when $H=(\frac{1}{2},\frac{1}{2})$.

The SDE \ref{MainPb} is a hyperbolic stochastic partial differential
equation (HPSDE), whose corresponding differential version takes the
following form for $X(s,t)=X_{s,t}^{x}$ and $W^{H}(s,t)=W_{s,t}^{H}$
\begin{align*}
	\left\{
	\begin{array}{ll}
		\frac{\partial ^{2}}{\partial s\partial t}X(s,t)=b(s,t,X(s,t))+\frac{
			\partial^{2}}{\partial s\partial t}W^{H}(s,t), & (s,t)\in (0,T]^{2},\\& \\X(s,0)=X(0,t)=x, & s,t\in [0,T],
	\end{array}
	\right.
\end{align*}
where $\frac{\partial ^{2}}{\partial s\partial t}W^{H}(s,t)$ is the white
noise of the fractional Brownian sheet. Interestingly, if $d=1$, the latter
SPDE can be transformed into the following stochastic wave equation with a
non-linear forcing term by using a formal $\frac{\pi }{2}$ rotation:%
\begin{equation}
	\frac{\partial ^{2}}{\partial t^{2}}X(t,x)-\frac{\partial ^{2}}{\partial
		x^{2}}X(t,x)=\frac{\partial ^{2}}{\partial t\partial x}%
	B^{H}(t,x)+b(t,x,X(t,x)).  \label{WaveEq}
\end{equation}%
for another fractional Brownian sheet $B^{H}(t,x)$. See e.g. \cite{Wa84} and \cite{FaNu93} for
further details in the case of a Brownian sheet.

The objective of this paper is to construct a unique global strong
solution to the HSPDE (\ref{MainPb}) in the case of a merely {
	integrable} vector field%
{
	\begin{equation}
		b\in L^1_{\infty}:=L^{\infty}([0,T]^2;L^{1 }(\R^d;\mathbb{R}^{d})),
		\label{CoeffClass}
	\end{equation}%
	when $H_1+H_2<\frac{1}{2(d+1)}$.}  {A strong solution here refers to a solution that is a progressively measurable functional of the driving noise $W^{H}$. We point out that this result extends the recent findings of  \cite{BDMP22} from the case of a
	Wiener sheet to the setting of non-Markov fields, when $W^{H}$ is the
	initial noise. Furthermore, it generalises the work in \cite{BNP18}, which dealt with additive noise driven by fractional Brownian motion with Hurst parameter $H<\frac{1}{2}$ to the case of a fractional Brownian sheet.}

{Our motivation for analysing HSPDEs of the type \eqref{MainPb}, inspired by the relatively new field of stochastic regularization theory for (a priori ill-posed) ordinary and partial differential equations (see, e.g., \cite{Gess} and \cite{Fla10} for an overview), is twofold:}

{	1. In \cite{MMNPZ13,MoNP15} it was for the first time observed that
	strong solutions to SDEs driven by a $L^{\infty }-$drift vector field and an
	additive Wiener noise are (locally) Sobolev differentiable with respect to
	the initial value and Malliavin differentiable (see also \cite{BFGM14}). Later, the authors in \cite{BNP18}
	noted that additive noise given by a fractional Brownian motion with
	a sufficiently small Hurst parameter yields even higher regularity in the
	associated stochastic flow of the corresponding singular equation compare to 
	\cite{BaNi16}, in terms of higher order (local) Sobolev
	differentiability. Further improvements in this direction was
	achieved in \cite{ABP17a}, where the authors obtained $C^{\infty }-$%
	flows in the case of certain Gaussian driving noise with non-H\"{o}lder
	continuous paths. As in \cite{ABP17a} we may (more generally)
	consider a two-parameter noise process $\mathbb{W%
	}$  in the HSPDE \eqref{MainPb}, defined as}%
\begin{equation}
	\mathbb{W}_{s,t}=\sum_{i\geq 1}\lambda _{i}W_{s,t}^{H_{i}},\leq s,t\leq T%
	\text{,}  \label{W}
\end{equation}%
where $W^{H_{i}},i\geq 1$ is a sequence of a independent fractional Brownian
sheets with Hurst indices $H_{i}=(H_{1,i},H_{2,i})\in (0,\frac{1}{2})^{2}$ \
and a sequence $(\lambda _{i})_{i\geq 1}\subset \mathbb{R}$, which e.g.
insures $L^{2}$-convergence of the sum in \eqref{W} and satisfies certain
conditions (compare \cite{ABP17a}).  {Similarly to \cite%
	{ABP17a}, we may also expect in this case a regularization by
	noise effect of singular hyperbolic PDEs, which gives rise to solutions that are smooth with respect to the initial condition. Hence, in
	particular, if $d=1$, such a result would imply the existence (and
	uniqueness) of a smooth solution to the stochastic wave equation \eqref%
	{WaveEq}, where $W^{H}$ is replaced by $\mathbb{W}$.}

2. Another motivation for this paper comes from the restoration of
well-posedness of (singular) hyperbolic PDEs of the type%
\begin{align*}
	\frac{\partial ^{2}}{\partial s\partial t}u(s,t)=b(s,t,u(s,t))
\end{align*}%
in the context of \emph{path-by-path} concept of solutions, originally introduced in Davie's groundbreaking work \cite{Da07} for Wiener noise-driven SDEs. Applied to HSPDE of the form \eqref{MainPb}, this concept means that there exists a measurable set $\Omega
^{\ast }$ (depending on the initial value $x$) of probability mass $1$ such
that for all $\omega \in \Omega ^{\ast }$, there exists a unique \emph{%
	deterministic} function $Y=Y(\omega )\in C([0,T]^{2};\mathbb{R}^{d})$ that
satisfies \eqref{MainPb}. Based on a local-time-space decomposition and
techniques in \cite{Da07}, this type of solutions was first studied in \cite%
{BDM21a}  and \cite{BMP22} in the case of
Brownian sheet and bounded variation drift vector fields. See also
Bechthold,\ Harang, Rana \cite{BHR23} for the case of fractional Brownian sheet and distributional vector fields in the framework
of Besov spaces $B_{p,q}^{\alpha }(\mathbb{R}^{d})$, where solutions are described using equations in the nonlinear Young sense. In the this case, it is unclear whether solutions to \eqref{MainPb} with respect to a classical Lebesgue integral and a discontinuous vector field can be interpreted as solutions of equations of the non-linear Young type (see e.g. \cite[Remark 4.5.(i)]{ALL23}). Regarding the $1-$%
parameter version of \eqref{MainPb}, we also mention the work \cite{AMP23}, where the authors establish path-by-path uniqueness
of solutions in Davie's sense for SDEs with fractional Brownian motion
additive noise, when $b$ is essentially bounded and integrable and $H<\frac{1%
}{2}$. See also \cite{CG16} for the case of continuous vector
fields, an approach later generalized by \cite{BHR23} to the $%
2-$parameter setting by applying techniques based on a multiparameter sewing
Lemma, the concept of local time and properties of an averaging operator. Compare also with \cite{Khoa}.
The result in \cite{AMP23} relies on techniques developed in %
\cite{Sh16}, combined with the aforementioned regularizing properties
of fractional Brownian motion (with a sufficiently small Hurst
parameter). The latter in connection with 1., suggests that such a
result should be also possible for the HSPDE (\ref{MainPb}).
Therefore, the results obtained in this paper can be considered a first step towards
the goals formulated in 1. and 2., which we persue in forthcoming articles.

{Finally, we would like to mention some related results in the literature concerning the analysis of solutions to \eqref{MainPb}. The
	first results on strong existence and pathwise uniqueness of solutions to
	SDEs on the plane with multiplicative Wiener noise date back to \cite%
	{Ca72} and \cite{Ye81}, where the authors assume that the
	driving vector fields are Lipschitz continuous and of linear growth. We also
	refer to \cite{Ye87} for strong solutions in the case a given
	deterministic boundary process, and \cite{Ye85} regarding the construction
	of weak solutions under the conditions of a continuous drift vector field with
	linear growth. Furthermore,  \cite{NuaSan85} investigates the
	regularity of solutions of Wiener sheet-driven SDEs in the sense of
	Malliavin differentiability, requiring sufficient smoothness of the
	coefficients. 
	See also \cite{QuTi07} for the case of
	a stochastic wave equation driven by fractional Brownian sheet.}

{As for SDEs on the plane with non-Lipschitz continuous vector fields,
	results on existence and uniqueness in the Wiener sheet case can be found in Nualart, Tindel \cite{NuTi97}. In that work, the authors use
	a comparison theorem to prove these results, under growth and
	monotonicity conditions with respect the (one-dimensional) drift coefficient.
	We also highlight the work \cite%
	{ENO03}, which extends the results of \cite%
	{NuTi97} to the case of a fractional Brownian motion sheet with Hurst
	indices $H_{1},H_{2}\leq \frac{1}{2}$. In this context, for results on the path-by-path uniqueness in the sense of Davie with
	respect to solutions to singular SDEs on the plane, we also recall the
	previously mentioned work  \cite%
	{BDM21a} in the Wiener sheet case (see also   \cite{BMP22}).} 


{In this paper, as mentioned earlier, we aim to prove the existence and uniqueness of Malliavin differentiable strong solutions to the HSPDE \eqref%
	{MainPb} for bounded and integrable vector fields $%
	b$ as defined in \ref{CoeffClass}. This result generalises the corresponding result in \cite{BDMP22} for the
	Wiener sheet to the case of $W^{H}$, where $H\in \left( 0,\frac{1}{2}\right) ^{2}$. The proof relies on a compactness criterion for square-integrable functionals of a
	Wiener sheet from Malliavin calculus (see e.g. \cite{BDMP22} or \cite%
	{BNP18}, \cite{MMNPZ13} in the one-parameter case, or \cite{BBMBP23} in
	the Hilbert space setting),  variational
	techniques developed in \cite{BNP18} in the case of a
	fractional Brownian motion and  {the concept of sectorial local
		nondeterminism.}}

Let us stress that the sectorial local
nondeterminism was first introduced by \cite{KhXi07} for the Brownian sheet and extended to fractional Brownian sheets in \cite{WX07}. In \cite{KhXi07}, the authors applied sectorial local nondeterminism to study distributional properties of the level sets and the continuity of the local times of the Brownian sheet. A continuation of this work by \cite{KXW06} focuses on the geometry of the Brownian sheet's surface. In \cite{WX07}, the sectorial local nondeterminism is used to describe the geometric and Fourier analytic properties of fractional Brownian sheets. It is well known that  Brownian sheets are not locally nondeterministic, unlike fractional Brownian motions. As a result, we cannot obtain similar estimates as in \cite{BNP18}. However, sectorial local nondeterminism provides alternative estimates that also allow to prove the existence of a unique strong solution to \eqref{MainPb} when  {$H_1+H_2<\frac{1}{2(d+1)}$. If, in addition, $b$ is bounded, then \eqref{MainPb} has a unique strong solution provided $\max\{H_1,H_2\}<\frac{1}{2(d+1)}$.}
We also note that the methods from related results mentioned above are not applicable to vector fields $b$ in \eqref%
{CoeffClass} for fractional Brownian sheet.  {Our strategy consists of four steps. First, we establish the weak existence  and the joint uniqueness in law for \eqref{MainPb}. The next two steps demonstrate that every weak solution is a strong solution. Precisely, we show that for every $(s,t)\in[0,T]^2$, $X^x_{s,t}$ is measurable with respect to the $\sigma$-algebra $\mathcal{F}^H_{s,t}$ generated by the random variables $\{W^H_{s_1,t_1};s_1\leq s,t_1\leq t\}$. Since $X^x_{s,0}=x=X^x_{0,t}$ is $\mathcal{F}_{0,0}$-measurable for every $s,\,t\in[0,T]$, it suffices to prove that $X^x_{s,t}$ is $\mathcal{F}^H_{s,t}$-measurable for all $(s,t)\in(0,T]^2$. In the second step, we approximate the drift $b$ by a sequence of smooth functions with compact support and we show that the sequence of the corresponding solutions converges weakly to a limit, which is $(\mathcal{F}^H_{s,t})$-adapted. In the third step, we apply a compactness criterion from Malliavin calculus to obtain a relative compactness property on the  sequence constructed in the second step.
	In the final step, we	
	show that $(X^x_{s,t})$ is a strong solution and apply a dual Yamada-Watanabe argument to deduce pathwise uniqueness.  }

Our paper is structured as follows: In Section \ref{defcon0}, we discuss the basic mathematical framework for this article. In Section \ref{secbasest}, we establish some key estimates based on the property of sectorial (strong) local nondeterminism of random fields (see, for example, Theorem \ref{RegulariseTheo}). Finally, in Section 4,
we apply these estimates to prove our main result (Theorem \ref{TheoEqmainPbS4}) on the strong uniqueness of solutions for \eqref{MainPb}.

\section{Preliminaries}\label{defcon0}

\subsection{Fractional calculus} 
Here are some basic definitions and results on two-parameter calculus that can be found in \cite{ENO03} and \cite{TuTu03}. We refer to \cite{SKM93} and \cite{Li01} for an exhaustive survey on classical one-parameter fractional calculus. 

Let $f\in L^p([0,T]^2)$ with $T>0$ and $p\geq1$. For $\alpha>0$, $\beta>0$, the $(\alpha,\beta)$-order left-sided Riemann-Liouville fractional integral of $f$ on $(0,T)^2$ is defined by
\begin{align}\label{EqFracInt}
	I^{\alpha,\beta}f(x,y)=\frac{1}{\Gamma(\alpha)\Gamma(\beta)}\int_0^x\int_0^y(x-u)^{\alpha-1}(y-v)^{\beta-1}f(u,v)\,\mathrm{d}u\mathrm{d}v,
\end{align}
for almost all $(x,y)\in[0,T]^2$, where $\Gamma$ is the Gamma function. Observe that \eqref{EqFracInt} rewrites
\begin{align*}
	I^{\alpha,\beta}f(x,y)=I^{\alpha}(I^{\beta}f(\cdot,y))(x),
\end{align*}
where $I^{\alpha}$ (respectively $I^{\beta}$) is the $\alpha$-order (respectively $\beta$-order) left-sided Riemann-Liouville fractional integral on $(0,T)$. The integral $I^{\alpha,\beta}$ can be iterated as it is shown by the following first composition formula.
\begin{align*}
	I^{\alpha^{\prime},\beta^{\prime}}(I^{\alpha,\beta}f)=I^{\alpha^{\prime}+\alpha,\beta^{\prime}+\beta}f.
\end{align*}
{The fractional derivative can be defined} as the reverse operation of the fractional integral. Assume in addition $\alpha<1$, $\beta<1$ and denote by $I^{\alpha,\beta}(L^p)$ the image of $L^p([0,T]^2)$ by the operator $I^{\alpha,\beta}$. If $f\in I^{\alpha,\beta}(L^p)$, then the function $\phi$ such that $f=I^{\alpha,\beta}\phi$ is unique in $L^p([0,T]^2)$ and is equal to the $(\alpha,\beta)$-order left-sided Riemann-Liouville derivative of $f$ given by
\begin{align*}
	D^{\alpha,\beta}f(x,y)=&\frac{\partial^2I^{1-\alpha,1-\beta}f}{\partial x\partial y}(x,y)\\
	=&\frac{1}{\Gamma(1-\alpha)\Gamma(1-\beta)}\frac{\partial^2}{\partial x\partial y}\int_0^x\int_0^y\frac{f(u,v)}{(x-u)^{\alpha}(y-v)^{\beta}}\,\mathrm{d}u\mathrm{d}v.
\end{align*}
If $\min\{\alpha p,\beta p\}>1$, then any function in $I^{\alpha,\beta}(L^p)$ is $(\alpha-1/p,\beta-1/p)$-H\"older continuous. Moreover, any H\"older continuous function of order $(\gamma,\delta)$
with $\gamma>\alpha$ and $\delta>\beta$ has a fractional derivative of order $(\alpha,\beta)$. The left-sided derivative of $f$ defined above has the following Weil representation
\begin{align*}
	D^{\alpha,\beta}f(x,y)=&\frac{\mathbf{1}_{(0,T)^2}(x,y)}{\Gamma(1-\alpha)\Gamma(1-\beta)}\Big\{\frac{f(x,y)}{x^{\alpha}y^{\beta}}+\frac{\alpha}{y^{\beta}}\int_0^x\frac{f(x,y)-f(u,y)}{(x-u)^{\alpha+1}}\mathrm{d}u\\& \qquad+\frac{\beta}{x^{\alpha}}\int_0^y\frac{f(x,y)-f(x,v)}{(y-v)^{\beta+1}}\mathrm{d}v\\& 
	\qquad+\alpha\beta\int_0^x\int_0^y\frac{f(x,y)-f(u,y)-f(x,v)+f(u,v)}{(x-u)^{\alpha+1}(y-v)^{\beta+1}}\mathrm{d}u\mathrm{d}v
	\Big\},
\end{align*}
where the convergence of the integrals at singularity $x=u$ or $y=v$ holds in $L^p$ sense.

By construction, we have
{
	\begin{align*}
		I^{\alpha,\beta}(D^{\alpha,\beta}f)=f,\,\forall\,f\in I^{\alpha,\beta}(L^p)\,\text{ and }\,
		D^{\alpha,\beta}(I^{\alpha,\beta}f)=f,\,\forall\,f\in L^1([0,T]^2).
\end{align*}}
Finally, if $f\in I^{\alpha+\alpha^{\prime},\beta+\beta^{\prime}}(L^1)$, with $\max\{\alpha+\beta,\alpha^{\prime}+\beta^{\prime}\}\leq1$, we also have the composition formula 
\begin{align*}
	D^{\alpha,\beta}(D^{\alpha^{\prime},\beta^{\prime}}f)=D^{\alpha+\alpha^{\prime},\beta+\beta^{\prime}}f.
\end{align*}

\subsection{Fractional Brownian sheet} Let $W^H=(W^H_{s,t},(s,t)\in[0,T]^2)$ be a $d$-dimensional fractional Brownian sheet with Hurst index $H=(H_1,H_2)\in(0,1/2)^2$ defined on a probability space $(\Omega,\mathcal{F},\Pb)$.  {More} precisely, $W^H$ is a centered Gaussian process with covariance matrix
\begin{align*}
	&R_H(s_1,t_1,s_2,t_2)_{i,j}:=\E\left[W^{H,(i)}_{s_1,t_1}W^{H,(j)}_{s_2,t_2}\right]\\=&\frac{1}{4}\delta_{ij}(s_2^{2H_1}+s_1^{2H_1}-\vert s_2-s_1\vert^{2H_1})(t_2^{2H_2}+t_1^{2H_2}-\vert t_2-t_1\vert^{2H_2}),\, i,j=1,\ldots,d,
\end{align*}
where $\delta_{ij}=1$ if $i=j$ and $0$ if $i\neq j$. 

Let $\mathcal{E}$ be the set of step functions on $[0,T]^2$ and denote by $\mathcal{H}$ the Hilbert space defined as the closure of $\mathcal{E}$ with respect to the inner product
{
	\begin{align}\label{FracInnerProd}
		\langle\mathbf{1}_{[0,s_1]\times[0,t_1]},\mathbf{1}_{[0,s_2]\times[0,t_2]}\rangle_{\mathcal{H}}=R_H(s_1,t_1,s_2,t_2).
\end{align} }
The mapping $\mathbf{1}_{[0,s]\times[0,t]}\mapsto W^H_{s,t}$ can be extended to an isometry between $\mathcal{H}$ and the Gaussian subspace of $L^2(\Omega)$ associated with $W^H$. Denote by $\varphi\mapsto W^H(\varphi)$ such isometry. The following representation of $R_H$ holds  {for} $H\in(0,1/2)^2$ (see e.g. \cite{ENO03} and \cite[Proposition 5.1.3]{Nua06}):
\begin{prop}\label{prop:FracKernel}
	Let $H=(H_1,H_2)\in(0,\frac{1}{2})^2$, $\alpha\in(0,\frac{1}{2})$ and $F_{\alpha}:\,(0,\infty)^2\to\R$ be the function defined by
	\begin{align*}
		F_{\alpha}(r,s):=\Big(\frac{s}{r}\Big)^{\alpha-1/2}(s-r)^{\alpha-1/2}\mathbf{1}_{[0,s]}(r)\quad\text{for all }r,\,s.
	\end{align*}
	{Let $K_H$ be the kernel}
	\begin{align*}
		K_H(s_1,t_1,s_2,t_2)= K_{H_1}(s_1,s_2)K_{H_2}(t_1,t_2),
	\end{align*}
	where $0<s_1<s_2$, $0<t_1<t_2$,
	\begin{align*}
		K_{H_1}(s_1,s_2)=c_{H_1}\Big[F_{H_1}(s_1,s_2)+\Big(\frac{1}{2}-H_1\Big)\int_{s_1}^{s_2}\frac{F_{H_1}(s_1,u)}{u}\,\mathrm{d}u\Big],
	\end{align*}
	\begin{align*}
		K_{H_2}(t_1,t_2)=c_{H_2}\Big[F_{H_2}(t_1,t_2)+\Big(\frac{1}{2}-H_2\Big)\int_{t_1}^{t_2}\frac{F_{H_2}(t_1,u)}{u}\,\mathrm{d}u\Big]
	\end{align*}
	and
	\begin{align*}
		c_{H_i}=\sqrt{\frac{2H_i}{(1-2H_i)\beta(1-2H_i,H_i+1/2)}},\quad i=1,2
	\end{align*}
	{with $\beta$, the Beta function. Then}
	\begin{align*}
		R_H(s_1,t_1,s_2,t_2)=\int_0^{s_1\wedge s_2}\int_0^{t_1\wedge t_2}K_H(u,v,s_1,t_1)K_H(u,v,s_2,t_2)\,\mathrm{d}u\mathrm{d}v.
	\end{align*}
\end{prop}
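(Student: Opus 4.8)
The plan is to exploit the complete product structure of both sides in the two coordinate directions, which reduces the claim to a one-dimensional kernel representation of fractional Brownian motion. First I would record that the covariance factorizes as
\begin{align*}
	R_H(s_1,t_1,s_2,t_2)=R_{H_1}(s_1,s_2)\,R_{H_2}(t_1,t_2),
\end{align*}
where $R_{H_i}(a,b)=\frac{1}{2}(a^{2H_i}+b^{2H_i}-\vert a-b\vert^{2H_i})$ is the covariance of a one-parameter fractional Brownian motion with Hurst index $H_i$; this is immediate from the displayed formula for $R_H$ on the diagonal $i=j$, writing $\frac14=\frac12\cdot\frac12$. By the very definition of $K_H$ the kernel also factorizes, $K_H(u,v,s,t)=K_{H_1}(u,s)\,K_{H_2}(v,t)$.

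Second, substituting these two factorizations into the right-hand integral and applying Fubini's theorem (the integrand is a product of a function of $u$ and a function of $v$, each square-integrable on the relevant interval), the double integral separates into a product of two one-dimensional integrals:
\begin{align*}
	&\int_0^{s_1\wedge s_2}\int_0^{t_1\wedge t_2}K_H(u,v,s_1,t_1)K_H(u,v,s_2,t_2)\,\mathrm{d}u\mathrm{d}v\\
	&\quad=\Big(\int_0^{s_1\wedge s_2} K_{H_1}(u,s_1)K_{H_1}(u,s_2)\,\mathrm{d}u\Big)\Big(\int_0^{t_1\wedge t_2} K_{H_2}(v,t_1)K_{H_2}(v,t_2)\,\mathrm{d}v\Big).
\end{align*}
The proof thus reduces to the single identity
\begin{align*}
	R_{H_i}(a,b)=\int_0^{a\wedge b}K_{H_i}(u,a)K_{H_i}(u,b)\,\mathrm{d}u,\qquad i=1,2,
\end{align*}
applied once with $i=1$, $(a,b)=(s_1,s_2)$ and once with $i=2$, $(a,b)=(t_1,t_2)$: multiplying the two resulting equalities and invoking the factorization of $R_H$ yields the claim.

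This one-dimensional identity is precisely the Molchan--Golosov (Decreusefond--\"Ust\"unel) kernel representation of fractional Brownian motion for $H_i<\frac12$, and I would invoke it directly from \cite[Proposition 5.1.3]{Nua06} (cf. \cite{ENO03}). Should a self-contained argument be preferred, it follows from the Wiener-integral representation $B^{H_i}_a=\int_0^a K_{H_i}(u,a)\,\mathrm{d}W_u$ against a standard Brownian motion $W$, together with the It\^o isometry, which gives $\E[B^{H_i}_a B^{H_i}_b]=\int_0^{a\wedge b}K_{H_i}(u,a)K_{H_i}(u,b)\,\mathrm{d}u$, the left-hand side being $R_{H_i}(a,b)$ by definition.

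The two factorizations and the Fubini separation are entirely routine; the only substantive ingredient is the one-dimensional representation, whose verification rests on the explicit structure of $K_{H_i}$ (the principal term $F_{H_i}$ together with the $(\frac12-H_i)$-weighted correction integral) and is the single place where the normalizing constants $c_{H_i}$ must enter with their stated values. Accordingly, the main obstacle—if one insists on a fully self-contained derivation rather than a citation—is checking this explicit scalar kernel identity, which demands a careful fractional-calculus computation; since it is classical and already referenced in the excerpt, citing it is the efficient route.
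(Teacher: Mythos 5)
Your argument is correct and coincides with how this proposition is established in the sources the paper itself cites (the paper states it without proof, referring to \cite{ENO03} and \cite[Proposition 5.1.3]{Nua06}): the tensor-product structure of $R_H$ and $K_H$ reduces everything, via Fubini, to the one-dimensional Molchan--Golosov representation $R_{H_i}(a,b)=\int_0^{a\wedge b}K_{H_i}(u,a)K_{H_i}(u,b)\,\mathrm{d}u$, which is exactly the content of the cited one-parameter result. No gap; the factorization $\tfrac14=\tfrac12\cdot\tfrac12$ and the square-integrability of $u\mapsto K_{H_i}(u,a)$ on $(0,a)$ needed for Fubini are both as you say.
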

Consider the linear operator $K^{\ast}_H:\,\mathcal{E}\to L^2([0,T]^2)$ given by
\begin{align*}
	(K^{\ast}_H\varphi)(s,t)&=K_H(T,T,s,t)\varphi(s,t)\\&\quad+\int_s^T\int_t^T(\varphi(u,v)-\varphi(s,t))\frac{\partial^2K_H}{\partial u\partial v}(u,v,s,t)\,\mathrm{d}u\mathrm{d}v
\end{align*}
for every $\varphi\in\mathcal{E}$. One can easily see that $$(K^{\ast}_H\mathbf{1}_{[0,s]\times[0,t]})(s_1,t_1)=K_H(s,s_1,t,t_1)\mathbf{1}_{[0,s]\times[0,t]}(s_1,t_1).$$
Hence, for any step functions $\varphi$, $\psi$ in $\mathcal{E}$, we have  {from \eqref{FracInnerProd} and Proposition \ref{prop:FracKernel}}
\begin{align*}
	\langle K^{\ast}_H\varphi,K^{\ast}_H\psi\rangle_{L^2([0,T]^2)}=\langle\varphi,\psi\rangle_{\mathcal{H}}.
\end{align*}
As a consequence, the operator $K^{\ast}_H$ is an isometry between $\mathcal{E}$ and $L^2([0,T]^2)$ that can be extended to the Hilbert space $\mathcal{H}$.	Then the $d$-dimensional process $W=(W_{s,t},(s,t)\in[0,T]^2)$ defined by 
\begin{align}\label{Bsheet}
	W_{s,t}=W^H(({K_H^{\ast}})^{-1}(\mathbf{1}_{[0,s]\times[0,t]}))
\end{align}	
is a $d$-dimensional Brownian sheet, and the process $W^H$ has the following representation
\begin{align}\label{FBsheet}
	W^H_{s,t}=\int_0^s\int_0^tK_H(s,t,s_1,t_1)\,\mathrm{d}W_{s_1,t_1}.
\end{align}	
The proof of similar results in the one parameter case can be found in \cite{AMN01}.

{For every $(s,t)\in[0,T]^2$, we denote by $\mathcal{F}_{s,t}$ the $\sigma$-algebra generated by the random variables $\{W_{u,v};\,u\leq s,v\leq t\}$, where $W$ is defined by \eqref{Bsheet}. It follows from \eqref{Bsheet} and \eqref{FBsheet} that $\mathcal{F}_{s,t}$ coincide with the $\sigma$-algebra $\mathcal{F}^H_{s,t}$ generated by the random variables $\{W^H_{u,v};\,u\leq s,v\leq t\}$.
	We equip the probability space $(\Omega,\mathcal{F},\Pb)$ with the filtration $(\mathcal{F}^H_{s,t},(s,t)\in[0,T]^2)$  augmented by all $\Pb$-null sets.} Next, we give a multidimensional version of Girsanov's theorem for fractional Brownian sheet provided by \cite[Theorem 3]{ENO03}. First we need to introduce the isomorphism $K_H$ from $L^2([0,T]^2)$ onto $I^{H_1+\frac{1}{2},H_2+\frac{1}{2}}(L^2)$ associated with the kernel $K_H(s,t,s_1,t_1)$ and defined as
\begin{align}\label{GirsanovTheo1}
	(K_H\varphi)(s,t)=I^{2H_1,2H_2}s^{\frac{1}{2}-H_1}t^{\frac{1}{2}-H_2}I^{\frac{1}{2}-H_1,\frac{1}{2}-H_2}s^{H_1-\frac{1}{2}}t^{H_2-\frac{1}{2}}\varphi
\end{align}
for any $\varphi\in L^2([0,T]^2)$.

Now, given a $d$-dimensional process $(\theta_{s,t},(s,t)\in[0,T]^2)$ with integrable trajectories, we consider the transformation
\begin{align*}
	\widetilde{W}^H_{s,t}=W^H_{s,t}+\int_{0}^s\int_0^t\theta_{u,v}\,\mathrm{d}u\mathrm{d}v.
\end{align*} 
Observe that
\begin{align*}
	\widetilde{W}^H_{s,t}=\int_0^s\int_0^tK_H(u,v)\,\mathrm{d}W_{u,v}+\int_{0}^s\int_0^t\theta_{u,v}\,\mathrm{d}u\mathrm{d}v=\int_0^s\int_0^tK_H(u,v)\,\mathrm{d}\widetilde{W}_{u,v},
\end{align*}
where
\begin{align*}
	\widetilde{W}_{s,t}=W_{s,t}+\int_0^s\int_0^tK_H^{-1}\Big(\int_0^{\cdot}\int_0^{\cdot}\theta_{\xi,\eta}\,\mathrm{d}\xi\mathrm{d}\eta\Big)(u,v)\,\mathrm{d}u\mathrm{d}v.
\end{align*}
Moreover, $K_H^{-1}\Big(\int_0^{\cdot}\int_0^{\cdot}\theta^{(i)}_{\xi,\eta}\,\mathrm{d}\xi\mathrm{d}\eta\Big)\in L^2([0,T]^2)$  almost surely if, and only if $\int_0^{\cdot}\int_0^{\cdot}\theta^{(i)}_{\xi,\eta}\,\mathrm{d}\xi\mathrm{d}\eta\in I^{H_1+1/2,H_2+1/2}(L^2)$ for all $i=1,\ldots,d$.\\
It follows from  \eqref{GirsanovTheo1} that the inverse operator is given by
\begin{align}\label{GirsanovTheo2}
	(K^{-1}_H\varphi)(s,t)=s^{\frac{1}{2}-H_1}t^{\frac{1}{2}-H_2}D^{1/2-H_1,1/2-H_2}s^{H_1-\frac{1}{2}}t^{H_2-\frac{1}{2}}D^{2H_1,2H_2}\varphi
\end{align}
for any $\varphi\in I^{H_1+\frac{1}{2},H_2+\frac{1}{2}}(L^2([0,T]^2))$. In particular, if $\varphi$ vanishes on the axes and is absolutely continuous, it can be proved that
\begin{align}\label{GirsanovTheo3}
	(K^{-1}_H\varphi)(s,t)=s^{H_1-\frac{1}{2}}t^{H_2-\frac{1}{2}}I^{\frac{1}{2}-H_1,\frac{1}{2}-H_2}s^{\frac{1}{2}-H_1}t^{\frac{1}{2}-H_2} \frac{\partial^2\varphi}{\partial s\partial t}.
\end{align}
\\
Here is a version of Girsanov's theorem for $d$-dimensional fractional Brownian sheet.  {Its proof is a consequence of Girsanov's theorem for $d$-dimensional Brownian sheet (see e.g. \cite[Proof of Theorem 3]{ENO03} for the one-dimensional case).}
\begin{thm}\label{GirsanovTheo}
	Let $\theta=(\theta_{s,t},(s,t)\in[0,T]^2)$ be a $(\mathcal{F}_{s,t})$-adapted $d$-dimensional process with integrable trajectories and define $\widetilde{W}^H_{s,t}=W^H_{s,t}+\int_0^s\int_0^t\theta_{u,v}\,\mathrm{d}u\mathrm{d}v$ for $(s,t)\in[0,T]^2$. Suppose
	\begin{enumerate}
		\item[(i)]$\int_0^{\cdot}\int_0^{\cdot}\theta^{(i)}_{u,v}\,\mathrm{d}u\mathrm{d}v\in I^{H+1/2,H+1/2}(L^2)$, $\Pb$-a.s., for all $i=1,\ldots,d$.
		\item [(ii)]$\E[\mathcal{Z}_{T,T}]=1$, where
		\begin{align*}
			\mathcal{Z}_{T,T}:=&\exp\Big\{\int_0^T\int_0^TK_H^{-1}\Big(\int_0^{\cdot}\int_0^{\cdot}\theta^{}_{\xi,\eta}\,\mathrm{d}\xi\mathrm{d}\eta\Big)(u,v)\cdot\mathrm{d}W_{u,v}\\&\qquad\qquad-\frac{1}{2}\int_0^T\int_0^T\Big\vert K_H^{-1}\Big(\int_0^{\cdot}\int_0^{\cdot}\theta^{}_{\xi,\eta}\,\mathrm{d}\xi\mathrm{d}\eta\Big)(u,v)\Big\vert^2\,\mathrm{d}u\mathrm{d}v\Big\}.
		\end{align*}
	\end{enumerate}
	Then the shifted process $\widetilde{W}^H$ is a $d$-dimensional  $(\mathcal{F}_{s,t})$-adapted fractional Brownian sheet with Hurst index $H$ under the new probability $\widetilde{\Pb}$ defined by $\frac{\mathrm{d}\widetilde{\Pb}}{\mathrm{d}\Pb}=\mathcal{Z}_{T,T}$. 
\end{thm}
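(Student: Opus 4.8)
The plan is to deduce the statement from Girsanov's theorem for the $d$-dimensional Brownian sheet $W$ of \eqref{Bsheet}, using the deterministic operator identity established immediately before the theorem. Set $u^{(i)}_{s,t}:=K_H^{-1}\big(\int_0^{\cdot}\int_0^{\cdot}\theta^{(i)}_{\xi,\eta}\,\mathrm{d}\xi\mathrm{d}\eta\big)(s,t)$ for $i=1,\dots,d$ and $u=(u^{(1)},\dots,u^{(d)})$, so that the auxiliary process of the preamble reads $\widetilde{W}_{s,t}=W_{s,t}+\int_0^s\int_0^t u_{\xi,\eta}\,\mathrm{d}\xi\mathrm{d}\eta$, the density $\mathcal{Z}_{T,T}$ in (ii) is exactly the Girsanov exponential for shifting $W$ by the drift $u$, and the computation preceding the theorem gives $\widetilde{W}^H_{s,t}=\int_0^s\int_0^t K_H(s,t,\xi,\eta)\,\mathrm{d}\widetilde{W}_{\xi,\eta}$.

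First I would record that $u$ is an admissible integrand: by the equivalence stated just above the theorem, hypothesis (i) is precisely the condition guaranteeing $u^{(i)}\in L^2([0,T]^2)$ $\Pb$-almost surely for every $i$. Next I would verify that $u$ is $(\mathcal{F}_{s,t})$-adapted. This is where the explicit form \eqref{GirsanovTheo2} is used: each operator appearing there---the Riemann--Liouville fractional integrals $I^{\alpha,\beta}$, the fractional derivatives $D^{\alpha,\beta}$ (both left-sided, hence built from integration over $[0,s]\times[0,t]$), and the multiplications by powers of the coordinates---is causal, in that $(K_H^{-1}\varphi)(s,t)$ depends on $\varphi$ only through its restriction to $[0,s]\times[0,t]$. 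Since $\theta$ is adapted, so is each $u^{(i)}$.

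With $u$ adapted and a.s. square-integrable, and with the normalisation $\E[\mathcal{Z}_{T,T}]=1$ from (ii) ensuring that $\mathcal{Z}_{T,T}$ defines a genuine probability measure $\widetilde{\Pb}$, I would then invoke Girsanov's theorem for the $d$-dimensional Brownian sheet (the $d=1$ version being \cite[Proof of Theorem 3]{ENO03}) to conclude that $\widetilde{W}$ is a $d$-dimensional Brownian sheet under $\widetilde{\Pb}$. Transferring back through the kernel, the representation $\widetilde{W}^H_{s,t}=\int_0^s\int_0^t K_H(s,t,\xi,\eta)\,\mathrm{d}\widetilde{W}_{\xi,\eta}$ is of the same form as \eqref{FBsheet} with $W$ replaced by the $\widetilde{\Pb}$-Brownian sheet $\widetilde{W}$; hence $\widetilde{W}^H$ is a fractional Brownian sheet with Hurst index $H$ under $\widetilde{\Pb}$, its $(\mathcal{F}_{s,t})$-adaptedness being inherited from that of $\widetilde{W}$.

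The main obstacle is twofold. On the analytic side, the identity $\int_0^s\int_0^t K_H\,\mathrm{d}\widetilde{W}=\int_0^s\int_0^t K_H\,\mathrm{d}W+\int_0^s\int_0^t\theta$ rests on $K_H$ and $K_H^{-1}$ being mutual inverses on the relevant image spaces, together with a two-parameter stochastic Fubini argument, and must be justified where the drift is only integrable. On the probabilistic side, the two-parameter Girsanov theorem is more delicate than its one-parameter analogue owing to the subtler martingale structure of sheets; however, since we are permitted to cite the Brownian-sheet version, the genuine content of the argument reduces to the causality/adaptedness check of the second step and the algebraic kernel identity underlying the preamble.
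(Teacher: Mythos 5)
Your proposal is correct and follows essentially the same route as the paper, which proves the theorem only by remarking that it is a consequence of Girsanov's theorem for the $d$-dimensional Brownian sheet (citing \cite[Proof of Theorem 3]{ENO03}) combined with the kernel identities set up in the preamble. Your explicit verification of the causality/adaptedness of $K_H^{-1}$ and of the square-integrability supplied by hypothesis (i) fills in details the paper leaves implicit, but introduces no new idea beyond the intended argument.
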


We end this section by recalling the property of sectorial local nondeterminism of the real-valued fractional Brownian sheet due to Wu and Xiao \cite{WX07} which plays a crucial role in this paper. Here is a special case of Theorem 1 in \cite{WX07}.
\begin{thm}\label{SLNDPropTheo}
	Let $W^{0,H}=(W^{0,H}_{s,t},(s,t)\in[0,T]^2)$ be a real-valued fractional Brownian sheet with Hurst parameter $H=(H_1,H_2)$.
	For any fixed number $\esp\in(0,1)$, there exists a positive constant $C_{\esp,H}$ depending on $\ve$ and $H$ only such that for all positive integer $n\geq1$, all $(s,t)$, $(s_1,t_1),\,\ldots,\,(s_n,t_n)\in[\esp,T]^2$, we have
	\begin{align*}
			\text{Var}\left[W^{0,H}_{s,t}\vert W^{0,H}_{s_1,t_1},\,\ldots,\,W^{0,H}_{s_n,t_n}\right] \geq C_{\esp,H}\left(\min\limits_{0\leq k\leq n}\vert s-s_k\vert^{2H_1}+\min\limits_{0\leq k\leq n}\vert t-t_k\vert^{2H_2}\right),
	\end{align*}
	where $s_0=0=t_0$.
\end{thm}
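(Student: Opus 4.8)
The plan is to derive the estimate from the harmonizable (spectral) representation of the sheet, via the Fourier-analytic approach to local nondeterminism. Writing $g_u(\lambda)=e^{iu\lambda}-1$, recall that up to normalizing constants $c_{H_1},c_{H_2}$ the real-valued sheet can be written as $W^{0,H}_{u,v}=c_{H_1}c_{H_2}\int_{\R^2}g_u(\lambda_1)\,g_v(\lambda_2)\,\widehat W(\mathrm d\lambda)$ for a Hermitian Gaussian random measure $\widehat W$ with control measure $|\lambda_1|^{-2H_1-1}|\lambda_2|^{-2H_2-1}\,\mathrm d\lambda$. The family $\{W^{0,H}_{s,t},W^{0,H}_{s_k,t_k}\}$ is jointly centred Gaussian, so the conditional variance equals the squared $L^2(\Pb)$-distance from $W^{0,H}_{s,t}$ to the linear span of the conditioning variables; transporting this through the spectral isometry,
\begin{equation*}
\operatorname{Var}\big[\,\cdot\,\big]=c_{H_1}^2c_{H_2}^2\inf_{(a_k)\in\R^n}\int_{\R^2}\Big|g_s(\lambda_1)g_t(\lambda_2)-\sum_{k=1}^n a_k\,g_{s_k}(\lambda_1)g_{t_k}(\lambda_2)\Big|^2\frac{\mathrm d\lambda}{|\lambda_1|^{2H_1+1}|\lambda_2|^{2H_2+1}},
\end{equation*}
all of it understood in the real Hilbert space $\mathcal K$ of Hermitian, weighted square-integrable functions.

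The product structure of the integrand is what lets me treat the two coordinates one at a time, and is precisely why one gets a \emph{sum} of directional terms. I would prove $\operatorname{Var}[\,\cdot\,]\ge C_1\min_{0\le k\le n}|s-s_k|^{2H_1}$ and $\operatorname{Var}[\,\cdot\,]\ge C_2\min_{0\le k\le n}|t-t_k|^{2H_2}$ separately and combine them by $V\ge\max(C_1A,C_2B)\ge\tfrac12\min(C_1,C_2)(A+B)$. For the first bound put $r=\min_{0\le k\le n}|s-s_k|$ and use the dual formula for the distance in $\mathcal K$: it suffices to produce a Hermitian test function of \emph{product} form $\Psi=\psi_1\otimes\psi_2$ orthogonal to every $g_{s_k}\otimes g_{t_k}$ but pairing nontrivially with $g_s\otimes g_t$. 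Take $\psi_2=g_t/\|g_t\|$, so the $t$-factor is automatic and contributes $\|g_t\|=c_{H_2}^{-1}t^{H_2}$ (the variance normalization of the one-dimensional fractional Brownian motion of index $H_2$). Build $\psi_1$ from a fixed smooth even bump $\chi$ supported in $(-1,1)$ with $\chi(0)=1$ by prescribing the Fourier transform of $\mu_1:=\psi_1\,|\lambda_1|^{-2H_1-1}$ to equal $u\mapsto\chi((u-s)/r)$. Since $|s_k-s|\ge r$ for every $k$, the bump vanishes at each $s_k$, giving $\langle g_{s_k},\psi_1\rangle=0$ and hence $\langle g_{s_k}\otimes g_{t_k},\Psi\rangle=0$; the constant "$-1$" in $g_u$ forces one also to kill the value at $0$, which is exactly the role of the extra index $k=0$ (since $r\le|s-s_0|=s$ with $s_0=0$). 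Meanwhile $\langle g_s,\psi_1\rangle=\chi(0)=1$, and a scaling $\eta=r\lambda_1$ in $\|\psi_1\|^2=\int|\mu_1|^2|\lambda_1|^{2H_1+1}\,\mathrm d\lambda_1$ yields $\|\psi_1\|^2=K_1\,r^{-2H_1}$ with $K_1<\infty$ depending only on $H_1$ and $\chi$ (finiteness uses that $\check\chi$ is Schwartz). The duality bound then gives $\sqrt V\ge c_{H_1}c_{H_2}\,\|g_t\|/(\sqrt{K_1}\,r^{-H_1})\ge (c_{H_1}\esp^{H_2}/\sqrt{K_1})\,r^{H_1}$ using $t\ge\esp$, and squaring yields $C_1=c_{H_1}^2\esp^{2H_2}/K_1$. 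The $t$-bound is identical after exchanging the two coordinates, using $s\ge\esp$.

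Combining the two directional estimates gives the theorem with $C_{\esp,H}=\tfrac12\min(C_1,C_2)$, depending only on $\esp$ and $H$. The conceptual heart — and the main obstacle — is the product ansatz for $\Psi$: a generic test function would have to be orthogonal to all $n$ tensors $g_{s_k}\otimes g_{t_k}$ at once, which is uncontrollable as $n$ grows, whereas the product form collapses this to a single one-dimensional condition met by one bump of width $r$ regardless of $n$. This is exactly what yields a coordinatewise (sectorial) bound and why it cannot be strengthened to the Euclidean distance to the configuration, matching the fact that (fractional) Brownian sheets are not locally nondeterministic in the strong sense. The remaining points to verify are routine: the Hermitian symmetry of $\Psi$ (so the real-Hilbert-space duality is legitimate), the normalization $\|g_v\|^2=c_{H_2}^{-2}v^{2H_2}$, and the uniform bounds $s^{H_1}\ge\esp^{H_1}$, $t^{H_2}\ge\esp^{H_2}$ on $[\esp,T]^2$, which is where the hypothesis $\esp>0$ is used.
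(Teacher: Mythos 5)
The paper does not prove this theorem at all: it is quoted verbatim as ``a special case of Theorem 1 in [WX07]'' (Wu and Xiao, 2007), so there is no internal proof to compare against. Your argument is correct, and it is in substance a faithful reconstruction of the Wu--Xiao proof: harmonizable representation with product control measure $|\lambda_1|^{-2H_1-1}|\lambda_2|^{-2H_2-1}\,\mathrm{d}\lambda$, conditional variance as squared distance to a linear span, duality against a Hermitian test function of tensor-product form, with a bump of width $r=\min_k|s-s_k|$ in the active coordinate and the normalized $g_t$ in the passive one. All the key mechanisms check out: the role of $k=0$ in killing the constant term of $g_u$ via $r\le s$, the scaling $\|\psi_1\|^2=K_1r^{-2H_1}$, the use of $t\ge\esp$ to bound $\|g_t\|$ from below, and the combination $V\ge\max(C_1A,C_2B)\ge\tfrac12\min(C_1,C_2)(A+B)$. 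Two small points you should still record explicitly: (i) the degenerate case $r=0$ (some $s_k=s$), where the directional bound is trivially $0\le V$ and the construction is not needed; and (ii) since the paper defines $W^{0,H}$ through the covariance $R_H$ and the kernel $K_H$, one line verifying that the harmonizable representation reproduces exactly this covariance (with the correct normalizing constants $c_{H_1},c_{H_2}$) is needed to legitimize transporting the problem to the spectral side. Your closing remark — that the product ansatz is what makes the orthogonality to $n$ tensors collapse to a single one-dimensional condition, and why only a sectorial (coordinatewise) bound is obtainable — is exactly the right conceptual summary and matches the discussion in the introduction of the paper about Brownian sheets failing ordinary local nondeterminism.
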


\section{Regularising properties of the fractional Brownian sheet}\label{secbasest}
   
\subsection{An integration by parts formula } 
Let $(m,p)\in\N^2$. Let $g:\,[0,T]^{2mp}\to\R$ be  a positive and bounded Borel measurable function  and let $f:\,[0,T]^{2mp}\times(\R^d)^{mp}\to\R$ be a function  of the form
\begin{align}\label{RegulariseDef1}
	f(s,t,z)=\prod\limits_{j=1}^{mp}f_j(s_j,t_j,z_j),
\end{align}
$s=(s_1,\ldots,s_{mp})\in[0,T]^{mp}$, $t=(t_1,\ldots,t_{mp})\in[0,T]^{mp}$, $z=(z_1,\ldots,z_{mp})\in(\R^d)^{mp}$, where $f_j:\,[0,T]^2\times\R^d\to\R$, $j=1,\ldots,mp$ are smooth functions with compact support.\\ 
Consider an integrable function $\kappa:\,[0,T]^{2mp}\to\R$ of the form
\begin{align}\label{RegulariseDef2}
	\kappa(s,t)=\prod\limits_{j=1}^{mp}\kappa_j(s_j,t_j),\quad(s,t)\in[0,T]^{2mp},
\end{align}
where $\kappa_j:\,[0,T]^2\to\R$, $j=1,\ldots,m$ are integrable functions.

Denote by $\alpha_j$ a multi-index and $D^{\alpha_j}$ its corresponding differential operator. For $\alpha=(\alpha_1,\ldots,\alpha_{mp})\in\N_0^{d\times {mp}}$, we denote by $D^{\alpha}f$ the function given by
\begin{align*}
	D^{\alpha}f(s,t,z)=\prod\limits_{j=1}^{mp}D^{\alpha_j}f_j(s_j,t_j,z_j).
\end{align*}

We endow $\R_+^2$ with the partial  {order} ``$\preceq$" (respectively ``$\prec$") defined by
$$
(u,v)\preceq(u^{\prime},v^{\prime})\text{ when }u\leq u^{\prime}\text{ and }v\leq v^{\prime},
$$
respectively
$$
(u,v)\prec(u^{\prime},v^{\prime})\text{ when }u< u^{\prime}\text{ and }v< v^{\prime},
$$
Let $\N_{2mp}:=\{1,2,\ldots,2mp\}$,  {$\mathcal{P}_{2mp}$} be the set of permutations on $\N_{2mp}$ and $\widehat{\mathcal{P}}_{m\vert 2p}$ be the subset of $\mathcal{P}_{2mp}$ defined as
\begin{align*}
	\widehat{\mathcal{P}}_{m\vert 2p} =\Big\{\sigma\in\mathcal{P}^{}_{2mp}:\,\sigma(1+im)<\ldots<\sigma((1+i)m),\text{ for all }i\in\{0,1,\ldots,2p-1\}\Big\}.
\end{align*}
For any $0\leq \bar r<\bar s\leq T$ and $\sigma\in\mathcal{P}_{2mp}$,   we	define  
\begin{align*}
	\nabla^{m}_{\bar r, \bar s					}=\{(s_1,\ldots,s_m)\in[0,T]^{m}:\,\bar r<s_{m}<\ldots<s_{1}<\bar s\}.
\end{align*}
and
\begin{align*}
\nabla^{2mp,\sigma}_{\bar r,\bar s}=\{(s_1,\ldots,s_{2mp})\in[0,T]^{2mp}:\,(s_{\sigma^{-1}(1)},\ldots,s_{\sigma^{-1}(2mp)})\in\nabla^{2mp}_{\bar r,\bar s}\}.
\end{align*}
 $(\esp,\esp)\prec(\bar r,\bar u)\prec(\bar s,\bar t)$, $z\in (\R^d)^{mp}$, $\alpha\in\N_0^{d\times mp}$,     {define}
\begin{align}\label{DefIntPart1}
	\notag\Lambda_{\alpha,p,m}^{f,g}(\bar r,\bar u,\bar s,\bar t,z):=&(2\pi)^{-dm}\int_{(\R^d)^m}\int_{(\nabla^{m}_{\bar r,\bar s})^p}\int_{(\nabla^{m}_{\bar u,\bar t})^p}g(s,t)\prod\limits_{j=1}^{mp}f_j(s_j,t_j,z_j)\\&\qquad\times(-iv_j)^{\alpha_j}\exp\{-i\langle v_j,W^H_{s_j,t_j}-z_j\rangle\}\,\mathrm{d}t\mathrm{d}s\mathrm{d}v, 
\end{align}
where $i$ denotes the imaginary unit. Let $\widetilde g:\,[0,T]^{4mp}\to\R$ be the function given by 
\begin{align*}
\widetilde g(s,t)=g(s_1,\ldots,s_{mp},t_1,\ldots,t_{mp})\times g(s_{mp+1},\ldots,s_{2mp},t_{mp+1},\ldots,t_{2mp})
\end{align*}
for every $s=(s_1,\ldots,s_{2mp})$ and $t=(t_1,\ldots,t_{2mp})$.\\ We define
\begin{align}\label{DefIntpartf2}
	&\Psi_{\alpha,p,m}^{f,g}(\bar r,\bar u,\bar s,\bar t,z)\notag\\:=&\Big(\prod\limits_{\ell=1}^d\sqrt{(2\vert\alpha^{(\ell)}\vert)!}\Big)\sum\limits_{\sigma,\pi\in \widehat{\mathcal{P}}_{2m\vert p}} \int_{\nabla^{2mp}_{\bar r,\bar s}}\int_{\nabla^{2mp}_{\bar u,\bar t}}\widetilde g_{\sigma,\pi}(s,t)\prod\limits_{j=1}^{2mp}\vert f_{[j]} (s_{\sigma(j)},t_{\pi(j)},z)\vert\\&\qquad\times\frac{\mathrm{d}t_1\ldots\mathrm{d}t_{2mp}\mathrm{d}s_{1}\ldots\mathrm{d}s_{2mp}}{\prod\limits_{j=1}^{2mp}\vert s_{j}-s_{j+1}\vert^{H_1(\vert\alpha^{}_{[\sigma^{-1}(j)]}\vert+d)}\vert t_{j}-t_{j+1}\vert^{ H_2(\vert\alpha^{}_{[\pi^{-1}(j)]}\vert+d)}}\notag
\end{align}
and
\begin{align}\label{DefIntPartKapa}
	&\Psi_{\alpha,p,m}^{\kappa,g}(\bar r,\bar u,\bar s,\bar t)\notag\\:=&\Big(\prod\limits_{\ell=1}^d\sqrt{(2\vert\alpha^{(\ell)}\vert)!}\Big)\sum\limits_{\sigma,\pi\in \widehat{\mathcal{P}}_{2m\vert p}} \int_{\nabla^{2mp}_{\bar r,\bar s}}\int_{\nabla^{2mp}_{\bar u,\bar t}}\widetilde g_{\sigma,\pi}(s,t_{})\prod\limits_{j=1}^{2mp}\vert \kappa_{[j]} (s_{\sigma(j)},t_{\pi(j)})\vert\\&\qquad\times\frac{\mathrm{d}t_1\ldots\mathrm{d}t_{2mp}\mathrm{d}s_{1}\ldots\mathrm{d}s_{2mp}}{\prod\limits_{j=1}^{2mp}\vert s_{j}-s_{j+1}\vert^{H_1(\vert\alpha^{}_{[\sigma^{-1}(j)]}\vert+d)}\vert t_{j}-t_{j+1}\vert^{ H_2(\vert\alpha^{}_{[\pi^{-1}(j)]}\vert+d)}}\notag
\end{align}
where $s_{2mp+1}=\bar r$, $t_{2mp+1}=\bar u$, $[j]=j$ for $j\in\{1,\ldots,mp\}$, $[j]=j-mp$ for $j\in\{mp+1,\ldots,2mp\}$,    $\vert\alpha^{(\ell)}\vert=\sum_{j=1}^m\alpha_j^{(\ell)}$, $\vert\alpha_{[j]}\vert=\sum_{\ell=1}^d\alpha_j^{(\ell)}$ and
\begin{align*}
\widetilde g_{\sigma,\pi}(s,t)=\widetilde{g}(s_{\sigma(1)},\ldots,s_{\sigma(2mp)},t_{\pi(1)},\ldots,t_{\pi(2mp)}).
\end{align*}

\begin{thm}\label{RegulariseTheo}
	Suppose $\Psi^{f,g}_{\alpha,p,m}(\bar r,\bar u,\bar s,\bar t,z)$ and $\Psi_{\alpha,p,m}^{\kappa,g}(\bar r,\bar u,\bar s,\bar t)$ given by \eqref{DefIntpartf2} and \eqref{DefIntPartKapa}, respectively, are finite. Then the random variable $\Lambda_{\alpha,p,m}^{f,g}(\bar r,\bar u,\bar s,\bar t,z)$ defined in \eqref{DefIntPart1} belongs to $L^2(\Omega)$ and there exists a constant $C=C(\esp,H,T,d)>0$ such that
	\begin{align}\label{IntPartFormEst1}
		\E\Big[\Big\vert\Lambda_{\alpha,p,m}^{f,g}(\bar r,\bar u,\bar s,\bar t,z)\vert^2\Big]\leq C^{mpd+\vert\alpha\vert}\Psi_{\alpha,p,m}^{f,g}(\bar r,\bar u,\bar s,\bar t,z).
	\end{align}
	Moreover,
	\begin{align}\label{IntPartFormEst2}
		\Big\vert\E\Big[ \int_{(\R^d)^m}\Lambda_{\alpha,p,m}^{\kappa f,g}(\bar r,\bar u,\bar s,\bar t,z)\,\mathrm{d}z\Big]\Big\vert\leq C^{(md+\vert\alpha\vert)/2}(\Psi_{\alpha,p,m}^{\kappa,g}(\bar r,\bar u,\bar s,\bar t))^{1/2}\prod\limits_{j=1}^m\Vert f_j\Vert_{L^1_{\infty}}
	\end{align}
	and the integration by parts formula
	\begin{align}\label{IntPartFormEst3}
		\int_{(\nabla^{m}_{\bar r,\bar s})^p}\int_{(\nabla^{m}_{\bar u,\bar t})^p}g(s,t)D^{\alpha}f(s,t,W^H_{s,t})\mathrm{d}s\mathrm{d}t=\int_{(\R^d)^m}\Lambda_{\alpha,p,m}^{f,g}(\bar r,\bar u,\bar s,\bar t,z)\,\mathrm{d}z,
	\end{align}
	holds.
\end{thm}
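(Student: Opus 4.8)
The plan is to prove the three assertions in the order \eqref{IntPartFormEst1}, \eqref{IntPartFormEst3}, \eqref{IntPartFormEst2}, since the two moment bounds rest on the same Gaussian computation and the finiteness needed to justify \eqref{IntPartFormEst3} comes out of the $L^2$ estimate. Before that, the identity \eqref{IntPartFormEst3} is essentially bookkeeping with Fourier inversion: each $f_j$ being smooth with compact support, one writes $f_j(s_j,t_j,z_j)=(2\pi)^{-d}\int_{\R^d}\widehat f_j(s_j,t_j,v_j)e^{i\langle v_j,z_j\rangle}\,\diffns v_j$ and differentiates under the integral so that $D^{\alpha_j}f_j(s_j,t_j,y)=(2\pi)^{-d}\int_{\R^d}(-iv_j)^{\alpha_j}\widehat f_j(s_j,t_j,-v_j)e^{-i\langle v_j,y\rangle}\,\diffns v_j$. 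Evaluating at $y=W^H_{s_j,t_j}$, inserting $\widehat f_j(s_j,t_j,-v_j)=\int_{\R^d}f_j(s_j,t_j,z_j)e^{i\langle v_j,z_j\rangle}\,\diffns z_j$, taking the product over $j$, multiplying by $g$ and integrating over the time simplices and over $z$ reproduces $\int\Lambda_{\alpha,p,m}^{f,g}\,\diffns z$ in \eqref{IntPartFormEst3}, once the $z$– and $v$–integrations are exchanged; the Fubini step is legitimate precisely because $\Psi^{f,g}_{\alpha,p,m}$ is finite and \eqref{IntPartFormEst1} holds.

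The heart of the matter is \eqref{IntPartFormEst1}. I would compute $\E[|\Lambda_{\alpha,p,m}^{f,g}|^2]=\E[\Lambda\overline{\Lambda}]$ by introducing a second, independent family of integration variables for the complex conjugate, producing $2mp$ space--time points $(s_j,t_j)$ together with signed Fourier vectors $\theta_j$. Moving $\E$ inside, the only random factor is $\exp\{-i\sum_j\langle\theta_j,W^H_{s_j,t_j}\rangle\}$, whose expectation is the Gaussian characteristic functional $\exp\{-\tfrac12\,\text{Var}(\sum_j\langle\theta_j,W^H_{s_j,t_j}\rangle)\}$, factorising over the $d$ independent scalar components. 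To bound the variance from below I would reorder the $2mp$ points so that their $s$–coordinates increase (a shuffle $\sigma$) and, separately, their $t$–coordinates increase (a shuffle $\pi$); each such reordering is exactly an element of $\widehat{\mathcal P}_{2m\vert p}$, and decomposing the product of simplices of the two copies into the fully ordered simplex $\nabla^{2mp}_{\bar r,\bar s}$ produces the sum over $\sigma,\pi$ in \eqref{DefIntpartf2}. On the ordered configuration Theorem \ref{SLNDPropTheo} gives, componentwise,
\begin{align*}
	\text{Var}\Big(\sum_j\theta_j^{(\ell)}W^{0,H}_{s_j,t_j}\Big)\geq C_{\esp,H}\sum_j\vert\theta_j^{(\ell)}\vert^2\big(\vert s_j-s_{j+1}\vert^{2H_1}+\vert t_j-t_{j+1}\vert^{2H_2}\big),
\end{align*}
the neighbour-minima collapsing to consecutive gaps. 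Integrating the polynomial weight $\prod_j\vert v_j\vert^{\vert\alpha_j\vert}$ against this Gaussian bound reduces to elementary Gaussian integrals, each contributing a factor $(\text{gap})^{-H(\vert\alpha_j\vert+d)}$, while the constant $\prod_\ell\sqrt{(2\vert\alpha^{(\ell)}\vert)!}$ is the even-moment Gaussian constant. The one genuinely delicate point --- and the main obstacle --- is that, unlike fractional Brownian motion, the sheet is only \emph{sectorially} locally nondeterministic, so the lower bound is a \emph{sum} of an $s$–term and a $t$–term rather than a single joint term; I would decouple the two directions by the elementary inequality $a+b\geq 2\sqrt{ab}$ \emph{before} integrating, which is what yields the factorised kernel $\vert s_j-s_{j+1}\vert^{-H_1(\vert\alpha\vert+d)}\vert t_j-t_{j+1}\vert^{-H_2(\vert\alpha\vert+d)}$ together with the two independent shuffles. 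Assembling these factors over the $2mp$ points and over $\sigma,\pi$ reproduces $C^{mpd+\vert\alpha\vert}\Psi^{f,g}_{\alpha,p,m}$, and the attendant finiteness justifies the Fubini steps used above.

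For \eqref{IntPartFormEst2} I would write $\E[\int\Lambda^{\kappa f,g}\,\diffns z]=\int\E[\Lambda^{\kappa f,g}]\,\diffns z$ and integrate out $z$: since $\int_{\R^d}f_j(s_j,t_j,z_j)e^{i\langle v_j,z_j\rangle}\,\diffns z_j=\widehat f_j(s_j,t_j,-v_j)$ is bounded in modulus by $\int_{\R^d}\vert f_j(s_j,t_j,z_j)\vert\,\diffns z_j\leq\Vert f_j\Vert_{L^1_{\infty}}$ uniformly in $v_j$, the $f_j$ factors can be pulled out as $\prod_j\Vert f_j\Vert_{L^1_{\infty}}$, leaving an oscillatory integral in $(s,t,v)$ carrying only the kernels $\kappa_j$ and the weight $\prod_j\vert v_j\vert^{\vert\alpha_j\vert}$ against the first-moment characteristic functional on a single family of $mp$ points. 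Bounding this integral by the same sectorial-nondeterminism-plus-Gaussian-integration scheme, combined with one application of Cauchy--Schwarz to pass from the $mp$-point first moment to the $2mp$-point quantity $\Psi^{\kappa,g}_{\alpha,p,m}$, produces the square root $(\Psi^{\kappa,g}_{\alpha,p,m})^{1/2}$ and the halved exponent $(md+\vert\alpha\vert)/2$, which is exactly \eqref{IntPartFormEst2}.
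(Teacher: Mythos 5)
The overall architecture of your argument (duplicating variables for $|\Lambda|^2$, shuffle decomposition into $\nabla^{2mp,\sigma}$, Gaussian characteristic functional, decoupling the $s$- and $t$-directions via $a+b\ge 2\sqrt{ab}$, Cauchy--Schwarz for \eqref{IntPartFormEst2}, truncation plus Fourier inversion for \eqref{IntPartFormEst3}) matches the paper. But the central analytic step of \eqref{IntPartFormEst1} rests on the inequality
\begin{align*}
\text{Var}\Big(\sum_j\theta_j^{(\ell)}W^{0,H}_{s_j,t_j}\Big)\geq C_{\esp,H}\sum_j\vert\theta_j^{(\ell)}\vert^2\big(\vert s_j-s_{j+1}\vert^{2H_1}+\vert t_j-t_{j+1}\vert^{2H_2}\big),
\end{align*}
and this does not follow from Theorem \ref{SLNDPropTheo} with a constant independent of the number of points. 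Sectorial local nondeterminism lower-bounds a \emph{single} conditional variance $\widehat\sigma_k^2=\text{Var}[W_{s_k,t_k}\,\vert\,W_{s_j,t_j},\,j\neq k]$, which yields $\text{Var}(\sum_j\theta_jX_j)\ge\theta_k^2\widehat\sigma_k^2$ for each fixed $k$ (equivalently it controls the diagonal of $Q^{-1}$), but summing over $k$ costs a factor $1/(2mp)$. Your inequality is the assertion $Q\succeq C\,\mathrm{diag}(\gamma_j)$ as quadratic forms with a dimension-free $C$, which is strictly stronger; carried through the Gaussian integration, the unavoidable $1/(2mp)$ loss produces an extra factor of order $(mp)^{mp}$, destroying the claimed bound $C^{mpd+\vert\alpha\vert}\Psi$ and, downstream, the convergence of the series in Propositions \ref{RegulariseProp1}--\ref{RegulariseProp3} and Theorem \ref{RelCompacTheo}.

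The paper circumvents this by never lower-bounding the full quadratic form: after the change of variables $v=Q^{1/2}u$ one pays $(\det Q)^{-1/2}$ and is left with the moment $\E[\prod_j\vert\langle Q^{-1/2}G,e_j\rangle\vert^{\alpha_j}]$ of a \emph{correlated} Gaussian vector. This is controlled by the permanent inequality (Lemma \ref{LemLW12} together with Marcus's theorem), which is where the factor $\prod_\ell\sqrt{(2\vert\alpha^{(\ell)}\vert)!}$ actually comes from --- it is not the even-moment constant of independent Gaussians, as your sketch suggests --- and which reduces everything to the second moments $\E[\vert\langle Q^{-1/2}G,e_j\rangle\vert^2]=1/\widehat\sigma_j^2$, i.e.\ precisely the conditional variances that sectorial LND bounds. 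The determinant is then handled separately via the chain rule $\det Q=\text{Var}(X_1)\prod_{j\ge2}\text{Var}(X_j\vert X_1,\dots,X_{j-1})$ and a second application of Theorem \ref{SLNDPropTheo}. Your sketch omits both the permanent inequality and the determinant estimate; without them (or a proof of your quadratic-form inequality with a dimension-free constant) the proof of \eqref{IntPartFormEst1} is incomplete.
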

\begin{proof}  {We start by proving \eqref{IntPartFormEst1}. The proof follows as in the proof of \cite[Theorem 3.1]{BNP18} with some modification.} 	We have
	\begin{align*}
		&\vert \Lambda_{\alpha,p,m}^{f,g}(\bar r,\bar u,\bar s,\bar t,z)\vert^2\\
		=&(2\pi)^{-2dmp}\int_{(\R^d)^{mp}}\int_{(\nabla^{m}_{\bar r,\bar s})^p}\int_{(\nabla^{m}_{\bar u,\bar t})^p}\prod\limits_{j=1}^{mp}f_j(s_j,t_j,z_j)(-iu_j)^{\alpha_j}e^{-i\langle u_j,W^H_{s_j,t_j}-z_j\rangle} \\&\times\int_{(\R^d)^{mp}}\int_{(\nabla^{m}_{\bar r,\bar s})^p}\int_{(\nabla^{m}_{\bar u,\bar t})^p}  \prod\limits_{j=mp+1}^{2mp}f_{[j]}(s_j,t_j,z_{[j]})(iu_j)^{\alpha_{[j]}}e^{i\langle u_j,W^H_{s_j,t_j}-z_{[j]}\rangle} \\&\times \widetilde g(s,t)\mathrm{d}t_1\ldots\mathrm{d}t_{2mp}\mathrm{d}s_1\ldots\mathrm{d}s_{2mp}\mathrm{d}u_1\ldots\mathrm{d}u_{2mp}
		\\=&\Big(\frac{1}{4\pi^2}\Big)^{dmp}\int_{(\R^d)^{mp}}\int_{(\nabla^{m}_{\bar r,\bar s})^p}\int_{(\nabla^{m}_{\bar u,\bar t})^p}\prod\limits_{j=1}^{mp}f_j(s_j,t_j,z_j)(-iu_j)^{\alpha_j}e^{-i\langle u_j,W^H_{s_j,t_j}-z_j\rangle} \\&\times\int_{(\R^d)^{mp}}\int_{(\nabla^{m}_{\bar r,\bar s})^p}\int_{(\nabla^{m}_{\bar u,\bar t})^p}\prod\limits_{j=mp+1}^{2mp}f_{[j]}(s_j,t_j,z_{[j]})(-iu_j)^{\alpha_{[j]}}e^{-i\langle u_j,W^H_{s_j,t_j}-z_{[j]}\rangle} \\&\times\widetilde g(s,t) \mathrm{d}t_1\ldots\mathrm{d}t_{2mp}\mathrm{d}s_1\ldots\mathrm{d}s_{2mp}\mathrm{d}u_1\ldots\mathrm{d}u_{2mp},
	\end{align*}
	where 
	 the change of variables $(u_{mp+1},\ldots,u_{2mp})\longmapsto(-u_{mp+1},\ldots,-u_{2mp})$ is applied in the last equality.
	Since $\{\nabla^{2mp,\sigma}_{\bar r, \bar s}\}_{\sigma\in\widehat{\mathcal{P}}_{m\vert 2p}}$  is a partition of $(\nabla^{m}_{\bar r,\bar s})^{2p}$  (see Lemma \ref{lemma:Shuffle}), we obtain
	\begin{align*}
		&\vert \Lambda_{\alpha,p,m}^{f,g}(\bar r,\bar s,\bar u,\bar t,z)\vert^2\\=&\frac{(-1)^{\vert\alpha\vert}}{(4\pi^2)^{dm}}\sum\limits_{\sigma,\pi\in \widehat{\mathcal{P}}_{m\vert 2p}}\int_{(\R^d)^{2mp}}\prod\limits_{j=1}^{mp}e^{-i\langle z_j,u_j+u_{j+mp}\rangle}\int_{\nabla^{2mp,\sigma}_{\bar r,\bar s}}\int_{\nabla^{2mp,\pi}_{\bar u,\bar t}}\widetilde g_{}(s,t)\widetilde{f}_{}(s,t,z)\\&\times\prod\limits_{j=1}^{2mp}u_j^{\alpha_{[j]}}\exp\Big\{-i\sum\limits_{j=1}^{2mp}\langle u_j,W^H_{s_j,t_{\sigma(j)}}\rangle\Big\}\mathrm{d}t_j\mathrm{d}s_j\mathrm{d}u_1\ldots\mathrm{d}u_{2mp},
	\end{align*}
	where $\vert\alpha\vert=\sum_{l=1}^d\sum_{j=1}^{mp}\alpha^{(l)}_{j}$, and 
	$
		\widetilde{f}_{}(s,t,z):=\prod\limits_{j=1}^{2mp}f_{[j]}(s_{j},t_{j},z_j).
	$
	 
	By taking the expectation on both sides we obtain
	\begin{align}
		&\notag\E\left[\vert \Lambda_{\alpha,p,m}^{f,g}(\bar r,\bar s,\bar u,\bar t,z)\vert^2\right]\\=&\notag(4\pi^2)^{-dmp}(-1)^{\vert\alpha\vert}\sum\limits_{\sigma,\pi\in\widehat{\mathcal{P}}_{m\vert2p}}\int_{(\R^d)^{2mp}}\prod\limits_{j=1}^{mp}e^{-i\langle z_j,u_j+u_{j+mp}\rangle}\int_{\nabla^{2mp,\sigma}_{\bar r,\bar s}}\int_{\nabla^{2mp,\pi}_{\bar u,\bar t}}\widetilde g_{}(s,t)\\&\notag\times\widetilde{f}_{}(s,t,z)\prod\limits_{j=1}^{2mp}u_j^{\alpha_{[j]}}\exp\Big\{-\frac{1}{2}\text{Var}\Big[\sum\limits_{j=1}^{2mp}\langle u_j,W^H_{s_j,t_{j}}\rangle\Big]\Big\}\mathrm{d}u_1\ldots\mathrm{d}u_{2mp}\,\mathrm{d}t_j\mathrm{d}s_j\\=&\notag(4\pi^2)^{-dmp}(-1)^{\vert\alpha\vert}\sum\limits_{\sigma,\pi\in\widehat{\mathcal{P}}_{m\vert2p}}\int_{(\R^d)^{2mp}}\prod\limits_{j=1}^{mp}e^{-i\langle z_j,u_j+u_{j+mp}\rangle}\int_{\nabla^{2mp,\sigma}_{\bar r,\bar s}}\int_{\nabla^{2mp,\pi}_{\bar u,\bar t}}\widetilde g_{}(s,t)\\&\notag\times\widetilde{f}_{}(s,t,z)\prod\limits_{j=1}^{2mp}u_j^{\alpha_{[j]}}\exp\Big\{-\frac{1}{2}\sum\limits_{\ell=1}^d\text{Var}\Big[\sum\limits_{j=1}^{2mp} u^{(\ell)}_jW^{H,(\ell)}_{s_j,t_{j}}\Big]\Big\}\mathrm{d}u_1\ldots\mathrm{d}u_{2mp}
		\mathrm{d}t_j\mathrm{d}s_j\\=&\notag\frac{(-1)^{\vert\alpha\vert}}{(4\pi^2)^{dmp}}\sum\limits_{\sigma,\pi\in\widehat{\mathcal{P}}_{m\vert2p}}\int_{(\R^d)^{2mp}}\prod\limits_{j=1}^{mp}e^{-i\langle z_j,u_j+u_{j+mp}\rangle}\int_{\nabla^{2mp,\sigma}_{\bar r,\bar s}}\int_{\nabla^{2mp,\pi}_{\bar u,\bar t}}\widetilde g_{}(s,t)\widetilde{f}_{}(s,t,z)\\&\notag\times\prod\limits_{j=1}^{2mp}u_j^{\alpha_{[j]}}\prod\limits_{\ell=1}^{d}\exp\Big\{-\frac{1}{2} (u^{(\ell)})^{\ast}Qu^{(\ell)}\Big\}\mathrm{d}u^{(\ell)}_1\ldots\mathrm{d}u^{(\ell)}_{2mp}\mathrm{d}t_1\ldots\mathrm{d}t_{2mp}\mathrm{d}s_{1}\ldots\mathrm{d}s_{2mp} \\
		\leq&\frac{1}{(4\pi^2)^{dmp}}\sum\limits_{\sigma,\pi\in\widehat{\mathcal{P}}_{m\vert2p}} \int_{\nabla^{2mp,\sigma}_{\bar r,\bar s}}\int_{\nabla^{2mp,\pi}_{\bar u,\bar t}}\widetilde g_{}(s,t)\vert \widetilde{f}_{}(s,t,z)\vert\prod\limits_{\ell=1}^{d}\int_{\R^{2mp}}\Big(\prod\limits_{j=1}^{2mp}\vert u_j^{(\ell)}\vert^{\alpha^{(\ell)}_{[j]}}\Big)\label{IntPartEstimat1}\\& \notag\times\exp\Big\{-\frac{1}{2}\langle Qu^{(\ell)},u^{(\ell)}\rangle \Big\}\mathrm{d}u^{(\ell)}_1\ldots\mathrm{d}u^{(\ell)}_{2mp} \mathrm{d}t_1\ldots\mathrm{d}t_{2mp}\mathrm{d}s_{1}\ldots\mathrm{d}s_{2mp},\notag
	\end{align}
	where $\ast$ denotes the transposition, $u^{(\ell)}=(u^{(\ell)}_1,\ldots,u^{(\ell)}_{2mp})$ and $$Q=Q^{(\ell)}(s_1,t_{1},\ldots,s_{2mp},t_{2mp}):=\left(\E\left[W^{H,(\ell)}_{s_i,t_{i}}W^{H,(\ell)}_{s_j,t_{j}}\right]\right)_{1\leq i,j\leq 2mp}.$$
	Let $(s_1,\ldots,s_{2mp})\in\nabla^{2mp,\sigma}_{\bar r,\bar s}$ and $(t_1,\ldots,t_{2mp})\in\nabla^{2mp,\pi}_{\bar u,\bar t}$. By the change of variable $v^{(\ell)}=Q^{1/2}u^{(\ell)}$, we have
	\begin{align}\notag
		&\int_{\R^{2mp}}\Big(\prod\limits_{j=1}^{2mp}\vert u_j^{(\ell)}\vert^{\alpha^{(\ell)}_{[j]}}\Big)\exp\Big\{-\frac{1}{2}\langle Qu^{(\ell)},u^{(\ell)}\rangle \Big\}\mathrm{d}u_1^{(\ell)}\ldots\mathrm{d}u_{2mp}^{(\ell)}\\=&\notag\int_{\R^{2mp}}\Big(\prod\limits_{j=1}^{2mp}\Big\vert \langle u^{(\ell)}, e_j\rangle\Big\vert^{\alpha^{(\ell)}_{[j]}}\Big)\exp\Big\{-\frac{1}{2}\langle Q^{1/2}u^{(\ell)},Q^{1/2}u^{(\ell)}\rangle \Big\}\mathrm{d}u_1^{(\ell)}\ldots\mathrm{d}u_{2mp}^{(\ell)}\\=&\notag\frac{1}{(\text{det }Q)^{1/2}}\int_{\R^{2mp}}\Big(\prod\limits_{j=1}^{2mp}\Big\vert \langle Q^{-1/2}v^{(\ell)}, e_j\rangle\Big\vert^{\alpha^{(\ell)}_{[j]}}\Big)\exp\Big\{-\frac{1}{2}\langle v^{(\ell)},v^{(\ell)}\rangle \Big\}\mathrm{d}v_1^{(\ell)}\ldots\mathrm{d}v_{2mp}^{(\ell)}\\=&\frac{(2\pi)^{mp}}{(\text{det }Q)^{1/2}}\E\Big[\prod\limits_{j=1}^{2mp}\Big\vert \langle Q^{-1/2}G, e_j\rangle\Big\vert^{\alpha^{(\ell)}_{[j]}}\Big]\label{IntPartChangeVar},
	\end{align}
	where $e_i,\,i=1,\ldots,2mp$ is the standard orthonormal basis of $\R^{2mp}$ and $G=(G_1,\ldots,G_{2mp})$ a centered $\R^{2m}$-valued Gaussian random vector with covariance matrix $I_{2mp\times2mp}$. 
	
	We deduce from Lemma \ref{LemLW12} and \cite[Theorem]{Ma63} (see also \cite[Sect. 1]{AGGS17}) that
	\begin{align*}
		\E\Big[\prod\limits_{j=1}^{2mp}\Big\vert \langle Q^{-1/2}G, e_j\rangle\Big\vert^{\alpha^{(\ell)}_{[j]}}\Big]\leq&\sqrt{\text{perm}(\Sigma)}\leq\sqrt{(2\vert\alpha^{(\ell)}\vert)! \prod\limits_{i=1}^{2\vert\alpha^{(\ell)}\vert}\sigma_{i,i}}\\=&\sqrt{(2\vert\alpha^{(\ell)}\vert)!\prod\limits_{j=1}^{2mp}\Big(\E\Big[\vert\langle Q^{-1/2}G,e_j\rangle\vert^{2}\Big]\Big)^{\alpha^{(\ell)}_{[j]}}},
	\end{align*}
	where perm$(\Sigma)$ is the permanent of the covariance matrix $\Sigma=(\Sigma_{i,j})$ of the Gaussian random vector 
	\begin{align*}
		\Big(\underbrace{\langle Q^{-1/2}G,e_1\rangle,\ldots,\langle Q^{-1/2}G,e_1\rangle}_{\alpha^{(\ell)}_{[1]}\text{ times}},\ldots,\underbrace{\langle Q^{-1/2}G,e_{2mp}\rangle,\ldots,\langle Q^{-1/2}G,e_{2mp}\rangle}_{\alpha^{(\ell)}_{[2mp]}\text{ times}}\Big)
	\end{align*}
	and $\vert\alpha^{(\ell)}\vert:=\sum_{j=1}^{mp}\alpha_j^{(\ell)}$.
	
	Applying the change of variable $u=Q^{-1/2}v$ and Lemma \ref{LemCD82}, we obtain
	\begin{align*}
		\E\Big[\vert\langle Q^{-1/2}G,e_j\rangle\vert^{2}\Big]=&\frac{1}{(2\pi)^{mp}}\int_{\R^{2mp}}\langle Q^{-1/2}v,e_j\rangle^2\exp\Big(-\frac{1}{2}\vert v\vert^2\Big)\,\mathrm{d}v\\=&\frac{(\text{det }Q)^{1/2}}{(2\pi)^{m}}\int_{\R^{2mp}}\langle u,e_j\rangle^2\exp\Big(-\frac{1}{2}\langle Qu,u\rangle\Big)\mathrm{d}u_1\ldots\mathrm{d}u_{2mp}\\=&\frac{(\text{det }Q)^{1/2}}{(2\pi)^{mp}}\int_{\R^{2mp}}u_j^2\exp\Big(-\frac{1}{2}\langle Qu,u\rangle\Big)\mathrm{d}u_1\ldots\mathrm{d}u_{2mp}\\=&\frac{(\text{det }Q)^{1/2}}{(2\pi)^{mp}}\frac{(2\pi)^{(2mp-1)/2}}{(\text{det }Q)^{1/2}}\int_{\R}\frac{v^2}{\widehat\sigma_j^2}\exp\Big(-\frac{1}{2}v^2\Big)\,\mathrm{d}v=\frac{1}{\widehat{\sigma}^2_j},
	\end{align*}
	where $\widehat{\sigma}_j^2:=\text{Var}[W^{(\ell),H}_{s_j,t_{j}}\vert W^{(\ell),H}_{s_i,t_{i}},\,i=1,\ldots,2mp,\,i\neq j]$.\\
	Applying Theorem \ref{SLNDPropTheo}  and Young inequality, we have
	\begin{align*}
		\widehat{\sigma}_j^2\geq
		&C_{\esp,H}\left(\min\limits_{1\leq i\leq 2mp,\,i\neq j}\vert s_j-s_i\vert^{2H_1}+\min\limits_{1\leq i\leq 2mp,\,i\neq j}\vert t_{j}-t_{i}\vert^{2H_2}\right) \\\geq&2C_{\esp,H}\min\limits_{1\leq i\leq 2mp,\,i\neq j}\vert s_j-s_i\vert^{H_1}\min\limits_{1\leq i\leq 2mp,\,i\neq j}\vert t_{\sigma(j)}-t_i\vert^{H_2}.
	\end{align*}
	Hence,
	\begin{align*}
		&\prod\limits_{j=1}^{2mp}(\widehat{\sigma}^2_j)^{\alpha^{(\ell)}_{[j]}}\\\geq&(4C^2_{\esp,H})^{\vert\alpha^{(\ell)}\vert}\prod\limits_{j=1}^{2mp}\min\limits_{1\leq i\leq 2mp,\,i\neq j}\vert s_j-s_i\vert^{H_1\alpha^{(\ell)}_{[j]}}\prod\limits_{j=1}^{2mp}\min\limits_{1\leq i\leq 2mp,\,i\neq j}\vert t_j-t_i\vert^{H_2\alpha^{(\ell)}_{[j]}}\\=&(4C^2_{\esp,H})^{\vert\alpha^{(\ell)}\vert}\prod\limits_{j=1}^{2mp}\min\limits_{1\leq i\leq 2mp,\,i\neq \sigma^{-1}(j)}\vert s_{\sigma^{-1}(j)}-s_i\vert^{H_1\alpha^{(\ell)}_{[\sigma^{-1}(j)]}}\\&\qquad\times\prod\limits_{j=1}^{2mp}\min\limits_{1\leq i\leq 2mp,\,i\neq \pi^{-1}(j)}\vert t_{\pi^{-1}(j)}-t_i\vert^{H_2\alpha^{(\ell)}_{[\pi^{-1}(j)]}}\\\geq&(4C^2_{\esp,H})^{\vert\alpha^{(\ell)}\vert} \prod\limits_{j=1}^{2mp}\min\{\vert s_{\sigma^{-1}(j)}-s_{\sigma^{-1}(j+1)}\vert,\vert s_{\sigma^{-1}(j)}-s_{\sigma^{-1}(j-1)}\vert\}^{H_1\alpha^{\sigma,(\ell)}_{[j]}}\\&\qquad\times\prod\limits_{j=1}^{2mp}\min\{\vert t_{\pi^{-1}(j)}-t_{\pi^{-1}(j+1)}\vert,\vert t_{\pi^{-1}(j)}-t_{\pi^{-1}(j-1)}\vert\}^{H_2\alpha^{\pi,(\ell)}_{[j]}}\\\geq&
		(4C^2_{\esp,H})^{\vert\alpha^{(\ell)}\vert}\prod\limits_{j=1}^{2mp}\vert s_{\sigma^{-1}(j)}-s_{\sigma^{-1}(j+1)}\vert^{2H_1\alpha^{\sigma,(\ell)}_{[j]}}\vert t_{\pi^{-1}(j)}-t_{\pi^{-1}(j+1)}\vert^{2H_2\alpha^{\pi,(\ell)}_{[j]}},
	\end{align*}
	where $s_{\sigma^{-1}(2m+1)}=\bar r$, $ t_{\pi^{-1}(2m+1)}=\bar u$, $\alpha^{\sigma,(\ell)}_{[j]}=\alpha^{(\ell)}_{[\sigma^{-1}(j)]}$ and $\alpha^{\pi,(\ell)}_{[j]}=\alpha^{(\ell)}_{[\pi^{-1}(j)]}$.\\
	As a consequence,
	\begin{align}
		&\notag\prod\limits_{j=1}^{2mp}\Big(\E\Big[\vert\langle Q^{-1/2}G,e_j\rangle\vert^{2}\Big]\Big)^{\alpha^{(\ell)}_{[j]}}=\prod\limits_{j=1}^{2mp}\Big(\frac{1}{\widehat{\sigma}_j^2}\Big)^{\alpha^{(\ell)}_{[j]}}\\\leq& \label{SectLND}\frac{1}{(4C^2_{\esp,H})^{\vert\alpha^{(\ell)}\vert}}\prod\limits_{j=1}^{2mp}\frac{1}{\vert s_{\sigma^{-1}(j)}-s_{\sigma^{-1}(j+1)}\vert^{2H_1\alpha^{\sigma,(\ell)}_{[j]}}\vert t_{\pi^{-1}(j)}-t_{\pi^{-1}(j+1)}\vert^{2H_2\alpha^{\pi,(\ell)}_{[j]}}}.
	\end{align}
	We deduce from \eqref{IntPartChangeVar} and \eqref{SectLND}  {that}
	\begin{align}
		\notag&\int_{\R^{2mp}}\Big(\prod\limits_{j=1}^{2mp}\vert u_j^{(\ell)}\vert^{\alpha^{(\ell)}_{[j]}}\Big)\exp\Big\{-\frac{1}{2}\langle Qu^{(\ell)},u^{(\ell)}\rangle \Big\}\mathrm{d}u_1^{(\ell)}\ldots\mathrm{d}u_{2mp}^{(\ell)}\\ \leq&\frac{(2\pi)^{mp} (C^{(1)}_{\esp,H})^{\vert\alpha^{(\ell)}\vert}\sqrt{(2\vert\alpha^{(\ell)}\vert)!}}{ (\text{det }Q)^{1/2}\prod\limits_{j=1}^{2mp}\vert s_{\sigma^{-1}(j)}-s_{\sigma^{-1}(j+1)}\vert^{H_1\alpha^{\sigma,(\ell)}_{[j]}}\vert t_{\pi^{-1}(j)}-t_{\pi^{-1}(j+1)}\vert^{H_2\alpha^{\pi,(\ell)}_{[j]}}}, \label{SectLND2}
	\end{align}
	where $C^{(1)}_{\esp,H}=1/2C_{\esp,H}$. Using once again Theorem \eqref{SLNDPropTheo},  {we obtain}
	\begin{align}
		\notag\text{det }Q=&\text{det Cov}\left(W^{(\ell),H}_{s_i,t_i},\,i=1,\ldots,2mp\right)\\=\notag&\text{Var}[W^{(\ell),H}_{s_1,t_1}]\prod\limits_{j=2}^{2mp}\text{Var}[W^{(\ell),H}_{s_j,t_j}\vert W^{(\ell),H}_{s_i,t_i},\,i=1,\,\ldots,\,j-1]\\\geq\notag&\frac{\esp^{2(H_1+H_2)}}{4}C_{\esp,H}\prod\limits_{j=2}^{2mp}\left(\min\limits_{1\leq i\leq j-1}\vert s_j-s_i\vert^{2H_1}+\min\limits_{1\leq i\leq j-1}\vert t_j-t_i\vert^{2H_2}\right)\\\geq&\frac{1}{4}\Big(\frac{\esp}{T}\Big)^{2(H_1+H_2)}C_{\esp,H}\prod\limits_{j=1}^{2mp}\vert s_{\sigma^{-1}(j)}-s_{\sigma^{-1}(j+1)}\vert^{2H_1}\vert t_{\pi^{-1}(j)}-t_{\pi^{-1}(j+1)}\vert^{2H_2}.\label{SectLND3}
	\end{align}
	Plugging \eqref{SectLND3} into \eqref{SectLND2}, we obtain
	\begin{align}
		\notag&\int_{\R^{2mp}}\Big(\prod\limits_{j=1}^{2mp}\vert u_j^{(\ell)}\vert^{\alpha^{(\ell)}_{[j]}}\Big)\exp\Big\{-\frac{1}{2}\langle Qu^{(\ell)},u^{(\ell)}\rangle \Big\}\mathrm{d}u_1^{(\ell)}\ldots\mathrm{d}u_{2mp}^{(\ell)}\\ \leq&\frac{  (C^{(2)}_{\esp,H,T})^{mp+\vert\alpha^{(\ell)}\vert}\sqrt{(2\vert\alpha^{(\ell)}\vert)!}}{ \prod\limits_{j=1}^{2mp}\vert s_{\sigma^{-1}(j)}-s_{\sigma^{-1}(j+1)}\vert^{H_1(\alpha^{\sigma,(\ell)}_{[j]}+1)}\vert t_{\pi^{-1}(j)}-t_{\pi^{-1}(j+1)}\vert^{H_2(\alpha^{\pi,(\ell)}_{[j]}+1)}}.\label{SectLND4}
	\end{align}
	where
	\begin{align*}
		C^{(2)}_{\esp,H,T}=2\pi \Big(\frac{1}{\min\{1,2C^2_{\esp,H}\}}\Big)^{1/2}   \Big(\frac{T}{\min\{1,\esp\}}\Big)^{H_1+H_2}.
	\end{align*}
	Using \eqref{IntPartEstimat1} and the change of variable $\ps_k=s_{\sigma^{-1}(k)},\,\tp_k=t_{\pi^{-1}(k)},\,k=1,\ldots,2mp$, we obtain
	\begin{align*}
		&\notag\E\left[\vert \Lambda_{\alpha,p,m}^{f,g}(\bar r,\bar s,\bar u,\bar t,z)\vert^2\right]\\ \leq& C_{\esp,H,T}^{ mpd+\vert\alpha\vert}\Big(\prod\limits_{\ell=1}^d\sqrt{(2\vert\alpha^{(\ell)}\vert)!}\Big)\sum\limits_{\sigma,\pi\in\widehat{\mathcal{P}}_{m\vert2p}} \int_{\nabla^{2mp,\sigma}_{\bar r,\bar s}}\int_{\nabla^{2mp,\pi}_{\bar u,\bar t}}\widetilde g(s,t)\vert \widetilde{f}(s,t,z)\vert\\&\qquad\times\frac{\mathrm{d}t_1\ldots\mathrm{d}t_{2mp}\mathrm{d}s_{1}\ldots\mathrm{d}s_{2mp}}{\prod\limits_{j=1}^{2mp}\vert s_{\sigma^{-1}(j)}-s_{\sigma^{-1}(j+1)}\vert^{H_1(\alpha^{\sigma}_{[j]}\vert+d)}\vert t_{\pi^{-1}(j)}-t_{\pi^{-1}(j+1)}\vert^{H_2(\vert\alpha^{\pi}_{[j]}\vert+d)}}\\=&C_{\esp,H,T}^{ mpd+\vert\alpha\vert}\Big(\prod\limits_{\ell=1}^d\sqrt{(2\vert\alpha^{(\ell)}\vert)!}\Big)\sum\limits_{\sigma,\pi\in\mathcal{P}_{2mp}} \int_{\nabla^{2mp}_{\bar r,\bar s}}\int_{\nabla^{2mp}_{\bar u,\bar t}}\widetilde g_{\sigma,\pi}(\ps,\tp)\vert \widetilde{f}_{\sigma,\pi}(\ps,\tp,z)\vert\\&\qquad\times\frac{\mathrm{d}\tp_1\ldots\mathrm{d}\tp_{2mp}\mathrm{d}\ps_{1}\ldots\mathrm{d}\ps_{2mp}}{\prod\limits_{j=1}^{2mp}\vert \ps_{j}-\ps_{j+1}\vert^{H_1(\vert\alpha^{\sigma}_{[j]}\vert+d)}\vert \tp_{j}-\tp_{j+1}\vert^{ H_2(\vert\alpha^{\pi}_{[j]}\vert+d)}},	 
	\end{align*}
	where $\vert\alpha^{}_{}\vert=\sum_{\ell=1}^d\alpha^{(\ell)}_{}$, $\vert\alpha^{\sigma}_{[j]}\vert=\sum_{\ell=1}^d\alpha^{\sigma,(\ell)}_{[j]}$, $\vert\alpha^{\pi}_{[j]}\vert=\sum_{\ell=1}^d\alpha^{\pi,(\ell)}_{[j]}$, $C_{\esp,H,T}=4\pi C^{(2)}_{\esp,H,T}$, $\widetilde g_{\sigma,\pi}(\ps,\tp)=g(\ps_{\sigma(1)},\ldots,\ps_{\sigma(2mp)},\tp_{\pi(1)},\ldots,\tp_{\pi(2mp)})$ and
	$
		\widetilde{f}_{\sigma,\pi}(\ps,\tp,z):=\widetilde{f}(\ps_{\sigma(1)},\ldots,\ps_{\sigma(2m)},\tp_{\pi(1)},\ldots,\tp_{\pi(2m)},z).
	$
	This completes the proof of \eqref{IntPartFormEst1}. 
	
	We now prove inequality \eqref{IntPartFormEst2}. Using \eqref{IntPartFormEst1}, we have
	\begin{align*}
		&\Big\vert\E\Big[ \int_{(\R^d)^{mp}}\Lambda_{\alpha}^{\kappa f}(\bar r,\bar u,\bar s,\bar t,z)\,\mathrm{d}z\Big]\Big\vert\leq\int_{(\R^d)^{mp}}\Big(\E\Big[\vert\Lambda_{\alpha}^{\kappa f}(\bar r,\bar u,\bar s,\bar t,z)\vert^2\Big]\Big)^{1/2}\,\mathrm{d}z\\ &\leq C^{ (mpd+\vert\alpha\vert)/2}\int_{(\R^d)^{mp}}\left(\Psi^{\kappa f}_{\alpha}(\bar r,\bar u,\bar s,\bar t,z)\right)^{1/2}\,\mathrm{d}z \\&\leq C^{ (mpd+\vert\alpha^{}\vert)/2}\Big(\prod\limits_{\ell=1}^d\sqrt{(2\vert\alpha^{(\ell)}\vert)!}\Big)^{1/2}\Big(\sup\limits_{0\leq r,u\leq T}\prod\limits_{j=1}^{2mp}\int_{\R^d}\vert f_{[j]}(r,u,z_j)\vert_{}\mathrm{d}z_j\Big)^{1/2} \\&\qquad\times\Big(\sum\limits_{\sigma,\pi\in\widehat{\mathcal{P}}_{m\vert2p}} \int_{\nabla^{2mp}_{\bar r,\bar s}}\int_{\nabla^{2mp}_{\bar u,\bar t}}\widetilde g_{\sigma,\pi}(s,t)\prod\limits_{j=1}^{2mp}\frac{\kappa_{[j]}(s_{\sigma(j)},t_{\pi(j)})}{\vert s_{j}-s_{j+1}\vert^{H_1(\vert\alpha^{\sigma}_{[j]}\vert+d)}\vert t_{j}-t_{j+1}\vert^{ H_2(\vert\alpha^{\pi}_{[j]}\vert+d)}}\\&\qquad\qquad\qquad\qquad\times\mathrm{d}t_1\ldots\mathrm{d}t_{2mp}\mathrm{d}s_{1}\ldots\mathrm{d}s_{2mp}
		\Big)^{1/2}\mathrm{d}z\\&=C^{ (mpd+\vert\alpha\vert)/2}\left(\Psi_{\alpha,p,m}^{\kappa,g}(\bar r,\bar u,\bar s,\bar t)\right)^{1/2}\prod\limits_{j=1}^{mp}\Vert f_j\Vert_{L^1_{\infty}}.
	\end{align*}
	Finally, we show the integration by parts formula \eqref{IntPartFormEst3}.			 
	For any $R>0$, define
	\begin{align*}
		\Lambda_{\alpha,p,m,R}^{f,g}(\bar r,\bar u,\bar s,\bar t,z):=&(2\pi)^{-dmp}\int_{B(R)}\int_{(\nabla^{m}_{\bar r,\bar s})^p}\int_{(\nabla^{m}_{\bar u,\bar t})^p}g(s,t)\prod\limits_{j=1}^{mp}f_j(s_j,t_j,z_j)(-iv_j)^{\alpha_j}\\&\qquad\qquad\times\exp\{-i\langle v_j,W^H_{s_j,t_j}-z_j\rangle\}\,\mathrm{d}t\mathrm{d}s\mathrm{d}v,
	\end{align*}
	where $B(R):=\{z\in(\R^d)^m:\,\vert z\vert<R\}$. This implies
	\begin{align}\label{IntPartRHS}
		\vert\Lambda_{\alpha,p,m,R}^{f,g}(\bar r,\bar u,\bar s,\bar t,z)\vert\leq C_R\int_{(\nabla^{m}_{\bar r,\bar s})^p}\int_{(\nabla^{m}_{\bar u,\bar t})^p}g(s,t)\prod\limits_{j=1}^{mp}\vert f_j(s_j,t_j,z_j)\vert\,\mathrm{d}s\mathrm{d}t
	\end{align}
	for a sufficiently large constant $C_R>0$. Since the right-hand side in \eqref{IntPartRHS} is integrable over $(\R^d)^m$,
	similar computations as above show that $\Lambda_{\alpha,p,m,R}^{f,g}(\bar r,\bar u,\bar s,\bar t,z)$ converges to $\Lambda_{\alpha,p,m}^{f,g}(\bar r,\bar u,\bar s,\bar t,z)$ in $L^2(\Omega)$ as $R$ goes to $\infty$ for all $\bar r$, $\bar u$, $\bar s$, $\bar t$ and $z$.
	
	Using the Lebesgue's dominated convergence theorem and the fact that the Fourier transform is an automorphism on the Schwarz space, one has
	\begin{align*}
		&\int_{(\R^d)^{mp}}\Lambda_{\alpha,p,m}^{f,g}(\bar r,\bar u,\bar s,\bar t,z)\,\mathrm{d}z=\lim\limits_{R\to\infty}\int_{(\R^d)^{mp}}\Lambda_{\alpha,p,m,R}^{f,g}(\bar r,\bar u,\bar s,\bar t,z)\,\mathrm{d}z\\
		&=\lim\limits_{R\to\infty}(2\pi)^{-dmp}\int_{(\R^d)^{mp}}\int_{B(R)}\int_{(\nabla^{m}_{\bar r,\bar s})^p}\int_{(\nabla^{m}_{\bar u,\bar t})^p}g(s,t)\prod\limits_{j=1}^{mp}f_j(s_j,t_j,z_j)(-iv_j)^{\alpha_j}\\&\qquad\qquad\qquad\times e^{-i\langle v_j,W^H_{s_j,t_j}-z_j\rangle}\,\mathrm{d}s\mathrm{d}t\mathrm{d}v\mathrm{d}z\\
		&=\lim\limits_{R\to\infty} \int_{(\nabla^{m}_{\bar r,\bar s})^p}\int_{(\nabla^{m}_{\bar u,\bar t})^p}g(s,t)\int_{B(R)}(2\pi)^{-dmp}\int_{(\R^d)^{mp}}\prod\limits_{j=1}^{mp}f_j(s_j,t_j,z_j)e^{i\langle v_j,z_j\rangle}(-iv_j)^{\alpha_j}\\&\qquad\qquad\qquad\times e^{-i\langle v_j,W^H_{s_j,t_j}\rangle}\,\mathrm{d}z\mathrm{d}v\mathrm{d}s\mathrm{d}t\\
		&=\lim\limits_{R\to\infty}\int_{(\nabla^{m}_{\bar r,\bar s})^p}\int_{(\nabla^{m}_{\bar u,\bar t})^p}g(s,t)\int_{B(R)}\prod\limits_{j=1}^{mp}\widehat{f}_j(s_j,t_j,-v_j)(-iv_j)^{\alpha_j}e^{-i\langle v_j,W^H_{s_j,t_j}\rangle}\,\mathrm{d}v\mathrm{d}s\mathrm{d}t\\
		&=\int_{(\nabla^{m}_{\bar r,\bar s})^p}\int_{(\nabla^{m}_{\bar u,\bar t})^p}g(s,t)D^{\alpha}f(s,t,W^H_{s,t})\,\mathrm{d}s\mathrm{d}t,
	\end{align*}	
	where $\widehat{f}_j$ denotes the Fourier	transform of $f_j$ and $\langle\cdot,\cdot\rangle$ the classical inner product in $\R^d$. This ends the proof.
\end{proof}

\subsection{Regularising estimates of the fractional Brownian sheet}		
The next estimate shows that the fractional Brownian sheet regularises \eqref{MainPb}. 

\begin{prop}\label{RegulariseProp1}
	Let $(W_{r,u}^H,(r,u)\in[0,T]^2)$ be a $d$-dimensional fractional Brownian sheet with Hurst index $H=(H_1,H_2)\in(0,\frac{1}{2})^2$ under $(\Omega,\mathcal{F},\Pb)$. Let $f$ and $\kappa$ be given as in \eqref{RegulariseDef1} and \eqref{RegulariseDef2} respectively. Let $p,\,m\in\N$, $\alpha\in(\N_0^m)^d$ be a multi-index, $0<\esp\leq \bar r<r<\bar s\leq T$, $0<\esp\leq \bar u<u<\bar t\leq T$, 
	\begin{align*}
		\kappa_j(s,t)=(K_H(s_j,t_j,r,u)-K_H(s_j,t_j,\bar r,\bar u))^{\ve_j}
	\end{align*} 
	for every $s=(s_1,\ldots,s_m)$, $t=(t_1,\ldots,t_m)$,  $j\in\{1,\dots,m\}$ and $(\ve_1,\ldots,\ve_m)\in\{0,1\}^m$. Suppose
	\begin{align}\label{RegulariseCond}
		\max\{H_1,H_2\}<\frac{\frac{1}{2}-\gamma}{d-1+\vert\alpha_{j}\vert}
	\end{align}
	for all $j=1,\ldots,m$, where $\vert\alpha_j\vert=\sum_{\ell=1}^d\alpha_{j}^{(\ell)}$ and $\gamma\in(0,H)$ is sufficiently small. Then there exists a universal constant $C=C_{\ve,H,T,d}$ (depending on $H$, $T$ and $d$, but independent of $m$, $p$, $\{f_i\}_{i=1,\ldots,m}$ and $\alpha$) such that  
	\begin{align*}
		&\Big\vert\E\Big[\Big(\int_{\nabla^{m}_{ r, \bar s}}\int_{\nabla^{m}_{ u, \bar t}}\prod\limits_{j=1}^mD^{\alpha_j}f_j(s_j,t_j,W^H_{s_j,t_j})\kappa_j(s_j,t_j)\,\mathrm{d}t_j\mathrm{d}s_j\Big)^p\Big]\Big\vert\\\leq& C^{p(md+\vert\alpha\vert)}\left(\prod\limits_{i=1}^d(2p\vert{\alpha}^{(i)}\vert)!\right)^{\frac{1}{4}}\prod\limits_{j=1}^m\Vert f_j\Vert^p_{L^1_{\infty}}\\&\quad\times\Big[r^{H_1-\frac{1}{2}-\gamma}\bar u^{H_2-\frac{1}{2}-\gamma}\Big(\frac{\vert u-\bar u\vert^{\gamma}}{(u\bar u)^{\gamma}} +\frac{\vert r-\bar r\vert^{\gamma}}{(r\bar r)^{\gamma}} \Big)\Big]^{p\sum_{j=1}^{m}\ve_{j}} \\&\quad\times\frac{(\bar s-r)^{p(H_1-\frac{1}{2}-\gamma)\sum_{j=1}^{m} \ve_{j}-p(\vert{\alpha}\vert+md)H_1+pm}}{\Gamma\left(2p(H_1-\frac{1}{2}-\gamma)\sum\limits_{j=1}^{m} \ve_{j}- 2p(\vert{\alpha}\vert+md)H_1+2pm+1\right)^{1/2}}
		\\&\quad\times\frac{(\bar t-u)^{p(H_2-\frac{1}{2}-\gamma)\sum_{j=1}^{m} \ve_{j}-p(\vert{\alpha}\vert+md)H_2+pm}}{\Gamma\left(2p(H_2-\frac{1}{2}-\gamma)\sum\limits_{j=1}^{m} \ve_{j}-2p(\vert{\alpha}\vert+md)H_2+2pm+1\right)^{1/2}}.
	\end{align*}
\end{prop}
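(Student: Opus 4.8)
The plan is to reduce the $p$-th moment to a single simplex integral governed by Theorem~\ref{RegulariseTheo}. First I would rewrite the $p$-th power of the integral over $\nabla^m_{r,\bar s}\times\nabla^m_{u,\bar t}$ as an integral over the $p$-fold product $(\nabla^m_{r,\bar s})^p\times(\nabla^m_{u,\bar t})^p$ carrying $mp$ factors $D^{\alpha_j}f_j(s_j,t_j,W^H_{s_j,t_j})\kappa_j(s_j,t_j)$. This is precisely the object treated in Theorem~\ref{RegulariseTheo}, applied with the trivial weight $g\equiv1$ and with the lower endpoints of the theorem specialised to $r$ and $u$ (admissible since $\esp\le\bar r<r$ and $\esp\le\bar u<u$, so all arguments stay in $[\esp,T]^2$). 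Since each $\kappa_j$ depends only on the time variables, one has $D^{\alpha_j}(\kappa_j f_j)=\kappa_j D^{\alpha_j}f_j$, so folding $\kappa_j$ into $f_j$ and using the integration-by-parts identity \eqref{IntPartFormEst3} followed by the moment bound \eqref{IntPartFormEst2}, the left-hand side is dominated by $C^{p(md+\vert\alpha\vert)}\big(\Psi^{\kappa,g}_{\alpha,p,m}(r,u,\bar s,\bar t)\big)^{1/2}\prod_{j=1}^m\Vert f_j\Vert^p_{L^1_\infty}$, where each of the $m$ functions is repeated $p$ times in the product. The outer square root is the source both of the quarter-power $\big(\prod_i(2p\vert\alpha^{(i)}\vert)!\big)^{1/4}$ and of the square roots on the Gamma functions in the claimed estimate.

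It then remains to estimate $\Psi^{\kappa,g}_{\alpha,p,m}$ from \eqref{DefIntPartKapa} with $g\equiv1$, so that $\widetilde g_{\sigma,\pi}\equiv1$. Since $K_H(s,t,\cdot,\cdot)=K_{H_1}K_{H_2}$ factorises and the singular denominators in \eqref{DefIntPartKapa} separate the $s$- and $t$-variables, the integrand factorises and $\Psi^{\kappa,g}$ splits into an $s$-integral over $\nabla^{2mp}_{r,\bar s}$ and a $t$-integral over $\nabla^{2mp}_{u,\bar t}$. The first substantive step is a H\"older-type bound for the kernel increment $\vert K_H(s_j,t_j,r,u)-K_H(s_j,t_j,\bar r,\bar u)\vert^{\ve_j}$: using the explicit form of $K_{H_1},K_{H_2}$ from Proposition~\ref{prop:FracKernel}, I would isolate the endpoint factor $r^{H_1-\frac12-\gamma}\bar u^{H_2-\frac12-\gamma}\big(\vert u-\bar u\vert^{\gamma}(u\bar u)^{-\gamma}+\vert r-\bar r\vert^{\gamma}(r\bar r)^{-\gamma}\big)$ together with residual powers of the integration variables $s_j,t_j$ that remain integrable against the singular weights. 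As each $\kappa_j$ with $\ve_j=1$ occurs $p$ times in the $p$-fold product, this endpoint factor is raised to the power $p\sum_j\ve_j$, matching the exponent in the statement.

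Inserting the kernel bound into the factorised $\Psi^{\kappa,g}$, each one-parameter integral becomes a multidimensional singular Beta (Dirichlet) integral over an ordered simplex, of the schematic form $\int_{\nabla^{2mp}_{r,\bar s}}\prod_{j=1}^{2mp}\vert s_j-s_{j+1}\vert^{-H_1(\vert\alpha\vert+d)+(H_1-\frac12-\gamma)\ve}\,\mathrm{d}s$, the shift $(H_1-\frac12-\gamma)$ per kernel-difference factor coming from the H\"older estimate, and analogously in the $t$-variable. Hypothesis \eqref{RegulariseCond} is exactly what ensures that every exponent stays above $-1$, so that the generalised Beta formula applies and evaluates each integral to an expression of the form $(\bar s-r)^{2A_1}/\Gamma(2A_1+1)$ with $2A_1=2p(H_1-\frac12-\gamma)\sum_j\ve_j-2p(\vert\alpha\vert+md)H_1+2pm$, and similarly $(\bar t-u)^{2A_2}/\Gamma(2A_2+1)$ in the $t$-variable. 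Taking the outer square root from \eqref{IntPartFormEst2} converts these into the claimed factors $(\bar s-r)^{A_1}/\Gamma(2A_1+1)^{1/2}$ and $(\bar t-u)^{A_2}/\Gamma(2A_2+1)^{1/2}$; the finite sum over $\sigma,\pi\in\widehat{\mathcal{P}}_{2m\vert p}$ contributes only a combinatorial constant absorbed into $C^{p(md+\vert\alpha\vert)}$.

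The main obstacles are twofold. The first is the sharp kernel-increment estimate: one must compare $K_{H_1},K_{H_2}$ in their second pair of arguments and extract precisely the powers of $r,\bar r,u,\bar u$ producing the endpoint factor while leaving only simplex-integrable singularities in $s_j,t_j$. The second, and more delicate, is the evaluation of the singular Dirichlet integrals with the shifted exponents and the exact bookkeeping of the resulting Gamma-function arguments and of the powers of $(\bar s-r)$ and $(\bar t-u)$; here \eqref{RegulariseCond} is indispensable for keeping all exponents above $-1$, and the square root from \eqref{IntPartFormEst2} must be propagated consistently through the factorial and Gamma-function factors to recover the stated quarter-powers and square-root denominators.
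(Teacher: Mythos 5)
Your proposal follows essentially the same route as the paper: rewrite the $p$-th power as a $p$-fold simplex integral, identify it with $\Lambda^{\bar\kappa\bar f,g}_{\bar\alpha,p,m}$ for $g\equiv1$, invoke Theorem \ref{RegulariseTheo} via \eqref{IntPartFormEst3} and \eqref{IntPartFormEst2}, and then bound $\Psi^{\bar\kappa,g}_{\bar\alpha,p,m}$ by combining the H\"older estimate for the kernel increment with the generalized Beta evaluation of the ordered simplex integrals, which is precisely the content of Lemma \ref{ExistEstLem2} in the appendix. Your bookkeeping — condition \eqref{RegulariseCond} keeping all exponents above $-1$, and the outer square root producing the quarter-power factorials and the $\Gamma(\cdot)^{1/2}$ denominators — matches the paper's argument.
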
 
\begin{proof}
	We start by noticing that
	\begin{align*}
		&\Big(\int_{\nabla^{m}_{r,\bar s}}\int_{\nabla^{m}_{u, \bar t}}\prod\limits_{j=1}^{m}D^{\alpha_j}f_j(s_j,t_j,W^H_{s_j,t_j})\kappa_j(s_j,t_j) \mathrm{d}t_{m} \ldots\mathrm{d}t_1\mathrm{d}s_{m}\ldots\mathrm{d}s_1\Big)^p\\=&  \int_{(\nabla_{r,\bar s}^{m})^p} \int_{(\nabla_{u,\bar t}^{m})^p}\prod\limits_{j=1}^{pm}D^{\bar\alpha_{j}}\bar f_{ j}(s_j,t_j,W^H_{s_j,t_j})\bar\kappa_{ j}(s_j,t_j)\mathrm{d}t_1\ldots\mathrm{d}t_{pm}\mathrm{d}s_1\ldots\mathrm{d}s_{pm}\\=&\Lambda^{\bar\kappa\bar f,g}_{\bar\alpha,p,m}(r,\bar s,u,\bar t,z),
	\end{align*}
	where $g=1$, $\bar f_j=f_{j-im}$, $\bar\alpha_j=\alpha_{j-im}$ and
	\begin{align*}
	\bar\kappa_j(s,t)=(K_H(s_j,t_j,r,u)-K_H(s_j,t_j,\bar r,\bar u))^{\bar\ve_j}
	\end{align*}
	with $\bar\ve_j=\ve_{j-im}$ for every $i\in\{0,1,\ldots,p-1\}$,  $j\in\{1+im,\ldots,(1+i)m\}$.
	 
	We deduce from Theorem \ref{RegulariseTheo} that
	\begin{align}
		\notag\Big\vert\E\left[\Lambda^{\bar\kappa\bar f,g}_{\bar\alpha,p,m}(r,\bar s,u,\bar t,z)\right]\vert\notag\leq&\E\left[\Lambda^{\bar\kappa\bar f,g}_{\bar\alpha,p,m}(r,\bar s,u,\bar t,z)^2\right]\\\leq&C_1^{ (pmd+\vert\widetilde\alpha\vert)/2}(\Psi_{\bar\alpha,p,m}^{\bar\kappa,g}(r,\bar s,u,\bar t))^{1/2}\prod\limits_{j=1}^{m}\Vert f_j\Vert^p_{L^1_{\infty}},\label{RegulariseProof1}
	\end{align}	
	where
	\begin{align*}
	&\Psi_{\bar\alpha,p,m}^{\bar\kappa,g}(r,\bar s,u,\bar t)=\prod\limits_{\ell=1}^d\sqrt{(2\vert\bar\alpha^{(\ell)}\vert)!}\sum\limits_{\sigma,\pi\in\widehat{\mathcal{P}}_{m\vert 2p}}\\& \int_{ \nabla^{2mp}_{ r,\bar s}}\int_{ \nabla^{2mp}_{ u,\bar t}}  \prod\limits_{j=1}^{2pm}\frac{\vert (K_H(s_{\sigma(j)},t_{\pi(j)},r,u)-K_H(s_{\sigma(j)},t_{\pi(j)},\bar r,\bar u))^{\bar\ve_{[j]}}\vert\mathrm{d}t_j\mathrm{d}s_{j}}{\vert s_{j}-s_{j+1}\vert^{H_1(\vert\alpha^{}_{[\sigma^{-1}(j)]}\vert+d)}\vert t_{j}-t_{j+1}\vert^{ H_2(\vert\alpha^{}_{[\pi^{-1}(j)]}\vert+d)}}.
	\end{align*}	 
	We deduce from \eqref{RegulariseCond} that
	\begin{align*}
		(H_1-\frac{1}{2}-\gamma)\bar\ve_{[j]}-(\vert\bar\alpha_{[j]}\vert+d)H_1>-1
	\end{align*}
	and
	\begin{align*}
		(H_2-\frac{1}{2}-\gamma)\bar\ve_{[j]}-(\vert\bar\alpha_{[j]}\vert+d)H_2>-1
	\end{align*}
	for all $j=1,\ldots,2pm$.  {Then Lemma \ref{ExistEstLem2} yields}
	\begin{align*}
		&\Psi_{\bar\alpha,p,m}^{\bar\kappa,g}( r, \bar s,u,\bar t)\leq C_1^{pm}\prod\limits_{\ell=1}^d\sqrt{(2\vert\bar\alpha^{(\ell)}\vert)!}\,\Pi_{\gamma,pm}(H_1)\Pi_{\gamma,pm}(H_2)\\&\quad\times\Big[r^{H_1-\frac{1}{2}-\gamma}\bar u^{H_2-\frac{1}{2}-\gamma}\Big(\frac{\vert u-\bar u\vert^{\gamma}}{(u\bar u)^{\gamma}} +\frac{\vert r-\bar r\vert^{\gamma}}{(r\bar r)^{\gamma}} \Big)\Big]^{\sum_{j=1}^{2pm}\bar\ve_{[j]}}\\&\quad\times(\bar s-r)^{(H_1-\frac{1}{2}-\gamma)\sum_{j=1}^{2pm} \bar\ve_{[j]}-2(\vert\bar\alpha\vert+pmd)H_1+2pm}\\&\quad\times(\bar t-u)^{(H_2-\frac{1}{2}-\gamma)\sum_{j=1}^{2pm} \bar\ve_{[j]}-2(\vert\bar\alpha\vert+pmd)H_2+2pm},
	\end{align*}
	where
	\begin{align*}
		&\Pi_{\gamma,pm}(H_1)\\=&\frac{\prod\limits_{j=2}^{2pm}\Gamma\left(1-(\vert\bar\alpha_{[j]}\vert+d)H_1\right)  \prod\limits_{j=1}^{2pm}\Gamma\left((H_1-\frac{1}{2}-\gamma) \bar\ve_{[j]}-(\vert\bar\alpha_{[j]}\vert+d)H_1+1\right)}{\Gamma\left((H_1-\frac{1}{2}-\gamma)\sum\limits_{j=1}^{2pm} \bar\ve_{[j]}- 2(\vert\bar\alpha\vert+pmd)H_1+2pm+1\right)}
	\end{align*}
	and
	\begin{align*}
		&\Pi_{\gamma,pm}(H_2)\\=&\frac{\prod\limits_{j=2}^{2pm}\Gamma\left(1-(\vert\bar\alpha_{[j]}\vert+d)H_2\right)  \prod\limits_{j=1}^{2pm}\Gamma\left((H_2-\frac{1}{2}-\gamma) \bar\ve_{[j]}-(\vert\bar\alpha_{[j]}\vert+d)H_2+1\right)}{\Gamma\left((H_2-\frac{1}{2}-\gamma)\sum\limits_{j=1}^{2pm} \bar\ve_{[j]}- 2(\vert\bar\alpha\vert+pmd)H_2+2pm+1\right)}.
	\end{align*}
	Now observe that $\sum_{j=1}^{2pm}\bar\ve_{[j]}=2p\sum_{j=1}^m\ve_j$, $\vert\bar\alpha\vert=p\vert\alpha\vert$,  $\vert\bar\alpha^{(\ell)}\vert=p\vert\alpha^{(\ell)}\vert$ for every $\ell\in\{1,\ldots,d\}$ and for a large enough positive constant $C_2$,
	\begin{align*}
		&C^{pm}_2\\\geq&\prod\limits_{j=2}^{2pm}\Gamma\left(1-(\vert\bar\alpha_{[j]}\vert+d)H_1\right)  \prod\limits_{j=1}^{2pm}\Gamma\left((H_1-\frac{1}{2}-\gamma) \bar\ve_{[j]}-(\vert\bar\alpha_{[j]}\vert+d)H_1+1\right)\\&\times\prod\limits_{j=2}^{2pm}\Gamma\left(1-(\vert\bar\alpha_{[j]}\vert+d)H_2\right)  \prod\limits_{j=1}^{2pm}\Gamma\left((H_2-\frac{1}{2}-\gamma) \bar\ve_{[j]}-(\vert\bar\alpha_{[j]}\vert+d)H_2+1\right).
	\end{align*}
	As a consequence, we have
	\begin{align} \label{RegulariseProof2}
		&(\Psi_{\bar\alpha,p,m}^{\bar\kappa,g}( r, \bar s,u,\bar t))^{\frac{1}{2}}\notag\\\leq& (C_1C_2)^{(pmd+p\vert\alpha\vert)}\left(\prod\limits_{\ell=1}^d(2p\vert\alpha^{(\ell)}\vert)!\right)^{\frac{1}{4}}\\&\notag\times\Big[r^{H_1-\frac{1}{2}-\gamma}\bar u^{H_2-\frac{1}{2}-\gamma}\Big(\frac{\vert u-\bar u\vert^{\gamma}}{(u\bar u)^{\gamma}} +\frac{\vert r-\bar r\vert^{\gamma}}{(r\bar r)^{\gamma}} \Big)\Big]^{p\sum_{j=1}^{m}\ve_{j}} \\&\notag\times\frac{(\bar s-r)^{p(H_1-\frac{1}{2}-\gamma)\sum_{j=1}^{m} \ve_{j}-p(\vert\alpha\vert+md)H_1+pm}}{\Gamma\left(2p(H_1-\frac{1}{2}-\gamma)\sum\limits_{j=1}^{m} \ve_{j}- 2p(\vert\alpha\vert+md)H_1+2pm+1\right)^{1/2}}
		\\&\notag\times\frac{(\bar t-u)^{p(H_2-\frac{1}{2}-\gamma)\sum_{j=1}^{m} \ve_{j}-p(\vert\alpha\vert+md)H_2+pm}}{\Gamma\left(2p(H_2-\frac{1}{2}-\gamma)\sum\limits_{j=1}^{m} \ve_{j}- 2p(\vert\alpha\vert+md)H_2+2pm+1\right)^{1/2}},
	\end{align}
	which gives the desired result by combining \eqref{RegulariseProof1} and \eqref{RegulariseProof2}.
\end{proof}
\begin{prop}\label{RegulariseProp2}
	Let $(W_{r,u}^H,(r,u)\in[0,T]^2)$ be a $d$-dimensional fractional Brownian sheet with Hurst index $H=(H_1,H_2)\in(0,\frac{1}{2})^2$ under $(\Omega,\mathcal{F},\Pb)$. Let $f$ and $\kappa$ be given as in \eqref{RegulariseDef1} and \eqref{RegulariseDef2} respectively. Let $m,\,p\in\N$, $\alpha\in(\N_0^m)^d$ be a multi-index, $\esp>0$, $\esp\leq r<\bar s\leq T$, $\esp\leq u<\bar t\leq T$, 
	\begin{align*}
		\kappa_j(s,t)= K_H(s_j,t_j, r, u)^{\ve_j}
	\end{align*} 
	for every $s=(s_1,\ldots,s_m)$, $t=(t_1,\ldots,t_m)$,  $j\in\{1,\dots,m\}$ and $(\ve_1,\ldots,\ve_m)\in\{0,1\}^m$. Suppose
	\begin{align}\label{RegulariseCond2}
		\max\{H_1,H_2\}<\frac{1}{2(d-1+\vert\alpha_{j}\vert)}
	\end{align}
	for all $j=1,\ldots,m$. Then there exists a universal constant $C=C_{\esp,H,T,d}$ (depending on $H$, $T$ and $d$, but independent of $m$, $p$, $\{f_i\}_{i=1,\ldots,m}$ and $\alpha$) such that  
	\begin{align*}
		&\Big\vert\E\Big[\Big(\int_{\nabla^{m}_{ r, \bar s}}\int_{\nabla^{m}_{ u, \bar t}}\prod\limits_{j=1}^mD^{\alpha_j}f_j(s_j,t_j,W^H_{s_j,t_j})\kappa_j(s_j,t_j)\,\mathrm{d}t_j\mathrm{d}s_j\Big)^p\Big]\Big\vert\\\leq& C^{p(md+\vert\alpha\vert)}\left(\prod\limits_{i=1}^d(2p\vert\alpha^{(i)}\vert)!\right)^{\frac{1}{4}}\prod\limits_{j=1}^m\Vert f_j\Vert^p_{L^1_{\infty}}\Big( r^{H_1-\frac{1}{2}} u^{H_2-\frac{1}{2}} \Big)^{p\sum_{j=1}^{m}\ve_{j}}  \\&\quad\times\frac{(\bar s- r)^{p(H_1-\frac{1}{2})\sum_{j=1}^{m} \ve_{j}-p(\vert\alpha\vert+md)H_1+pm}}{\Gamma\left(2p(H_1-\frac{1}{2})\sum\limits_{j=1}^{m} \ve_{j}- 2p(\vert\alpha\vert+md)H_1+2pm+1\right)^{1/2}}
		\\&\quad\times\frac{(\bar t- u)^{p(H_2-\frac{1}{2})\sum_{j=1}^{m} \ve_{j}-p(\vert\alpha\vert+md)H_2+pm}}{\Gamma\left(2p(H_2-\frac{1}{2})\sum\limits_{j=1}^{m} \ve_{j}- 2p(\vert\alpha\vert+md)H_2+2pm+1\right)^{1/2}}.
	\end{align*}
\end{prop}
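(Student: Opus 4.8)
The statement is the exact analogue of Proposition \ref{RegulariseProp1}, with the increment kernel $(K_H(\cdot,r,u)-K_H(\cdot,\bar r,\bar u))^{\ve_j}$ replaced by the single kernel $K_H(\cdot,r,u)^{\ve_j}$ and with the H\"older exponent $\gamma$ set to $0$; accordingly the plan is to reproduce the proof of Proposition \ref{RegulariseProp1} step by step, the only genuinely new ingredient being a pointwise \emph{size} bound for the kernel in place of its H\"older-in-the-inner-variable bound. First I would rewrite the $p$-th power of the iterated integral over the simplices, exactly as in the first lines of the proof of Proposition \ref{RegulariseProp1}, as $\int_{(\R^d)^m}\Lambda^{\bar\kappa\bar f,g}_{\bar\alpha,p,m}(r,\bar s,u,\bar t,z)\,\mathrm{d}z$ with $g\equiv1$ and with the periodic extensions $\bar f_j=f_{j-im}$, $\bar\alpha_j=\alpha_{j-im}$, $\bar\ve_j=\ve_{j-im}$, now carrying $\bar\kappa_j=K_H(\cdot,r,u)^{\bar\ve_j}$; this is precisely the integration-by-parts identity \eqref{IntPartFormEst3}. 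Applying \eqref{IntPartFormEst2} of Theorem \ref{RegulariseTheo} then gives
\[
\Big|\E\Big[\int_{(\R^d)^m}\Lambda^{\bar\kappa\bar f,g}_{\bar\alpha,p,m}\,\mathrm{d}z\Big]\Big|\leq C_1^{(pmd+\vert\bar\alpha\vert)/2}\big(\Psi^{\bar\kappa,g}_{\bar\alpha,p,m}(r,\bar s,u,\bar t)\big)^{1/2}\prod_{j=1}^m\Vert f_j\Vert^p_{L^1_\infty},
\]
so that everything reduces to estimating the quantity $\Psi^{\bar\kappa,g}_{\bar\alpha,p,m}$ of \eqref{DefIntPartKapa}.

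The new ingredient is the bound on the kernel factors entering $\Psi^{\bar\kappa,g}$. Writing $K_H(s_j,t_j,r,u)=K_{H_1}(r,s_j)K_{H_2}(u,t_j)$ as in Proposition \ref{prop:FracKernel}, and using $r,u\geq\esp>0$ together with $H_1,H_2<\tfrac12$, I would establish the pointwise estimate
\[
0\leq K_{H_1}(r,s)\leq C_{\esp,H_1,T}\,r^{H_1-\frac12}(s-r)^{H_1-\frac12},\qquad 0<r<s\leq T,
\]
and the analogous bound for $K_{H_2}$. This is obtained directly from the explicit formula in Proposition \ref{prop:FracKernel}: the leading term $F_{H_1}(r,s)=(r/s)^{\frac12-H_1}(s-r)^{H_1-\frac12}$ is dominated by $(s-r)^{H_1-\frac12}$, while the integral term $(\tfrac12-H_1)\int_r^s F_{H_1}(r,w)w^{-1}\,\mathrm{d}w$ is controlled using $w\geq r$ and $H_1<\tfrac12$, and the prefactor $r^{H_1-\frac12}$ is inserted uniformly for $r\geq\esp$. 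This plays the role the H\"older estimate played in Proposition \ref{RegulariseProp1}; since here we only need the \emph{size} of the kernel rather than one of its increments, no power of $\gamma$ is lost, which is exactly why the admissible range sharpens to \eqref{RegulariseCond2}. Substituting these bounds, the prefactors $r^{(H_1-\frac12)\bar\ve_{[j]}}u^{(H_2-\frac12)\bar\ve_{[j]}}$ factor out, and after taking the square root and using $\sum_{j=1}^{2pm}\bar\ve_{[j]}=2p\sum_{j=1}^m\ve_j$ they reproduce the term $\big(r^{H_1-\frac12}u^{H_2-\frac12}\big)^{p\sum_j\ve_j}$.

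It then remains to evaluate the integral over $\nabla^{2mp}_{r,\bar s}\times\nabla^{2mp}_{u,\bar t}$ of the consecutive-difference denominators against the factors $(s_{\sigma(j)}-r)^{(H_1-\frac12)\bar\ve_{[j]}}$ and $(t_{\pi(j)}-u)^{(H_2-\frac12)\bar\ve_{[j]}}$. The hypothesis \eqref{RegulariseCond2} guarantees that each exponent satisfies $(H_1-\tfrac12)\bar\ve_{[j]}-(\vert\bar\alpha_{[j]}\vert+d)H_1>-1$ and likewise for $H_2$, so the iterated Dirichlet-type integral converges, and Lemma \ref{ExistEstLem2} evaluates it as the product $\Pi_{0,pm}(H_1)\Pi_{0,pm}(H_2)$ of ratios of Gamma functions together with the powers $(\bar s-r)^{\,\cdots}$ and $(\bar t-u)^{\,\cdots}$, exactly as in Proposition \ref{RegulariseProp1} with $\gamma=0$. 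Bounding the finitely many numerator Gamma factors by a constant of the form $C_2^{pm}$, using $\vert\bar\alpha\vert=p\vert\alpha\vert$ and $\vert\bar\alpha^{(\ell)}\vert=p\vert\alpha^{(\ell)}\vert$, and finally taking square roots and combining with the reduction above, yields the asserted estimate. I expect the main obstacle to be the kernel estimate of the second paragraph, namely showing that the integral term in $K_{H_1}(r,s)$ does not worsen the singularity at $s=r$ beyond $(s-r)^{H_1-\frac12}$ and that the prefactor $r^{H_1-\frac12}$ can be extracted uniformly for $r\geq\esp$; once this pointwise bound is in hand, the remaining Beta-integral bookkeeping is identical to that already carried out for Proposition \ref{RegulariseProp1}.
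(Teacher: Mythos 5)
Your proposal is correct and is essentially the route the paper takes (its own proof is a one-line deferral to Theorem \ref{RegulariseTheo} and to the proof of Proposition \ref{RegulariseProp1}): reduce to $\Psi^{\bar\kappa,g}_{\bar\alpha,p,m}$ via \eqref{IntPartFormEst2}, replace the H\"older increment estimate by the pointwise size bound $\vert K_{H_1}(s,r)\vert\leq C_{H_1,T}(s-r)^{H_1-\frac{1}{2}}r^{H_1-\frac{1}{2}}$ (the paper's \eqref{BanApp1a}, valid for all $0<r<s\leq T$, so no uniformity in $r\geq\esp$ is actually needed), and evaluate the resulting Dirichlet-type integrals, with \eqref{RegulariseCond2} ensuring the exponents exceed $-1$. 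The only nitpick is that the appendix lemma performing that evaluation in the single-kernel, $\gamma=0$ case is Lemma \ref{ExistEstLem3} rather than Lemma \ref{ExistEstLem2}, though the computation is the one you describe.
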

\begin{proof}
	The proof follows directly from Theorem \ref{RegulariseTheo} and is similar to that of the preceding Proposition.
\end{proof}

\begin{prop}\label{RegulariseProp3}
	Let $(W_{r,u}^H,(r,u)\in[0,T]^2)$ be a $d$-dimensional fractional Brownian sheet with Hurst index $H=(H_1,H_2)\in(0,\frac{1}{2})^2$ under $(\Omega,\mathcal{F},\Pb)$. Let $f$ and $\kappa$ be given as in \eqref{RegulariseDef1} and \eqref{RegulariseDef2} respectively. Let $m,\ell,p\in\N$, $\alpha\in(\N_0^d)^{m+\ell}$ be a multi-index, $\esp>0$, $\esp\leq \bar r<r<\bar s\leq T$, $\esp\leq \bar u<u<\bar t\leq T$, 
	\begin{align*}
		\kappa_j(s,t)= K_H(s_j,t_j,\bar r,\bar u)^{\ve_j}
	\end{align*} 
	for every $s=(s_1,s_2,\ldots,s_{m+\ell})$, $t=(t_1,t_2,\ldots,t_{m+\ell})$, $j\in\{1,2,\dots,m+\ell\}$ and $(\ve_1,\ve_2,\ldots,\ve_{m+\ell})\in\{0,1\}^{m+\ell}$. Define
	\begin{align*}
		\Xi^{m,\ell}_{\bar r,r,\bar s}:=\{(s_1,\ldots,s_{m+\ell}):\,(s_1,\ldots,s_m)\in\nabla_{r,\bar s}^{m}\text{ and }(s_{m+1},\ldots,s_{m+\ell})\in\nabla^{\ell}_{\bar r,s_m}\}
	\end{align*}
	and
	\begin{align*}
		\Delta^{m,\ell}_{\bar u,u,\bar t}=&\{(t_1\ldots,t_{m+\ell}):\,\bar u<t_{m+\ell}<\ldots<t_{m+1}<u<t_{m}<\ldots<t_{1}<\bar t\}.
	\end{align*} 	
	Suppose
	\begin{align}\label{RegulariseCond3}
		\max\{H_1,H_2\}<\frac{1}{2(d-1+\vert\alpha_{j}\vert)}
	\end{align}
	for all $j=1,\ldots,m+\ell$.
	Then there exists a universal constant $C=C_{\ve,H,T,d}$ (depending on $\esp$, $H$, $T$ and $d$, but independent of $m$, $p$, $\{f_i\}_{i=1,\ldots,m}$ and $\alpha$) such that  
	\begin{align*}
		&\Big\vert\E\Big[\Big(\int_{\Xi^{m,\ell}_{\bar r,r,s}}\int_{\Delta^{m,\ell}_{\bar u, u, t}}\prod\limits_{j=1}^{m+\ell}D^{\alpha_j}f_j(s_j,t_j,W^H_{s_j,t_j})\kappa_j(s_j,t_j) \mathrm{d}t_{m+\ell} \ldots\mathrm{d}t_1\mathrm{d}s_{m+\ell}\ldots\mathrm{d}s_1\Big)^p\Big]\Big\vert\\\leq& C^{p[(m+\ell)d+\vert\alpha\vert]}\left(\prod\limits_{l=1}^d(2p\vert\alpha^{(l)}\vert)!\right)^{1/4}\prod\limits_{j=1}^{m+\ell}\Vert f_j\Vert^p_{L^1_{\infty}}\Big(\bar r^{H_1-\frac{1}{2}}\bar u^{H_2-\frac{1}{2}} \Big)^{p\sum_{j=1}^{m+\ell}\ve_{j}}  \\& \times\frac{(\bar s-\bar r)^{p(H_1-\frac{1}{2})\sum_{j=1}^{m+\ell} \ve_{j}-p(\vert\alpha\vert+(m+\ell)d)H_1+p(m+\ell)}}{\Gamma\left(2p(H_1-\frac{1}{2})\sum\limits_{j=1}^{m+\ell} \ve_{j}- 2p(\vert\alpha\vert+(m+\ell)d)H_1+2p(m+\ell)+1\right)^{1/2}}
		\\& \times\sum\limits_{\pi\in\widehat{\mathcal{P}}_{m+\ell\vert 2p}}\Big\{\frac{(\bar t-u)^{\frac{1}{2}(H_2-\frac{1}{2})\sum_{k=1}^{2pm} \bar\ve^{\pi}_{[k]}-(\frac{1}{2}\sum_{k=1}^{2pm}\vert\bar\alpha^{\pi}_{[k]}\vert+pmd)H_2+pm}}{\Gamma\left((H_2-\frac{1}{2})\sum\limits_{k=1}^{2pm} \bar\ve^{\pi}_{[k]}- (\sum\limits_{k=1}^{2pm}\vert\bar\alpha^{\pi}_{[k]}\vert+2pmd)H_2+2pm+1\right)^{1/2}}\\& \times\frac{\vert u-\bar u\vert^{\frac{1}{2}(H_2-\frac{1}{2})\sum_{j=1}^{2p\ell} \bar\ve^{\pi}_{[2pm+j]}-(\frac{1}{2}\sum_{j=1}^{2p\ell}\vert\bar\alpha^{\pi}_{[2pm+j]}\vert+p\ell d)H_2+p\ell}}{\Gamma\left((H_2-\frac{1}{2})\sum\limits_{j=1}^{2p\ell} \bar\ve^{\pi}_{[2pm+j]}- (\sum\limits_{j=1}^{2p\ell}\vert\bar\alpha^{\pi}_{[2pm+j]}\vert+2p\ell d)H_2+2p\ell+1\right)^{1/2}}\Big\}.
	\end{align*}
	where $\bar\ve_j=\ve_{j-i(m+\ell)}$, $\bar\alpha_j=\alpha_{j-i(m+\ell)}$  for $i=0,1,\ldots,p-1$, $j=1+i(m+\ell),\ldots,(i+1)(m+\ell)$,  $\bar\ve_{[j]}=\ve_{j-ip(m+\ell)}$, $\bar\alpha_{[j]}=\alpha_{j-ip(m+\ell)}$ for $i=0,1$, $j=1+ip(m+\ell),\ldots,(i+1)p(m+\ell)$, 
	$\bar\ve^{\pi}_{[j]}=\ve^{}_{[\pi^{-1}(j)]}$,  $\bar\alpha^{\pi}_{[j]}=\alpha^{}_{[\pi^{-1}(j)]}$ for all $j\in\{0,1,2,\ldots,2p(m+\ell)\}$.		
\end{prop}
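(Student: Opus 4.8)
The plan is to follow the strategy of Proposition~\ref{RegulariseProp1}, reducing the $p$-th moment to a second-moment estimate for a $\Lambda$-functional and then evaluating the resulting deterministic integrals with Lemma~\ref{ExistEstLem2}; the new feature is that the integration domain is no longer a product of plain simplices, and the asymmetry between the $s$- and $t$-variables must be tracked carefully. First I would observe that within a single copy the constraints defining $\Xi^{m,\ell}_{\bar r,r,\bar s}$ force $\bar r<s_{m+\ell}<\cdots<s_1<\bar s$, so $\Xi^{m,\ell}_{\bar r,r,\bar s}\subset\nabla^{m+\ell}_{\bar r,\bar s}$ (the point $r$ only imposes the extra constraint $s_m>r$, which may be relaxed when bounding), whereas $\Delta^{m,\ell}_{\bar u,u,\bar t}$ splits the $t$-variables into $t_1,\dots,t_m\in(u,\bar t)$ and $t_{m+1},\dots,t_{m+\ell}\in(\bar u,u)$, two disjoint intervals separated by $u$. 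Raising the integral to the power $p$ and reindexing as in Proposition~\ref{RegulariseProp1} (with $g=1$, $\bar f_j=f_{j-i(m+\ell)}$, $\bar\alpha_j=\alpha_{j-i(m+\ell)}$, $\bar\ve_j=\ve_{j-i(m+\ell)}$) identifies it with $\Lambda^{\bar\kappa\bar f,1}_{\bar\alpha,p,m+\ell}(\bar r,\bar s,\bar u,\bar t,z)$, and Cauchy--Schwarz together with \eqref{IntPartFormEst1} gives $|\E[\Lambda]|\le\E[\Lambda^2]^{1/2}\le C^{(p(m+\ell)d+|\bar\alpha|)/2}(\Psi^{\bar\kappa,1}_{\bar\alpha,p,m+\ell})^{1/2}$. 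The point to stress is that the pointwise Fourier--Gaussian computation \eqref{IntPartChangeVar} and the sectorial local nondeterminism bound \eqref{SectLND} used in the proof of Theorem~\ref{RegulariseTheo} hold for any ordered configuration, so that step transfers verbatim to the present domains.

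Next I would treat the $s$- and $t$-integrations separately, exploiting their different geometry. For the $s$-variables, after squaring we have $2p(m+\ell)$ points which, by the shuffle partition of Lemma~\ref{lemma:Shuffle} and after discarding the constraints $s_m>r$, fill the single simplex $\nabla^{2p(m+\ell)}_{\bar r,\bar s}$; the kernel factors are now $K_H(s_j,t_j,\bar r,\bar u)^{\bar\ve_{[j]}}$ evaluated at the corner, so the kernel bound produces the boundary term $\bar r^{H_1-1/2}$ and a power of $(\bar s-\bar r)$, and Lemma~\ref{ExistEstLem2} collapses the whole $s$-chain into a \emph{single} $H_1$-Gamma factor involving $\sum_{j=1}^{m+\ell}\ve_j$, $|\alpha|$ and $(m+\ell)d$. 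This is precisely why no residual $\sigma$-sum survives in the $s$-direction. For the $t$-variables the split at $u$ is preserved: the upper block of $2pm$ points lies in $(u,\bar t)$ and the lower block of $2p\ell$ points in $(\bar u,u)$, and since these intervals are disjoint the two blocks shuffle and integrate independently. Applying Lemma~\ref{ExistEstLem2} to each block yields the two $H_2$-Gamma factors, one carrying $(\bar t-u)$ and the other $|u-\bar u|$, while the interleaving of the original indices within each block is exactly recorded by a shuffle $\pi\in\widehat{\mathcal P}_{m+\ell\mid 2p}$; the partition of $\{1,\dots,2p(m+\ell)\}$ into the first $2pm$ and last $2p\ell$ positions determines which $\bar\ve^{\pi}_{[\cdot]}$ and $\bar\alpha^{\pi}_{[\cdot]}$ feed each factor, producing the surviving sum $\sum_{\pi}$. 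The hypothesis \eqref{RegulariseCond3} is used precisely to guarantee that every exponent $(H_i-\tfrac12)\bar\ve_{[j]}-(|\bar\alpha_{[j]}|+d)H_i$ exceeds $-1$, so that all the Beta--Gamma integrals in Lemma~\ref{ExistEstLem2} converge.

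Finally I would assemble the pieces using the identities $\sum_{j}\bar\ve_{[j]}=2p\sum_{j=1}^{m+\ell}\ve_j$, $|\bar\alpha|=p|\alpha|$ and $|\bar\alpha^{(\ell)}|=p|\alpha^{(\ell)}|$, bound the product of Gamma-function values arising from Lemma~\ref{ExistEstLem2} by a constant of the form $C^{p(m+\ell)}$ as in Proposition~\ref{RegulariseProp1}, and extract the factorial prefactor $(\prod_{l=1}^d(2p|\alpha^{(l)}|)!)^{1/4}$ from $\E[\Lambda^2]^{1/2}$. Collecting the $H_1$-factor, the two $H_2$-factors and the boundary terms then reproduces the claimed estimate. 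I expect the main obstacle to be bookkeeping rather than analysis: one must verify that the shuffle decomposition genuinely reconstitutes a single simplex in the $s$-direction (so that the $\sigma$-sum is absorbed) while respecting the $u$-split in the $t$-direction (so that the $\pi$-sum and the upper/lower index partition are retained), and that the split of $|\alpha|$ and of the $\ve_j$'s between the $(\bar t-u)$- and $|u-\bar u|$-blocks is exactly the one dictated by $\pi$. Once this combinatorial matching is in place, the analytic content is identical to that of Proposition~\ref{RegulariseProp1}.
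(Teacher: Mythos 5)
Your proposal follows essentially the same route as the paper: reduce the $p$-th power to a $\Lambda$-functional, invoke Theorem \ref{RegulariseTheo}, relax the constraint $s_m>r$ on the $s$-side while preserving the split at $u$ on the $t$-side, and evaluate the two $t$-blocks separately (the paper packages your blockwise application of the Lemma \ref{ExistEstLem2} machinery into Lemma \ref{ExistEstLem4}, which also handles the junction factor $\vert t_{2pm}-t_{2pm+1}\vert^{w_{2pm}}$ via $t_{2pm}-t_{2pm+1}\geq t_{2pm}-u$). The only correction needed is that the identification cannot be with $g=1$: since the integrand is not of definite sign you must take $g=\mathbf{1}_{(\Xi_{\bar r,r,\bar s}^{m,\ell})^p\times(\Delta_{\bar u,u,\bar t}^{m,\ell})^p}$ in $\Lambda^{\bar\kappa\bar f,g}_{\bar\alpha,p,m+\ell}$ and only discard the domain restrictions later, at the level of the everywhere-nonnegative $\Psi$-integrals — which is exactly where your relaxation/preservation argument already lives.
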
 
\begin{proof}
	We first observe that
	\begin{align*}
		&\Big(\int_{\Xi^{m,\ell}_{\bar r,r,\bar s}}\int_{\Delta^{m,\ell}_{\bar u, u, \bar t}}\prod\limits_{j=1}^{m+\ell}D^{\alpha_j}f_j(s_j,t_j,W^H_{s_j,t_j})\kappa_j(s_j,t_j) \mathrm{d}t_{m+\ell} \ldots\mathrm{d}t_1\mathrm{d}s_{m+\ell}\ldots\mathrm{d}s_1\Big)^p\\=&  \int_{(\Xi_{\bar r,r,\bar s}^{m,\ell})^p} \int_{(\Delta_{\bar u,u,\bar t}^{m,\ell})^p}\prod\limits_{j=1}^{p(m+\ell)}D^{\bar\alpha_{j}}\bar f_{ j}(s_j,t_j,W^H_{s_j,t_j})\bar\kappa_{ j}(s_j,t_j)\mathrm{d}t_1\ldots\mathrm{d}t_{p(m+\ell)}\\&\qquad\qquad\qquad\times\mathrm{d}s_1\ldots\mathrm{d}s_{p(m+\ell)}\\=&\int_{(\nabla_{\bar r,\bar s}^{m+\ell})^p} \int_{(\nabla_{\bar u,\bar t}^{m+\ell})^p}\mathbf{1}_{(\Xi_{\bar r,r,\bar s}^{m,\ell})^p\times(\Delta_{\bar u,u,\bar t}^{m,\ell})^p}(s,t)\prod\limits_{j=1}^{p(m+\ell)}D^{\bar\alpha_{j}}\bar f_{ j}(s_j,t_j,W^H_{s_j,t_j})\bar\kappa_{ j}(s_j,t_j)\\&\qquad\qquad\qquad\times\mathrm{d}t_1\ldots\mathrm{d}t_{p(m+\ell)}\mathrm{d}s_1\ldots\mathrm{d}s_{p(m+\ell)}\\=&\Lambda^{\bar\kappa\bar f,g}_{\bar\alpha,p,m+\ell}(\bar r,\bar s,\bar u,\bar t,z),
	\end{align*}
	where $s=(s_1,\ldots,s_{p(m+\ell)})$, $t=(t_1,\ldots,t_{p(m+\ell)})$,
	$g=\mathbf{1}_{(\Xi_{\bar r,r,\bar s}^{m+\ell})^p\times(\Delta_{\bar u,u,\bar t}^{m+\ell})^p}$,
	$\bar f_j=f_{j-i(m+\ell)}$, $\bar \alpha_j=\alpha_{j-i(m+\ell)}$, $\bar \ve_j=\ve_{j-i(m+\ell)}$ for every $i\in\{0,1,\ldots,p-1\}$,  $j\in\{1+i(m+\ell),\ldots,(1+i)(m+\ell)\}$ and
	\begin{align*}
	\bar\kappa_j(s,t)= K_H(s_j,t_j,\bar r,\bar u)^{\bar\ve_j},\quad\forall\,j\in\{1,2,\ldots,p(m+\ell)\}.
	\end{align*}
	It follows from Theorem \ref{RegulariseTheo} that
	\begin{align*}
		\vert\E[\Lambda^{\bar\kappa\bar f,g}_{\bar\alpha,p,m+\ell}(\bar r,\bar s,\bar u,\bar t,z)]\vert\leq&\E\left[\vert\Lambda^{\bar\kappa\bar f,g}_{\bar\alpha,p,m+\ell}(\bar r,\bar s,\bar u,\bar t,z)\vert^2\right]\\\leq& C_1^{p(m+\ell)/2}(\Psi_{\bar\alpha,p,m+\ell}^{\bar\kappa,g}(\bar r,\bar s,\bar u, \bar t))^{1/2}\prod\limits_{j=1}^{m+\ell}\Vert f_j\Vert^p_{L^1_{\infty}},
	\end{align*}
	where
	\begin{align*}
		&\Psi_{\bar\alpha,p,m+\ell}^{\bar\kappa,g}(\bar r,\bar s,\bar u, \bar t)\\	
		:=&\Big(\prod\limits_{ l=1}^d\sqrt{(2\vert\bar\alpha^{( l)}\vert)!}\Big)\sum\limits_{\sigma,\pi\in\widehat{\mathcal{P}}_{m+\ell\vert2p}} \int_{\nabla_{\bar r,\bar s}^{2p(m+\ell)} }\int_{\nabla_{\bar u,\bar t}^{2p(m+\ell)}} \widetilde g_{\sigma,\pi}(s,t)\prod\limits_{j=1}^{2p(m+\ell)}\vert \bar\kappa_{[j]}(s_{\sigma(j)},t_{\pi(j)})\vert\\&\qquad\times\frac{\mathrm{d}t_1\ldots\mathrm{d}t_{2p(m+\ell)}\mathrm{d}s_{1}\ldots\mathrm{d}s_{2p(m+\ell)}}{\prod\limits_{j=1}^{2p(m+\ell)}\vert s_{j}-s_{j+1}\vert^{H_1(\vert\alpha^{\sigma}_{([j])}\vert+d)}\vert t_{j}-t_{j+1}\vert^{ H_2(\vert\alpha^{\pi}_{([j])}\vert+d)}},
	\end{align*}
$\alpha^{\sigma}_{[j]}=\alpha_{[\sigma^{-1}(j)]}$, $\alpha^{\pi}_{[j]}=\alpha_{[\pi^{-1}(j)]}$ and
\begin{align*}
\widetilde g_{\sigma,\pi}(s,t)=&g(s_{\sigma(1)},\ldots,s_{\sigma(p(m+\ell))},t_{\pi(1)},\ldots,s_{\pi(p(m+\ell))})\\&\times g(s_{\sigma(p(m+\ell)+1)},\ldots,s_{\sigma(2p(m+\ell))},t_{\pi(p(m+\ell)+1)},\ldots,s_{\pi(2p(m+\ell))}).
\end{align*}	
We define
	\begin{align*}
	\Delta^{2pm,2p\ell,\pi}_{\bar u,u,\bar t}=\{(t_1,t_2,\ldots,t_{2p(m+\ell)}):\,(t_{\pi^{-1}(1)},t_{\pi^{-1}(2)},\ldots,t_{\pi^{-1}(2p(m+\ell))})\in\Delta^{2pm,2p\ell}_{\bar u,u,\bar t}\}.
	\end{align*}
	Since 
	\begin{align*}
	(\Xi_{\bar r,r,\bar s}^{m,\ell})^{2p}\cap\nabla^{2p(m+\ell),\sigma}_{\bar r,\bar s}\subset(\nabla_{\bar r,\bar s}^{m+\ell})^{2p}\cap\nabla^{2p(m+\ell),\sigma}_{\bar r,\bar s}=\nabla^{2p(m+\ell),\sigma}_{\bar r,\bar s}
	\end{align*}
	and
	\begin{align*}
	(\Delta^{m,\ell}_{\bar u,u,\bar t})^{2p}\cap\nabla^{2p(m+\ell),\pi}_{\bar u,\bar t}\subset\Delta^{2pm,2p\ell,\pi}_{\bar u,u,\bar t}
	\end{align*}
	for every $\sigma,\pi\in\widehat{\mathcal{P}}_{m+\ell\vert2p}$, we have
	\begin{align*}
	&\Psi_{\bar\alpha,p,m+\ell}^{\bar\kappa,g}(\bar r,\bar s,\bar u, \bar t)\\	
	\leq&\Big(\prod\limits_{ l=1}^d\sqrt{(2\vert\bar\alpha^{( l)}\vert)!}\Big)\sum\limits_{\sigma,\pi\in\widehat{\mathcal{P}}_{m+\ell\vert2p}} \int_{\nabla_{\bar r,\bar s}^{2p(m+\ell)} }\int_{\Delta_{\bar u,u,\bar t}^{2pm,2p\ell}}  \prod\limits_{j=1}^{2p(m+\ell)}\vert \bar\kappa_{[j]}(s_{\sigma(j)},t_{\pi(j)})\vert\\&\qquad\times\frac{\mathrm{d}t_1\ldots\mathrm{d}t_{2p(m+\ell)}\mathrm{d}s_{1}\ldots\mathrm{d}s_{2p(m+\ell)}}{\prod\limits_{j=1}^{2p(m+\ell)}\vert s_{j}-s_{j+1}\vert^{H_1(\vert\alpha^{\sigma}_{([j])}\vert+d)}\vert t_{j}-t_{j+1}\vert^{ H_2(\vert\alpha^{\pi}_{([j])}\vert+d)}}.
	\end{align*}
	Since
	\begin{align*}
		(H_1-\frac{1}{2})\ve_{([j])}-(\vert\alpha_{([j])}\vert+d)H_1>-1,\text{ }	(H_2-\frac{1}{2})\ve_{([j])}-(\vert\alpha_{([j])}\vert+d)H_2>-1,
	\end{align*}
	 we deduce from Lemma \ref{ExistEstLem4} that
	\begin{align*}
		&\Psi_{\bar\alpha,p,m+\ell}^{\bar\kappa,g}(\bar r,\bar s,\bar u,\bar t)\\	
		\leq&\Big(\prod\limits_{l=1}^d\sqrt{(2\vert\widetilde\alpha^{(l)}\vert)!}\Big)C^{m+\ell}_{2}\Pi_{0,2p(m+\ell)}(H_1)(\bar r^{H_1-\frac{1}{2}}\bar u^{H_2-\frac{1}{2}})^{2p\sum_{j=1}^{m+\ell}\ve_j}\\&\notag\times(\bar s-\bar r)^{2p(H_1-\frac{1}{2})\sum_{j=1}^{m+\ell}\ve_j-2(\vert\bar\alpha\vert+p(m+\ell)d)H_1+2p(m+\ell)}\\&\notag\times\sum\limits_{\pi\in\widehat{\mathcal{P}}_{m+\ell\vert 2p}}\widetilde\Pi_{p,m,\ell}(\pi,H_2)\Big\{(t- u)^{(H_2-\frac{1}{2})\sum_{k=1}^{2pm}\bar\ve^{\pi}_{[k]}-2(\sum_{k=1}^{2m}\vert\bar\alpha^{\pi}_{[k]}\vert+pmd)H_2+2pm} \\&\notag\quad\times\vert u-\bar u\vert^{(H_2-\frac{1}{2})\sum_{j=1}^{2p\ell}\bar\ve^{\pi}_{[2pm+j]}-2(\sum_{j=1}^{2p\ell}\vert\bar\alpha^{\pi}_{[2pm+j]}\vert+p\ell d)H_2+2p\ell}\Big\},
	\end{align*} 
	where $\bar\ve^{\pi}_{[j]}=\bar\ve_{[\pi^{-1}(j)]}$, $\bar\alpha^{\pi}_{[j]}=\bar\alpha_{[\pi^{-1}(j)]}$ for all $j\in\{1,2,\ldots,2p(m+\ell)\}$,
	\begin{align*}
		&\Pi_{0,2p(m+\ell)}(H_1)\\:=&\frac{\prod\limits_{j=2}^{2p(m+\ell)}\Gamma\left(1-(\vert\bar\alpha_{[j]}\vert+d)H_1\right)  \prod\limits_{j=1}^{2p(m+\ell)}\Gamma\left((H_1-\frac{1}{2}) \bar\ve_{[j]}-(\vert\bar\alpha_{[j]}\vert+d)H_1+1  \right)}{\Gamma\left(2p(H_1-\frac{1}{2})\sum\limits_{j=1}^{m+\ell} \ve_{j}- 2(\vert\bar\alpha\vert+p(m+\ell)d)H_1+2p(m+\ell)+1\right) }   
	\end{align*}
	and
	\begin{align*}
		&\widetilde\Pi_{p,m,\ell}(\pi,H_2)\\=&\frac{\prod\limits_{j=2}^{2p(m+\ell)}\Gamma\left(1-(\vert\bar\alpha_{[j]}\vert+d)H_2\right) }{\Gamma\left((H_2-\frac{1}{2})\sum\limits_{k=1}^{2pm} \bar\ve^{\pi}_{[k]}- (\sum\limits_{k=1}^{2pm} \vert\bar\alpha^{\pi}_{[k]}\vert+2pmd)H_2+2pm+1)\right)}\\&\times\frac{\prod\limits_{j=1}^{2p(m+\ell)}\Gamma\left((H_2-\frac{1}{2}) \bar\ve_{[j]}-(\vert\bar\alpha_{[j]}\vert+d)H_2+1  \right)}{\Gamma\left((H_2-\frac{1}{2})\sum\limits_{j=1}^{2p\ell} \bar\ve^{\pi}_{[2pm+j]}- (\sum\limits_{j=1}^{2p\ell}\vert\bar\alpha^{\pi}_{[2pm+j]}\vert+2p\ell d)H_2+2p\ell+1\right)}.
	\end{align*}
	Observe that there exists a finite constant $C_3$ such that
	\begin{align}\label{RegularFinalEst}
		C_3^{2p(m+\ell)}\geq&\prod\limits_{j=2}^{2p(m+\ell)}\Gamma\left(1-(\vert\bar\alpha_{[j]}\vert+d)H_1\right) \prod\limits_{j=2}^{2p(m+\ell)}\Gamma\left(1-(\vert\bar\alpha_{[j]}\vert+d)H_2\right) \\&\notag\times\prod\limits_{j=1}^{2p(m+\ell)}\Gamma\left((H_1-\frac{1}{2}) \bar\ve_{[j]}-(\vert\bar\alpha_{[j]}\vert+d)H_1+1  \right) \\&\notag\times\prod\limits_{j=1}^{2p(m+\ell)}\Gamma\left((H_2-\frac{1}{2}) \bar\ve_{[j]}-(\vert\bar\alpha_{[j]}\vert+d)H_2+1  \right).
	\end{align}
	Then, using \eqref{RegularFinalEst}, $\vert\bar\alpha^{(l)}\vert=p\vert\alpha^{(l)}\vert$, $\bar{\alpha}^{(l)}_j=\alpha^{(l)}_{(j)}$ for every $l\in\{1,\ldots,d\}$, $j\in\{1,\ldots,p(m+\ell)\}$ and $\sqrt{a_1+\ldots+a_q}\leq\sqrt{a_1}+\ldots+\sqrt{a_q}$ for arbitrary non-negative numbers $a_1,\,\ldots,\,a_q$ we obtain
	\begin{align*}
		&(\Psi_{\bar\alpha,p,m+\ell}^{\bar\kappa,g}(\bar r,\bar u,\bar s,\bar t,z))^{1/2}\\	
		\leq&C^{(m+\ell)/2}_{2}C_3^{p(m+\ell)}\Big(\prod\limits_{l=1}^d(2p\vert\alpha^{(l)}\vert)!\Big)^{1/4} (\bar r^{H_1-\frac{1}{2}}\bar u^{H_2-\frac{1}{2}})^{p\sum_{j=1}^{m+\ell}\ve_j}\\&\notag\times\frac{(\bar s-\bar r)^{p(H_1-\frac{1}{2})\sum_{j=1}^{m+\ell}\ve_j-p(\vert\alpha\vert+(m+\ell)d)H_1+p(m+\ell)}}{\Gamma\left(2p(H_1-\frac{1}{2})\sum\limits_{j=1}^{m+\ell} \ve_{j}- 2p(\vert\alpha\vert+(m+\ell)d)H_1+2p(m+\ell)+1\right)^{1/2}}\\&\times\sum\limits_{\pi\in\widehat{\mathcal{P}}_{m+\ell\vert 2p}}\Big\{\frac{(\bar t- u)^{\frac{1}{2}(H_2-\frac{1}{2})\sum_{k=1}^{2pm}\bar\ve^{\pi}_{[k]}-(\frac{1}{2}\sum_{k=1}^{2pm} \vert\bar\alpha^{\pi}_{[k]}\vert+pmd)H_2+pm}}{\Gamma\left((H_2-\frac{1}{2})\sum\limits_{k=1}^{2pm} \ve^{\bar\pi}_{[k]}- (\sum\limits_{k=1}^{2pm} \vert\alpha^{\bar\pi}_{[k]}\vert+2pmd)H_2+2pm+1)\right)^{1/2}}\\&\times\frac{\vert u-\bar u\vert^{\frac{1}{2}(H_2-\frac{1}{2})\sum_{j=1}^{2p\ell}\bar\ve^{\pi}_{[2pm+j]}-(\frac{1}{2}\sum_{j=1}^{2p\ell} \vert\bar\alpha^{\pi}_{[2pm+j]}\vert+p\ell d)H_2+p\ell}}{\Gamma\left((H_2-\frac{1}{2})\sum\limits_{j=1}^{2p\ell} \bar\ve^{\pi}_{[2pm+j]}- (\sum\limits_{j=1}^{2p\ell} \vert\bar\alpha^{\pi}_{[2pm+j]}\vert+2p\ell d)H_2+2p\ell+1\right)^{1/2}}\Big\}.
	\end{align*}
	This ends the proof.
\end{proof}

\section{Existence and uniqueness of strong solutions}
{The purpose of this section is to prove existence and uniqueness for a time-inhomogeneous SDE on the plane with additive $d$-dimensional fractional Brownian sheet $W^H$ with Hurst parameter $H=(H_1,H_2)\in(0,1/2)^2$. More precisely, we consider the followind SDE}
\begin{align}\label{eqmainProbS4}
	X_{s,t}=x+\int_0^s\int_0^tb(s_1,t_1,X_{s_1,t_1})\,\mathrm{d}t_1\mathrm{d}s_1+W^H_{s,t},\quad(s,t)\in[0,T]^2, 
\end{align}
where $x\in\R^d$,  {$b:\,[0,T]^2\times\R^d\to\R^d$ belongs to $L^1_{\infty} $}.  
Here is the main result of the current section.  {It also constitutes the main result of this paper.} 
{
	\begin{thm}\label{TheoEqmainPbS4}
		Let $b\in L^1_{\infty}$ and suppose $H_1+H_2<\frac{1}{2(d+1)}$, $d\geq1$. Then there exists a unique strong solution $X^x=(X^x_{s,t},\,(s,t)\in[0,T]^2)$ of \eqref{eqmainProbS4}. Moreover, for any $\esp>0$, $\esp\leq r<s\leq T$ and $\esp\leq u<t\leq T$, $X^x_{s,t}$ is Malliavin differentiable in the direction of $W_{r,u}$, where $W$ is the $d$-dimensional Brownian sheet in \eqref{Bsheet}. 
	\end{thm}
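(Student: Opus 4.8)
The plan is to follow the four-step strategy outlined in the introduction, using the regularising estimates of Section~\ref{secbasest} as the analytic engine.

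\emph{Step 1 (weak existence and uniqueness in law).} First I would construct a weak solution by a Girsanov change of measure. On a space carrying a fractional Brownian sheet $W^H$ under $\Pb$, set $B_{s,t}:=x+W^H_{s,t}$ and apply Theorem~\ref{GirsanovTheo} with the shift $\theta_{u,v}=b(u,v,B_{u,v})$. Condition (i) holds because $b\in L^1_{\infty}$ makes the integrated drift regular enough to lie in the image space $I^{H+1/2,H+1/2}(L^2)$, and the Novikov-type condition (ii), $\E[\mathcal Z_{T,T}]=1$, is verified by expanding the stochastic exponential and bounding its moments through Theorem~\ref{RegulariseTheo} and Proposition~\ref{RegulariseProp2} (this is where merely $L^1$-integrability in the space variable suffices). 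Under the new measure $\widetilde\Pb$ the shifted sheet $\widetilde W^H$ is a fractional Brownian sheet and $B$ solves \eqref{eqmainProbS4} driven by $\widetilde W^H$, giving a weak solution. Joint uniqueness in law then follows by running the same Girsanov transform in reverse on an arbitrary weak solution, using that the law of the driving noise together with the Radon--Nikodym density determines the joint law.

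\emph{Step 2 (approximation and uniform Malliavin bounds).} I would mollify $b$ by smooth compactly supported $b_n\to b$ in $L^1_{\infty}$. For each $n$, classical theory yields a unique strong, $(\mathcal F^H_{s,t})$-adapted and Malliavin differentiable solution $X^n$. The Malliavin derivative $D_{r,u}X^n_{s,t}$ satisfies a linear equation whose Picard expansion produces a series of iterated space-time integrals of products $D^{\alpha}b_n(\cdot,X^n)$ weighted by powers of the kernel $K_H$. Each such term is exactly of the form controlled by the integration-by-parts formula \eqref{IntPartFormEst3} and the regularising estimates in Propositions~\ref{RegulariseProp1}--\ref{RegulariseProp3}, with $\kappa_j$ equal to the relevant $K_H$-factors and $\ve_j\in\{0,1\}$. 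Summing these bounds, the factorials $\sqrt{(2p|\alpha^{(i)}|)!}$ in the numerators are dominated by the Gamma functions with arguments of order $pm$ in the denominators precisely when $H_1+H_2<\frac{1}{2(d+1)}$, so the series converges and yields
\begin{align*}
	\sup_n\E\big[|X^n_{s,t}|^2\big]<\infty,\qquad \sup_n\E\big[\|D_{\cdot,\cdot}X^n_{s,t}\|^2\big]<\infty,
\end{align*}
together with the corresponding modulus-of-continuity estimate for the Malliavin derivatives.

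\emph{Step 3 (compactness, identification, strong solution).} The uniform bounds of Step~2 are exactly the hypotheses of the compactness criterion of Da Prato, Malliavin and Nualart (as used in \cite{BNP18,BDMP22}), so $\{X^n_{s,t}\}_n$ is relatively compact in $L^2(\Omega)$. Combining this with the weak convergence of the laws coming from $b_n\to b$ and the convergence of the associated Girsanov densities, a subsequence of $X^n_{s,t}$ converges in $L^2(\Omega)$ to the weak solution $X^x_{s,t}$ of Step~1. Since each $X^n_{s,t}$ is $\mathcal F^H_{s,t}$-measurable and $L^2$-limits preserve measurability, $X^x_{s,t}$ is $\mathcal F^H_{s,t}$-measurable for every $(s,t)\in(0,T]^2$; as $X^x_{s,0}=x=X^x_{0,t}$ is trivially adapted, $X^x$ is a strong solution. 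Finally, the uniform $\mathbb{D}^{1,2}$-bound and the closedness of the Malliavin derivative operator give $X^x_{s,t}\in\mathbb{D}^{1,2}$ with $D_{r,u}X^n_{s,t}\rightharpoonup D_{r,u}X^x_{s,t}$, proving Malliavin differentiability in the direction of $W_{r,u}$.

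\emph{Step 4 (pathwise uniqueness).} Having produced a strong solution and joint uniqueness in law, I would invoke the dual (converse) Yamada--Watanabe principle to conclude pathwise uniqueness, hence uniqueness of the strong solution. The main obstacle is Step~2: organising the expansion of $D_{r,u}X^n_{s,t}$ so that every term fits the template of Propositions~\ref{RegulariseProp1}--\ref{RegulariseProp3}, and then checking that the resulting series of Gamma-function bounds converges uniformly in $n$. This convergence is delicate because of the factorial growth of the derivative moments and the two-parameter, sectorial structure (the shuffle permutations and the plane ordering), and it is precisely here that the sharp Hurst condition $H_1+H_2<\frac{1}{2(d+1)}$ is forced.
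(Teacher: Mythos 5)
Your overall architecture matches the paper's (Girsanov weak solution, mollification, Malliavin compactness criterion, dual Yamada--Watanabe), but there is a genuine gap in your Step 3, and you misplace where the hypothesis $H_1+H_2<\frac{1}{2(d+1)}$ actually enters.

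The gap: relative compactness combined with the weak convergence of Lemma \ref{LemWeakConverge} identifies the strong $L^2$-limit of (a subsequence of) $X^{x,n}_{s,t}$ as the \emph{conditional expectation} $\E[X^x_{s,t}\vert\mathcal{F}^H_{s,t}]$, not as $X^x_{s,t}$ itself. Since every $X^{x,n}_{s,t}$ is $\mathcal{F}^H_{s,t}$-measurable, any $L^2$-limit of the sequence is automatically $\mathcal{F}^H_{s,t}$-measurable; asserting that this limit equals the a priori non-adapted weak solution $X^x_{s,t}$ is therefore exactly the statement you are trying to prove, and ``convergence of the associated Girsanov densities'' does not deliver it. The missing argument is the paper's fourth step: for every Lipschitz $\varphi$ one has, along a subsequence, $\varphi(X^{x,k_n}_{s,t})\to\varphi(\E[X^x_{s,t}\vert\mathcal{F}^H_{s,t}])$ a.s.\ (from the strong convergence) and simultaneously $\varphi(X^{x,k_n}_{s,t})\rightharpoonup\E[\varphi(X^x_{s,t})\vert\mathcal{F}^H_{s,t}]$ weakly (Lemma \ref{LemWeakConverge} applied to $\varphi$), whence $\varphi(\E[X^x_{s,t}\vert\mathcal{F}^H_{s,t}])=\E[\varphi(X^x_{s,t})\vert\mathcal{F}^H_{s,t}]$ for all such $\varphi$; only this family of identities forces $X^x_{s,t}$ to be $\mathcal{F}^H_{s,t}$-measurable. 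Without it your Step 3 is circular.

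Second, you claim the constraint $H_1+H_2<\frac{1}{2(d+1)}$ is ``forced'' by the convergence of the Malliavin-derivative series in Step 2. In the paper those series (Theorem \ref{RelCompacTheo}, via Propositions \ref{RegulariseProp1}--\ref{RegulariseProp3}) converge already under the weaker condition $\max\{H_1,H_2\}<\frac{1}{2(d+1)}$. The sum condition is forced in Step 1, in the verification of the Novikov-type condition for merely integrable $b$ (Lemma \ref{GirsanovLemEst}): the $2m$-th moment of $K_H^{-1}\big(\int\int\vert b\vert\big)$ is bounded by a constant times $(m!)^{2(H_1+H_2)(1+d)}$, and summability of the exponential series requires $2(H_1+H_2)(1+d)<1$. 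This is precisely why the paper's concluding remark can relax the hypothesis to $\max\{H_1,H_2\}<\frac{1}{2(d+1)}$ when $b$ is additionally bounded. Relatedly, that Novikov bound is not obtained from Theorem \ref{RegulariseTheo} or Proposition \ref{RegulariseProp2} (which are tailored to products of derivatives $D^{\alpha_j}f_j$ and rest on sectorial local nondeterminism), but from a direct Gaussian density computation using the Oppenheim-type determinant inequality (Lemma \ref{LemOPXZ02}) and the strong local nondeterminism of the one-parameter fractional Brownian motion.
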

}
The proof of the above result relies on the following steps:
\begin{enumerate}
	\item We start with the construction of a weak solution $X^x$ to \eqref{eqmainProbS4} using Girsanov's theorem. This solution is not adapted to the filtration generated by $W^H$ in general.  {Then we show that joint uniqueness in law holds for \eqref{eqmainProbS4}.}
	\item Next, we consider a sequence $(b_n)_{n\in\N}$ of functions in $C^{\infty}_c([0,T]^2\times\R^d,\R^d)$  {(the space of smooth functions with compact support)} such that $b_n(s,t,y)$ converges to $b(s,t,y)$ for a.e. $(s,t,y)\in[0,T]^2\times\R^d$ with $\sup_{n\geq0}\Vert b_n\Vert_{L^1_{\infty}}<\infty$.  By standard results, one can show that for each smooth coefficient $b_n$, $n\in\N$, there exists a unique strong solution $X^{x,n}$ to the SDE
	\begin{align}\label{eqmainProbS4Xn}
		X_{s,t}=x+\int_0^s\int_0^tb_n(s_1,t_1,X_{s_1,t_1})\,\mathrm{d}t_1\mathrm{d}s_1+W^H_{s,t},\quad(s,t)\in[0,T]^2.
	\end{align}
	{We show } that for each $(s,t)\in[0,T]^2$, the sequence  {$X_{s,t}^{x,n}$} converges weakly to the conditional expectation $\E[X^x_{s,t}\vert\mathcal{F}_{s,t}]$ in the space $L^2(\Omega,\mathcal{F}_{s,t})$ of square integrable $\mathcal{F}_{s,t}$-measurable random variables.
	\item  {It also holds} that for each $n\in\N$ the strong solution $(X^{x,n}_{s,t},(s,t)\in[0,T]^2)$ is Malliavin differentiable, and that the Malliavin derivative $(D_{r,u}X^{x,n}_{s,t},(r,u)\in[0,s]\times[0,t])$ with respect to $W$ in \eqref{Bsheet} satisfies
	\begin{align*}
		D_{r,u}X^{x,n}_{s,t}=K_H(s,t,r,u)I_d+\int_r^s\int_u^t\,b_n^{\prime}(s_1,t_1,X^{x,n}_{s_1,t_1})D_{r,u}X^{x,n}_{s_1,t_1}\,\mathrm{d}t_1\mathrm{d}s_1,
	\end{align*}
	where $b_n^{\prime}$ is the Jacobian of $b_n$ and $I_d$ the identity matrix in $\R^{d\times d}$. In the third step we apply a compactness criterion based on Malliavin calculus to show that for every $(s,t)\in(0,T]^2$ the set of random variables $\{X^{x,n}_{s,t},\,n\in\N\}$ is relatively compact in $L^2(\Omega)$, which then implies that $(X^{x,n}_{s,t},\,n\in\N)$ converges strongly in $L^2(\Omega,\mathcal{F}_{s,t})$ to $\E[X^x_{s,t}\vert\mathcal{F}_{s,t}]$. Moreover $\E[X^x_{s,t}\vert\mathcal{F}_{s,t}]$ is Malliavin differentiable as a consequence of the compactness criterion.
	\item In the fourth step we  {prove} that $\E[X^x_{s,t}\vert\mathcal{F}_{s,t}]=X^x_{s,t}$, which entails that $X^x_{s,t}$ is $\mathcal{F}_{s,t}$-measurable and thus a stong solution on a specific probability space.  {The pathwise uniqueness then follows from a dual Yamada-Watanabe argument.}
\end{enumerate}
The first step of the scheme follows from Lemma  \eqref{GirsanovLemEst} and Theorem \eqref{GirsanovTheo}. Precisely, Let $(\Omega,\widetilde{\mathcal{F}},\widetilde{\Pb})$ be some given probability space and $\widetilde{W}^H$ be a $d$-dimensional fractional Brownian sheet with Hurst parameter $H=(H_1,H_2)\in(0,\frac{1}{2})^2$ defined on this space. Set $X^x_{s,t}:=x+\widetilde W^H_{s,t}$, $(s,t)\in[0,T]^2$, $x\in\R^d$. Define $\ \theta_{s,t}:=(K^{-1}_H(\int_{0}^{\cdot}\int_0^{\cdot}b(s_1,t_1,X_{s_1,t_1})\,\mathrm{d}t_1\mathrm{d}s_1))(s,t)$ and  the Dol\'eans-Dade exponential 
\begin{align*}
	&\mathcal{Z}_{s,t}:=\mathcal{E}(\theta_{\cdot,\cdot})_{s,t}=\exp\Big(\int_0^s\int_0^t\theta_{s_1,t_1}\cdot\mathrm{d}W_{s_1,t_1}-\frac{1}{2}\int_0^s\int_0^t\vert\theta_{s_1,t_1}\vert^2\,\mathrm{d}t_1\mathrm{d}s_1\Big),\\&\quad(s,t)\in[0,T]^2.
\end{align*}

The next lemma shows that the conditions of Theorem \ref{GirsanovTheo} hold. 
\begin{lem}\label{GirsanovLemEst}
	Let $b\in L^1_{\infty} $ and  $(\widetilde{W}_{s,t}^H,(s,t)\in[0,T]^2)$ be a $d$-dimensional fractional Brownian sheet with Hurst index $H=(H_1,H_2)\in(0,\frac{1}{2})^2$ defined on  $(\Omega,\widetilde{\mathcal{F}},\widetilde\Pb)$. Suppose $H_1+H_2<\frac{1}{2(1+d)}$. Then for every $\mu\in\R$, we have
	\begin{align}\label{EqNovikov}	
		&\widetilde\E\Big[\exp\Big\{\mu\int_0^T\int_0^T\Big\vert K^{-1}_H\Big(\int_0^{\cdot}\int_0^{\cdot}\vert b(s_1,t_1,\widetilde{W}_{s_1,t_1}^H)\vert\,\mathrm{d}t_1\mathrm{d}s_1\Big)(s,t)\Big\vert^2\mathrm{d}t\mathrm{d}s\Big\}\Big]\\\leq& C_{H,d,\mu,T}(\Vert b\Vert_{L^{1}_{\infty}}) \notag
	\end{align}
	for some increasing continuous function $C_{H,d,\mu,T}$ depending only on $H$, $d$, $T$ and $\mu$.
	In particular
	\begin{align*}
		&\widetilde\E\Big[\mathcal{E}\Big(\int_0^T\int_0^T K^{-1}_H\Big(\int_0^{\cdot}\int_0^{\cdot}\vert b(s_1,t_1,\widetilde{W}_{s_1,t_1}^H)\vert\,\mathrm{d}t_1\mathrm{d}s_1\Big)^{\ast}(s,t)\mathrm{d}W_{s,t}\Big)^p\Big]\\\leq& C_{H,d,p,T}(\Vert b\Vert_{L^{1}_{\infty}}),
	\end{align*}
	for any $p>1$ and for some increasing continuous function $C_{H,d,p,T}$, where $W$  denotes a $d$-dimensional Brownian sheet, $\widetilde\E$ is the expectation under $\widetilde\Pb$ and $\ast$ denotes the transposition.
\end{lem}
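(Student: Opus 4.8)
The plan is to reduce the whole statement to the exponential integrability bound \eqref{EqNovikov}, and then to prove \eqref{EqNovikov} by a power-series expansion controlled through the estimates of Section~\ref{secbasest}.

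\textbf{Reduction of the second estimate to \eqref{EqNovikov}.} Set $M:=\int_0^T\int_0^T K_H^{-1}\bigl(\int_0^\cdot\int_0^\cdot\vert b(s_1,t_1,\widetilde W^H_{s_1,t_1})\vert\,\mathrm{d}t_1\mathrm{d}s_1\bigr)^{\ast}(s,t)\,\mathrm{d}W_{s,t}$ and let $\langle M\rangle$ denote the quantity inside the exponential in \eqref{EqNovikov}, so that $\langle M\rangle$ is the quadratic variation of $M$. Using the identity $pM-\tfrac p2\langle M\rangle=\tfrac12\bigl(2pM-2p^2\langle M\rangle\bigr)+(p^2-\tfrac p2)\langle M\rangle$ and the Cauchy--Schwarz inequality, I would obtain
\begin{align*}
\widetilde\E\bigl[\mathcal E(M)^p\bigr]\leq \widetilde\E\bigl[\mathcal E(2pM)\bigr]^{1/2}\,\widetilde\E\bigl[\exp\bigl((2p^2-p)\langle M\rangle\bigr)\bigr]^{1/2}.
\end{align*}
Once \eqref{EqNovikov} holds for every $\mu$, Novikov's criterion applies to $2pM$ (its condition is \eqref{EqNovikov} with $\mu=2p^2$), so the first factor equals $1$, while the second is bounded by \eqref{EqNovikov} with $\mu=2p^2-p$. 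The case $\mu=\tfrac12$ of \eqref{EqNovikov} likewise gives condition (ii) of Theorem~\ref{GirsanovTheo}. Thus everything reduces to \eqref{EqNovikov}.

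\textbf{Power-series expansion and explicit kernel.} For $\mu\le0$ the integrand is nonnegative and the bound is trivial, so assume $\mu>0$ and write $\Theta:=\langle M\rangle\ge0$, so that $\widetilde\E[\exp(\mu\Theta)]=\sum_{n\ge0}\tfrac{\mu^n}{n!}\widetilde\E[\Theta^n]$; it suffices to bound $\widetilde\E[\Theta^n]$ at a rate that beats $n!$. Since $\Phi(s,t):=\int_0^s\int_0^t\vert b(s_1,t_1,\widetilde W^H_{s_1,t_1})\vert\,\mathrm{d}t_1\mathrm{d}s_1$ vanishes on the axes and is absolutely continuous with $\partial_s\partial_t\Phi(s,t)=\vert b(s,t,\widetilde W^H_{s,t})\vert$, representation \eqref{GirsanovTheo3} gives the explicit singular form
\begin{align*}
K_H^{-1}(\Phi)(s,t)=\frac{s^{H_1-\frac12}t^{H_2-\frac12}}{\Gamma(\tfrac12-H_1)\Gamma(\tfrac12-H_2)}\int_0^s\int_0^t (s-u)^{-H_1-\frac12}(t-v)^{-H_2-\frac12}u^{\frac12-H_1}v^{\frac12-H_2}\vert b(u,v,\widetilde W^H_{u,v})\vert\,\mathrm{d}u\,\mathrm{d}v.
\end{align*}

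\textbf{Main estimate.} Expanding the square and the $n$-th power produces a $2n$-fold product of drift evaluations $\vert b(u_i,v_i,\widetilde W^H_{u_i,v_i})\vert$, integrated against the singular kernels and boundary weights above and against the $n$ outer variables $(s_k,t_k)$. After reordering the $2n$ space--time points into the ordered simplices $\nabla^{2n}$ by shuffling (as in the proof of Theorem~\ref{RegulariseTheo}), I would estimate $\widetilde\E\bigl[\prod_{i=1}^{2n}\vert b(u_i,v_i,\widetilde W^H_{u_i,v_i})\vert\bigr]$ exactly as in Theorem~\ref{RegulariseTheo} with $\alpha=0$: conditioning successively and invoking the sectorial local nondeterminism of Theorem~\ref{SLNDPropTheo} bounds each conditional variance from below, hence each conditional Gaussian density from above, producing a factor $\prod_i\Vert b(u_i,v_i,\cdot)\Vert_{L^1}\le\Vert b\Vert_{L^1_{\infty}}^{2n}$ times a product of gap singularities $\vert s_{j}-s_{j+1}\vert^{-H_1 d}\vert t_{j}-t_{j+1}\vert^{-H_2 d}$. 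What remains is a purely deterministic multiple integral whose singularities combine the $K_H^{-1}$ kernel exponents with the SLND exponents; it is evaluated by the same Beta/Gamma computations used in Propositions~\ref{RegulariseProp1}--\ref{RegulariseProp3}, yielding a bound of the schematic form $\widetilde\E[\Theta^n]\le C^{n}T^{\beta n}\big/\Gamma(cn+1)$ with $C,\beta,c>0$ depending only on $H,d,T$. Summing against $\mu^n/n!$ then gives a finite, increasing, continuous function of $\Vert b\Vert_{L^1_{\infty}}$. Since every bound depends on $b$ only through $\Vert b\Vert_{L^1_{\infty}}$, the computation may first be carried out for smooth compactly supported $b$ and then extended by approximation.

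\textbf{The main obstacle} is the bookkeeping in this last deterministic step. The condition $H_1+H_2<\tfrac{1}{2(1+d)}$ enters decisively and in two ways: it is precisely what makes every singularity exponent arising from the $K_H^{-1}$ kernels together with the SLND gaps strictly larger than $-1$, so that the Beta integrals converge, and it guarantees that the factorial growth coming from the number of shuffles is dominated by the $\Gamma(cn+1)$ in the denominator, ensuring the series converges. Matching these two families of exponents and extracting a denominator growing fast enough to counteract the missing $n!$ is the heart of the estimate; once this is done the lemma follows.
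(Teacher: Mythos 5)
Your overall architecture matches the paper's: reduce the exponential-martingale moment bound to \eqref{EqNovikov} by Cauchy--Schwarz and Novikov, expand $\exp(\mu\Theta)$ as a power series, use \eqref{GirsanovTheo3} to write $K_H^{-1}$ as an explicit weakly singular kernel acting on $\vert b(u,v,\widetilde W^H_{u,v})\vert$, bound the expectation of the $2n$-fold product of drift evaluations by a Gaussian density estimate extracting $\Vert b\Vert_{L^1_\infty}^{2n}$ times the inverse square root of a covariance determinant, and finish with Beta/Gamma integrals whose convergence is exactly the condition $H_1+H_2<\frac{1}{2(1+d)}$. However, there is a genuine gap in your key probabilistic input: you propose to lower-bound the conditional variances (hence the covariance determinant) by the \emph{sectorial} local nondeterminism of Theorem \ref{SLNDPropTheo}, ``exactly as in Theorem \ref{RegulariseTheo} with $\alpha=0$''. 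That theorem only applies to points $(s_i,t_i)\in[\esp,T]^2$ and its constant $C_{\esp,H}$ depends on $\esp$ (the determinant bound \eqref{SectLND3} even carries an explicit prefactor $\esp^{2(H_1+H_2)}$ that degenerates as $\esp\to0$). But the integrals in \eqref{EqNovikov} run over all of $[0,T]^2$, and after the rescaling the inner variables range over the full simplex $\nabla^{2n}_{0,1}$ down to the axes, so the sectorial LND estimate is simply not available there. This is precisely why all the regularising estimates of Section \ref{secbasest} carry an $\esp$-cutoff while Lemma \ref{GirsanovLemEst} cannot afford one.

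The paper circumvents this with a different determinant bound: Lemma \ref{LemOPXZ02} (Oppenheim's inequality for the Hadamard product) factors $\det\mathrm{Cov}(\widetilde W^{H,(1)}_{s_i,t_{\sigma(i)}})$ into $\det\mathrm{Cov}(B^{H_1}_{s_i})\cdot\det\mathrm{Cov}(B^{H_2}_{t_{\sigma(i)}})$, and then the \emph{one-parameter} strong local nondeterminism of fractional Brownian motion bounds each factor below by $K(H_i)\,s_{2m}^{2H_i}\prod_j(s_j-s_{j+1})^{2H_i}$, which is valid down to $0$ with constants independent of any cutoff. The resulting deterministic integral is then handled by citing Lemma A.5 of \cite{BOPP22}, giving $\widetilde\E[\Theta^m]\leq C^m(m!)^{2(H_1+H_2)(1+d)}\Vert b\Vert_{L^1_\infty}^{2m}$ (note this \emph{grows} like a fractional power of $m!$ rather than decaying like your schematic $C^n/\Gamma(cn+1)$; convergence comes only after dividing by $m!$ in the exponential series, which is where $H_1+H_2<\frac{1}{2(1+d)}$ enters). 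You also need to track the boundary factors $s^{2m(\frac12-(d+1)H_1)}t^{2m(\frac12-(d+1)H_2)}$ produced by the self-similarity rescaling and the Gaussian density, and check their integrability in $(s,t)$ near the origin. Replacing your sectorial LND step by the Oppenheim factorisation plus one-parameter strong LND would repair the argument; your final approximation remark is unnecessary since every bound depends on $b$ only through $\Vert b\Vert_{L^1_\infty}$.
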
 
\begin{proof} 
	Assume without loss of generality that $b\in L^{\infty}([0,T]^2;L^1(\R^d;\R)).$
	Denote by $\theta_{s,t}:=K^{-1}_H\Big(\int_0^{\cdot}\int_0^{\cdot}\vert b(s_1,t_1,\widetilde{W}_{s_1,t_1}^H)\vert\,\mathrm{d}t_1\mathrm{d}s_1\Big)(s,t)$. We deduce from (2.7) 
	and Fubini's theorem that
	\begin{align*}
		\notag\vert\theta_{s,t}\vert=&\left\vert s^{H_1-\frac{1}{2}}t^{H_2-\frac{1}{2}}I^{\frac{1}{2}-H_1,\frac{1}{2}-H_2}s^{\frac{1}{2}-H_1}t^{\frac{1}{2}-H_2}\vert b(s,t,\widetilde{W}_{s,t}^H)\vert\right\vert\\=\notag&\frac{s^{H_1-\frac{1}{2}}t^{H_2-\frac{1}{2}}}{\Gamma(\frac{1}{2}-H_1)\Gamma(\frac{1}{2}-H_2)}\int_0^s\int_0^t(s-s_1)^{-\frac{1}{2}-H_1}(t-t_1)^{-\frac{1}{2}-H_2}s_1^{\frac{1}{2}-H_1}t_1^{\frac{1}{2}-H_2}
		\\&\notag\qquad\qquad\qquad\qquad\qquad\qquad\qquad\times\vert b(s_1,t_1,\widetilde{W}_{s_1,t_1}^H)\vert\,\mathrm{d}t_1\mathrm{d}s_1\\=\notag&\frac{s^{-\frac{1}{2}-H_1}t^{-\frac{1}{2}-H_2}}{\Gamma(\frac{1}{2}-H_1)\Gamma(\frac{1}{2}-H_2)}\int_0^s\int_0^t(1-\frac{s_1}{s})^{-\frac{1}{2}-H_1}(1-\frac{t_1}{t})^{-\frac{1}{2}-H_2}(\frac{s_1}{s})^{\frac{1}{2}-H_1}\\&\notag\qquad\qquad\qquad\qquad\qquad\qquad\times(\frac{t_1}{t})^{\frac{1}{2}-H_2}\vert b(s_1,t_1,\widetilde{W}_{s_1,t_1}^H)\vert\,\mathrm{d}t_1\mathrm{d}s_1\\= \notag&\frac{s^{-\frac{1}{2}-H_1}t^{-\frac{1}{2}-H_2}}{\Gamma(\frac{1}{2}-H_1)\Gamma(\frac{1}{2}-H_2)}\int_0^s\int_0^t(1-\frac{s_1}{s})^{-\frac{1}{2}-H_1}(1-\frac{t_1}{t})^{-\frac{1}{2}-H_2}(\frac{s_1}{s})^{\frac{1}{2}-H_1}\\&\notag\qquad\qquad\qquad\qquad\qquad\qquad\times(\frac{t_1}{t})^{\frac{1}{2}-H_2}\vert b(s_1,t_1,s^Ht^H\widetilde{W}_{\frac{s_1}{s},\frac{t_1}{t}}^H)\vert\,\mathrm{d}t_1\mathrm{d}s_1\\=\notag&\frac{s^{\frac{1}{2}-H_1}t^{\frac{1}{2}-H_2}}{\Gamma(\frac{1}{2}-H_1)\Gamma(\frac{1}{2}-H_2)}\int_0^1\int_0^1(1-s_1)^{-\frac{1}{2}-H_1}(1-t_1)^{-\frac{1}{2}-H_2}s_1^{\frac{1}{2}-H_1}t_1^{\frac{1}{2}-H_2}\\&\notag\qquad\qquad\qquad\qquad\qquad\qquad\times\vert b(ss_1,tt_1,s^Ht^H\widetilde{W}_{s_1,t_1}^H)\vert\,\mathrm{d}t_1\mathrm{d}s_1\\=\notag&\frac{s^{\frac{1}{2}-H_1}t^{\frac{1}{2}-H_2}}{\Gamma(\frac{1}{2}-H_1)\Gamma(\frac{1}{2}-H_2)}\int_0^1\int_0^1\gamma_{-\frac{1}{2}-H_1,\frac{1}{2}-H_1}(1,s_1)\gamma_{-\frac{1}{2}-H_2,\frac{1}{2}-H_2}(1,t_1) \\&\notag\qquad\qquad\qquad\qquad\qquad\qquad\times\vert b(ss_1,tt_1,s^Ht^H\widetilde{W}_{s_1,t_1}^H)\vert\,\mathrm{d}t_1\mathrm{d}s_1,
	\end{align*}
	where
	\begin{align*}
		\gamma_{\xi,\zeta}(s,r):=r^{\zeta}(s-r)^{\xi},\quad s>r.
	\end{align*}
	As a consequence,
	\begin{align}\label{GirsanovTheo5}
		&\notag\vert\theta_{s,t}\vert^{2m}\\=&\Big\vert K^{-1}_H\Big(\int_0^{\cdot}\int_0^{\cdot}\vert b(s_1,t_1,\widetilde{W}_{s_1,t_1}^H)\vert\,\mathrm{d}t_1\mathrm{d}s_1\Big)(s,t)\Big\vert^{2m}\\\notag=&\frac{s^{2m(\frac{1}{2}-H_1)}t^{2m(\frac{1}{2}-H_2)}}{\Gamma(\frac{1}{2}-H_1)^{2m}\Gamma(\frac{1}{2}-H_2)^{2m}}\Big(\int_0^1\int_0^1\gamma_{-\frac{1}{2}-H_1,\frac{1}{2}-H_1}(1,s_1)\gamma_{-\frac{1}{2}-H_2,\frac{1}{2}-H_2}(1,t_1) \\&\notag\qquad\qquad\qquad\qquad\qquad\qquad\qquad\times\vert b(ss_1,tt_1,s^Ht^H\widetilde{W}_{s_1,t_1}^H)\vert\,\mathrm{d}t_1\mathrm{d}s_1\Big)^{2m}  \\\notag=&\frac{s^{2m(\frac{1}{2}-H_1)}t^{2m(\frac{1}{2}-H_2)}(2m)!}{\Gamma(\frac{1}{2}-H_1)^{2m}\Gamma(\frac{1}{2}-H_2)^{2m}}\sum\limits_{\sigma\in\mathcal{P}_{2m}}\int_{(\nabla^{2m}_{0,1})^2} \prod\limits_{j=1}^{2m}\gamma_{-\frac{1}{2}-H_1,\frac{1}{2}-H_1}(1,s_j)\\&\notag\times\prod\limits_{j=1}^{2m}\gamma_{-\frac{1}{2}-H_2,\frac{1}{2}-H_2}(1,t_{\sigma(j)})\vert b(ss_j,tt_{\sigma(j)},s^Ht^H\widetilde{W}_{s_j,t_{\sigma(j)}}^H)\vert \mathrm{d}s_1\mathrm{d}t_1\ldots\mathrm{d}s_{2m}\mathrm{d}t_{2m}.
	\end{align}
	Hence, we obtain
	\begin{align*}
		\widetilde\E[\vert\theta_{s,t}\vert^{2m}]=&\widetilde\E\Big[\Big\vert K^{-1}_H\Big(\int_0^{\cdot}\int_0^{\cdot}\vert b(s_1,t_1,\widetilde{W}_{s_1,t_1}^H)\vert\,\mathrm{d}t_1\mathrm{d}s_1\Big)(s,t)\Big\vert^{2m}\Big]\\\leq& \frac{s^{2m(\frac{1}{2}-H_1)}t^{2m(\frac{1}{2}-H_2)}(2m)!}{\Gamma(\frac{1}{2}-H_1)^{2m}\Gamma(\frac{1}{2}-H_2)^{2m}}\\&\notag\times\sum\limits_{\sigma\in\mathcal{P}_{2m}}\int_{(\nabla^{2m}_{0,1})^2} \prod\limits_{j=1}^{2m}\gamma_{-\frac{1}{2}-H_1,\frac{1}{2}-H_1}(1,s_j)\gamma_{-\frac{1}{2}-H_2,\frac{1}{2}-H_2}(1,t_{\sigma(j)})\\&\notag\times\widetilde\E\Big[\prod\limits_{j=1}^{2m}\vert b(ss_j,tt_{\sigma(j)},s^Ht^H\widetilde{W}_{s_j,t_{\sigma(j)}}^H)\vert\Big] \mathrm{d}s_1\mathrm{d}t_1\ldots\mathrm{d}s_{2m}\mathrm{d}t_{2m}\mathrm{d}t\mathrm{d}s.
	\end{align*}	
	Moreover, for any $\sigma\in\mathcal{P}_{2m}$,
	\begin{align*}
		&\widetilde\E\Big[\prod\limits_{j=1}^{2m}\vert b(ss_j,tt_{\sigma(j)},s^Ht^H\widetilde{W}_{s_j,t_{\sigma(j)}}^H)\vert\Big]\\=&\int_{(\R^d)^{2m}}\vert b(ss_j,tt_{\sigma(j)},s^Ht^Hy_j)\vert\prod\limits_{\ell=1}^{d}\frac{\exp(-\frac{1}{2}(y^{\ell})^{\ast}Q^{\sigma}(\bar s,\bar t)^{-1}y^{(\ell)})}{(2\pi)^m\text{det}( Q^{\sigma}(\bar s,\bar t))^{\frac{1}{2}}}\,\mathrm{d}y_1\ldots\mathrm{d}y_{2m},
	\end{align*}
	where $y_j:=(y^{(1)},\ldots,y^{(d)})^{\ast}$, $j=1,\ldots,2m$, $y^{(\ell)}=(y^{(\ell)}_1,\ldots,y^{(\ell)}_d)$, $\ell=1,\ldots,d$, $Q^{\sigma}(\bar s,\bar t)=(Q^{\sigma}(\bar s,\bar t)_{i,j})_{1\leq i,j\leq 2m}$ with $\bar s=(s_1,\ldots, s_{2m})$, $\bar t=(t_{\sigma(1)},\ldots, t_{\sigma(2m)})$, $$Q^{\sigma}(\bar s,\bar t)_{i,j}=\widetilde\E[\widetilde W^{H,(1)}_{s_i,t_{\sigma(i)}}\widetilde W^{H,(1)}_{s_j,t_{\sigma(j)}}]\,\text{ and }\,\widetilde W^H=(W^{H,(1)},\ldots,W^{H,(d)})^{\ast}.$$\\Thus
	\begin{align*}
		&\widetilde\E\Big[\prod\limits_{j=1}^{2m}\vert b(ss_j,tt_{\sigma(j)},s^Ht^H\widetilde{W}_{s_j,t_{\sigma(j)}}^H)\vert\Big]\\\leq&\frac{1}{(2\pi)^{md}\text{det}( Q^{\sigma}(\bar s,\bar t))^{\frac{d}{2}}}\int_{(\R^d)^{2m}}\prod\limits_{j=1}^{2m}\vert b(ss_j,tt_{\sigma(j)},s^Ht^Hy_j)\vert \mathrm{d}y_1\ldots\mathrm{d}y_{2m}\\=&\frac{s^{-2mdH_1}t^{-2mdH_2}}{(2\pi)^{md}\text{det}( Q^{\sigma}(\bar s,\bar t))^{\frac{d}{2}}}\int_{(\R^d)^{2m}}\prod\limits_{j=1}^{2m}\vert b(ss_j,tt_{\sigma(j)},y_j)\vert \mathrm{d}y_1\ldots\mathrm{d}y_{2m}\\\leq&\frac{s^{-2mdH_1}t^{-2mdH_2}}{(2\pi)^{md}\text{det}( Q^{\sigma}(\bar s,\bar t))^{\frac{d}{2}}}\Big(\sup\limits_{0\leq r,u\leq T}\int_{\R^d}\vert b(r,u,y)\vert\,\mathrm{d}y\Big)^{2m}\\=&\frac{\Vert b\Vert^{2m}_{L^1_{\infty}}}{(2\pi)^{md}\text{det}( Q^{\sigma}(\bar s,\bar t))^{\frac{d}{2}}}s^{-2mdH_1}t^{-2mdH_2}.
	\end{align*}
	Then we have that 
	\begin{align*}
		\widetilde\E[\vert\theta_{s,t}\vert^{2m}]\leq& \Big(\sup\limits_{0\leq r,u\leq T}\int_{\R^d}\vert b(r,u,y)\vert\,\mathrm{d}y\Big)^{2m} \frac{s^{2m(\frac{1}{2}-H_1)}t^{2m(\frac{1}{2}-H_2)}(2m)!}{\Gamma(\frac{1}{2}-H_1)^{2m}\Gamma(\frac{1}{2}-H_2)^{2m}}\\&\notag\times\sum\limits_{\sigma\in\mathcal{P}_{2m}}\int_{(\nabla^{2m}_{0,1})^2} \prod\limits_{j=1}^{2m}\gamma_{-\frac{1}{2}-H_1,\frac{1}{2}-H_1}(1,s_j)\gamma_{-\frac{1}{2}-H_2,\frac{1}{2}-H_2}(1,t_{\sigma(j)})\\&\times\frac{s^{-2mdH_1}t^{-2mdH_2}}{(2\pi)^{md}\text{det}( Q^{\sigma}(\bar s,\bar t))^{\frac{d}{2}}}\,\mathrm{d}s_1\mathrm{d}t_1\ldots\mathrm{d}s_{2m}\mathrm{d}t_{2m} 
	\end{align*}
	We deduce from Lemma \ref{LemOPXZ02} 
	that
	\begin{align*}
		\text{det} Q^{\sigma}(\bar s,\bar t)=&\text{det Cov}(\widetilde W_{s_1,t_{\sigma(1)}}^{H,(1)},\ldots,\widetilde W_{s_{2m},t_{\sigma(2m)}}^{H,(1)})\geq\text{det }Q_1(\bar s)\times\text{det }Q^{\sigma}_2(\bar t),
	\end{align*}
	where $Q_1(\bar s):=\text{Cov}\left(B^{H_1}_{s_1},\,\ldots,\,B^{H_1}_{s_{2m}}\right)$, $Q_2(\bar t):=\text{Cov}\left(B^{H_2}_{t_{\sigma(1)}},\,\ldots,\,B^{H_2}_{t_{\sigma(2m)}}\right)$ and  $(B^{H_i}_t,t\geq0)$, $i=1,2$ is a  one parameter fractional Brownian motion in $\R$ with Hurst index $H_i$. By the strong local non-determinism of the fractional Brownian motion
	\begin{align*}
		&\text{det Cov}\left(B^{H_1}_{s_1},\,\ldots,\,B^{H_1}_{s_{2m}}\right)\geq K(H_1)s^{2H_1}_{2m}(s_{2m-1}-s_{2m})^{2H_1}\ldots(s_{1}-s_{2})^{2H_1}
	\end{align*}
	and
	\begin{align*}
		\text{det Cov}\left(B^{H_2}_{t_{\sigma(1)}},\,\ldots,\,B^{H_2}_{t_{\sigma(2m)}}\right)=&\text{det Cov}\left(B^{H_2}_{t_{1}},\,\ldots,\,B^{H_2}_{t_{2m}}\right)\\\geq& K(H_2)t^{2H_2}_{2m}(t_{2m-1}-t_{2m})^{2H_2}\ldots(t_{1}-t_{2})^{2H_2}
	\end{align*}
	for all  $\sigma\in\mathcal{P}_{2m}$, $(s_1,\ldots,s_{2m}),\,(t_1,\ldots,t_{2m})\in\nabla^{2m}_{0,1}$ and for constants $K(H_i)$, $i=1,2$ not depending on $m$. By Lemma A.5 in \cite{BOPP22}, one has
	\begin{align*}
		&(2m)!\int_{\nabla^{2m}_{0,1}}\prod\limits_{j=1}^{2m}\gamma_{-\frac{1}{2}-H_1,\frac{1}{2}-H_1}(1,s_j) \frac{1}{(2\pi)^{md}\text{det}( Q_1(\bar s))^{\frac{d}{2}}}\,\mathrm{d}s_1\ldots\mathrm{d}s_{2m}\\\leq&C^m_{H_1,d}(m!)^{2H_1(1+d)}
	\end{align*} 
	and
	\begin{align*}
		&(2m)!\int_{\nabla^{2m}_{0,1}}\prod\limits_{j=1}^{2m}\gamma_{-\frac{1}{2}-H_2,\frac{1}{2}-H_2}(1,t_{\sigma(j)}) \frac{1}{(2\pi)^{md}\text{det}( Q^{\sigma}_2(\bar t))^{\frac{d}{2}}}\,\mathrm{d}t_1\ldots\mathrm{d}t_{2m}\\\leq&C^m_{H_2,d}(m!)^{2H_2(1+d)}
	\end{align*} 
	for all   $\sigma\in\mathcal{P}_{2m}$.
	Therefore, we obtain 
	\begin{align*}
		&\widetilde\E[\vert\theta_{s,t}\vert^{2m}]\\\leq&\Vert b\Vert^{2m}_{L^1_{\infty}}\Big( \frac{s^{2m(\frac{1}{2}-H_1)} s^{-2mdH_1}}{\Gamma(\frac{1}{2}-H_1)^{2m} }\\&\notag\times (2m)!\int_{\nabla^{2m}_{0,1}} \prod\limits_{j=1}^{2m}\gamma_{-\frac{1}{2}-H_1,\frac{1}{2}-H_1}(1,s_j)  \frac{1 }{(2\pi)^{md}\text{det}( Q_{1}(\bar s ))^{\frac{d}{2}}}\,\mathrm{d}s_1\ldots\mathrm{d}s_{2m} \Big)\\&\times\Big( \frac{ t^{2m(\frac{1}{2}-H_2)}t^{-2mdH_2}}{(2m)! \Gamma(\frac{1}{2}-H_2)^{2m}}\\&\notag\times\sum\limits_{\sigma\in\mathcal{P}_{2m}}(2m)!\int_{\nabla^{2m}_{0,1}} \prod\limits_{j=1}^{2m} \gamma_{-\frac{1}{2}-H_2,\frac{1}{2}-H_2}(1,t_{\sigma(j)})\frac{ 1}{(2\pi)^{md}\text{det}( Q^{\sigma}_2(\bar t))^{\frac{d}{2}}}\,\mathrm{d}t_1\ldots\mathrm{d}t_{2m}\Big)
		\\\leq&\frac{C_{H_1,H_2,d}^m(m!)^{2(H_1+H_2)(1+d)}\Vert b\Vert^{2m}_{L^1_{\infty}}  }{\Gamma(\frac{1}{2}-H_1)^{2m}\Gamma(\frac{1}{2}-H_2)^{2m}}\,  s^{2m(\frac{1}{2}-(d+1)H_1)} t^{2m(\frac{1}{2}-(d+1)H_2)}
	\end{align*}
	and, as a consequence,
	\begin{align}
		\notag&\widetilde\E\Big[\Big(\int_0^T\int_0^T\Big\vert K^{-1}_H\Big(\int_0^{\cdot}\int_0^{\cdot}\vert b(s_1,t_1,\widetilde{W}_{s_1,t_1}^H)\vert\,\mathrm{d}t_1\mathrm{d}s_1\Big)(s,t)\Big\vert^2\mathrm{d}t\mathrm{d}s\Big)^{m}\Big]\\\leq\notag&T^{2m-2}\int_0^T\int_0^T \widetilde\E\Big[\Big\vert K^{-1}_H\Big(\int_0^{\cdot}\int_0^{\cdot}\vert b(s_1,t_1,\widetilde{W}_{s_1,t_1}^H)\vert\,\mathrm{d}t_1\mathrm{d}s_1\Big)(s,t)\Big\vert^{2m}\Big]\mathrm{d}t\mathrm{d}s\\\notag=&T^{2m-2}\int_0^T\int_0^T \widetilde\E[\vert\chi_{s,t}\vert^{2m}]\mathrm{d}t\mathrm{d}s\\\leq&\notag\frac{C_{H_1,H_2,d}^m(m!)^{2(H_1+H_2)(1+d)}\,\Vert b\Vert^{2m}_{L^1_{\infty}}}{\Gamma(\frac{1}{2}-H_1)^{2m}\Gamma(\frac{1}{2}-H_2)^{2m}}   \\&\notag\times T^{2m-2}\int_0^T\int_0^Ts^{2m(\frac{1}{2}-(d+1)H_1)} t^{2m(\frac{1}{2}-(d+1)H_2)}\,\mathrm{d}t\mathrm{d}s\\\leq&\frac{C_{H_1,H_2,d}^mT^{4m(1-(d+1)(H_1+H_2))}(m!)^{2(H_1+H_2)(1+d)}\,\Vert b\Vert^{2m}_{L^1_{\infty}}}{\Gamma(\frac{1}{2}-H_1)^{2m}\Gamma(\frac{1}{2}-H_2)^{2m}}   .\label{EsttGirsanov}
	\end{align}
	Then
	\begin{align*}
		&\notag\widetilde\E\Big[\exp\Big\{\mu\int_0^T\int_0^T\Big\vert K^{-1}_H\Big(\int_0^{\cdot}\int_0^{\cdot}\vert b(s_1,t_1,\widetilde{W}_{s_1,t_1}^H)\vert\,\mathrm{d}t_1\mathrm{d}s_1\Big)(s,t)\Big\vert^2\mathrm{d}t\mathrm{d}s\Big\}\Big]\\=&\sum\limits_{m=0}^{\infty}\frac{1}{m!}\widetilde\E\Big[\Big(\int_0^T\int_0^T\Big\vert K^{-1}_H\Big(\int_0^{\cdot}\int_0^{\cdot}\vert b(s_1,t_1,\widetilde{W}_{s_1,t_1}^H)\vert\,\mathrm{d}t_1\mathrm{d}s_1\Big)(s,t)\Big\vert^2\mathrm{d}t\mathrm{d}s\Big)^{m}\Big]\\\leq&\sum\limits_{m=0}^{\infty}\frac{\mu^mT^{2m(2-(d+1)(H_1+H_2))}C_{H_1,H_2,d}^m\Vert b\Vert^{2m}_{L^1_{\infty}} }{(m!)^{1-2(H_1+H_2)(1+d)}\Gamma(\frac{1}{2}-H_1)^{2m}\Gamma(\frac{1}{2}-H_2)^{2m}}:= C_{H,d,\mu,T}(\Vert b\Vert_{L^{1}_{\infty}}),
	\end{align*}
	where $C^m_{H_1,H_2,d}=C^m_{H_1,d}C^m_{H_2,d}$.
	This ends the proof.
\end{proof}

We deduce from Lemma \eqref{GirsanovLemEst} and Theorem \eqref{GirsanovTheo} that the process
\begin{align}\label{WeakSolutEq}
	W^H_{s,t}:=X^x_{s,t}-x-\int_0^s\int_0^tb(s_1,t_1,X^x_{s_1,t_1})\,\mathrm{d}t_1\mathrm{d}s_1,\quad(s,t)\in[0,T]^2
\end{align}
is a $d$-dimensional fractional Brownian sheet on $(\Omega,\mathcal{F},\Pb)$ with Hurst parameter $H=(H_1,H_2)\in(0,\frac{1}{2})^2$
, where $\frac{\mathrm{d}\Pb}{\mathrm{d}\widetilde{\Pb}}=\mathcal{Z}_{T,T}$. Then \eqref{WeakSolutEq} means that $(X^x,W^H)$ is a weak solution of \eqref{eqmainProbS4}. Moreover, this solution is unique in law for  {$b\in L^1_{\infty}$} since the estimates of Lemma \ref{GirsanovLemEst} also holds for $X^x$ in place of $\widetilde W^H$. 
{We end the first step of the scheme by showing that  weak joint uniqueness  also holds for \eqref{eqmainProbS4}.
	\begin{lem}\label{lem:weakjointu}
		Let $(X^{1,x},W^{1,H})$ and $(X^{2,x},W^{2,H})$ be two weak solutions of \eqref{eqmainProbS4} defined on two possibly different filtered probability spaces $(\Omega^1,\mathcal{F}^1,(\mathcal{F}^1_{s,t}),\Pb^1)$ and $(\Omega^2,\mathcal{F}^2,(\mathcal{F}^2_{s,t}),\Pb^2)$. Then $(X^{1,x},W^{1,H})$ and $(X^{2,x},W^{2,H})$ have the same law.
	\end{lem}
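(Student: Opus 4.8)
The plan is to reduce the joint law of $(X^{i,x},W^{i,H})$ to a computation under a reference measure for which $X^{i,x}-x$ is a \emph{genuine} fractional Brownian sheet, and then to exploit the uniqueness in law of the $X$-marginal established above. For $i\in\{1,2\}$, set $\widetilde{W}^{i,H}_{s,t}:=X^{i,x}_{s,t}-x$ and note from \eqref{eqmainProbS4} that $\widetilde{W}^{i,H}_{s,t}=W^{i,H}_{s,t}+\int_0^s\int_0^t b(s_1,t_1,X^{i,x}_{s_1,t_1})\,\mathrm{d}t_1\mathrm{d}s_1$. Since $W^{i,H}$ is an $(\mathcal{F}^i_{s,t})$-fractional Brownian sheet under $\Pb^i$ and $\theta^i_{u,v}:=b(u,v,X^{i,x}_{u,v})$ is $(\mathcal{F}^i_{s,t})$-adapted, I would apply Theorem \ref{GirsanovTheo} with this $\theta^i$: under the measure $\widetilde{\Pb}^i$ defined by $\mathrm{d}\widetilde{\Pb}^i/\mathrm{d}\Pb^i=\mathcal{Z}^i_{T,T}$, the shifted process $\widetilde{W}^{i,H}$ is a $d$-dimensional fractional Brownian sheet with Hurst index $H$, where $\mathcal{Z}^i$ is the Dol\'eans--Dade exponential of $\eta^i:=K_H^{-1}\big(\int_0^\cdot\int_0^\cdot b(s_1,t_1,X^{i,x}_{s_1,t_1})\,\mathrm{d}t_1\mathrm{d}s_1\big)$ against the Brownian sheet $W^i$ associated with $W^{i,H}$ via \eqref{Bsheet}.

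The key verification, and the main obstacle, is hypothesis (ii) of Theorem \ref{GirsanovTheo}, namely $\E^{\Pb^i}[\mathcal{Z}^i_{T,T}]=1$; I would establish it through Novikov's criterion, i.e.\ by bounding $\E^{\Pb^i}[\exp(\tfrac12\int_0^T\int_0^T|\eta^i|^2\,\mathrm{d}u\mathrm{d}v)]$. Because $\eta^i$ is the same fixed functional of $X^{i,x}$ for both solutions and, by the uniqueness in law proved above, $X^{i,x}$ has under $\Pb^i$ the common solution law $\mathcal{L}_X$, this moment is independent of $i$ and can be transported to the fractional-Brownian-sheet reference measure of the weak-existence construction. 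Writing $\mathcal{L}_X$ as the image of $x+\widetilde{W}^H$ under $\mathrm{d}\Pb/\mathrm{d}\widetilde{\Pb}=\mathcal{Z}_{T,T}=\exp(\int\theta\,\mathrm{d}W-\tfrac12\int|\theta|^2)$, with $\theta=K_H^{-1}(\int\int b(\cdot,\cdot,x+\widetilde{W}^H))$, the exponential weights telescope and one is left with $\widetilde{\E}[\exp(\int_0^T\int_0^T\theta\,\mathrm{d}W)]$; the Cauchy--Schwarz split $\exp(\int\theta\,\mathrm{d}W)=\mathcal{E}(2\theta)_{T,T}^{1/2}\exp(\int|\theta|^2)$ then reduces finiteness to $\widetilde{\E}[\mathcal{E}(2\theta)_{T,T}]\le1$ together with $\widetilde{\E}[\exp(2\int\int|\theta|^2)]<\infty$, the latter being exactly estimate \eqref{EqNovikov} of Lemma \ref{GirsanovLemEst} with $\mu=2$. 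Hypothesis (i) follows from the same integrability.

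Once $\widetilde{\Pb}^i$ is in force, I would conclude by a pushforward argument. Under $\widetilde{\Pb}^i$ the process $\widetilde{W}^{i,H}$ is a standard fractional Brownian sheet, so its law is the same for $i=1,2$. Moreover each of $X^{i,x}=x+\widetilde{W}^{i,H}$, $W^{i,H}=\Psi(\widetilde{W}^{i,H})$ with $\Psi(\xi)_{s,t}:=\xi_{s,t}-\int_0^s\int_0^t b(s_1,t_1,x+\xi_{s_1,t_1})\,\mathrm{d}t_1\mathrm{d}s_1$, and the density $\mathrm{d}\Pb^i/\mathrm{d}\widetilde{\Pb}^i=(\mathcal{Z}^i_{T,T})^{-1}$, is one and the same measurable functional of the path $\widetilde{W}^{i,H}$, independent of $i$; the functional $\Psi$ is well defined $\widetilde{\Pb}^i$-a.s.\ as the a.s.\ limit of its smooth approximations $b_n\to b$, the limit depending only on the (common) fractional-Brownian-sheet law. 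Hence for bounded measurable $F,G$ on $C([0,T]^2;\R^d)$,
\begin{align*}
\E^{\Pb^i}\big[F(X^{i,x})G(W^{i,H})\big]=\E^{\widetilde{\Pb}^i}\Big[F(x+\widetilde{W}^{i,H})\,G\big(\Psi(\widetilde{W}^{i,H})\big)\,(\mathcal{Z}^i_{T,T})^{-1}\Big],
\end{align*}
and the right-hand side is the expectation of a fixed functional of a standard fractional Brownian sheet, hence independent of $i$. Therefore $(X^{1,x},W^{1,H})$ and $(X^{2,x},W^{2,H})$ share the same law, which is the claim.
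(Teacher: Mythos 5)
Your argument is correct in outline, but it takes a substantially longer route than the paper. The paper's proof is essentially a one-line observation: for a weak solution, the noise is a \emph{deterministic} measurable functional of the solution, namely $W^{i,H}=\mathfrak{M}(X^{i,x})$ with $\mathfrak{M}(z)(s,t)=z(s,t)-x-\int_0^s\int_0^tb(s_1,t_1,z(s_1,t_1))\,\mathrm{d}t_1\mathrm{d}s_1$, so the joint law of $(X^{i,x},W^{i,H})$ is the pushforward of the law of $X^{i,x}$ under $z\mapsto(z,\mathfrak{M}(z))$; marginal uniqueness in law (already established after Lemma \ref{GirsanovLemEst}) then gives joint uniqueness immediately. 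You instead re-run the full Girsanov machinery in reverse on each space $(\Omega^i,\Pb^i)$ — verifying Novikov for $\eta^i$ by transporting the exponential moment to the reference fBm-sheet measure, passing to $\widetilde{\Pb}^i$, and expressing the joint law as the expectation of a fixed path functional of a standard fractional Brownian sheet. This works, and it has the minor virtue of only using marginal uniqueness to control the Novikov bound rather than as the final input, but it forces you to handle two technical points the paper's route avoids entirely: (a) that the stochastic integral in $\mathcal{Z}^i_{T,T}$ is a.s.\ equal to a single measurable functional of the path of $\widetilde{W}^{i,H}$, common to $i=1,2$ (true, but it requires the standard ``functional defined up to null sets of a fixed law'' argument), and (b) the Novikov verification under $\Pb^i$ rather than under the reference measure. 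Both proofs share the same mild subtlety that, for merely measurable $b\in L^1_{\infty}$, the map $\mathfrak{M}$ (your $\Psi$) is only defined up to null sets of the common solution law; you flag this explicitly, the paper leaves it implicit. In short: no gap, but you rebuilt with Girsanov what the identity $W^{i,H}=\mathfrak{M}(X^{i,x})$ gives for free.
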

	\begin{proof}
		Define the map $\mathfrak{M}$ from $\mathcal{C}([0,T]^2;\R^d)$ to $\mathcal{C}([0,T]^2;\R^d)$ by
		\begin{align*}
			\mathfrak{M}(z)(s,t)=z(s,t)-x-\int_0^s\int_0^tb(s_1,t_1,z(s_1,t_1))\mathrm{d}t_1\mathrm{d}s_1,\,\forall\,z\in\mathcal{C}([0,T]^2;\R^d).
		\end{align*}
		Since $(X^{1,x},W^{1,H})$ and $(X^{2,x},W^{2,H})$ are two weak solutions of \eqref{eqmainProbS4}, we have $W^{1,H}=\mathfrak{M}(X^{1,x})$ $\Pb^1$-a.s. and $W^{2,H}=\mathfrak{M}(X^{2,x})$ $\Pb^2$-a.s. It follows from the uniqueness in law for \eqref{eqmainProbS4}
		that $X^{1,x}$ and $X^{2,x}$ have the same law. As a consequence, $(X^{1,x},\mathfrak{M}(X^{1,x}))$ and $(X^{2,x},\mathfrak{M}(X^{2,x}))$ have the same law, which means that $(X^{1,x},W^{1,H})$ and $(X^{2,x},W^{2,H})$ have the same law.
	\end{proof}
}

Therefore, we consider a filtered probability space $(\Omega,\widetilde{\mathcal{F}},(\widetilde{\mathcal{F}}_{s,t}),\Pb)$ on which the weak solution $(X^x,W^H)$ is defined.  {Also, for every $(s,t)\in[0,T]^2$, we denote by $\mathcal{F}^H_{s,t}$ the $\sigma$-algebra generated by the random variables $\{W^H_{u,v};u\leq s,v\leq t\}$.}


The second step of the scheme is a consequence of the next result which may be seen as  a counterpart of \cite[Lemma 4.6]{BNP18}. 
\begin{lem}\label{LemWeakConverge}
	Let $(b_n)_{n\in\N}$ be a sequence of functions
	in $C^{\infty}_c([0,T]^2\times\R^d,\R^d)$ such that $b_n(s,t,z)$ converges to $b(s,t,z)$ for a.e. $(s,t,z)\in[0,T]^2\times\R^d$ with $\sup_{n\geq0}\Vert b_n\Vert_{L^1_{\infty} }<\infty$. Denote by $X^{x,n}$ the solution of \eqref{eqmainProbS4Xn}. Then for every $(s,t)\in[0,T]^2$ and bounded continuous function $\varphi:\,\R^d\to\R$, $\varphi(X^{x,n}_{s,t})$ converges weakly in  {$L^2(\Omega,\mathcal{F}^H_{s,t})$ to $\E[\varphi(X^x_{s,t})\vert\mathcal{F}^H_{s,t}]$}, where $X^x$ satisfies \eqref{eqmainProbS4}.
\end{lem}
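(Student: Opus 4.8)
The plan is to establish weak convergence by a standard two–step reduction followed by a change of measure back to a single reference sheet. First I would note that, since $b_n$ is smooth, $X^{x,n}$ is a genuine strong solution of \eqref{eqmainProbS4Xn}, so $X^{x,n}_{s,t}$ is $\mathcal{F}^H_{s,t}$-measurable, and $\{\varphi(X^{x,n}_{s,t})\}_n$ is bounded in $L^2(\Omega,\mathcal{F}^H_{s,t})$ because $\varphi$ is bounded. A bounded sequence in a Hilbert space is weakly relatively compact, and the cylinder functionals $G=g(W^H_{r_1,u_1},\dots,W^H_{r_k,u_k})$ with $g$ bounded continuous and $(r_i,u_i)$ below $(s,t)$ form a total set in $L^2(\Omega,\mathcal{F}^H_{s,t})$. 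Hence it suffices to prove $\E[\varphi(X^{x,n}_{s,t})G]\to\E[\varphi(X^x_{s,t})G]$ for such $G$; the weak limit is then automatically $\E[\varphi(X^x_{s,t})\mid\mathcal{F}^H_{s,t}]$, since $\E[\varphi(X^x_{s,t})G]=\E[\E[\varphi(X^x_{s,t})\mid\mathcal{F}^H_{s,t}]\,G]$ for $\mathcal{F}^H_{s,t}$-measurable $G$ (this is exactly why the limit is a conditional expectation rather than $\varphi(X^x_{s,t})$, the latter not being adapted to $W^H$).

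Next I would recast both expectations under the reference sheet $\widetilde W^H$ on $(\Omega,\widetilde{\mathcal{F}},\widetilde\Pb)$. Writing $\mathcal{M}_n(z)_{s,t}=z_{s,t}-\int_0^s\int_0^t b_n(s_1,t_1,x+z_{s_1,t_1})\,\mathrm{d}t_1\mathrm{d}s_1$ and $\mathcal{Z}^n_{T,T}=\mathcal{E}(\theta^n)_{T,T}$ with $\theta^n=K_H^{-1}\big(\int_0^\cdot\int_0^\cdot b_n(\cdot,x+\widetilde W^H)\big)$ (and $\mathcal{M},\mathcal{Z}_{T,T}$ the analogues for $b$), Theorem \ref{GirsanovTheo} shows that $(x+\widetilde W^H,\mathcal{M}_n(\widetilde W^H))$ under $\mathcal{Z}^n_{T,T}\,\mathrm{d}\widetilde\Pb$ is a weak solution of the $b_n$-SDE. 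Joint uniqueness in law (Lemma \ref{lem:weakjointu}, whose proof applies verbatim to $b_n$) then gives, for $G$ as above,
\[
\E_{\Pb}\big[\varphi(X^{x,n}_{s,t})\,G(W^H)\big]=\widetilde\E\big[\mathcal{Z}^n_{T,T}\,\varphi(x+\widetilde W^H_{s,t})\,G(\mathcal{M}_n(\widetilde W^H))\big],
\]
and likewise (dropping $n$) for $X^x$. Subtracting, the whole problem reduces to showing
\[
\mathcal{Z}^n_{T,T}\,G(\mathcal{M}_n(\widetilde W^H))\longrightarrow \mathcal{Z}_{T,T}\,G(\mathcal{M}(\widetilde W^H))\quad\text{in }L^1(\widetilde\Pb).
\]

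For the passage to the limit I would argue as follows. Using $b_n\to b$ a.e.\ and $\sup_n\Vert b_n\Vert_{L^1_{\infty}}<\infty$ together with the boundedness of the Gaussian density of $\widetilde W^H_{s_1,t_1}$, one shows that the drift functionals $\int_0^\cdot\int_0^\cdot b_n(\cdot,x+\widetilde W^H)$ converge in probability to $\int_0^\cdot\int_0^\cdot b(\cdot,x+\widetilde W^H)$; hence $\mathcal{M}_n(\widetilde W^H)\to\mathcal{M}(\widetilde W^H)$ uniformly in probability and, $G$ being bounded continuous, $G(\mathcal{M}_n(\widetilde W^H))\to G(\mathcal{M}(\widetilde W^H))$ in probability. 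The same ingredient yields $\theta^n\to\theta$ and thus $\mathcal{Z}^n_{T,T}\to\mathcal{Z}_{T,T}$ in probability. Crucially, Lemma \ref{GirsanovLemEst} provides uniform $L^p$-bounds ($p>1$) on $\mathcal{Z}^n_{T,T}$ precisely because $\sup_n\Vert b_n\Vert_{L^1_{\infty}}<\infty$, which gives uniform integrability; since $\varphi$ and $G$ are bounded, the products are uniformly integrable and convergence in $L^1(\widetilde\Pb)$ follows from Vitali's theorem.

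The hard part will be the convergence of the drift functionals and of the Doléans-Dade densities from merely a.e.\ convergence of $b_n$: a.e.\ convergence together with a uniform $L^1_{\infty}$-bound does \emph{not} by itself control $\int_{\R^d}\vert b_n-b\vert(s_1,t_1,y)\,p_{s_1,t_1}(y-x)\,\mathrm{d}y$ (mass can concentrate), so one must exploit the specific mollified structure of the approximating sequence, for which in addition $b_n(s_1,t_1,\cdot)\to b(s_1,t_1,\cdot)$ in $L^1(\R^d)$ for a.e.\ $(s_1,t_1)$, in combination with the uniform integrability furnished by Lemma \ref{GirsanovLemEst}. Once this convergence is secured, the remaining steps are routine, and the density argument of Step~1 upgrades convergence on cylinder functionals to the asserted weak convergence in $L^2(\Omega,\mathcal{F}^H_{s,t})$.
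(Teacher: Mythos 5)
Your plan is essentially the paper's proof: reduce to testing against a total family in $L^2(\Omega,\mathcal{F}^H_{s,t})$ (the paper uses stochastic exponentials $\exp(\sum_j\int f_j\,\mathrm{d}W^{(j),H})$ rather than your cylinder functions, but both are total), transfer $\E[\varphi(X^{x,n}_{s,t})\xi]$ by Girsanov into an expectation of explicit functionals of the driving sheet involving $b_n$ and the Dol\'eans--Dade density, and pass to the limit using the convergence of $K_H^{-1}$-drifts and the uniform $L^p$-bounds of Lemma \ref{GirsanovLemEst} for uniform integrability; the identification of the limit as $\E[\varphi(X^x_{s,t})\mid\mathcal{F}^H_{s,t}]$ then follows exactly as you say. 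The one step you flag as ``the hard part'' --- deducing $\int_{\R^d}\vert b_n-b\vert(s_2,t_2,y)\,p_{s_2,t_2}(y)\,\mathrm{d}y\to 0$ from mere a.e.\ convergence plus a uniform $L^1_\infty$-bound --- is handled in the paper by an unjustified appeal to dominated convergence, so your observation that the stated hypotheses do not quite suffice (and that one should use the $L^1(\R^d)$-convergence of $b_n(s,t,\cdot)$ available for the mollified sequences actually used downstream) identifies a real, if minor, lacuna in the paper rather than a defect of your argument relative to it.
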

\begin{proof} 
	We suppose without loss of generality $x=0$. Let us start by showing that
	\begin{align}\label{WeakL2Converge}
		&\lim\limits_{n\to\infty}\E\Big[\Big\vert\mathcal{E}\Big(\int_0^s\int_0^t K^{-1}_H\Big(\int_0^{\cdot}\int_0^{\cdot}\vert b_n(s_2,t_2,{W}_{s_2,t_2}^H)\vert\,\mathrm{d}t_2\mathrm{d}s_2\Big)^{\ast}(s_1,t_1)\cdot\mathrm{d}W_{s_1,t_1}\Big)\\&\quad-\mathcal{E}\Big(\int_0^s\int_0^t K^{-1}_H\Big(\int_0^{\cdot}\int_0^{\cdot}\vert b(s_2,t_2,{W}_{s_2,t_2}^H)\vert\,\mathrm{d}t_2\mathrm{d}s_2\Big)^{\ast}(s_1,t_1)\cdot\mathrm{d}W_{s_1,t_1}\Big)\Big\vert^p\Big]=0\notag
	\end{align}
	for all $p\geq1$. To obtain \eqref{WeakL2Converge} we first observe that
	\begin{align}\label{WeakL1Converge}
		\lim\limits_{n\to\infty}&\E\Big[\Big\vert K^{-1}_H\Big(\int_0^{\cdot}\int_0^{\cdot}\vert b_n(s_2,t_2,{W}_{s_2,t_2}^H)\vert\,\mathrm{d}t_2\mathrm{d}s_2\Big)^{}(s_1,t_1)\\&\quad-K^{-1}_H\Big(\int_0^{\cdot}\int_0^{\cdot}\vert b(s_2,t_2,{W}_{s_2,t_2}^H)\vert\,\mathrm{d}t_2\mathrm{d}s_2\Big)^{}(s_1,t_1)\Big\vert\Big]=0\notag
	\end{align}
	for all $(s_1,t_1)$. Indeed we deduce from \eqref{GirsanovTheo3}
	, Fubini's theorem and the dominated convergence theorem that
	\begin{align*}
		&\lim\limits_{n\to\infty}\E\Big[\Big\vert K^{-1}_H\Big(\int_0^{\cdot}\int_0^{\cdot}\vert b_n(s_2,t_2,{W}_{s_2,t_2}^H)\vert\,\mathrm{d}t_2\mathrm{d}s_2\Big)^{}(s_1,t_1)\\&\quad-K^{-1}_H\Big(\int_0^{\cdot}\int_0^{\cdot}\vert b(s_2,t_2,{W}_{s_2,t_2}^H)\vert\,\mathrm{d}t_2\mathrm{d}s_2\Big)^{}(s_1,t_1)\Big\vert\Big]\\\leq&\lim\limits_{n\to\infty}\frac{s_1^{H_1-\frac{1}{2}}t_1^{H_2-\frac{1}{2}}}{\Gamma(\frac{1}{2}-H_1)\Gamma(\frac{1}{2}-H_2)}\int_0^{s_1}\int_0^{t_1}(s_1-s_2)^{-\frac{1}{2}-H_1}(t_1-t_2)^{-\frac{1}{2}-H_2}s_2^{\frac{1}{2}-H_1}t_2^{\frac{1}{2}-H_2}\\&\notag\qquad\qquad\qquad\qquad\qquad\qquad\times\E[\vert b_n(s_2,t_2,{W}_{s_2,t_2}^H)-b(s_2,t_2,{W}_{s_2,t_2}^H)\vert]\,\mathrm{d}t_2\mathrm{d}s_2\\=&\lim\limits_{n\to\infty}\frac{s_1^{H_1-\frac{1}{2}}t_1^{H_2-\frac{1}{2}}}{\Gamma(\frac{1}{2}-H_1)\Gamma(\frac{1}{2}-H_2)}\int_0^{s_1}\int_0^{t_1}(s_1-s_2)^{-\frac{1}{2}-H_1}(t_1-t_2)^{-\frac{1}{2}-H_2}s_2^{\frac{1}{2}-H_1}t_2^{\frac{1}{2}-H_2}\\&\notag\qquad\qquad\qquad\qquad\qquad\qquad\times\int_{\R^d}\vert b_n(s_2,t_2,y)-b(s_2,t_2,y)\vert(2\pi s_2^{2H}t_2^{2H})^{-d/2}\\&\notag\qquad\qquad\qquad\qquad\qquad\qquad\qquad\times\exp\Big(-\frac{y^2}{2s_2^{2H}t_2^{2H}}\Big)\,\mathrm{d}y\mathrm{d}s_2\mathrm{d}t_2=0
	\end{align*}
	since $\lim_{n\to\infty}b_n(s_2,t_2,y)=b(s_2,t_2,y)$ for a.e. $(s_2,t_2,y)$.\\
	Since  $\left(K^{-1}_H(\int_0^{\cdot}b_n(s_2,t_2,W^H_{s_2,t_2})\,\mathrm{d}s_2\mathrm{d}t_2)\right)_{n\in\N}$ is bounded in $L^p(\Omega\times[0,s]\times[0,t],\R^d)$ for any $p\geq1$ (see \eqref{EsttGirsanov}), we deduce from \eqref{WeakL1Converge} that
	\begin{align}\label{StrLpConverge}
		\lim\limits_{n\to\infty}&\E\Big[\Big\vert K^{-1}_H\Big(\int_0^{\cdot}\int_0^{\cdot}\vert b_n(s_2,t_2,{W}_{s_2,t_2}^H)\vert\,\mathrm{d}t_2\mathrm{d}s_2\Big)^{}(s_1,t_1)\\&\quad-K^{-1}_H\Big(\int_0^{\cdot}\int_0^{\cdot}\vert b(s_2,t_2,{W}_{s_2,t_2}^H)\vert\,\mathrm{d}t_2\mathrm{d}s_2\Big)^{}(s_1,t_1)\Big\vert^p\Big]=0\notag
	\end{align}
	for any $p\geq1$.
	As a consequence,
	\begin{align*}
		\lim\limits_{n\to\infty}&\E\Big[\Big\vert\int_0^s\int_0^t K^{-1}_H\Big(\int_0^{\cdot}\int_0^{\cdot}\vert b_n(s_2,t_2,{W}_{s_2,t_2}^H)\vert\,\mathrm{d}t_2\mathrm{d}s_2\Big)^{\ast}(s_1,t_1)\mathrm{d}W_{s_1,t_1}\\&\quad-\int_0^s\int_0^t K^{-1}_H\Big(\int_0^{\cdot}\int_0^{\cdot}\vert b(s_2,t_2,{W}_{s_2,t_2}^H)\vert\,\mathrm{d}t_2\mathrm{d}s_2\Big)^{\ast}(s_1,t_1)\mathrm{d}W_{s_1,t_1}\Big\vert^p\Big]=0
	\end{align*}
	and
	\begin{align*}
		\lim\limits_{n\to\infty}&\E\Big[\Big(\int_0^s\int_0^t \Big\vert K^{-1}_H\Big(\int_0^{\cdot}\int_0^{\cdot}\vert b_n(s_2,t_2,{W}_{s_2,t_2}^H)\vert\,\mathrm{d}t_2\mathrm{d}s_2\Big)^{}(s_1,t_1)\Big\vert^2\mathrm{d}t_1\mathrm{d}s_1\\&\quad-\int_0^s\int_0^t \Big\vert K^{-1}_H\Big(\int_0^{\cdot}\int_0^{\cdot}\vert b_n(s_2,t_2,{W}_{s_2,t_2}^H)\vert\,\mathrm{d}t_2\mathrm{d}s_2\Big)^{}(s_1,t_1)\Big\vert^2\mathrm{d}t_1\mathrm{d}s_1\Big)^p\,\Big]=0
	\end{align*}
	for any $p\geq1$. The equality \eqref{WeakL2Converge} then follows from the estimate $\vert e^y-e^z\vert\leq  (e^y+e^z)\vert y-z\vert$, H\"older inequality and the bounds in Lemma \ref{GirsanovLemEst}
	Similarly, one also shows that
	\begin{align*}
		\lim\limits_{n\to\infty}&\E\Big[\Big\vert\exp\Big(\sum\limits_{j=1}^d\int_0^{s_1}\int_0^{t_1}f_j(s_2,t_2)b_n^{(j)}(s_2,t_2,x+W^H_{s_2,t_2})\mathrm{d}t_2\mathrm{d}s_2\Big)\\&\quad-\exp\Big(\sum\limits_{j=1}^d\int_0^{s_1}\int_0^{t_1}f_j(s_2,t_2)b^{(j)}(s_2,t_2,x+W^H_{s_2,t_2})\mathrm{d}t_2\mathrm{d}s_2\Big)\Big\vert^p\Big]=0
	\end{align*}
	for all $p\geq1$, $0< s_1\leq s\leq T$, $0< t_1\leq t$, $f_j\in L^{\infty}([0,s]\times[0,t])$.\\
	We  {observe} that the set%
	\begin{align*}
		\Pi _{s,t} :=&\Big\{ \exp \Big(\sum_{j=1}^{d}\int_{0}^{s_1}\int_0^{t_1}f_{j}(s_{2},t_{2})\mathrm{d}W_{s_{2},t_{2}}^{(j),H}\Big):\,0\leq s_{1}\leq
		s,\,0\leq t_1\leq s,\\&\qquad\,f_{j}\in L^{\infty }(\left[ 0,s\right] \times \left[ 0,t%
		\right] ),j=1,\ldots,d\Big\} 
	\end{align*}%
	is a total subspace of $L^{p}(\Omega ;\mathcal{F}^H_{s,t})$. To conclude the proof, it is
	then sufficient to show that%
	\begin{equation*}
		\E\left[ \varphi (X_{s,t}^{x,n})\xi \right] \underset{%
			n\longrightarrow \infty }{\longrightarrow }\E\left[ \E\left[ \varphi
		(X_{s,t}^{x})\right. \left\vert \mathcal{F}^H_{s,t}\right] \xi %
		\right] 
	\end{equation*}%
	for all $\xi \in \Pi _{s,t}$.  {For $n\geq1$, define the new measures $\Q_n$ by } 
	\begin{equation*}
		\mathrm{d}\Q_{n}=\mathcal{E}\Big(\int_0^s\int_0^t K^{-1}_H\Big(\int_0^{\cdot}\int_0^{\cdot}\vert b_n(s_2,t_2,{X}_{s_2,t_2}^{x,n})\vert\,\mathrm{d}t_2\mathrm{d}s_2\Big)^{\ast}(s_1,t_1)\cdot\mathrm{d}W_{s_1,t_1}\Big)\mathrm{d}\Pb.
	\end{equation*}%
	Then  {the} Girsanov`s theorem for multiparameter Wiener processes
	implies that  {the process $\widetilde{W}^{H,n}$ defined by } %
	\begin{equation*}
		\widehat W_{\cdot }^{H ,n}:=W^H_{\cdot }-\int_{0}^{\cdot }\int_{0}^{\cdot
		}b_{n}(s_{2},t_{2},X_{s_{2},t_{2}}^{x,n})\mathrm{d}t_{2}\mathrm{d}s_{2}
	\end{equation*}%
	is a $\mathbb{Q}_{n}$-Wiener process on the plane and one has
	\begin{align*}
	&\E\Big[\varphi(X^{x,n}_{s,t})\exp \Big(\sum_{j=1}^{d}\int_{0}^{s_1}\int_0^{t_1}f_{j}(s_{2},t_{2})\mathrm{d}W_{s_{2},t_{2}}^{(j),H}\Big)\Big]\\=& \E_{\Q_{n}}\Big[\varphi(x+\widehat W^{H,n}_{s,t})\exp \Big(\sum_{j=1}^{d}\int_{0}^{s_1}\int_0^{t_1}f_{j}(s_{2},t_{2})\,\mathrm{d}\widehat W_{s_{2},t_{2}}^{(j),H,n}\Big)\\&\times\exp\Big(\sum\limits_{j=1}^d\int_0^{s_1}\int_0^{t_1}f_j(s_2,t_2)b_n^{(j)}(s_2,t_2,x+\widehat W^{H,n}_{s_2,t_2})\,\mathrm{d}t_2\mathrm{d}s_2\Big)\\&\times\mathcal{E}\Big(\int_0^s\int_0^t K^{-1}_H\Big(\int_0^{\cdot}\int_0^{\cdot}\vert b_n(s_2,t_2,x+\widehat{W}_{s_2,t_2}^{H,n})\vert\,\mathrm{d}t_2\mathrm{d}s_2\Big)^{\ast}(s_1,t_1)\cdot\mathrm{d}\widehat W^{H,n}_{s_1,t_1}\Big)\Big]\\=& \E_{}\Big[\varphi(x+ W^{H}_{s,t})\exp \Big(\sum_{j=1}^{d}\int_{0}^{s_1}\int_0^{t_1}f_{j}(s_{2},t_{2})\,\mathrm{d} W_{s_{2},t_{2}}^{(j),H}\Big)\\&\times\exp\Big(\sum\limits_{j=1}^d\int_0^{s_1}\int_0^{t_1}f_j(s_2,t_2)b_n^{(j)}(s_2,t_2,x+ W^{H}_{s_2,t_2})\,\mathrm{d}t_2\mathrm{d}s_2\Big)\\&\times\mathcal{E}\Big(\int_0^s\int_0^t K^{-1}_H\Big(\int_0^{\cdot}\int_0^{\cdot}\vert b_n(s_2,t_2,x+{W}_{s_2,t_2}^{H})\vert\,\mathrm{d}t_2\mathrm{d}s_2\Big)^{\ast}(s_1,t_1)\cdot\mathrm{d} W^{H}_{s_1,t_1}\Big)\Big].
	\end{align*}
	As a consequence,
	\begin{align*}
		&\lim\limits_{n\to\infty}\E\Big[\varphi(X^{x,n}_{s,t})\exp \Big(\sum_{j=1}^{d}\int_{0}^{s_1}\int_0^{t_1}f_{j}(s_{2},t_{2})\mathrm{d}W_{s_{2},t_{2}}^{(j),H}\Big)\Big]\\=&\E_{}\Big[\varphi(x+ W^{H}_{s,t})\exp \Big(\sum_{j=1}^{d}\int_{0}^{s_1}\int_0^{t_1}f_{j}(s_{2},t_{2})\,\mathrm{d} W_{s_{2},t_{2}}^{(j),H}\Big)\\&\times\exp\Big(\sum\limits_{j=1}^d\int_0^{s_1}\int_0^{t_1}f_j(s_2,t_2)b^{(j)}(s_2,t_2,x+ W^{H}_{s_2,t_2})\,\mathrm{d}t_2\mathrm{d}s_2\Big)\\&\times\mathcal{E}\Big(\int_0^s\int_0^t K^{-1}_H\Big(\int_0^{\cdot}\int_0^{\cdot}\vert b(s_2,t_2,x+{W}_{s_2,t_2}^{H})\vert\,\mathrm{d}t_2\mathrm{d}s_2\Big)^{\ast}(s_1,t_1)\cdot\mathrm{d} W^{H}_{s_1,t_1}\Big)\Big]\\=&\E\Big[\varphi(X^x_{s,t})\exp \Big(\sum_{j=1}^{d}\int_{0}^{s_1}\int_0^{t_1}f_{j}(s_{2},t_{2})\,\mathrm{d} W_{s_{2},t_{2}}^{(j),H}\Big)\Big]\\=&\E\Big[\E[\varphi(X^x_{s,t})\vert\mathcal{F}^H_{s,t}]\exp \Big(\sum_{j=1}^{d}\int_{0}^{s_1}\int_0^{t_1}f_{j}(s_{2},t_{2})\,\mathrm{d} W_{s_{2},t_{2}}^{(j),H}\Big)\Big].
	\end{align*}
	This completes the proof.
\end{proof}

We turn now to the third step of the scheme.  {The following result show that
	the compactness criterion for subsets of $L^2(\Omega)$ which we recall in the Appendix is
	satisfied in this case.}

\begin{thm}\label{RelCompacTheo}
	Let $\{b_n\}_{n\in\N_0}\subset C^{\infty}_0([0,T]^2\times\R^d)^d$ be a  sequence that converges to $b$ in $L^1_{\infty}$. For any $n\in\N_0$, denote by $X^{x,n}=(X^{x,n}_{s,t},(s,t)\in[0,T]^2)$ the corresponding solution of \eqref{MainPb} if we replace $b$ by $b_n$. Then there exists $\beta\in(0,1/2)$ such that for any  $(s,t)\in(0,T]^2$ and $\esp\in(0,\min\{s,t\})$, 
	\begin{align}\label{TheoExistEstim1}
		&\sup\limits_{n\in\N_0}\int_{[\esp,s]^2\times[\esp,t]^2}\frac{\E\left[\Vert D_{r,u}X^{x,n}_{s,t}-D_{\bar r,\bar u}X^{x,n}_{s,t}\Vert^2\right]}{(\vert r-\bar r\vert+\vert u-\bar u\vert)^{2+2\beta}}\,\mathrm{d}u\mathrm{d}\bar u\mathrm{d}r\mathrm{d}\bar r\\&\notag\qquad\leq\sup\limits_{n\in\N_0}C_{\esp,H,T,d}(\Vert b_n\Vert_{L^1_{\infty}})<\infty
	\end{align}
	and 
	\begin{align}\label{TheoExistEstim2}
		\sup\limits_{n\in\N_0}\Vert D_{\cdot,\cdot}X^{x,n}_{s,t}\Vert_{L^2(\Omega\times[\esp,s]\times[\esp,t],\R^{d\times d})}\leq\sup\limits_{n\in\N_0}C_{\esp,H,T,d}(\Vert b_n\Vert_{L^1_{\infty}})<\infty
	\end{align}
	for some continuous function $C_{\esp,H,,d,T}:\,\R_+^2\to\R_+$, where $\Vert\cdot\Vert$ denotes the maximum norm in $\R^{d\times d}$.
\end{thm}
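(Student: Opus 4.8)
The plan is to verify the two conditions of the compactness criterion by controlling the Malliavin derivative $D_{r,u}X^{x,n}_{s,t}$ through its defining linear equation. Recall that for smooth $b_n$ the derivative solves
\[
D_{r,u}X^{x,n}_{s,t}=K_H(s,t,r,u)I_d+\int_r^s\int_u^t b_n^{\prime}(s_1,t_1,X^{x,n}_{s_1,t_1})\,D_{r,u}X^{x,n}_{s_1,t_1}\,\mathrm{d}t_1\mathrm{d}s_1 .
\]
First I would solve this linear Volterra-type equation on the plane by Picard iteration, obtaining the Neumann series
\[
D_{r,u}X^{x,n}_{s,t}=\sum_{m\geq 0}\int_{\nabla^{m}_{r,s}}\int_{\nabla^{m}_{u,t}}\prod_{j=1}^{m}b_n^{\prime}(s_j,t_j,X^{x,n}_{s_j,t_j})\,K_H(s_m,t_m,r,u)\,\mathrm{d}t\,\mathrm{d}s,
\]
where each summand is an ordered iterated integral over the simplices $\nabla^{m}_{r,s}\times\nabla^{m}_{u,t}$ and the kernel $K_H(\cdot,\cdot,r,u)$ enters only through the innermost factor. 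Since $b_n^{\prime}$ is the Jacobian of $b_n$, every factor is a first-order spatial derivative, so each summand is precisely of the type treated in Theorem \ref{RegulariseTheo}, with multi-indices $\vert\alpha_j\vert=1$ and weights $\kappa_j(s_j,t_j)=K_H(s_j,t_j,r,u)^{\ve_j}$ where $\ve_m=1$ and $\ve_j=0$ for $j<m$.

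For the $L^2$-bound \eqref{TheoExistEstim2} I would bound the $L^2(\Omega)$-norm of each summand componentwise via Proposition \ref{RegulariseProp2} (applied with $p=2$, $\vert\alpha_j\vert=1$, $\sum_j\ve_j=1$) and then sum using the triangle inequality in $L^2(\Omega)$. For the $m$-th term the estimate carries a factor $C^{md}\big(\prod_i(4\vert\alpha^{(i)}\vert)!\big)^{1/4}\prod_j\Vert f_j\Vert^2_{L^1_{\infty}}$, with the $f_j$ the relevant components of $b_n$, divided by a product of two Gamma functions whose arguments grow linearly in $m$. Since $H_1+H_2<\frac{1}{2(d+1)}$ forces $\max\{H_1,H_2\}<\frac{1}{2d}$ and hence the integrability condition \eqref{RegulariseCond2} for $\vert\alpha_j\vert=1$, and since the $\Gamma$-denominators outgrow the combinatorial numerators (the device that tames the factorials, as in \cite{BNP18}), the series converges to a bound that is a continuous function of $\Vert b_n\Vert_{L^1_{\infty}}$. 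Integrating the resulting second-moment bound in $(r,u)$ over $[\esp,s]\times[\esp,t]$ then yields \eqref{TheoExistEstim2} uniformly in $n$.

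For the fractional Sobolev estimate \eqref{TheoExistEstim1} I would form the difference $D_{r,u}X^{x,n}_{s,t}-D_{\bar r,\bar u}X^{x,n}_{s,t}$ termwise. For each $m$ the difference of two iterated integrals splits into (a) a part where only the inner kernel changes, producing $K_H(s_m,t_m,r,u)-K_H(s_m,t_m,\bar r,\bar u)$, and (b) a part arising from the mismatch of the integration domains $[r,s]\times[u,t]$ and $[\bar r,s]\times[\bar u,t]$. Part (a) is estimated by Proposition \ref{RegulariseProp1} with $\kappa_j=(K_H(s_j,t_j,r,u)-K_H(s_j,t_j,\bar r,\bar u))^{\ve_j}$, whose modulus $\big[r^{H_1-\frac12-\gamma}\bar u^{H_2-\frac12-\gamma}\big(\tfrac{\vert u-\bar u\vert^{\gamma}}{(u\bar u)^{\gamma}}+\tfrac{\vert r-\bar r\vert^{\gamma}}{(r\bar r)^{\gamma}}\big)\big]^{2}$ is, since $r,u,\bar r,\bar u\geq\esp$, bounded by a constant times $(\vert r-\bar r\vert+\vert u-\bar u\vert)^{2\gamma}$. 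Part (b) is handled by Proposition \ref{RegulariseProp3} on the split domains $\Xi^{m,\ell}_{\bar r,r,\bar s}$ and $\Delta^{m,\ell}_{\bar u,u,\bar t}$, which produces factors $\vert u-\bar u\vert^{\cdots}$ and $(\bar s-\bar r)^{\cdots}$ of the same Hölder order. Summing over $m$ as before yields
\[
\E\big[\Vert D_{r,u}X^{x,n}_{s,t}-D_{\bar r,\bar u}X^{x,n}_{s,t}\Vert^2\big]\leq C_{\esp,H,T,d}(\Vert b_n\Vert_{L^1_{\infty}})\,(\vert r-\bar r\vert+\vert u-\bar u\vert)^{2\gamma},
\]
and choosing any $\beta\in(0,\min\{\gamma,1/2\})$ makes the net exponent $2\gamma-2-2\beta>-2$, so the singular kernel is integrable over the two-dimensional diagonal and \eqref{TheoExistEstim1} follows uniformly in $n$.

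The main obstacle will be the convergence of the Neumann series after the regularising estimates are applied: each use of Propositions \ref{RegulariseProp1}--\ref{RegulariseProp3} introduces factorial factors $(2p\vert\alpha^{(i)}\vert)!^{1/4}$ alongside $\Gamma$-function denominators, and one must show, uniformly in $n$ and across the permutations in $\widehat{\mathcal{P}}_{m\vert 2p}$, that the denominators dominate so that the sum over $m$ converges and depends on $b_n$ only through $\Vert b_n\Vert_{L^1_{\infty}}$. This balancing — precisely where the improved bound $H_1+H_2<\frac{1}{2(d+1)}$ enters, by leaving room to pick $\gamma>0$ satisfying \eqref{RegulariseCond} while \eqref{RegulariseCond2} and \eqref{RegulariseCond3} hold for $\vert\alpha_j\vert=1$ — is the technical heart of the argument; once the termwise bounds are summed, the verification of the two compactness conditions is routine.
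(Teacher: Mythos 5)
Your overall architecture (Neumann series for the Malliavin derivative, termwise application of Propositions \ref{RegulariseProp1}--\ref{RegulariseProp3}, summation via the $\Gamma$-denominators, then integration against the singular kernel) matches the paper's, but there is one genuine gap: you apply the regularising propositions directly to the summands of the Neumann series, whose integrands are $b_n'(s_j,t_j,X^{x,n}_{s_j,t_j})$, whereas Theorem \ref{RegulariseTheo} and Propositions \ref{RegulariseProp1}--\ref{RegulariseProp3} are stated only for functionals of the driving noise, i.e.\ integrands of the form $D^{\alpha_j}f_j(s_j,t_j,W^H_{s_j,t_j})$. Since $X^{x,n}$ is not Gaussian, the sectorial-local-nondeterminism machinery behind those propositions does not apply to it. The paper bridges this by a change of measure: it invokes Girsanov's theorem (Theorem \ref{GirsanovTheo}) to replace $X^{x,n}_{s_j,t_j}$ by $x+W^H_{s_j,t_j}$, paying a Cauchy--Schwarz against the Dol\'eans--Dade exponential whose moments are controlled uniformly in $n$ by Lemma \ref{GirsanovLemEst}. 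This is also why the paper applies the propositions with $p=4$ (and $p=8$ for the product term) rather than your $p=2$: the Cauchy--Schwarz step doubles the exponent. Without this step your termwise bounds are not justified, and with it your exponent bookkeeping must be redone at the higher value of $p$ (harmless, since the propositions hold for all $p$, but it changes the constants and the factorial/Gamma balance you need to check).

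A secondary, less serious issue is your two-part decomposition of $D_{r,u}X^{x,n}_{s,t}-D_{\bar r,\bar u}X^{x,n}_{s,t}$ into ``inner kernel changes'' plus ``domain mismatch''. On the plane the symmetric difference of the rectangles $[r,s]\times[u,t]$ and $[\bar r,s]\times[\bar u,t]$ is L-shaped, and the paper's decomposition accordingly has seven terms: besides your part (a) (the paper's $I_{1}$ and $I_{4,n}$, estimated via Proposition \ref{RegulariseProp1}) and your part (b) (the paper's $I_{5,n}$, $I_{6,n}$, estimated via Proposition \ref{RegulariseProp3}), there are corner contributions $I_{2,n}$, $I_{3,n}$ built from $D_{\bar r,\bar u}X^{x,n}_{s,u}$ and $D_{\bar r,\bar u}X^{x,n}_{r,t}$, and a product term $I_{7,n}$ built from $D_{\bar r,\bar u}X^{x,n}_{r,u}$, all estimated via Proposition \ref{RegulariseProp2} and the H\"older regularity of $K_H$ in $(r,u)$ over the small rectangles $[\bar r,r]\times[\bar u,u]$. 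These extra terms are controlled by the same toolkit, so your strategy survives, but a complete proof must account for them; as written, your difference does not telescope into only the two pieces you describe.
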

\begin{proof}
	We only show \eqref{TheoExistEstim1} since the proof of \eqref{TheoExistEstim2} follows analogeously.\\
	Let $(s,t)\in[\esp,T]^2$ and $(r,\bar r,u,\bar u)\in\R_+^4$ such that $\esp<\bar r,\,r<s$ and $\esp<\bar u,\,u<t$. Suppose without loss of generality we suppose $\bar u<u$ and $\bar r<r$. By the chain rule for the Malliavin derivative, one has
	\begin{align*}
		D_{r,u}X^{x,n}_{s,t}=K_H(s,t,r,u)I_d+\int_r^s\int_u^tb_n^{\prime}(s_1,t_1,X^{x,n}_{s_1,t_1})D_{r,u}X^{x,n}_{s_1,t_1}\,\mathrm{d}t_1\mathrm{d}s_1,
	\end{align*}
	where the above equality holds in $L^p([\esp,T]^2)$,   $b^{\prime}(s,t,z)=\Big(\frac{\partial}{\partial z_j}b^{(i)}_n(s,t,z)\Big)_{i,j=1,\ldots,d}$ is the Jacobian  matrix of $b_n$ and $I_d$ is the identity matrix in $\R^{d\times d}$. Hence one has
	\begin{align*}
		&D_{r,u}X^{x,n}_{s,t}-D_{\bar r,\bar u}X^{x,n}_{s,t}\\=&
K_H(s,t,r,u)I_d-K_H(s,t,\bar r,\bar u)I_d-(D_{\bar r,\bar u}X^{x,n}_{s,u}-K_H(s,u,\bar r,\bar u)I_d)\\&-(D_{\bar r,\bar u}X^{x,n}_{r,t}-K_H(r,t,\bar r,\bar u)I_d)+(D_{\bar r,\bar u}X^{x,n}_{r,u}-K_H(r,u,\bar r,\bar u)I_d)\\&+\int_r^s\int_u^tb_n^{\prime}(s_1,t_1,X^{x,n}_{s_1,t_1})(D_{r,u}X^{x,n}_{s_1,t_1}-D_{\bar r,\bar u}X^{x,n}_{s_1,t_1})\,\mathrm{d}t_1\mathrm{d}s_1.
	\end{align*}
	Then Picard iteration applied to the above equation yields
	\begin{align*}
		&D_{r,u}X^{x,n}_{s,t}-D_{\bar r,\bar u}X^{x,n}_{s,t}\\=&(K_H(s,t,r,u)I_d-K_H(s,t,\bar r,\bar u)I_d)-(D_{\bar r,\bar u}X^{x,n}_{s,u}-K_H(s,u,\bar r,\bar u)I_d)\\&-(D_{\bar r,\bar u}X^{x,n}_{r,t}-K_H(r,t,\bar r,\bar u)I_d)\\&+\sum\limits_{m=1}^{\infty}\int_{\nabla^{m}_{r,s}}\int_{\nabla^{m}_{u,t}}\prod\limits_{j=1}^mb_n^{\prime}(s_j,t_j,X^{x,n}_{s_j,t_j})(K_H(s_m,t_m,r,u)I_d-K_H(s_m,t_m,\bar r,\bar u)I_d)\\&\qquad\qquad\qquad\qquad\times\mathrm{d}t_1\ldots\mathrm{d}t_m\mathrm{d}s_1\ldots\mathrm{d}s_m\\&-\sum\limits_{m=1}^{\infty}\int_{\nabla^{m}_{r,s}}\int_{\nabla^{m}_{u,t}}\prod\limits_{j=1}^mb_n^{\prime}(s_j,t_j,X^{x,n}_{s_j,t_j})(D_{\bar r,\bar u}X_{s_m,u}-K_H(s_m,u,\bar r,\bar u)I_d)\,\mathrm{d}t_j\mathrm{d}s_j\\&-\sum\limits_{m=1}^{\infty}\int_{\nabla^{m}_{r,s}}\int_{\nabla^{m}_{u,t}}\prod\limits_{j=1}^mb_n^{\prime}(s_j,t_j,X^{x,n}_{s_j,t_j})(D_{\bar r,\bar u}X_{r,t_m}-K_H(r,t_m,\bar r,\bar u)I_d)\,\mathrm{d}t_j\mathrm{d}s_j\\&+\Big(I_d+\sum\limits_{m=1}^{\infty}\int_{\nabla^{m}_{r,s}}\int_{\nabla^{m}_{u,t}}\prod\limits_{j=1}^mb_n^{\prime}(s_j,t_j,X^{x,n}_{s_j,t_j})\,\mathrm{d}s_j\mathrm{d}t_j\Big)\\&\quad\times(D_{\bar r,\bar u}X^{x,n}_{r,u}-K_H(r,u,\bar r,\bar u)I_d)\\:=&I_1(r,u,\bar r,\bar u)-I_{2,n}(r,u,\bar r,\bar u)-I_{3,n}(r,u,\bar r,\bar u)+I_{4,n}(r,u,\bar r,\bar u)-I_{5,n}(r,u,\bar r,\bar u)\\&-I_{6,n}(r,u,\bar r,\bar u)+I_{7,n}(r,u,\bar r,\bar u).
	\end{align*}
	Observe that for any $(\esp,\esp)\preceq(r,u)\prec(s,t)\preceq(T,T)$, one has
	\begin{align*}
		&D_{r,u}X^{x,n}_{s,t}-K_H(s,t,r,u)I_d\\=&\sum\limits_{\ell=1}^{\infty}\int_{\nabla_{r,s}^{\ell}}\int_{\nabla_{u,t}^{\ell}}\prod\limits_{j=1}^{\ell}b^{\prime}(s_j,t_j,X^{x,n}_{s_j,t_j})K_H(s_{\ell},t_{\ell},r,u)I_d\,\mathrm{d}t_j\mathrm{d}s_j.
	\end{align*}
	Hence we may write
	\begin{align*}
		I_{2,n}(r,u,\bar r,\bar u)=\sum\limits_{\ell=1}^{\infty}\int_{\nabla_{\bar r,s}^{\ell}}\int_{\nabla_{\bar u,u}^{\ell}}\prod\limits_{j=1}^{\ell}b^{\prime}(s_j,t_j,X^{x,n}_{s_j,t_j})K_H(s_{\ell},t_{\ell},\bar r,\bar u)I_d\,\mathrm{d}t_j\mathrm{d}s_j,
	\end{align*}
	\begin{align*}
		I_{3,n}(r,u,\bar r,\bar u)=\sum\limits_{\ell=1}^{\infty}\int_{\nabla_{\bar r,r}^{\ell}}\int_{\nabla_{\bar u,t}^{\ell}}\prod\limits_{j=1}^{\ell}b^{\prime}(s_j,t_j,X^{x,n}_{s_j,t_j})K_H(s_{\ell},t_{\ell},\bar r,\bar u)I_d\,\mathrm{d}t_j\mathrm{d}s_j,
	\end{align*}
	\begin{align}\label{CompactEqIn5}
		&I_{5,n}(r,u,\bar r,\bar u)\\=&\notag\sum\limits_{m=1}^{\infty}\sum\limits_{\ell=1}^{\infty}\int_{\nabla^{m}_{r,s}}\int_{\nabla^{\ell}_{\bar r,s_m}}\int_{\nabla^{m}_{u,t}}\int_{\nabla^{\ell}_{\bar u,u}}\prod\limits_{j=1}^{m+\ell}b_n^{\prime}(s_j,t_j,X^{x,n}_{s_j,t_j}) K_H(s_{m+\ell},t_{m+\ell},\bar r,\bar u)I_d\\&\notag\qquad\times\mathrm{d}t_{m+\ell}\ldots\mathrm{d}t_{m+1}\mathrm{d}t_m\ldots\mathrm{d}t_1\mathrm{d}s_{m+\ell}\ldots\mathrm{d}s_{m+1}\mathrm{d}s_m\ldots\mathrm{d}s_1,
	\end{align}
	\begin{align}\label{CompactEqIn6}
		&I_{6,n}(r,u,\bar r,\bar u)\\=&\notag\sum\limits_{m=1}^{\infty}\sum\limits_{\ell=1}^{\infty}\int_{\nabla^{m}_{r,s}}\int_{\nabla^{\ell}_{\bar r,r}}\int_{\nabla^{m}_{u,t}}\int_{\nabla^{\ell}_{\bar u,t_m}}\prod\limits_{j=1}^{m+\ell}b_n^{\prime}(s_j,t_j,X^{x,n}_{s_j,t_j}) K_H(s_{m+\ell},t_{m+\ell},\bar r,\bar u)I_d\\&\notag\qquad\times\mathrm{d}t_{m+\ell}\ldots\mathrm{d}t_{m+1}\mathrm{d}t_m\ldots\mathrm{d}t_1\mathrm{d}s_{m+\ell}\ldots\mathrm{d}s_{m+1}\mathrm{d}s_m\ldots\mathrm{d}s_1
	\end{align}
	and
	\begin{align*}
		I_{7,n}(r,u,\bar r,\bar u)=&\Big(I_d+\sum\limits_{m=1}^{\infty}\int_{\nabla^{m}_{r,s}}\int_{\nabla^{m}_{u,t}}\prod\limits_{j=1}^mb_n^{\prime}(s_j,t_j,X^{x,n}_{s_j,t_j})\,\mathrm{d}s_j\mathrm{d}t_j\Big)\\&\times\Big(\sum\limits_{\ell=1}^{\infty}\int_{\nabla_{\bar r,r}^{\ell}}\int_{\nabla_{\bar u,u}^{\ell}}\prod\limits_{j=1}^{\ell}b^{\prime}(s_j,t_j,X^{x,n}_{s_j,t_j})K_H(s_{\ell},t_{\ell},\bar r,\bar u)I_d\,\mathrm{d}t_j\mathrm{d}s_j\Big).
	\end{align*}
	For any $m,\,\ell\in\N$ and $0\leq\bar r<r<s\leq T$, define the sets
	\begin{align*}
		\Xi^{m,\ell}_{\bar r,r,s}:=\{(s_1,\ldots,s_{m+\ell}):\,(s_1,\ldots,s_m)\in\nabla_{r,s}^{m}\text{ and }(s_{m+1},\ldots,s_{m+\ell})\in\nabla^{\ell}_{\bar r,s_m}\}
	\end{align*}
	and 
	\begin{align*}
		\Delta^{m,\ell}_{\bar r,r,s}:=&\{(s_1,\ldots,s_{m+\ell}):\,(s_1,\ldots,s_m)\in\nabla_{r,s}^{m}\text{ and }(s_{m+1},\ldots,s_{m+\ell})\in\nabla^{\ell}_{\bar r,r}\}\\=&\{(s_1,\ldots,s_{m+\ell}):\,\bar r<s_{m+\ell}<\ldots<s_{m+1}<r<s_m<\ldots<s_1<s\}.
	\end{align*}
	We have $\Xi^{m,\ell}_{\bar r,r,s}\subset\nabla^{m+\ell}_{\bar r,s}$ and $\Delta^{m,\ell}_{\bar r,r,s}\subset\nabla^{m+\ell}_{\bar r,s}$. One can see that \eqref{CompactEqIn5} and \eqref{CompactEqIn6} rewrite
	\begin{align*}
		&I_{5,n}(r,u,\bar r,\bar u)\\=&\notag\sum\limits_{m=1}^{\infty}\int_{\Xi^{m,\ell}_{\bar r,r,s}} \int_{\Delta^{m,\ell}_{\bar u,u,t}} \prod\limits_{j=1}^{m+\ell}b_n^{\prime}(s_j,t_j,X^{x,n}_{s_j,t_j}) K_H(s_{m+\ell},t_{m+\ell},\bar r,\bar u)I_d\,\mathrm{d}t_j\mathrm{d}s_j 
	\end{align*}
	and
	\begin{align*}
		&I_{6,n}(r,u,\bar r,\bar u)\\=&\notag\sum\limits_{m=1}^{\infty}\int_{\Delta^{m,\ell}_{\bar r,r,s}} \int_{\Xi^{m,\ell}_{\bar u,u,t}} \prod\limits_{j=1}^{m+\ell}b_n^{\prime}(s_j,t_j,X^{x,n}_{s_j,t_j}) K_H(s_{m+\ell},t_{m+\ell},\bar r,\bar u)I_d\,\mathrm{d}t_j\mathrm{d}s_j. 
	\end{align*}
	We deduce from Lemma \ref{ExistEstLem1} that there exists $\beta_1\in(0,\frac{1}{2})$ such that for any $(0,0)\prec(\esp,\esp) \prec(s,t)\preceq(T,T)$,
	\begin{align}\label{ExistEstLemPr1}
		&\notag\int_{[\esp,s]^2}\int_{[\esp,t]^2}\frac{\Vert I_1(r,u,\bar r,\bar u)\Vert^2}{(\vert r-\bar r\vert+\vert u-\bar u\vert)^{2+2\beta_1}}\mathrm{d}u\mathrm{d}\bar u\mathrm{d}r\mathrm{d}\bar r\\
		\notag=&\int_{[\ve,s]^2}\int_{[\ve,t]^2}\frac{\vert K_H(s,t,r,u)-K_H(s,t,\bar r,\bar u)\vert^2}{(\vert r-\bar r\vert+\vert u-\bar u\vert)^{2+2\beta_1}}\mathrm{d}u\mathrm{d}\bar u\mathrm{d}r\mathrm{d}\bar r
		\\\leq&	
		\int_{[0,s]^2}\int_{[0,t]^2}\frac{\vert K_H(s,t,r,u)-K_H(s,t,\bar r,\bar u)\vert^2}{(\vert r-\bar r\vert+\vert u-\bar u\vert)^{2+2\beta_1}}\mathrm{d}u\mathrm{d}\bar u\mathrm{d}r\mathrm{d}\bar r<\infty.
	\end{align}
	Next we estimate the term $I_{4,n}(r,u,\bar r,\bar u)$. It follows from  {Theorem \ref{GirsanovTheo}}, Cauchy-Schwarz inequality and Lemma  \ref{GirsanovLemEst} that
	\begin{align}
		&\notag\E[\Vert I_{4,n}(r,u,\bar r,\bar u)\Vert^2]\\\leq&\notag C_{H,d,T}(\Vert b_n\Vert_{L^{\infty}_{\infty}})\E\Big[\Big\Vert \sum\limits_{m=1}^{\infty}\int_{\nabla^{m}_{r,s}}\int_{\nabla^{m}_{u,t}}\prod\limits_{j=1}^mb_n^{\prime}(s_j,t_j,x+W^{H}_{s_j,t_j})(K_H(s_m,t_m,r,u)I_d\\&\qquad\qquad\qquad\qquad\notag-K_H(s_m,t_m,\bar r,\bar u)I_d)\mathrm{d}t_1\ldots\mathrm{d}t_m\mathrm{d}s_1\ldots\mathrm{d}s_m\Big\Vert^4\Big]\\\leq&C_1\Big(\sum\limits_{m=1}^{\infty}\sum\limits_{i,j=1}^d\sum\limits_{l_1,\ldots,l_{m-1}=1}^d\Big\Vert\int_{\nabla^{m}_{r,s}}\int_{\nabla^{m}_{u,t}}\frac{\partial b_n^{(i)}}{\partial x_{l_1}}(s_1,t_1,x+W^H_{s_1,t_1})\label{ExistEstimI4n}\\&\notag\qquad\times\frac{\partial b_n^{(l_1)}}{\partial x_{l_2}}(s_2,t_2,x+W^H_{s_2,t_2})\cdots\frac{\partial b_n^{(l_{m-1})}}{\partial x_{j}}(s_m,t_m,x+W^H_{s_m,t_m})\\&\qquad\notag\times(K_H(s_m,t_m,r,u)-K_H(s_m,t_m,\bar r,\bar u))\mathrm{d}t_1\ldots\mathrm{d}t_m\mathrm{d}s_1\ldots\mathrm{d}s_m\Big\Vert_{L^4(\Omega)}\Big)^2,
	\end{align}
	where $C_{H,d,T}=(C_{H,d,2,T})^{1/2}$, $C_{H,d,2,T}$ being the function from Lemma \ref{GirsanovLemEst} with $p=2$ and $$C_1:=\sup_{n\in\N_0}C_{H,d,T}(\Vert b_n\Vert_{L^{1}_{\infty}})<\infty.$$
	We define
	\begin{align*}
		J_{4,n}(r,u,\bar r,\bar u,W^H):=&\int_{\nabla^{m}_{r,s}}\int_{\nabla^{m}_{u,t}}\frac{\partial b_n^{(i)}}{\partial x_{l_1}}(s_1,t_1,x+W^H_{s_1,t_1})\\&\times\frac{\partial b_n^{(l_1)}}{\partial x_{l_2}}(s_2,t_2,x+W^H_{s_2,t_2})\cdots\frac{\partial b_n^{(l_{m-1})}}{\partial x_{j}}(s_m,t_m,x+W^H_{s_m,t_m})\\&\times(K_H(s_m,t_m,r,u)-K_H(s_m,t_m,\bar r,\bar u))\mathrm{d}t_1\ldots\mathrm{d}t_m\mathrm{d}s_1\ldots\mathrm{d}s_m.
	\end{align*}
	It follows from 
	Proposition \ref{RegulariseProp1} with $p=4$, $\sum_{j=1}^{m}\ve_{j}=1$ (since $\ve_m=1$ and $\ve_j=0$ for $j=1,\ldots,m-1$), $\vert\alpha_j\vert=\sum_{l=1}^d\alpha^{(l)}_j=1$ for all $j$, $\vert\alpha\vert=m$ and from the inequality $\prod_{l=1}^d(2p\vert\alpha^{(l)}\vert)!\leq(2p\sum_{l=1}^d\vert\alpha^{(l)}\vert)!=(2p\vert\alpha\vert)!$ that
	\begin{align*}
		&\E[(J_{4,n}(r,u,\bar r,\bar u,W^H))^4]\leq\Big[r^{H_1-\frac{1}{2}-\gamma}\bar u^{H_2-\frac{1}{2}-\gamma}\Big(\frac{\vert u-\bar u\vert^{\gamma}}{(u\bar u)^{\gamma}} +\frac{\vert r-\bar r\vert^{\gamma}}{(r\bar r)^{\gamma}} \Big)\Big]^{4}\\&\quad\times C^{4m}\Vert b_n\Vert^{4m}_{L^1_{\infty}}A^{4,\gamma}_{d,m}(H,\vert s-r\vert,\vert t-u\vert),
	\end{align*}
	where
	\begin{align*}
	A^{4,\gamma}_{d,m}(H,\vert s-r\vert,\vert t-u\vert)    :=&\frac{((8m)!)^{\frac{1}{4}}(s-r)^{4(H_1-\frac{1}{2}-\gamma) -4m(1+d)H_1+4m}}{\Gamma\left(8(H_1-\frac{1}{2}-\gamma)- 8m(1+d)H_1+8m+1\right)^{1/2}}
		\\&\times\frac{(t-u)^{4(H_2-\frac{1}{2}-\gamma) -4m(1+d)H_2+4m}}{\Gamma\left(8(H_2-\frac{1}{2}-\gamma) -8m(1+d)H_2+8m+1\right)^{1/2}}.
	\end{align*}
	Then using \eqref{ExistEstimI4n}, we have
	\begin{align}
		\E[\Vert I_{4,n}(r,u,\bar r,\bar u)\Vert^2]\leq&\Big[r^{H_1-\frac{1}{2}-\gamma}\bar u^{H_2-\frac{1}{2}-\gamma}\Big(\frac{\vert u-\bar u\vert^{\gamma}}{(u\bar u)^{\gamma}} +\frac{\vert r-\bar r\vert^{\gamma}}{(r\bar r)^{\gamma}} \Big)\Big]^{2}\\&\notag\times\Big(\sum\limits_{m=1}^{\infty}d^{m+1} C^{m}\Vert b_n\Vert^{m}_{L^1_{\infty}}(A^{4,\gamma}_{d,m}(H,\vert s-r\vert,\vert t-u\vert))^{\frac{1}{4}}\Big)^2.
	\end{align}
	Since $\max\{H_1,H_2\}<\frac{1}{2(d+1)}$ one can see that the above sum  {converges}. Hence, there exists a continuous function $C^{(4)}_{\esp,H,d,T}:\,[0,\infty)^2\to[0,\infty)$ such that
	\begin{align*}
		&\sup\limits_{n\in\N_0}\E[\Vert I_{4,n}(r,u,\bar r,\bar u)\Vert^2]\\\leq&\sup_{n\in\N_0}C^{(4)}_{\esp,H,d,T}(\Vert b_n\Vert_{L^1_{\infty}}) \Big[r^{H_1-\frac{1}{2}-\gamma}\bar u^{H_2-\frac{1}{2}-\gamma}\Big(\frac{\vert u-\bar u\vert^{\gamma}}{(u\bar u)^{\gamma}} +\frac{\vert r-\bar r\vert^{\gamma}}{(r\bar r)^{\gamma}} \Big)\Big]^{2}
	\end{align*}
	for any  $\gamma\in(0,\min\{H_1,H_2\})$,
	where  $\max\{H_1,H_2\}<\frac{1}{2(d+1)}$. Moreover, applying the Young type inequality, we have
	\begin{align}\label{EqLemC4g}
		\frac{\vert r-\bar r\vert+\vert u-\bar u\vert}{\min\{\eta,\delta\}}\geq\frac{\vert r-\bar r\vert}{\delta}+\frac{\vert u-\bar u\vert}{\eta} \geq\vert r-\bar r\vert^{1/\delta}\vert u-\bar u\vert^{1/\eta},
	\end{align}
	where $\delta>1$ and $\eta=\frac{\delta}{\delta-1}$,
	we can choose $\gamma\in(0,\min\{H_1,H_2\})$ and a suitably small $\beta_4\in(0,\frac{1}{2})$ satisfying $0<\beta_4<\gamma<\min\{H_1,H_2\}<1/2$ such that
	\begin{align*}
		&\int_{[\esp,s]^2}\int_{[\esp,t]^2}\Big[r^{H_1-\frac{1}{2}-\gamma}\bar u^{H_2-\frac{1}{2}-\gamma}\Big(\frac{\vert u-\bar u\vert^{\gamma}}{(u\bar u)^{\gamma}} +\frac{\vert r-\bar r\vert^{\gamma}}{(r\bar r)^{\gamma}} \Big)\Big]^{2}\\&\qquad\times(\vert r-\bar r\vert+\vert u-\bar u\vert)^{-2-2\beta_4}\,\mathrm{d}u\mathrm{d}\bar u\mathrm{d}r\mathrm{d}\bar r\\\leq&2\int_{[0,s]^2}\int_{[0,t]^2}r^{2H_1-1-2\gamma}\bar u^{2H_2-1-2\gamma}\Big(\frac{\vert u-\bar u\vert^{2\gamma}}{(u\bar u)^{2\gamma}} +\frac{\vert r-\bar r\vert^{2\gamma}}{(r\bar r)^{2\gamma}} \Big) \\&\qquad\qquad\qquad\times(\vert r-\bar r\vert+\vert u-\bar u\vert)^{-2-2\beta_4}\,\mathrm{d}u\mathrm{d}\bar u\mathrm{d}r\mathrm{d}\bar r:=C^{(2)}_{H,\gamma,\beta_4}<\infty
	\end{align*}
	for every $(s,t)\in[\ve,T]^2$.\\ As a consequence, there exists $\beta_4\in(0,\frac{1}{2})$ such that for any $\esp>0$ and $(s,t)\in[\esp,T]^2$ it holds
	\begin{align*}
		\int_{[\esp,s]^2}\int_{[\esp,t]^2}\frac{\E\left[\Vert I_{4,n}(r,u,\bar r,\bar u)\Vert^2\right]}{(\vert r-\bar r\vert+\vert u-\bar u\vert)^{2+2\beta_4}}\mathrm{d}u\mathrm{d}\bar u\mathrm{d}r\mathrm{d}\bar r<\infty.
	\end{align*}    
	Let us turn to the term $I_{5,n}(r,u,\bar r,\bar u)$. Using again Girsanov's theorem, Cauchy-Schwarz inequality and Lemma \ref{GirsanovLemEst}, we may write
	\begin{align}
		&\notag\E\left[\Vert I_{5,n}(r,u,\bar r,\bar u)\Vert^2\right]\\\leq&\notag C_{H,d,T}(\Vert b_n\Vert_{L^{\infty}_{\infty}})\E\Big[\Big\Vert \sum\limits_{m=1}^{\infty}\int_{\Xi^{m,\ell}_{\bar r,r,s}} \int_{\Delta^{m,\ell}_{\bar u,u,t}} \prod\limits_{j=1}^{m+\ell}b_n^{\prime}(s_j,t_j,x+W^{H}_{s_j,t_j})\\&\notag\qquad\times K_H(s_{m+\ell},t_{m+\ell},\bar r,\bar u)I_d\,\mathrm{d}t_{m+\ell}\ldots\mathrm{d}t_1\mathrm{d}s_{m+\ell}\ldots\mathrm{d}s_1\Big\Vert^4\Big]\\\leq&C_1\Big(\sum\limits_{m,\ell=1}^{\infty}\sum\limits_{i,j=1}^d\sum\limits_{l_1,\ldots,l_{m+\ell-1}=1}^d\Vert J_{5,n}(r,u,\bar r,\bar u,W^H)\Vert_{L^4(\Omega)}\Big)^2,\label{ExistEstimI5n}
	\end{align}
	where
	\begin{align*}
		&J_{5,n}(r,u,\bar r,\bar u,W^H)\\:=&
		\int_{\Xi^{m,\ell}_{\bar r,r,s}}\int_{\Delta^{m,\ell}_{\bar u,u,t}}\frac{\partial b_n^{(i)}}{\partial x_{l_1}}(s_1,t_1,x+W^H_{s_1,t_1})\\&\quad\times\frac{\partial b_n^{(l_1)}}{\partial x_{l_2}}(s_2,t_2,x+W^H_{s_2,t_2})\cdots\frac{\partial b_n^{(l_{m+\ell-1})}}{\partial x_{j}}(s_{m+\ell},t_{m+\ell},x+W^H_{s_{m+\ell},t_{m+\ell}})\\&\quad\times K_H(s_{m+\ell},t_{m+\ell},\bar r,\bar u)\mathrm{d}t_{m+\ell}\ldots\mathrm{d}t_1\mathrm{d}s_{m+\ell}\ldots\mathrm{d}s_1.
	\end{align*}
	Using Proposition \ref{RegulariseProp3} $p=4$, $\sum_{j=1}^{m+\ell}\ve_{j}=1$ (since $\ve_{m+\ell}=1$ and $\ve_j=0$ for $j=1,\ldots,m+\ell-1$), $\vert{\alpha}_{([j])}\vert:=\sum_{l=1}^d\alpha^{(l)}_{([j])}=1$ for all $j$, $\vert\alpha\vert=m+\ell$ and the inequality $\prod_{l=1}^d(2p\vert\alpha^{(l)}\vert)!\leq (2p\vert\alpha\vert)!$, we have
	\begin{align*}
		&\E[(J_{5,n}(r,u,\bar r,\bar u,W^H))^4]\leq\bar r^{4H_1-2}\bar u^{4H_2-2}C_2^{4(m+\ell)}\Vert b_n\Vert^{4(m+\ell)}_{L^1_{\infty} }\\&\qquad\times A^{5,0}_{d,m}(H,\vert s-\bar r\vert,\vert t-u\vert,\vert u-\bar u\vert),
	\end{align*}
	where
	\begin{align*}
		&A^{5,0}_{d,m}(H,\vert s-\bar r\vert,\vert t-u\vert,\vert u-\bar u\vert)\\
		:=&\frac{(8(m+\ell)!)^{\frac{1}{4}}(s-\bar r)^{4(H_1-\frac{1}{2}) -4(m+\ell)(1+d)H_1+4(m+\ell)}}{\Gamma\left(8(H_1-\frac{1}{2})- 8(m+\ell)(1+d)H_1+8(m+\ell)+1\right)^{1/2}}
		\\&\quad\times\sum\limits_{\pi\in\widehat{\mathcal{P}}_{m+\ell\vert 8}}\Big\{\frac{(t-u)^{\frac{1}{2}(H_2-\frac{1}{2})\sum_{k=1}^{8m} \ve_{([\pi^{-1}(k)])}-4m(1+d)H_2+4m}}{\Gamma\left((H_2-\frac{1}{2})\sum\limits_{k=1}^{8m} \ve_{([\pi^{-1}(k)])}- 8m(1+d)H_2+8m+1\right)^{1/2}}\\&\qquad\qquad\times\frac{\vert u-\bar u\vert^{\frac{1}{2}(H_2-\frac{1}{2})\sum_{j=1}^{8\ell} \ve_{([\pi^{-1}(8m+j)])}-4\ell(1+ d)H_2+4\ell}}{\Gamma\left((H_2-\frac{1}{2})\sum\limits_{j=1}^{8\ell} \ve_{([\pi^{-1}(8m+j)])}- 8\ell(1+ d)H_2+8\ell+1\right)^{1/2}}\Big\}.
	\end{align*}
	Now, since $H_2-\frac{1}{2}<0$, $\sum_{k=1}^{8m} \ve_{([\pi^{-1}(k)])}\leq\sum_{k=1}^{8(m+\ell)}\ve_{([k])}$, $\sum_{j=1}^{8\ell} \ve_{([\pi^{-1}(8m+j)])}\leq\sum_{k=1}^{8(m+\ell)}\ve_{([k])}$, $\sum_{k=1}^{8(m+\ell)}\ve_{([k])}=8$, $0\leq t-u\leq T$, $0\leq u-\bar u\leq T$ and $\Gamma$ is nondecreasing on $[2,\infty)$,
	then 
	for $m$ and $\ell$ large enough, 
	\begin{align*}
		A^{5,0}_{d,m}(H,\vert s-\bar r\vert,\vert t-u\vert,\vert u-\bar u\vert)\leq C_{T}^{m+\ell}\widetilde{A}^{5,0}_{d,m}(H,\vert s-\bar r\vert,\vert t-u\vert,\vert u-\bar u\vert),
	\end{align*}
	where
	\begin{align*}
		&\widetilde{A}^{5,0}_{d,m}(H,\vert s-\bar r\vert,\vert t-u\vert,\vert u-\bar u\vert)\\:=&  \frac{((8(m+\ell))!)^{\frac{1}{4}}(s-\bar r)^{4(H_1-\frac{1}{2}) -4(m+\ell)(1+d)H_1+4(m+\ell)}}{\Gamma\left(8(H_1-\frac{1}{2})- 8(m+\ell)(1+d)H_1+8(m+\ell)+1\right)^{1/2}}
		\\&\quad\times \frac{(t-u)^{4(H_2-\frac{1}{2}) -4m(1+d)H_2+4m}}{\Gamma\left(8(H_2-\frac{1}{2})- 8m(1+d)H_2+8m+1\right)^{1/2}}\\&\quad\times\frac{\vert u-\bar u\vert^{4(H_2-\frac{1}{2}) -4\ell(1+ d)H_2+4\ell}}{\Gamma\left(8(H_2-\frac{1}{2})- 8\ell(1+ d)H_2+8\ell+1\right)^{1/2}}.
	\end{align*}
	Hence, by \eqref{ExistEstimI5n}, we obtain
	\begin{align}\label{ExistEstimI5nb}
		&\E[\Vert I_{5,n}(r,u,\bar r,\bar u)\Vert^2]\leq\bar r^{2H_1-1}\bar u^{2H_2-1}\\&\notag\times\Big(\sum\limits_{m,\ell=1}^{\infty}d^{m+\ell+1} C_T^{m+\ell}\Vert b_n\Vert^{m+\ell}_{L^1_{\infty}}(\widetilde{A}^{5,0}_{d,m}(H,\vert s-\bar r\vert,\vert t-u\vert,\vert u-\bar u\vert))^{\frac{1}{4}}\Big)^2.
	\end{align}
	We deduce from the inequality $(8m)!(8\ell)!\leq(8(m+\ell))!\leq2^{8(m+\ell)}(8m)!(8\ell)!$ that
	\begin{align*}
		&\widetilde{A}^{5,0}_{d,m}(H,\vert s-\bar r\vert,\vert t-u\vert,\vert u-\bar u\vert)\\\leq&  \frac{2^{8(m+\ell)}((8m)!(8\ell)!)^{\frac{1}{8}}(s-\bar r)^{4(H_1-\frac{1}{2}) -4(m+\ell)(1+d)H_1+4(m+\ell)}}{\Gamma\left(8(H_1-\frac{1}{2})- 8(m+\ell)(1+d)H_1+8(m+\ell)+1\right)^{1/2}}
		\\&\quad\times \frac{((8m)!)^{\frac{1}{8}}(t-u)^{4(H_2-\frac{1}{2}) -4m(1+d)H_2+4m}}{\Gamma\left(8(H_2-\frac{1}{2})- 8m(1+d)H_2+8m+1\right)^{1/2}}\\&\quad\times\frac{((8\ell)!)^{\frac{1}{8}}\vert u-\bar u\vert^{4(H_2-\frac{1}{2}) -4\ell(1+ d)H_2+4\ell}}{\Gamma\left(8(H_2-\frac{1}{2})- 8\ell(1+ d)H_2+8\ell+1\right)^{1/2}}.
	\end{align*}
	Thus as $\max\{H_1,H_2\}<\frac{1}{2(d+1)}$, we have  
	\begin{align*}
		\Gamma\left(8(H_1-1/2)- 8(m+\ell)(1+d)H_1+8(m+\ell)+1\right)>\Gamma(4(m+\ell)+1),
	\end{align*} 
	\begin{align*}
		\Gamma\left(8(H_2-1/2)- 8m(1+d)H_2+8m+1\right)>\Gamma(4m+1)
	\end{align*}
	and
	\begin{align*}
		\Gamma\left(8(H_2-\frac{1}{2})- 8\ell(1+ d)H_2+8\ell+1\right)>\Gamma(4\ell+1)
	\end{align*}
	for $m$ and $\ell$ large enough. As a consequence, we can show (by ratio test) that the sum in \eqref{ExistEstimI5nb}  {converges} and there exists $\gamma>0$ and a continuous function $C^{(5)}_{\ve,H,d,T}:\,[0,\infty)^2\to[0,\infty)$ such that
	\begin{align*}
		\sup\limits_{n\in\N_0}\E[\Vert I_{5,n}(r,u,\bar r,\bar u)\Vert^2]\leq&\sup_{n\in\N_0}C^{(5)}_{\esp,H,d,T}(\Vert b_n\Vert_{L^1_{\infty}}) \,\bar r^{2H_1-1}\bar u^{2H_2-1}\vert u-\bar u\vert^{\gamma}.
	\end{align*}
	It follows from Young inequality that there exists $\beta_5\in(0,\gamma)$ such that for any $0<\esp<s,t<T$,
	\begin{align*}
		&\int_{[\esp,s]^2}\int_{[\esp,t]^2}\bar r^{2H_1-1}\bar u^{2H_2-1}\vert u-\bar u\vert^{\gamma}(\vert r-\bar r\vert+\vert u-\bar u\vert)^{-2-2\beta_5}\\\leq&\int_{[0,s]^2}\int_{[0,t]^2}\bar r^{2H_1-1}\bar u^{2H_2-1}\vert u-\bar u\vert^{\gamma}(\vert r-\bar r\vert+\vert u-\bar u\vert)^{-2-2\beta_5}<\infty.
	\end{align*}
	Then we can choose $\beta_5\in(0,\frac{1}{2})$ such that for any $0<\esp<s,t\leq T$,
	\begin{align*}
		\int_{[\esp,s]^2}\int_{[\esp,t]^2}\frac{\E\left[\Vert I_{5,n}(r,u,\bar r,\bar u)\Vert^2\right]}{(\vert r-\bar r\vert+\vert u-\bar u\vert)^{2+2\beta_5}}\mathrm{d}u\mathrm{d}\bar u\mathrm{d}r\mathrm{d}\bar r<\infty.
	\end{align*}
	Similarly we can find $\beta_6\in(0,\frac{1}{2})$ such that for any $0<\esp<s,t\leq T$,
	\begin{align*}
		\int_{[\esp,s]^2}\int_{[\esp,t]^2}\frac{\E\left[\Vert I_{6,n}(r,u,\bar r,\bar u)\Vert^2\right]}{(\vert r-\bar r\vert+\vert u-\bar u\vert)^{2+2\beta_6}}\mathrm{d}u\mathrm{d}\bar u\mathrm{d}r\mathrm{d}\bar r<\infty.
	\end{align*} 
	The estimates of the terms $I_{2,n}(r,u,\bar r,\bar u)$ and $I_{3,n}(r,u,\bar r,\bar u)$ are analogeous to those obtained above. Precisely, we have 
	\begin{align*}
		&\notag\E[\Vert I_{2,n}(r,u,\bar r,\bar u)\Vert^2]\\\leq&\bar r^{2H_1-1}\bar u^{2H_2-1}  \Big(\sum\limits_{\ell=1}^{\infty}d^{\ell+1} C^{\ell}\Vert b_n\Vert^{\ell}_{L^1_{\infty} }(A^{2,0}_{d,\ell}(H,\vert s-\bar r\vert,\vert u-\bar u\vert))^{\frac{1}{4}}\Big)^2
	\end{align*}
	and
	\begin{align*}
		&\notag\E[\Vert I_{3,n}(r,u,\bar r,\bar u)\Vert^2]\\\leq&\bar r^{2H_1-1}\bar u^{2H_2-1}  \Big(\sum\limits_{\ell=1}^{\infty}d^{\ell+1} C^{\ell}\Vert b_n\Vert^{\ell}_{L^1_{\infty} }(A^{3,0}_{d,\ell}(H,\vert r-\bar r\vert,\vert t-\bar u\vert))^{\frac{1}{4}}\Big)^2,
	\end{align*}
	where
	\begin{align*}
		A^{2,0}_{d,\ell}(H,\vert s-\bar r\vert,\vert u-\bar u\vert) =&\frac{((8\ell)!)^{\frac{1}{4}}(s-\bar r)^{4(H_1-\frac{1}{2}) -4\ell(1+d)H_1+4\ell}}{\Gamma\left(8(H_1-\frac{1}{2})- 8\ell(1+d)H_1+8\ell+1\right)^{1/2}}
		\\&\quad\times\frac{\vert u-\bar u\vert^{4(H_2-\frac{1}{2}) -4\ell(1+d)H_2+4\ell}}{\Gamma\left(8(H_2-\frac{1}{2}) -8\ell(1+d)H_2+8\ell+1\right)^{1/2}}
	\end{align*}
	and
	\begin{align*}
		A^{3,0}_{d,\ell}(H,\vert r-\bar r\vert,\vert t-\bar u\vert) =&\frac{((8\ell)!)^{\frac{1}{4}}\vert r-\bar r\vert^{4(H_1-\frac{1}{2}) -4\ell(1+d)H_1+4\ell}}{\Gamma\left(8(H_1-\frac{1}{2})- 8\ell(1+d)H_1+8\ell+1\right)^{1/2}}
		\\&\quad\times\frac{(t-\bar u)^{4(H_2-\frac{1}{2}) -4\ell(1+d)H_2+4\ell}}{\Gamma\left(8(H_2-\frac{1}{2}) -8\ell(1+d)H_2+8\ell+1\right)^{1/2}}.
	\end{align*}
	Then, as $\max\{H_1,H_2\}<\frac{1}{2(d+1)}$, there exist $\beta_2,\,\beta_3\in(0,\frac{1}{2})$ such that for any $0<\esp<s,t\leq T$,
	\begin{align*}
		\int_{[\esp,s]^2}\int_{[\esp,t]^2}\frac{\E\left[\Vert I_{i,n}(r,u,\bar r,\bar u)\Vert^2\right]}{(\vert r-\bar r\vert+\vert u-\bar u\vert)^{2+2\beta_i}}\mathrm{d}u\mathrm{d}\bar u\mathrm{d}r\mathrm{d}\bar r<\infty,\quad i=2,3.
	\end{align*} 
	It remains to estimate $I_{7,n}(r,u,\bar r,\bar u)$. Using  {once more} Girsanov's theorem, Cauchy-Schwarz inequality  {repeatedly} and Proposition \ref{RegulariseProp2}, we have
	\begin{align*}
		&\E\left[\Vert I_{7,n}(r,u,\bar r,\bar u)\Vert^2\right]\\\leq&C_1\Big\Vert\Big(I_d+\sum\limits_{m=1}^{\infty}\int_{\nabla^{m}_{r,s}}\int_{\nabla^{m}_{u,t}}\prod\limits_{j=1}^mb_n^{\prime}(s_j,t_j,X^{x,n}_{s_j,t_j})\,\mathrm{d}s_j\mathrm{d}t_j\Big)\Big\Vert^2_{L^8(\Omega)}\\&\times\Big\Vert\Big(\sum\limits_{m=1}^{\infty}\int_{\nabla_{\bar r,r}^{m}}\int_{\nabla_{\bar u,u}^{m}}\prod\limits_{j=1}^{m}b^{\prime}(s_j,t_j,X^{x,n}_{s_j,t_j})K_H(s_{m},t_{m},\bar r,\bar u)I_d\,\mathrm{d}t_j\mathrm{d}s_j\Big)\Big\Vert^2_{L^8(\Omega)},
	\end{align*}
	which implies that
	\begin{align*}
		&\E\left[\Vert I_{7,n}(r,u,\bar r,\bar u)\Vert^2\right]\\\leq&C_{12}\Big(1+\sum\limits_{m=1}^{\infty}\sum\limits_{i,j=1}^d\sum\limits_{l_1,\ldots,l_{m-1}=1}^d\Big\Vert\int_{\nabla^{m}_{r,s}}\int_{\nabla^{m}_{u,t}}\frac{\partial b_n^{(i)}}{\partial x_{l_1}}(s_1,t_1,x+W^H_{s_1,t_1}) \\&\notag\qquad\times\frac{\partial b_n^{(l_1)}}{\partial x_{l_2}}(s_2,t_2,x+W^H_{s_2,t_2})\cdots\frac{\partial b_n^{(l_{m-1})}}{\partial x_{j}}(s_m,t_m,x+W^H_{s_m,t_m})\\&\qquad\notag\times \mathrm{d}t_1\ldots\mathrm{d}t_m\mathrm{d}s_1\ldots\mathrm{d}s_m\Big\Vert_{L^8(\Omega)}\Big)^2\\&\times\Big(\sum\limits_{m=1}^{\infty}\sum\limits_{i,j=1}^d\sum\limits_{l_1,\ldots,l_{m-1}=1}^d\Big\Vert\int_{\nabla^{m}_{\bar r,r}}\int_{\nabla^{m}_{\bar u,u}}\frac{\partial b_n^{(i)}}{\partial x_{l_1}}(s_1,t_1,x+W^H_{s_1,t_1}) \\&\notag\qquad\times\frac{\partial b_n^{(l_1)}}{\partial x_{l_2}}(s_2,t_2,x+W^H_{s_2,t_2})\cdots\frac{\partial b_n^{(l_{m-1})}}{\partial x_{j}}(s_m,t_m,x+W^H_{s_m,t_m})\\&\qquad\notag\times K_H(s_m,t_m,\bar r,\bar u)\mathrm{d}t_1\ldots\mathrm{d}t_m\mathrm{d}s_1\ldots\mathrm{d}s_m\Big\Vert_{L^8(\Omega)}\Big)^2.
	\end{align*}
	Applying Proposition \ref{RegulariseProp1} with $p=8$,   $\sum_{l=1}^d\alpha^{(l)}_j=1$ for all $j$, $\vert\alpha\vert=m+\ell$ and the inequality $\prod_{l=1}^d(2p\vert\alpha^{(l)}\vert)!\leq (2p\vert\alpha\vert)!$, we have
	\begin{align}\label{ExistEstim7n}
		&\E\left[\Vert I_{7,n}(r,u,\bar r,\bar u)\Vert^2\right]\leq C_{12}\bar r^{2H_1-1}\bar u^{2H_2-1}\\&\notag\qquad\times \Big(1+\sum\limits_{m=1}^{\infty}d^{m+1} C^{m}\Vert b_n\Vert^{m}_{L^1_{\infty} }(A^{7,0}_{d,m}(H,\vert s-r\vert,\vert t-u\vert))^{\frac{1}{8}}\Big)^2\\&\qquad\times\Big(\sum\limits_{m=1}^{\infty}d^{m+1} C^{m}\Vert b_n\Vert^{m}_{L^1_{\infty} }(A^{7,0}_{d,m}(H,\vert r-\bar r\vert,\vert u-\bar u\vert))^{\frac{1}{8}}\Big)^2\notag,
	\end{align}
	where, for any $\bar s,\,\bar t>0$,
	\begin{align*}
		A^{7,0}_{d,m}(H,\bar s,\bar t)  :=&\frac{((16\ell)!)^{\frac{1}{4}}\bar s^{\,8(H_1-\frac{1}{2}-\gamma) -8\ell(1+d)H_1+8\ell}}{\Gamma\left(16(H_1-\frac{1}{2})- 16\ell(1+d)H_1+16\ell+1\right)^{1/2}}
		\\&\quad\times\frac{\bar t^{\,8(H_2-\frac{1}{2}) -8\ell(1+d)H_2+8\ell}}{\Gamma\left(16(H_2-\frac{1}{2}) -16\ell(1+d)H_2+16\ell+1\right)^{1/2}}.
	\end{align*}
	Since $\max\{H_1,H_2\}<\frac{1}{2(d+1)}$, both sums in \eqref{ExistEstim7n} converge and there exist $\gamma_1,\,\gamma_2>0$ and a continuous function $C^{(7)}_{\ve,H,d,T}:\,[0,\infty)^2\to[0,\infty)$ such that
	\begin{align*}
		&\sup\limits_{n\in\N_0}\E[\Vert I_{7,n}(r,u,\bar r,\bar u)\Vert^2]\\\leq&\sup_{n\in\N_0}C^{(7)}_{\esp,H,d,T}(\Vert b_n\Vert_{L^1_{\infty}}) \,\bar r^{2H_1-1}\bar u^{2H_2-1}\vert r-\bar r\vert^{\gamma_1}\vert u-\bar u\vert^{\gamma_2}.
	\end{align*}
	Then the Young inequality allows to find $\beta_7\in(0,\frac{1}{2})$ such that for any $0<\esp<s,t\leq T$,
	\begin{align*}
		\int_{[\esp,s]^2}\int_{[\esp,t]^2}\frac{\E\left[\Vert I_{7,n}(r,u,\bar r,\bar u)\Vert^2\right]}{(\vert r-\bar r\vert+\vert u-\bar u\vert)^{2+2\beta_7}}\mathrm{d}u\mathrm{d}\bar u\mathrm{d}r\mathrm{d}\bar r<\infty.
	\end{align*}
	The proof of \eqref{TheoExistEstim1} is completed by choosing $\beta=\min\{\beta_i,\,i=1,\ldots,7\}$ since
	\begin{align*}
		\E\left[\Vert D_{r,u}X^{x,n}_{s,t}-D_{\bar r,\bar u}X^{x,n}_{s,t}\Vert^2\right]\leq7\sum\limits_{i=1}^7\E\left[\Vert I_{i,n}(r,u,\bar r,\bar u)\Vert^2\right].
	\end{align*}
\end{proof}
\begin{cor}\label{CorolStrongExists}
	Let $(b_n)_{n\in\N}$ be a sequence of functions
	in $C^{\infty}_c([0,T]^2\times\R^d,\R^d)$ such that $b_n(s,t,z)$ converges to $b(s,t,z)$ for a.e. $(s,t,z)\in[0,T]^2\times\R^d$ with $\sup_{n\geq0}\Vert b_n\Vert_{L^1_{\infty} }<\infty$. Denote by $X^{x,n}$ the solution of \eqref{eqmainProbS4Xn}. Then for every $(s,t)\in[0,T]^2$ and bounded continuous function $\varphi:\,\R^d\to\R$, $\varphi(X^{x,n}_{s,t})$ converges strongly in $L^2(\Omega,\mathcal{F}^H_{s,t})$ to $\varphi(\E[X^x_{s,t}\vert\mathcal{F}^H_{s,t}])$. Moreover, for every $(s,t)\in(0,T]^2$, $\E[X^x_{s,t}\vert\mathcal{F}^H_{s,t}]$ is Malliavin differentiable.
\end{cor}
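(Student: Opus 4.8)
The plan is to assemble the two ingredients already at our disposal: the uniform Malliavin--Sobolev bounds of Theorem~\ref{RelCompacTheo} and the weak convergence of Lemma~\ref{LemWeakConverge}. Fix $(s,t)\in(0,T]^2$ (for $s=0$ or $t=0$ the solutions reduce to the constant $x$ and every assertion is trivial) and $\esp\in(0,\min\{s,t\})$. Since each $b_n$ is smooth with compact support, $X^{x,n}_{s,t}$ is a genuine strong solution of \eqref{eqmainProbS4Xn}, hence $\mathcal{F}^H_{s,t}$-measurable and Malliavin differentiable, and estimates \eqref{TheoExistEstim1}--\eqref{TheoExistEstim2} hold uniformly in $n$ because $\sup_n\Vert b_n\Vert_{L^1_{\infty}}<\infty$ and $C_{\esp,H,T,d}$ is continuous. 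First I would invoke the compactness criterion recalled in the Appendix (component by component in $\R^d$): the uniform bound \eqref{TheoExistEstim2} together with the uniform fractional-Sobolev bound \eqref{TheoExistEstim1} guarantees that $\{X^{x,n}_{s,t}\}_{n\in\N_0}$ is relatively compact in $L^2(\Omega)$.

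Next I would identify the limit. Let $X^{x,n_k}_{s,t}\to Y$ strongly in $L^2(\Omega)$ along some subsequence; as an $L^2$-limit of $\mathcal{F}^H_{s,t}$-measurable random variables, $Y$ is $\mathcal{F}^H_{s,t}$-measurable. For any bounded continuous $\varphi$, passing to a further a.s.-convergent subsequence and using dominated convergence gives $\varphi(X^{x,n_k}_{s,t})\to\varphi(Y)$ in $L^2(\Omega)$, while Lemma~\ref{LemWeakConverge} yields $\varphi(X^{x,n}_{s,t})\rightharpoonup\E[\varphi(X^x_{s,t})\vert\mathcal{F}^H_{s,t}]$ weakly; uniqueness of limits forces
\begin{align*}
	\varphi(Y)=\E[\varphi(X^x_{s,t})\vert\mathcal{F}^H_{s,t}]\quad\text{a.s.}
\end{align*}
Applying this to $\varphi$ and to $\varphi^2$ shows that the $\mathcal{F}^H_{s,t}$-conditional variance of $\varphi(X^x_{s,t})$ vanishes, so $\varphi(X^x_{s,t})=\varphi(Y)$ a.s.; letting $\varphi$ run over a point-separating family, I obtain $X^x_{s,t}=Y$ a.s., whence $Y=\E[X^x_{s,t}\vert\mathcal{F}^H_{s,t}]$ (using that $X^x_{s,t}\in L^2(\Omega)$ via the density bound of Lemma~\ref{GirsanovLemEst}). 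Since every strongly convergent subsequence has the same limit $\E[X^x_{s,t}\vert\mathcal{F}^H_{s,t}]$, the whole sequence converges, and continuity and boundedness of $\varphi$ upgrade this to $\varphi(X^{x,n}_{s,t})\to\varphi(\E[X^x_{s,t}\vert\mathcal{F}^H_{s,t}])$ strongly in $L^2(\Omega,\mathcal{F}^H_{s,t})$, which is the first claim.

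For the Malliavin differentiability I would combine the strong convergence just obtained with the uniform derivative bound \eqref{TheoExistEstim2}. The latter makes $\{D_{\cdot,\cdot}X^{x,n}_{s,t}\}_n$ bounded, hence weakly relatively compact, in $L^2(\Omega\times[\esp,s]\times[\esp,t];\R^{d\times d})$; since $X^{x,n}_{s,t}\to\E[X^x_{s,t}\vert\mathcal{F}^H_{s,t}]$ strongly in $L^2(\Omega)$ and the Malliavin derivative is a closed operator, the limit $\E[X^x_{s,t}\vert\mathcal{F}^H_{s,t}]$ belongs to $\mathbb{D}^{1,2}$ and its derivative is the weak limit of the $D_{\cdot,\cdot}X^{x,n}_{s,t}$. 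This is precisely the content of the compactness criterion recalled in the Appendix and yields Malliavin differentiability for every $(s,t)\in(0,T]^2$.

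The genuine analytic difficulty has already been absorbed into Theorem~\ref{RelCompacTheo}, so here the main obstacle is the bookkeeping of the limit identification: Lemma~\ref{LemWeakConverge} only controls $\varphi(X^{x,n}_{s,t})$ for \emph{bounded} $\varphi$, whereas the natural identification uses the unbounded identity map. I would circumvent this either by the conditional-variance argument above or, equivalently, by truncation $\varphi_R(y)=(y\wedge R)\vee(-R)$ followed by a passage $R\to\infty$ justified by the uniform $L^2$-bound \eqref{TheoExistEstim2} and the integrability of $X^x_{s,t}$. One must also check the consistency $\varphi(\E[X^x_{s,t}\vert\mathcal{F}^H_{s,t}])=\E[\varphi(X^x_{s,t})\vert\mathcal{F}^H_{s,t}]$, which is exactly the $\mathcal{F}^H_{s,t}$-measurability of $X^x_{s,t}$ produced as a by-product of the variance argument and recorded separately in the final step of the scheme.
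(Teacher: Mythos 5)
Your proof is correct and follows essentially the same route as the paper: relative compactness of $\{X^{x,n}_{s,t}\}_n$ in $L^2(\Omega)$ from Theorem \ref{RelCompacTheo} via the criterion of Lemma \ref{CorolCompact}, identification of the limit via Lemma \ref{LemWeakConverge}, and Malliavin differentiability of the limit from the uniform bound \eqref{TheoExistEstim2} together with the closability of the Malliavin derivative. The only place where you do more than the paper is the identification step: the paper simply asserts that Lemma \ref{LemWeakConverge} pins down the limit, whereas you observe correctly that the lemma only controls bounded $\varphi$ and supply the conditional-variance argument (applying the identity to $\varphi$ and $\varphi^2$) to conclude $X^x_{s,t}=Y$ a.s.; this in fact already yields the $\mathcal{F}^H_{s,t}$-measurability of $X^x_{s,t}$, which the paper establishes only later, in the proof of Theorem \ref{TheoEqmainPbS4}, by essentially the same device.
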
	
\begin{proof}
	By Theorem \ref{RelCompacTheo}, we can apply  Lemma \ref{CorolCompact} to deduce the relative compactness of $\{X_{s,t}^{x,n},n\in\N\}$ in $L^2(\Omega)$ for every $(s,t)\in(0,T]$. Moreover, it follows from Lemma \ref{LemWeakConverge} that we can identify the limit as being $\E[X^x_{s,t}\vert\mathcal{F}^H_{s,t}]$. Then the convergence still holds for any continuous function. The Malliavin differentiability of $\E[X^x_{s,t}\vert\mathcal{F}^H_{s,t}]$ is obtained by taking $\varphi=I_d$ and applying estimate \eqref{TheoExistEstim2} together with \cite[Proposition 1.2.3]{Nua06}.
\end{proof}
We are now ready to prove the main result of this section.
\begin{proof}[Proof of Theorem \ref{TheoEqmainPbS4}]
	We first show that $X^x_{s,t}$ is $\mathcal{F}^H_{s,t}$-measurable for every $(s,t)\in(0,T]^2$. It follows from Corollary \ref{CorolStrongExists} that for any  globally Lipschitz continuous function $\varphi$, there exists a sequence $(k_n)$ such that $\varphi(X^{x,k_n})$ converges $\Pb$-a.s. to $\varphi(\E[X^x_{s,t}\vert\mathcal{F}^H_{s,t}])$ as $n$ goes to $\infty$. Moreover, by Lemma \ref{LemWeakConverge}, $\varphi(X^{x,k_n}_{s,t})$ converges weakly in $L^2(\Omega,\mathcal{F}^H_{s,t})$ and then $\Pb$-a.s. to $\E[\varphi(X^x_{s,t})\vert\mathcal{F}^H_{s,t}]$. By the uniqueness of the limit, we have
	\begin{align*}
		\varphi(\E[X^x_{s,t}\vert\mathcal{F}^H_{s,t}])=\E[\varphi(X^x_{s,t})\vert\mathcal{F}^H_{s,t}]\quad\Pb\text{-a.s., }\forall\,(s,t)\in(0,T]^2,
	\end{align*}
	which yields that $X^x_{s,t}$ is $\mathcal{F}^H_{s,t}$-measurable for every $(s,t)\in(0,T]^2$, and then $X^x$ is a strong solution.\\
	We turn now to pathwise uniqueness.  {Since weak joint uniqueness holds for \eqref{eqmainProbS4} (see Lemma \ref{lem:weakjointu}), then the desired result follows by  using a similar dual Yamada-Watanabe argument as in \cite[Theorem 3.14]{Ku07}  (see also \cite[Proof of Corollary 2.5]{CrRi22}) which states that joint weak uniqueness and strong existence imply pathwise uniqueness. Indeed, let $(X^{1,x},W^H)$ and $(X^{2,x},W^H)$ be two strong solutions with the same $W^H$. Then there exist two measurable maps $F^{1,x}$ and $F^{2,x}$ on $\mathcal{C}([0,T]^2,\R^d)$ into $\mathcal{C}([0,T]^2,\R^d)$ such that $X^{1,x}=F^{1,x}(W^H)$ and $X^{2,x}=F^{2,x}(W^H)$ a.s. By  joint uniqueness in law  $(X^{1,x},W^H)$ and $(X^{2,x},W^H)$ have the same law $\mu^x$. Let $\nu$ denote the law of $W^H$. It follows from the desintegration theorem that there exists a transition function $\eta^x$ such that $\mu^x(\mathrm{d}y,\mathrm{d}z)=\eta^x(\mathrm{d}y, z)\nu(\mathrm{d}z)$.
		As a consequence,  for any Borel subset $A$ of $\mathcal{C}([0,T]^2,\R^d)$, we have
		\begin{align*}
			&\delta_{F^{1,x}(W^H)}(A)=\Pb(F^{1,x}(W^H)\in A\vert W^H)=\Pb(X^{1,x}\in A\vert W^H)=\eta^x(A,W^H)\\=&\Pb(X^{2,x}\in A\vert W^H)=\Pb(F^{2,x}(W^H)\in A\vert W^H)=\delta_{F^{2,x}(W^H)}(A)\quad\Pb\text{-a.s.},
		\end{align*}
		where $\delta_z$ denotes the usual Dirac distribution at point $z$. Then
		\begin{align*}
			X^{1,x}=F^{1,x}(W^H)=F^{2,x}(W^H)=X^{2,x}\quad\Pb\text{-a.s.}
		\end{align*}
		and pathwise uniqueness follows.
	}
\end{proof}

{
	\begin{rem}
		Denote by $L^{\infty}_{\infty}:=L^{\infty}([0,T]^2;L^{\infty}(\R^d;\R^d))$.
		If $b\in L^{1}_{\infty}\cap L^{\infty}_{\infty}$, then Theorem \ref{TheoEqmainPbS4} still holds when   $\max\{H_1,H_2\}<\frac{1}{2(d+1)}$. Indeed, using similar computations as those in the proof of \cite[Lemma 4.3]{BNP18}, one shows that the conditions of Theorem \ref{GirsanovTheo} hold.
	\end{rem}	
}

\appendix
\section{Shuffles}\label{Appen1}

\begin{lem}\label{lemma:Shuffle}
	Let $k,\,m\in\N$, $T>0$, $\N_k=\{1,\ldots,k\}$, $\mathcal{T}=[0,T]$, $0\leq r<s\leq T$, $0\leq u<t\leq T$. Let $\mathcal{P}_k$  denote the set of permutations on $\N_k$ and $\widehat{\mathcal{P}}_{m\vert k}$ the subset of $\mathcal{P}_{mk}$ given by
	\begin{align*}
	\widehat{\mathcal{P}}_{m\vert k} =\Big\{\sigma\in\mathcal{P}^{}_{km}:\,\sigma(1+im)<\ldots<\sigma((1+i)m),\text{ for all }i\in\{0,1,\ldots,k-1\}\Big\}.
	\end{align*}  
	For every $\sigma\in\mathcal{P}_m$, we define
	\begin{align*}
		\nabla^{m,\sigma}_{r, s					}=
		\{(s_1,\ldots,s_{m})\in\mathcal{T}^{m}:\,r<s_{\sigma^{-1}(m)}<\ldots<s_{\sigma^{-1}(1)}<s\}.
	\end{align*}
	When $\sigma=Id$ is the identity , we denote
	\begin{align*}
		\nabla^{m}_{r, s					}:=\nabla_{r,s}^{m,Id}=\{(s_1,\ldots,s_m)\in\mathcal{T}^{m}:\,r<s_m<\ldots<s_1<s\}.
	\end{align*}
	Then, for any $f\in L^1(\mathcal{T})$ and $m\in\N$,
	\begin{align*}
		&\Big(\int_{
			\nabla^{m}_{ r, s			}}\int_{\nabla^{m}_{ u, t			}
		}\,\prod\limits_{i=1}^mf(s_i,t_{i})\,\mathrm{d}s_1\ldots\mathrm{d}s_m \mathrm{d}t_1\ldots\mathrm{d}t_m\Big)^k\\=&\sum\limits_{\sigma,\gamma\in\widehat{\mathcal{P}}_{m\vert k}}\int_{\nabla^{mk,\sigma}_{
				r, s
		}}\int_{\nabla^{mk,\gamma}_{
				u, t
		}}\,\prod\limits_{i=1}^{mk}f(s_i,t_{i})\,\mathrm{d}s_1\ldots\mathrm{d}s_{mk} \mathrm{d}t_1\ldots\mathrm{d}t_{mk}.
	\end{align*}	
\end{lem}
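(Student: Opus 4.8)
The plan is to read both sides as integrals over products of order simplices and to reduce the identity to a combinatorial decomposition of a product of simplices into shuffle simplices, performed independently in the $s$- and $t$-variables; this is the two-parameter version of the classical shuffle-product formula for iterated integrals. First I would expand the $k$-th power by Fubini's theorem. Introducing $k$ independent copies of the variables, with the $\ell$-th copy written $(s^{(\ell)}_1,\dots,s^{(\ell)}_m)$ and $(t^{(\ell)}_1,\dots,t^{(\ell)}_m)$, one has
\begin{align*}
\Big(\int_{\nabla^{m}_{r,s}}\int_{\nabla^{m}_{u,t}}\prod_{i=1}^m f(s_i,t_i)\,\mathrm{d}s\,\mathrm{d}t\Big)^k=\int_{(\nabla^{m}_{r,s})^k}\int_{(\nabla^{m}_{u,t})^k}\prod_{\ell=1}^k\prod_{i=1}^m f\big(s^{(\ell)}_i,t^{(\ell)}_i\big)\,\mathrm{d}s\,\mathrm{d}t.
\end{align*}
Relabelling block by block through $j=(\ell-1)m+i$, using the same block structure for the $s$- and the $t$-variables so that each factor $f(s^{(\ell)}_i,t^{(\ell)}_i)$ becomes $f(s_j,t_j)$, the integrand turns into $\prod_{j=1}^{mk}f(s_j,t_j)$ and the domain becomes $R_s\times R_t$, where $R_s$ is the set of $(s_1,\dots,s_{mk})\in(r,s)^{mk}$ that are strictly decreasing within each block $\{1+im,\dots,(1+i)m\}$, and $R_t$ is its analogue in the $t$-variables over $(u,t)^{mk}$.

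The key step is the combinatorial decomposition of $R_s$. Outside the Lebesgue-null set on which two coordinates coincide, every point of $(r,s)^{mk}$ admits a unique total ordering and therefore lies in exactly one simplex $\nabla^{mk,\tau}_{r,s}$, $\tau\in\mathcal{P}_{mk}$, where $\tau(j)$ records the rank of $s_j$ (rank $1$ being the largest, consistent with the convention $s_{\tau^{-1}(1)}>\dots>s_{\tau^{-1}(mk)}$ in the definition of $\nabla^{mk,\tau}_{r,s}$). Such a point lies in $R_s$ exactly when the coordinates decrease within each block, that is when $\tau(1+im)<\dots<\tau((1+i)m)$ for every $i\in\{0,\dots,k-1\}$, which is precisely the defining condition for $\tau\in\widehat{\mathcal{P}}_{m\vert k}$. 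Hence $R_s=\bigsqcup_{\sigma\in\widehat{\mathcal{P}}_{m\vert k}}\nabla^{mk,\sigma}_{r,s}$ up to a null set, and likewise $R_t=\bigsqcup_{\gamma\in\widehat{\mathcal{P}}_{m\vert k}}\nabla^{mk,\gamma}_{u,t}$; since the two order constraints involve disjoint sets of variables, these decompositions are independent.

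Substituting both decompositions into the integral over $R_s\times R_t$ and using that the integrand $\prod_{j=1}^{mk}f(s_j,t_j)$ depends on neither $\sigma$ nor $\gamma$, the integral splits into the announced double sum over $\widehat{\mathcal{P}}_{m\vert k}\times\widehat{\mathcal{P}}_{m\vert k}$. The main obstacle is the bookkeeping in this combinatorial identification: matching the block-decreasing condition on a point with the increasing-on-blocks condition on its ranking permutation, and keeping the $\sigma$ versus $\sigma^{-1}$ convention of $\nabla^{mk,\sigma}_{r,s}$ straight. Everything else is routine, since $f\in L^1$ makes the overlaps $\{s_i=s_j\}$ and $\{t_i=t_j\}$ negligible and Fubini's theorem legitimises all the interchanges of integration.
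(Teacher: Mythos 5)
Your proof is correct and follows essentially the same route as the paper: both arguments reduce the identity to the fact that the shuffle simplices $\{\nabla^{mk,\sigma}_{r,s}\}_{\sigma\in\widehat{\mathcal{P}}_{m\vert k}}$ decompose $(\nabla^m_{r,s})^k$ (independently in the $s$- and $t$-variables), the paper via two set inclusions plus disjointness and you via the ranking-permutation description. Your explicit remark that the decomposition holds only up to the Lebesgue-null set of ties is in fact slightly more careful than the paper's wording.
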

\begin{proof} 
	It  {is enough} to prove that  $\{\nabla_{r,s}^{mk,\sigma}\}_{\sigma\in\widehat{\mathcal{P}}_{m\vert k}}$ (respectively  $\{\nabla_{u,t}^{mk,\gamma}\}_{\gamma\in\widehat{\mathcal{P}}_{m\vert k}}$) is a partition of $(\nabla^{m}_{r,s})^k$ (respectively $(\nabla^{m}_{u,t})^k$). We first show that $\nabla_{r,s}^{mk,\sigma}\subset(\nabla_{r,s}^{m})^k$ (respectively $\nabla_{u,t}^{mk,\gamma}\subset(\nabla_{u,t}^{m})^k$). Let $\sigma\in\widehat{\mathcal{P}}_{m\vert k}$ and  $(s_1,\ldots,s_{mk})\in\nabla_{r,s}^{mk,\sigma}$. Since $\sigma((i+1)m)<\ldots<\sigma(1+im)$, then, by definition of $\nabla_{r,s}^{mk,\sigma}$, we have 
	\begin{align*}
		r<s_{\sigma^{-1}\circ\sigma((i+1)m)}<\ldots<s_{\sigma^{-1}\circ\sigma(1+im)}<s,\,\forall\,i\in\{0,1,\ldots,k-1\},
	\end{align*}
	which can be rewritten as
	\begin{align*}
		r<s_{ (i+1)m}<\ldots<s_{ 1+im}<s,\,\forall\,i\in\{0,1,\ldots,k-1\}.
	\end{align*}
	This means that $(s_{1+im},\ldots,s_{ (1+i)m})\in\nabla^{m}_{r,s}$ for every $i$ and then $(s_1,\ldots,s_{mk})\in(\nabla^{m}_{r,s})^k$. The proof of $\nabla_{u,t}^{mk,\gamma}\subset(\nabla_{u,t}^{m})^k$ follows analogously. Moreover, $\{\nabla_{r,s}^{mk,\sigma}\}_{\sigma}$ and $\{ \nabla_{u,t}^{mk,\gamma}\}_{\gamma}$ are clearly families of disjoint nonempty sets. Hence, it remains to show that
	\begin{align}\label{Partition1}
		(\nabla_{r,s}^{m})^k\subset\bigcup\limits_{\sigma\in\widehat{\mathcal{P}}_{m\vert k}}\nabla_{r,s}^{mk,\sigma}\quad(\text{respectively } (\nabla_{u,t}^{m})^k\subset\bigcup\limits_{\gamma\in\widehat{\mathcal{P}}_{m\vert k}}\nabla_{u,t}^{mk,\gamma}).	
	\end{align}	
	Let $(s_1,\ldots,s_{mk})\in(\nabla_{r,s}^{m})^k$.   Then
	\begin{align}\label{Partition11}
		s_{(i+1)m}<\ldots<s_{1+im}\,\text{ for all }i\in\{0,1,\ldots,k-1\}
	\end{align}
	and there exists $\sigma\in\mathcal{P}_{mk}$ such that
	\begin{align}\label{Partition12}
		s_{\sigma^{-1}(mk)}<\ldots<s_{\sigma^{-1}(1)}.
	\end{align}
	Since \eqref{Partition11} is equivalent to
	\begin{align*}
		s_{\sigma^{-1}\circ\sigma((i+1)m)}<\ldots<s_{\sigma^{-1}\circ\sigma(1+im)}\quad\text{for all }i\in\{0,1,\ldots,k-1\},
	\end{align*}
	then it follows from \eqref{Partition12} that
	\begin{align*}
		\sigma((i+1)m)<\ldots<\sigma(1+im)\, \text{ for all }i\in\{0,1,\ldots,k-1\},
	\end{align*}
	and therefore $\sigma\in\widehat{\mathcal{P}}_{m\vert k}$.
	The proof of $(\nabla_{u,t}^{m})^k\subset\bigcup\limits_{\gamma\in\widehat{\mathcal{P}}_{m\vert k}}\nabla_{u,t}^{mk,\gamma}$ follows the same lines.
	This ends the proof.
\end{proof}

\section{Some Gaussian estimates}\label{Appen2}
The next inequality may be found in \cite{LW12} (see also \cite[Lemma A.9]{BNP18})
\begin{lem}\label{LemLW12}
	Assume that $X_1,\,\ldots,\,X_n$ are real valued centered jointly Gaussian random variables, and $\Sigma=(\sigma_{i,j}:=\E[X_iX_j])_{1\leq i,j\leq n}$ is the covariance matrix, then
	\begin{align*}
		\E[\vert X_1\vert\ldots\vert X_n\vert]\leq\sqrt{\text{perm}(\Sigma)},
	\end{align*}
	where perm$(\Sigma)$ is the permanent of  $\Sigma$ defined by
	\begin{align*}
		\text{perm}(\Sigma)=\sum\limits_{\pi\in\mathcal{P}_n}\prod\limits_{j=1}^n\sigma_{j,\pi(j)},
	\end{align*}
	where $\mathcal{P}_n$ is the set of permutations on $\N_n=\{1,\ldots,n\}$.
\end{lem}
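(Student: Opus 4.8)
The plan is to square the quantity and recognise $\text{perm}(\Sigma)$ as a moment of an auxiliary complex Gaussian vector. First I would note that $\Sigma$, being a covariance matrix, is positive semidefinite, and introduce two \emph{independent} copies $A=(A_1,\dots,A_n)$ and $B=(B_1,\dots,B_n)$ of $(X_1,\dots,X_n)$ on a common product probability space; each is centered Gaussian with covariance $\Sigma$. Using independence of $A$ and $B$,
\begin{align*}
	\Big(\E\big[\,|X_1|\cdots|X_n|\,\big]\Big)^2=\E\Big[\prod_{i=1}^n|A_i|\Big]\,\E\Big[\prod_{i=1}^n|B_i|\Big]=\E\Big[\prod_{i=1}^n|A_iB_i|\Big].
\end{align*}

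The second step is an elementary pointwise bound. By the arithmetic--geometric mean inequality, $|A_iB_i|=\sqrt{A_i^2B_i^2}\le\tfrac12(A_i^2+B_i^2)$ for every $i$, whence $\prod_{i=1}^n|A_iB_i|\le 2^{-n}\prod_{i=1}^n(A_i^2+B_i^2)$ almost surely. Taking expectations, the claim is reduced to the single identity
\begin{align*}
	\E\Big[\prod_{i=1}^n(A_i^2+B_i^2)\Big]=2^n\,\text{perm}(\Sigma).
\end{align*}

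To prove this identity I would pass to the complex Gaussian vector $Z=(Z_1,\dots,Z_n)$ with $Z_i:=A_i+\mathrm{i}\,B_i$. A direct computation using the independence of $A$ and $B$ and $\E[A_iA_j]=\E[B_iB_j]=\sigma_{ij}$ gives $\E[Z_iZ_j]=\sigma_{ij}-\sigma_{ij}+\mathrm{i}(\E[A_iB_j]+\E[B_iA_j])=0$ and $\E[Z_i\overline{Z_j}]=\E[A_iA_j]+\E[B_iB_j]=2\sigma_{ij}$, so $Z$ is a circularly symmetric complex Gaussian vector with Hermitian covariance $2\Sigma$. The bosonic (complex) Wick theorem then yields $\E\big[\prod_{i=1}^nZ_i\overline{Z_i}\big]=\text{perm}(2\Sigma)=2^n\,\text{perm}(\Sigma)$, and since $Z_i\overline{Z_i}=|Z_i|^2=A_i^2+B_i^2$, this is exactly the required identity. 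Combining the three displays yields $\big(\E\prod_{i}|X_i|\big)^2\le 2^{-n}\cdot 2^n\,\text{perm}(\Sigma)=\text{perm}(\Sigma)$, which is the assertion.

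The only nontrivial ingredient is the permanent--Gaussian moment identity, so the main obstacle is to justify it cleanly. I would either invoke the standard complex Wick (bosonic) formula, stating that for a circularly symmetric complex Gaussian vector with $\E[Z_i\overline{Z_j}]=C_{ij}$ one has $\E\big[\prod_i Z_i\overline{Z_i}\big]=\text{perm}(C)$, or, to keep the argument self-contained, prove it directly from the real Isserlis (Wick) theorem: expand $\prod_i(A_i^2+B_i^2)$ over the $2^n$ choices of factor, use independence and the identical law of $A$ and $B$ together with the vanishing of blocks containing an odd number of $A$'s or $B$'s, and verify that the surviving pairings of the $2n$ factors reorganise into the $n!$ terms $\prod_i\sigma_{i,\pi(i)}$, $\pi\in\mathcal{P}_n$, with the overall factor $2^n$. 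Everything else---the passage to independent copies and the arithmetic--geometric mean step---is pointwise and requires no further work.
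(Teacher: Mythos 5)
Your proof is correct. Note first that the paper does not actually prove this lemma: it is quoted from Li and Wei (2012) (\cite{LW12}) without proof, so there is no internal argument to compare against; what you have written is essentially the symmetrization argument that underlies the cited result. The chain is sound: with $A,B$ independent copies of $X$ you get $\big(\E[\prod_i|X_i|]\big)^2=\E[\prod_i|A_iB_i|]\le 2^{-n}\E[\prod_i(A_i^2+B_i^2)]$ by AM--GM, and the identity $\E[\prod_i(A_i^2+B_i^2)]=\operatorname{perm}(2\Sigma)=2^n\operatorname{perm}(\Sigma)$ follows from the complex Wick (permanent) formula once you check, as you do, that $Z_i=A_i+\mathrm{i}B_i$ is circularly symmetric with $\E[Z_iZ_j]=0$ and $\E[Z_i\overline{Z_j}]=2\sigma_{ij}$. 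It is worth emphasizing why the doubling is not cosmetic: the naive Cauchy--Schwarz bound $\E[\prod_i|X_i|]\le\big(\E[\prod_iX_i^2]\big)^{1/2}$ fails to give the permanent, since by the real Isserlis formula $\E[\prod_iX_i^2]$ is a sum over all pairings of $2n$ indices and strictly exceeds $\operatorname{perm}(\Sigma)$ in general (already for $n=2$: $\sigma_{11}\sigma_{22}+2\sigma_{12}^2$ versus $\sigma_{11}\sigma_{22}+\sigma_{12}^2$); your passage through the product measure and the complex structure is exactly what discards the unwanted pairings. The only ingredient requiring citation or a short self-contained verification is the permanent--moment identity $\E\big[\prod_iZ_i\overline{Z_i}\big]=\operatorname{perm}\big((\E[Z_i\overline{Z_j}])_{i,j}\big)$ for proper complex Gaussian vectors, and you correctly identify the two standard ways to obtain it (the bosonic Wick theorem, or a direct expansion via the real Isserlis theorem). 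Everything else, including the degenerate case of singular $\Sigma$, goes through without change since no invertibility is used.
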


The next result can be found in \cite{CD82} (see also \cite[Lemma 2.4]{Xi97} and \cite[Lemma 4.2]{XZ02}):
\begin{lem}\label{LemCD82}
	Let $Z_1,\,\ldots,\,Z_n$ be mean zero Gaussian variables which are linearly independent and let $g$ be real valued measurable function on $\R$ such that
	\begin{align*}
		\int_{\R}g(v)\exp(-\ve v^2)\,\mathrm{d}v<\infty
	\end{align*}
	for all $\ve>0$. Then
	\begin{align*}
		&\int_{\R^n}g(v_1)\exp\Big(-\frac{1}{2}\text{Var}\Big[\sum\limits_{j=1}^nv_jZ_j\Big]\Big)\,\mathrm{d}v_1\ldots\mathrm{d}v_n\\=&\frac{(2\pi)^{(n-1)/2}}{(\text{det }\text{Cov}(Z_1,\ldots,Z_n))^{1/2}}\int_{\R}g\Big(\frac{v}{\sigma_1}\Big)\exp\Big(-\frac{1}{2}v^2\Big)\,\mathrm{d}v,
	\end{align*}
	where $\sigma_1^2:=\text{Var}[Z_1\vert Z_2,\ldots,Z_n]$ is the conditional variance of $Z_1$ given $Z_2,\,\ldots,\,Z_n$.
\end{lem}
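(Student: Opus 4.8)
The plan is to exploit the fact that the integrand depends on the single variable $v_1$ through $g$, so that the remaining variables $v_2,\ldots,v_n$ can be integrated out explicitly by a Gaussian computation. Write $\Sigma=\text{Cov}(Z_1,\ldots,Z_n)$, which is symmetric and positive definite since the $Z_j$ are linearly independent, and note that $\text{Var}\big[\sum_{j=1}^n v_jZ_j\big]=v^{\ast}\Sigma v$ for $v=(v_1,\ldots,v_n)^{\ast}$. First I would split $v=(v_1,w)$ with $w=(v_2,\ldots,v_n)^{\ast}$ and decompose $\Sigma$ in block form
\begin{align*}
\Sigma=\begin{pmatrix} \sigma_{11} & b^{\ast}\\ b & A\end{pmatrix},
\end{align*}
where $\sigma_{11}=\text{Var}[Z_1]$, $b=(\text{Cov}(Z_j,Z_1))_{2\leq j\leq n}$ and $A=\text{Cov}(Z_2,\ldots,Z_n)$. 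This gives $v^{\ast}\Sigma v=\sigma_{11}v_1^2+2v_1 b^{\ast}w+w^{\ast}Aw$.

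The next step is to complete the square in $w$ for fixed $v_1$: since $A$ is invertible,
\begin{align*}
w^{\ast}Aw+2v_1 b^{\ast}w=(w+v_1A^{-1}b)^{\ast}A(w+v_1A^{-1}b)-v_1^2\,b^{\ast}A^{-1}b.
\end{align*}
Integrating $\exp\big(-\tfrac12(w+v_1A^{-1}b)^{\ast}A(w+v_1A^{-1}b)\big)$ over $w\in\R^{n-1}$ yields $(2\pi)^{(n-1)/2}(\det A)^{-1/2}$ by translation invariance of Lebesgue measure and the standard multivariate Gaussian integral. Interchanging the $w$- and $v_1$-integrations by Tonelli's theorem (in our applications $g\geq0$, so this is immediate; in general one splits $g=g^+-g^-$), the left-hand side reduces to
\begin{align*}
\frac{(2\pi)^{(n-1)/2}}{(\det A)^{1/2}}\int_{\R}g(v_1)\exp\Big(-\tfrac12 v_1^2(\sigma_{11}-b^{\ast}A^{-1}b)\Big)\,\mathrm{d}v_1.
\end{align*}

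It remains to identify the two emerging quantities. The Schur complement $\sigma_{11}-b^{\ast}A^{-1}b$ is precisely the conditional variance $\sigma_1^2=\text{Var}[Z_1\vert Z_2,\ldots,Z_n]$, because for a jointly Gaussian vector the conditional law of $Z_1$ given $(Z_2,\ldots,Z_n)$ is Gaussian with exactly this variance; moreover the Schur determinant formula gives $\det\Sigma=\det A\cdot(\sigma_{11}-b^{\ast}A^{-1}b)=\sigma_1^2\det A$, hence $(\det A)^{-1/2}=\sigma_1(\det\Sigma)^{-1/2}$. Substituting these and performing the change of variable $v=\sigma_1 v_1$ (legitimate since $\sigma_1^2>0$ by linear independence) produces exactly
\begin{align*}
\frac{(2\pi)^{(n-1)/2}}{(\det\Sigma)^{1/2}}\int_{\R}g\Big(\frac{v}{\sigma_1}\Big)\exp\Big(-\tfrac12 v^2\Big)\,\mathrm{d}v,
\end{align*}
which is the asserted identity. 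The only delicate point is the integrability bookkeeping needed to justify the interchange of integrals: the hypothesis $\int_{\R}g(v)e^{-\varepsilon v^2}\,\mathrm{d}v<\infty$ for every $\varepsilon>0$ (applied with $\varepsilon=\tfrac12\sigma_1^2$) guarantees that $g$ is integrable against $\exp(-\tfrac12\sigma_1^2 v_1^2)$, so all iterated integrals above are finite; everything else is elementary linear algebra.
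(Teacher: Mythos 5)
Your proof is correct, and there is nothing in the paper to compare it against in the usual sense: the paper states Lemma \ref{LemCD82} without proof, importing it from Cuzick and DuPreez \cite{CD82} (see also \cite[Lemma 2.4]{Xi97} and \cite[Lemma 4.2]{XZ02}). Your argument supplies the standard self-contained derivation of what the paper only cites, and every step checks out: writing $\mathrm{Var}\big[\sum_{j}v_jZ_j\big]=v^{\ast}\Sigma v$, block-partitioning $\Sigma$ with respect to $(v_1,w)$, completing the square in $w$, and integrating out $w$ correctly produces the factor $(2\pi)^{(n-1)/2}(\det A)^{-1/2}$ together with the residual weight $\exp\big(-\tfrac12 v_1^2(\sigma_{11}-b^{\ast}A^{-1}b)\big)$; the two identifications that finish the proof are exactly right, namely that the Schur complement $\sigma_{11}-b^{\ast}A^{-1}b$ is the Gaussian conditional variance $\sigma_1^2=\mathrm{Var}[Z_1\vert Z_2,\ldots,Z_n]$, and that the Schur determinant formula $\det\Sigma=\sigma_1^2\det A$ converts $(\det A)^{-1/2}$ into $\sigma_1(\det\Sigma)^{-1/2}$, after which the substitution $v=\sigma_1 v_1$ (legitimate since linear independence makes $\Sigma$, hence $A$ and $\sigma_1^2$, positive definite) yields the asserted identity on the nose. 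One caveat worth recording: the hypothesis as stated in the lemma controls $\int_{\R}g(v)e^{-\ve v^2}\,\mathrm{d}v$ rather than $\int_{\R}\vert g(v)\vert e^{-\ve v^2}\,\mathrm{d}v$, and for a signed $g$ your Fubini--Tonelli interchange (equivalently, your splitting $g=g^{+}-g^{-}$) genuinely requires the absolute version, so the lemma should be read with $g\geq0$ or with $\vert g\vert$ integrable against Gaussian weights. This is a defect of the statement inherited from the literature, not of your proof, and it is harmless for the paper, which only invokes the lemma with nonnegative integrands (essentially $g(v)=v^2$ in the proof of Theorem \ref{RegulariseTheo}); you were right to flag it, and also right that the hypothesis is applied precisely at $\ve=\tfrac12\sigma_1^2$, which is where positivity of the conditional variance enters.
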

The following estimate is a special case of \cite[Lemma 4.5]{XZ02}. This result follows from a classical theorem of Oppenheim about the Hadamard product of positive semidefinite Hermitian matrices.	
\begin{lem}\label{LemOPXZ02}
	Let $(W^{H,(0)}_{s,t},(s,t)\in\R_+^2)$ be a real valued fractional Brownian sheet with Hurst index $H=(H_1,H_2)$. For any $n\in\N$, $n\geq2$, and $(s_1,t_1),\,\ldots,(s_n,t_n)\in\R_+^2$,
	\begin{align*}
		&\text{det Cov}\left(W^{H,(0)}_{s_1,t_1},\,\ldots,\,W^{H,(0)}_{s_n,t_n}\right)\\
		&\geq\text{det Cov}\left(B^{H_1}_{s_1},\,\ldots,\,B^{H_1}_{s_n}\right)\times\text{det Cov}\left(B^{H_2}_{t_1},\,\ldots,\,B^{H_2}_{t_n}\right),
	\end{align*}
	where $(B^{H_i}_t,t\geq0)$, $i=1,2$ is a  one parameter fractional Brownian motion in $\R$ with Hurst index $H_i$.
\end{lem}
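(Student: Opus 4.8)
The plan is to exhibit the covariance matrix of the sheet, evaluated at the points $(s_1,t_1),\ldots,(s_n,t_n)$, as the Hadamard (entrywise) product of the two one-parameter fractional Brownian motion covariance matrices, and then to invoke Oppenheim's determinant inequality together with Hadamard's inequality.

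First I would record the factorization of the covariance kernel. Writing $R_{H_i}(a,b)=\frac12\big(a^{2H_i}+b^{2H_i}-\vert a-b\vert^{2H_i}\big)$ for the covariance of a one-parameter fractional Brownian motion with Hurst index $H_i$, the covariance of the real-valued sheet reads
\[
\E\big[W^{H,(0)}_{s_i,t_i}W^{H,(0)}_{s_j,t_j}\big]=R_{H_1}(s_i,s_j)\,R_{H_2}(t_i,t_j),
\]
since the factor $1/4$ in the sheet kernel splits as $\frac12\cdot\frac12$. Setting $\Sigma_1=(R_{H_1}(s_i,s_j))_{1\leq i,j\leq n}$ and $\Sigma_2=(R_{H_2}(t_i,t_j))_{1\leq i,j\leq n}$, this identity shows that the covariance matrix of the sheet equals the Schur product $\Sigma_1\circ\Sigma_2$. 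Here $\Sigma_1=\text{Cov}(B^{H_1}_{s_1},\ldots,B^{H_1}_{s_n})$ and $\Sigma_2=\text{Cov}(B^{H_2}_{t_1},\ldots,B^{H_2}_{t_n})$, so both are symmetric positive semidefinite as genuine Gaussian covariance matrices.

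Next I would apply Oppenheim's inequality, which asserts that for symmetric positive semidefinite $n\times n$ matrices $\Sigma_1,\Sigma_2$ one has $\det(\Sigma_1\circ\Sigma_2)\geq\det(\Sigma_1)\prod_{i=1}^n(\Sigma_2)_{ii}$. Combining this with Hadamard's inequality $\prod_{i=1}^n(\Sigma_2)_{ii}\geq\det(\Sigma_2)$, valid for positive semidefinite $\Sigma_2$, yields
\[
\det(\Sigma_1\circ\Sigma_2)\geq\det(\Sigma_1)\prod_{i=1}^n(\Sigma_2)_{ii}\geq\det(\Sigma_1)\det(\Sigma_2),
\]
which is precisely the claimed bound once the identifications of $\Sigma_1$, $\Sigma_2$ and $\Sigma_1\circ\Sigma_2$ are substituted. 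The chain is vacuously valid when $\Sigma_1$ or $\Sigma_2$ is singular, since then the right-hand side vanishes while $\det(\Sigma_1\circ\Sigma_2)\geq0$ by the Schur product theorem; hence no positive-\emph{definiteness} assumption is required.

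The argument is essentially bookkeeping on top of two classical matrix inequalities, so no genuine obstacle arises. The only points requiring care are the correct orientation of Hadamard's inequality (the product of diagonal entries is an \emph{upper} bound for $\det\Sigma_2$, hence a lower bound in the direction we need) and the verification that the scalar factors in the sheet kernel distribute exactly as $\tfrac14=\tfrac12\cdot\tfrac12$, so that the Hadamard-product identity holds on the nose rather than up to a constant.
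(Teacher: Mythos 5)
Your proof is correct and follows exactly the route the paper indicates: the paper proves this lemma by citing \cite[Lemma 4.5]{XZ02} and noting it follows from Oppenheim's theorem on Hadamard products of positive semidefinite matrices, which is precisely your factorization of the sheet covariance as $\Sigma_1\circ\Sigma_2$ combined with Oppenheim's and Hadamard's inequalities. Your explicit handling of the singular case and the $\tfrac14=\tfrac12\cdot\tfrac12$ bookkeeping is a welcome bit of extra care, but the argument is the same.
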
	

\section{Further technical results}\label{Appen3}
Here is a compactness criterion for subsets of $L^2(\Omega)$ that has been proved in \cite[Theorem 1]{DaPMN92}.
\begin{thm}\label{theoCompactApp}
	Let $\{(\Omega,\mathcal{A},\Pb);H\}$ be a Gaussian probability space, that is $(\Omega,\mathcal{A},\Pb)$ is a probability space and $H$ a separable closed subspace of Gaussian random variables of $L^2(\Omega)$, which generate the $\sigma$-field $\mathcal{A}$. Denote by $\mathbf{D}$ the derivative operator acting on elementary smooth random variables in the sense that
	\begin{align*}
		\mathbf{D}(f(h_1,\ldots,h_n))=\sum\limits_{i=1}^n\partial_if(h_1,\ldots,h_n)h_i,\quad h_i\in H,\,f\in C^{\infty}_b(\R^n).
	\end{align*}
	Further let $\mathbb{D}^{1,2}$ be the closure of the family of elementary smooth random variables with respect to the norm
	\begin{align*}
		\Vert F\Vert_{1,2}:=\Vert F\Vert_{L^2(\Omega)}+\Vert\mathbf{D}F\Vert_{L^2(\Omega;H)}.
	\end{align*}
	Assume that $C$ is a self-adjoint compact operator on $H$ with dense image. Then for any $c>0$ the set
	\begin{align*}
		\mathcal{G}=\left\{G\in\mathbb{D}^{1,2}:\,\Vert G\Vert_{L^2(\Omega)}+\Vert C^{-1}\mathbf{D}G\Vert_{L^2(\Omega;H)}\leq c\right\}
	\end{align*}
	is relatively compact in $L^2(\Omega)$.
\end{thm}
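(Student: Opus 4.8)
The plan is to establish this compactness criterion of Da Prato, Malliavin and Nualart through the Wiener chaos decomposition, using the symmetry of the chaos kernels to convert the one-sided information carried by $C^{-1}\mathbf{D}$ into genuine compactness. First I would diagonalise $C$: being compact and self-adjoint, it admits by the spectral theorem an orthonormal basis $\{e_k\}_{k\geq 1}$ of $H$ of eigenvectors, $Ce_k=\lambda_k e_k$ with $\lambda_k\to 0$; since the range of $C$ is dense and $C$ is self-adjoint, $\ker C=\{0\}$, so every $\lambda_k\neq 0$ and $C^{-1}e_k=\lambda_k^{-1}e_k$ is a well-defined (unbounded) operator on the range. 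Expanding $G\in\mathcal{G}$ in its chaos series $G=\sum_{n\geq 0}I_n(g_n)$ with symmetric kernels $g_n\in H^{\odot n}$ and coordinates $c^{(n)}_{\mathbf{k}}$ in the associated tensor basis, I would record the standard identities $\|G\|_{L^2(\Omega)}^2=\sum_n n!\,\|g_n\|^2$, $\mathbf{D}I_n(g_n)=n\,I_{n-1}(g_n)$, and consequently $\|\mathbf{D}G\|_{L^2(\Omega;H)}^2=\sum_n n\,n!\,\|g_n\|^2$ together with $\|C^{-1}\mathbf{D}G\|_{L^2(\Omega;H)}^2=\sum_n n!\sum_{\mathbf{k}}w^{(n)}_{\mathbf{k}}\,|c^{(n)}_{\mathbf{k}}|^2$, where the symmetrised weight is $w^{(n)}_{\mathbf{k}}=\lambda_{k_1}^{-2}+\cdots+\lambda_{k_n}^{-2}$.

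The argument rests on two uniform truncations. Because $\|C^{-1}v\|\geq\|C\|^{-1}\|v\|$ on the range, membership in $\mathcal{G}$ forces $\|\mathbf{D}G\|_{L^2(\Omega;H)}\leq\|C\|\,c$, so the high-order chaos tail is controlled uniformly: $\sum_{n\geq N}n!\,\|g_n\|^2\leq N^{-1}\sum_{n\geq N}n\,n!\,\|g_n\|^2\leq N^{-1}(\|C\|c)^2$, which shows that the projection onto chaoses of order below $N$ approximates $G$ in $L^2(\Omega)$ with an error that is uniform over $\mathcal{G}$ and vanishes as $N\to\infty$. For the low-order part it then suffices to prove that, for each fixed $n$, the family of kernels $\{g_n:G\in\mathcal{G}\}$ is relatively compact in $H^{\otimes n}$, since $I_n$ is an isometry (up to the scalar $\sqrt{n!}$) onto the $n$-th chaos and a finite direct sum of relatively compact sets is relatively compact.

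The heart of the proof, and the point I expect to be the main obstacle, is this fixed-$n$ compactness, where the symmetry of $g_n$ is \emph{essential}: a bound controlling only a single tensor slot is useless, as an operator of the form $\mathrm{Id}^{\otimes(n-1)}\otimes C$ fails to be compact. Exploiting the symmetry of the coordinates, the one-slot estimate symmetrises into the weight $w^{(n)}_{\mathbf{k}}=\sum_{i=1}^n\lambda_{k_i}^{-2}$, and since $\lambda_k\to 0$ one has $w^{(n)}_{\mathbf{k}}\to\infty$ whenever $\max_i k_i\to\infty$. The bound $\sum_{\mathbf{k}}w^{(n)}_{\mathbf{k}}\,|c^{(n)}_{\mathbf{k}}|^2\leq c^2/n!$ therefore confines the kernels to a weighted $\ell^2$-ball with weights tending to infinity, which is the classical criterion for relative compactness in $\ell^2$: the mass carried by indices with $w^{(n)}_{\mathbf{k}}>R$ is at most $R^{-1}c^2/n!$, while only finitely many indices obey $w^{(n)}_{\mathbf{k}}\leq R$, so finite-dimensional truncations approximate uniformly. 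Finally I would assemble the two truncations: given $\varepsilon>0$, pick $N$ making the chaos tail smaller than $\varepsilon$, cover the relatively compact truncated family $\bigoplus_{n<N}\{I_n(g_n):G\in\mathcal{G}\}$ by finitely many $\varepsilon$-balls, and conclude that the doubled balls cover $\mathcal{G}$; hence $\mathcal{G}$ is totally bounded, and by completeness of $L^2(\Omega)$ it is relatively compact.
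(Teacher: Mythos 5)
Your argument is correct, and it is essentially the proof of the source the paper relies on: the paper does not prove Theorem \ref{theoCompactApp} itself but quotes it from \cite{DaPMN92}, whose original argument is exactly this chaos-expansion scheme --- spectral diagonalisation of $C$, the uniform tail bound $\sum_{n\ge N}n!\,\|g_n\|^2\le N^{-1}\|C\|^2c^2$ obtained from $\|\mathbf{D}G\|_{L^2(\Omega;H)}\le\|C\|\,\|C^{-1}\mathbf{D}G\|_{L^2(\Omega;H)}$, and fixed-order compactness via the symmetrised weights $w^{(n)}_{\mathbf{k}}=\sum_{i}\lambda_{k_i}^{-2}\to\infty$. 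The one step you leave implicit --- that the chaos projections commute with the unbounded operator $C^{-1}$, so that $\|C^{-1}\mathbf{D}G\|^2_{L^2(\Omega;H)}=\sum_n n!\sum_{\mathbf{k}}w^{(n)}_{\mathbf{k}}|c^{(n)}_{\mathbf{k}}|^2$ --- is immediate upon writing $\mathbf{D}G=C\,(C^{-1}\mathbf{D}G)$ and noting that the bounded operator $C$ commutes with those projections, so the proposal is complete as it stands.
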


In order to apply the above result, we consider the fractional Sobolev space: 
$$
\mathbb{G}^2_{\beta}(U;\R):=\Big\{g\in L^2(U,\R):\,\int_U\int_U\frac{\vert g(u)-g(u')\vert^2}{\vert u-u'\vert^{p+2\beta}}\mathrm{d}u\mathrm{d}u'<\infty\Big\},
$$ 
where $U$ is a domain of $\mathbb{R}^p$, $p\geq1$ and the norm is given by 
$$
\Vert g\Vert_{\mathbb{G}^2_{\beta}(U;\R)}:=\Vert g\Vert_{L^2(U;\R)}+\Big(\int_U\int_U\frac{\vert g(u)-g(u')\vert^2}{\vert u-u'\vert^{p+2\beta}}\mathrm{d}u\mathrm{d}u'\Big)^{1/2}.
$$
We need the next compact embedding result  from \cite[Lemma 10]{PSV13} (see also \cite[Theorem 7.1]{DNPV12}).
\begin{lem}\label{lemCompact}
	Let $p\geq1$, $U\subset\mathbb{R}^p$ be a Lipschitz bounded open set and $\mathcal{J}$ be a bounded subset of $L^2(U;\R)$. Suppose that
	$$
	\sup\limits_{g\in\mathcal{J}}\int_U\int_U\frac{\vert g(u)-g(u')\vert^2}{\vert u-u'\vert^{p+2\beta}}\mathrm{d}u\mathrm{d}u'<\infty
	$$
	for some $\beta\in(0,1)$. Then $\mathcal{J}$ is relatively compact in $L^2(U;\R)$.
\end{lem}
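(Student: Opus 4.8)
The plan is to deduce the statement from the Fr\'echet--Kolmogorov--Riesz compactness theorem, in the following form: if a family is bounded in $L^2(\R^p;\R)$ and its $L^2$-translations are uniformly small, then its restrictions to any bounded set are relatively compact in $L^2$ of that set. Accordingly, set
\[
M:=\sup_{g\in\mathcal{J}}\Big(\Vert g\Vert^2_{L^2(U;\R)}+\int_U\int_U\frac{\vert g(u)-g(u')\vert^2}{\vert u-u'\vert^{p+2\beta}}\,\mathrm{d}u\,\mathrm{d}u'\Big),
\]
which is finite by hypothesis, so that $\mathcal{J}$ is a bounded subset of $\mathbb{G}^2_{\beta}(U;\R)$. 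The entire difficulty is then to manufacture, out of the uniformly bounded Gagliardo seminorm on $U$, a family defined on all of $\R^p$ whose $L^2$-modulus of continuity is controlled uniformly; this is precisely the point where the Lipschitz regularity of $\partial U$ will be used.

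First I would extend. Since $U$ is a bounded Lipschitz domain, it is an extension domain for $\mathbb{G}^2_{\beta}$, so there is a bounded linear operator $E:\mathbb{G}^2_{\beta}(U;\R)\to\mathbb{G}^2_{\beta}(\R^p;\R)$ with $Eg=g$ on $U$ and $\Vert Eg\Vert_{\mathbb{G}^2_{\beta}(\R^p;\R)}\le C_U\Vert g\Vert_{\mathbb{G}^2_{\beta}(U;\R)}$; hence $\{Eg:g\in\mathcal{J}\}$ is bounded in $\mathbb{G}^2_{\beta}(\R^p;\R)$ by $C_U^2 M$, and in particular bounded in $L^2(\R^p;\R)$. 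On the full space one has the Plancherel identity
\[
\int_{\R^p}\int_{\R^p}\frac{\vert f(x)-f(y)\vert^2}{\vert x-y\vert^{p+2\beta}}\,\mathrm{d}x\,\mathrm{d}y=c_{p,\beta}\int_{\R^p}\vert\xi\vert^{2\beta}\vert\widehat f(\xi)\vert^2\,\mathrm{d}\xi ,
\]
so that, writing $f=Eg$ and using $\vert e^{\mathrm{i}h\cdot\xi}-1\vert^2=4\sin^2(h\cdot\xi/2)\le 4(\vert h\vert\,\vert\xi\vert)^{2\beta}$, valid for every $\beta\in(0,1)$, Plancherel yields the pointwise-in-$h$ modulus estimate
\[
\int_{\R^p}\vert f(x+h)-f(x)\vert^2\,\mathrm{d}x=\int_{\R^p}\vert e^{\mathrm{i}h\cdot\xi}-1\vert^2\vert\widehat f(\xi)\vert^2\,\mathrm{d}\xi\le 4\vert h\vert^{2\beta}\int_{\R^p}\vert\xi\vert^{2\beta}\vert\widehat f(\xi)\vert^2\,\mathrm{d}\xi\le C\vert h\vert^{2\beta}M .
\]
Thus the translations of $\{Eg\}$ are uniformly small, of order $\vert h\vert^{2\beta}$; the Fr\'echet--Kolmogorov--Riesz theorem then gives that the restrictions $\{Eg\vert_U\}=\mathcal{J}$ are relatively compact in $L^2(U;\R)$, which is the claim.

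The main obstacle is the behaviour near $\partial U$: the Gagliardo seminorm on $U$ only controls differences of values at pairs of points both lying in $U$, whereas any translate $g(\cdot+h)$ unavoidably samples $g$ outside $U$ for points close to the boundary, and extension by zero would introduce a jump across $\partial U$ destroying the $\vert h\vert^{2\beta}$ modulus. The Lipschitz hypothesis is exactly what dissolves this, through the extension operator $E$. Should one wish to avoid citing $E$, I would argue intrinsically: the change of variables $y=x+h$ together with the bound $\vert y-x\vert^{p+2\beta}\le\rho^{p+2\beta}$ on $B(x,\rho)$ gives
\[
\int_{B(0,\rho)}\int_{\{x\in U:\,x+h\in U\}}\vert g(x+h)-g(x)\vert^2\,\mathrm{d}x\,\mathrm{d}h\le\rho^{p+2\beta}\,M ,
\]
an estimate using only values of $g$ on $U$; combined with a mollification on the interior sets $U_\rho=\{x\in U:\mathrm{dist}(x,\partial U)>\rho\}$ it yields interior compactness, and a uniform-integrability estimate near $\partial U$, again furnished by the Lipschitz cone condition, upgrades this to relative compactness in $L^2(U;\R)$.
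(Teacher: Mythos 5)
Your argument is correct, but it is genuinely different from what the paper does: the paper offers no proof of this lemma at all, citing it as \cite[Lemma 10]{PSV13} and \cite[Theorem 7.1]{DNPV12}, and the proof in those references, after the same first step (a bounded extension operator $E:\mathbb{G}^2_{\beta}(U;\R)\to\mathbb{G}^2_{\beta}(\R^p;\R)$, available because $U$ is bounded Lipschitz), establishes total boundedness directly in physical space, approximating each $Eg$ by its averages over a mesh of small cubes and controlling the error by the Gagliardo seminorm — no Fourier analysis. Your route instead converts the seminorm bound via Plancherel into the quantitative translation modulus $\Vert Eg(\cdot+h)-Eg\Vert_{L^2(\R^p)}\leq C\vert h\vert^{\beta}M^{1/2}$ (your elementary bound $4\sin^2(\theta/2)\leq\min(4,\theta^2)\leq 4^{1-\beta}\vert\theta\vert^{2\beta}$ checks out for $\beta\in(0,1)$) and then invokes the Fr\'echet--Kolmogorov--Riesz theorem in the form that bounded translation-equicontinuous families of $L^2(\R^p)$ have relatively compact restrictions to any finite-measure set, which applies verbatim since $U$ is bounded; note that tightness is indeed not needed in this restricted form, as you implicitly use. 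What each approach buys: yours is shorter given the three standard black boxes (extension theorem, Fourier characterization of the $W^{\beta,2}$ seminorm, FKR) and yields the explicit H\"older modulus $\vert h\vert^{\beta}$ of translations, which the cube-averaging proof does not state; the cited proof is more elementary and extends to $L^q$-based spaces where Plancherel is unavailable. One caution: your fallback ``intrinsic'' sketch is the only weak spot — the integrated translation estimate on $\{x\in U:\,x+h\in U\}$ is fine and gives interior compactness via mollification on $U_{\rho}$, but the claimed uniform smallness of $\int_{U\setminus U_{\rho}}\vert g\vert^2$ over $\mathcal{J}$ does not follow from $L^2$-boundedness and the cone condition alone; it needs extra integrability, e.g.\ the fractional Sobolev embedding $\mathbb{G}^2_{\beta}\hookrightarrow L^q$ with $q>2$ plus $\vert U\setminus U_{\rho}\vert\to0$, which in turn again leans on the extension you were trying to avoid. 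Since this is only an optional alternative, your main proof stands.
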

As a consequence of Theorem \ref{theoCompactApp} and Lemma \ref{lemCompact}, we have the following compactness criterion for subsets in the space $L^2(\Omega,\mathbb{R}^d)$. 
\begin{cor}\label{CorolCompact}
	Denote by $\mathcal{F}^{}_{T}$ the $\sigma$-algebra generated by the $d$-dimensional fractional Brownian sheet $W^H=(W^{H,(1)},\ldots,W^{H,(d)})$ with Hurst index $H=(H_1,H_2)$. Let $(X^{(n)},n\in\N)$ be a sequence of $(\mathcal{F}^{}_{T},\mathcal{B}(\mathbb{R}^d))$-measurable random variables and let $D_{r,u}$ be the Malliavin derivative associated with the random vector $W^H_{r,u}=(W_{r,u}^{H,(1)},\ldots,W_{r,u}^{H,(d)})$. Suppose 
	\begin{align}\label{EqSeqXn1}
		\sup\limits_{n\in\N}\Vert X^{(n)}\Vert_{L^2(\Omega,\mathbb{R}^d)}<\infty\,\text{ and }\,\sup\limits_{n\in\N}\Vert D_{\cdot,\cdot}X^{(n)}\Vert_{L^2(\Omega\times[\esp,s]\times[\esp,t],\mathbb{R}^{d\times d})}<\infty
	\end{align}
	as well as
	\begin{align}\label{EqSeqXn2}
		\sup\limits_{n\in\N}\int_{[\esp,s]^2\times[\esp,t]^2}\frac{\E\left[\vert D_{r,u}X^{(n)}-D_{\bar r,\bar u}X^{(n)}\vert^2\right]}{(\vert{}r-\bar r\vert{}+\vert{}u-\bar u\vert{})^{2+2\beta}}\mathrm{d}u\mathrm{d}\bar u\mathrm{d} r\mathrm{d}\bar r<\infty
	\end{align}
	for some $(s,t)\in(0,T]^2$ and $\esp\in(0,\min\{s,t\})$, where $\beta\in(0,1)$ is independent of $\esp,\,s,\,t$ and $\vert\cdot\vert$ denotes any matrix norm. Then $(X^{(n)},n\in\N)$ is relatively compact in $L^2(\Omega,\mathbb{R}^d)$.
\end{cor}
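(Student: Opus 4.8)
The plan is to deduce the statement from the Da Prato--Malliavin--Nualart criterion (Theorem \ref{theoCompactApp}) by manufacturing, out of the compact fractional Sobolev embedding of Lemma \ref{lemCompact}, a single self-adjoint compact operator $C$ on the underlying Gaussian Hilbert space that converts the bounds \eqref{EqSeqXn1}--\eqref{EqSeqXn2} into the uniform estimate required by that criterion. First I would reduce to the scalar case: writing $X^{(n)}=(X^{(n),1},\dots,X^{(n),d})$, it suffices to show that each coordinate sequence $(X^{(n),i})_{n}$ is relatively compact in $L^{2}(\Omega)$, since a finite product of relatively compact sets is relatively compact in $L^{2}(\Omega;\R^{d})$. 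Fix $i$ and set $G_{n}:=X^{(n),i}$; each $G_{n}\in\mathbb{D}^{1,2}$ with $\mathbf{D}G_{n}=(D_{r,u}G_{n})_{(r,u)}$ taking values in the first chaos $H\cong L^{2}([0,T]^{2};\R^{d})$ generating $\mathcal{F}_{T}$, and the pair \eqref{EqSeqXn1}--\eqref{EqSeqXn2} controls $\mathbf{D}G_{n}$ over the rectangle $U:=(\esp,s)\times(\esp,t)\subset\R^{2}$.

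The heart of the argument is the construction of $C$. By Lemma \ref{lemCompact} (applied with $p=2$, componentwise in the $\R^{d}$-target) the inclusion $\mathbb{G}^{2}_{\beta}(U;\R^{d})\hookrightarrow L^{2}(U;\R^{d})$ is compact, and it is clearly dense. Viewing the $\mathbb{G}^{2}_{\beta}$-inner product as a closed, densely defined quadratic form on $L^{2}(U;\R^{d})$, standard spectral theory associates to it a positive self-adjoint operator $A\geq 1$ with compact resolvent such that $\langle g,h\rangle_{\mathbb{G}^{2}_{\beta}}=\langle A^{1/2}g,A^{1/2}h\rangle_{L^{2}(U)}$ on the form domain. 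Setting $C:=A^{-1/2}$ yields a self-adjoint, positive, compact operator on $L^{2}(U;\R^{d})$ whose range is exactly the form domain $\mathbb{G}^{2}_{\beta}(U;\R^{d})$, hence dense, and for which $\Vert C^{-1}h\Vert_{L^{2}(U)}=\Vert h\Vert_{\mathbb{G}^{2}_{\beta}(U)}$. Since $|r-\bar r|+|u-\bar u|$ and the Euclidean distance on $U$ are comparable, the defining seminorm of $\mathbb{G}^{2}_{\beta}$ is comparable to the integrand in \eqref{EqSeqXn2}; extending $C$ to a self-adjoint compact operator with dense image on the full chaos $H$ that acts as $A^{-1/2}$ on the modes supported in $U$ then produces an operator to which Theorem \ref{theoCompactApp} applies.

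With this $C$ in hand I would verify the criterion's bound directly: by Tonelli's theorem and the norm identity just recorded,
\begin{align*}
\E\big[\Vert C^{-1}\mathbf{D}G_{n}\Vert_{L^{2}(U;\R^{d})}^{2}\big]\;\asymp\;\E\big[\Vert \mathbf{D}G_{n}\Vert_{L^{2}(U;\R^{d})}^{2}\big]+\E\Big[\int_{U}\int_{U}\frac{|D_{r,u}G_{n}-D_{\bar r,\bar u}G_{n}|^{2}}{(|r-\bar r|+|u-\bar u|)^{2+2\beta}}\,\mathrm{d}r\,\mathrm{d}u\,\mathrm{d}\bar r\,\mathrm{d}\bar u\Big],
\end{align*}
and the right-hand side is finite uniformly in $n$, precisely by \eqref{EqSeqXn1} and \eqref{EqSeqXn2}. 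Combined with $\sup_{n}\Vert G_{n}\Vert_{L^{2}(\Omega)}<\infty$, this places every $G_{n}$ in a set $\mathcal{G}$ of the form appearing in Theorem \ref{theoCompactApp} for a common constant $c$, whence $(G_{n})_{n}$ is relatively compact in $L^{2}(\Omega)$. Reassembling the $d$ coordinates yields the claim.

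The main obstacle is reconciling the $\esp$-localisation with the global requirement of Theorem \ref{theoCompactApp}, which demands a single compact operator acting on the \emph{whole} chaos $H$ generating $\mathcal{F}_{T}$, while \eqref{EqSeqXn1}--\eqref{EqSeqXn2} only constrain $\mathbf{D}G_{n}$ over $U=(\esp,s)\times(\esp,t)$. The delicate step is thus to extend $A^{-1/2}$ from $L^{2}(U;\R^{d})$ to a genuinely compact, self-adjoint, dense-range operator on $H$ for which $\Vert C^{-1}\mathbf{D}G_{n}\Vert_{L^{2}(\Omega;H)}$ stays controlled; this forces one to use that the derivatives at hand carry no uncontrolled high-frequency content off $U$ (in the intended application this is guaranteed by the causal support of $D_{r,u}X^{x,n}_{s,t}$ in $[0,s]\times[0,t]$ together with the integrability of the kernel $K_{H}$ near the axes). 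Once this localisation point is settled, the spectral construction of $C$ and the norm comparison above are routine.
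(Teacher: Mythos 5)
Your construction coincides with the paper's own proof. The paper considers exactly your quadratic form on $L^2([\esp,s]\times[\esp,t],\R^{d\times d})$ (with the distance $\vert r-\bar r\vert+\vert u-\bar u\vert$ built directly into the form, so no comparison with the Euclidean metric is needed), applies Kato's first representation theorem \cite{Ka13} to obtain a positive self-adjoint operator $T_{\mathcal{L}}$ with $\mathcal{L}(f,g)=\langle f,T_{\mathcal{L}}g\rangle$, notes $\mathcal{L}(g,g)\geq\Vert g\Vert^2$ so that $\mathcal{D}(\mathcal{L})=\mathcal{D}(T_{\mathcal{L}}^{1/2})$, sets $A=T_{\mathcal{L}}^{1/2}$, and obtains compactness of $A^{-1}$ from \cite[Lemma 1]{Le82} (see also \cite[Lemma 9]{HP14}) combined with Lemma \ref{lemCompact} with $p=2$, $U=[\esp,s]\times[\esp,t]$. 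Your $C=A^{-1/2}$, where your $A$ is the form operator, is literally the paper's $T_{\mathcal{L}}^{-1/2}$, and your verification of the bound in Theorem \ref{theoCompactApp} via \eqref{EqSeqXn1}--\eqref{EqSeqXn2} is the paper's closing step; the coordinatewise reduction is an immaterial difference, since the paper works with $\R^{d\times d}$-valued functions directly.

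The localisation obstacle you isolate in your final paragraph is, however, genuine, and you should be aware that the paper does not resolve it either: its proof applies Theorem \ref{theoCompactApp} with the form living on $L^2(U,\R^{d\times d})$ and never constructs a compact operator with dense image on the full first chaos $H\cong L^2([0,T]^2;\R^d)$ of $W$. Moreover, the extension you propose cannot exist under the stated hypotheses alone: any compact self-adjoint $C$ on $H$ has singular values accumulating at $0$ on the infinite-dimensional orthogonal complement of $L^2(U)$, so $\Vert C^{-1}\mathbf{D}X^{(n)}\Vert_{L^2(\Omega;H)}$ is uncontrollable whenever $\mathbf{D}X^{(n)}$ carries unbounded mass off $U$ --- and the hypotheses permit exactly that. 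Concretely, $X^{(n)}:=(\sin(nW^{H,(1)}_{\esp/2,\esp/2}),0,\ldots,0)$ is $\mathcal{F}_T$-measurable and satisfies \eqref{EqSeqXn1} and \eqref{EqSeqXn2}, because its Malliavin derivative $n\cos(nG)\,K_H(\esp/2,\esp/2,r,u)\mathbf{1}_{[0,\esp/2]^2}(r,u)$ (in the $(1,1)$-entry, with $G:=W^{H,(1)}_{\esp/2,\esp/2}$) vanishes identically on $[\esp,s]\times[\esp,t]$; yet $(X^{(n)})$ has no $L^2(\Omega)$-convergent subsequence, since $\E[\sin(nG)\sin(mG)]=\tfrac12\bigl(e^{-(n-m)^2\sigma^2/2}-e^{-(n+m)^2\sigma^2/2}\bigr)\to0$ as $\vert n-m\vert\to\infty$ while $\E[\sin^2(nG)]\to\tfrac12$, $\sigma^2=\mathrm{Var}(G)>0$. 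So the corollary as stated requires an additional hypothesis (e.g.\ $\mathcal{F}^H_{s,t}$-measurability together with a uniform $L^2$ bound on $D_{r,u}X^{(n)}$ over all of $[0,s]\times[0,t]$, with the H\"older-type bound \eqref{EqSeqXn2} kept away from the axes) and a correspondingly localised variant of Theorem \ref{theoCompactApp}; in the application to $X^{x,n}_{s,t}$ one would additionally need to control the derivative near the axes, where the sectorial nondeterminism estimate of Theorem \ref{SLNDPropTheo} is unavailable. Your instinct that the causal support of the derivative and the integrability of $K_H$ near the axes must be invoked points in the right direction, but neither your sketch nor the paper carries this step out.
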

\begin{proof} 
	The proof is inspired from \cite[Section 5]{HP14}. We consider the symmetric form $\mathcal{L}$ on $L^2([\esp,s]\times[\esp,t],\R^{d\times d})$ defined as
	\begin{align*}
		\mathcal{L}(f,g)=&\int_{[\esp,s]\times[\esp,t]} f(r,u)\cdot g(r,u)\mathrm{d}r\mathrm{d}u\\&+\int_{([\esp,s]\times[\esp,t])^2} \frac{(f(r,u)-f(\bar r,\bar u))\cdot(g(r,u)-g(\bar r,\bar u))}{(|r-\bar r|+|u-\bar u|)^{2+2\beta}}\mathrm{d}r\mathrm{d}\bar r\mathrm{d}u\mathrm{d}\bar u
	\end{align*}
	for functions $f$, $g$ in the dense domain $ \mathcal{D}(\mathcal{L})\subset L^2([\esp,s]\times[\esp,t],\R^{d\times d})$ and a fixed $\beta\in(0,1)$, where
	\begin{align*}
		\mathcal{D}(\mathcal{L})=&\Big\{g:\,\Vert g\Vert^2_{L^2([\esp,s]\times[\esp,t],\R^{d\times d})}\\&\qquad+\int_{([\esp,s]\times[\esp,t])^2}\frac{|g(r,u)-g(\bar r,\bar u)|^2}{(|r-\bar r|+|u-\bar u|)^{2+2\beta}}\mathrm{d}r\mathrm{d}\bar r\mathrm{d}u\mathrm{d}\bar u<\infty\Big\}.
	\end{align*}
	Then $\mathcal{L}$ is a positive symmetric closed form and, by Kato's first representation theorem (see e.g. \cite{Ka13}), one can find a positive self-adjoint operator $T_{\mathcal{L}}$ such that
	\begin{align*}
		\mathcal{L}(f,g)=\langle f,T_{\mathcal{L}}g\rangle_{L^2([\esp,s]\times[\esp,t],\R^{d\times d})}
	\end{align*}
	for all $g\in\mathcal{D}(T_{\mathcal{L}})$ and $f\in\mathcal{D}(\mathcal{L})$. Further, one may observe that the form $\mathcal{L}$ is bounded from below by a positive number. Indeed, 
	\begin{align}\label{EqOpMatcalE}
		\mathcal{L}(g,g)\geq\Vert g\Vert_{L^2([\esp,s]\times[\esp,t],\R^{d\times d})}
	\end{align}
	for all $g\in\mathcal{D}(\mathcal{L})$. Hence, we have that $\mathcal{D}(\mathcal{L})=\mathcal{D}(T^{1/2}_{\mathcal{L}})$. 
	Now, define the operator $A$ as $A=T^{1/2}_{\mathcal{L}}$. It follows from Lemma 1 in \cite[Section 1]{Le82} (see also \cite[Lemma 9]{HP14}) and Lemma \ref{lemCompact} 
	applied to $p=2$, $U=[\esp,s]\times[\esp,t]$ that $A$ has a discrete spectrum and a compact inverse $A^{-1}$. Then, using (C1)
	and (C2)
	, the operator $A$ and the sequence $(X^{(n)},n\in\N)$ satisfy the assumptions of Theorem \ref{theoCompactApp}.
	.
\end{proof}

In order to apply the preceding compact criterion we will need the following estimates.
\begin{lem}\label{ExistEstLem1}
	Let $H=(H_1,H_2)\in(0,1/2)$ and $(s,t)\in[0,T]^2$ be fixed. Then, there exists $\beta\in(0,1/2)$ such that
	\begin{align}
		\int_{[0,s]^2}\int_{[0,t]^2}\frac{\vert K_H(s,t,r,u)-K_H(s,t,\bar r,\bar u)\vert^2}{(\vert r-\bar r\vert+\vert u-\bar u\vert)^{2+2\beta}}\mathrm{d}u\mathrm{d}\bar u\mathrm{d}r\mathrm{d}\bar r<\infty.
	\end{align}
\end{lem}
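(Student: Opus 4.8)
The plan is to exploit the product structure of the kernel. By Proposition \ref{prop:FracKernel}, for $r\in(0,s)$ and $u\in(0,t)$ we have $K_H(s,t,r,u)=K_{H_1}(r,s)K_{H_2}(u,t)$, the factorisation underlying the representation \eqref{FBsheet}. Setting $\phi(r):=K_{H_1}(r,s)$ and $\psi(u):=K_{H_2}(u,t)$, the numerator becomes $\vert\phi(r)\psi(u)-\phi(\bar r)\psi(\bar u)\vert^2$. Both $\phi$ and $\psi$ are square integrable, since $\int_0^sK_{H_1}(r,s)^2\,\mathrm{d}r=\mathrm{Var}(B^{H_1}_s)=s^{2H_1}<\infty$ and likewise for $\psi$. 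The first step is to telescope,
\begin{align*}
\vert\phi(r)\psi(u)-\phi(\bar r)\psi(\bar u)\vert^2\leq2\vert\phi(r)-\phi(\bar r)\vert^2\psi(u)^2+2\phi(\bar r)^2\vert\psi(u)-\psi(\bar u)\vert^2,
\end{align*}
and to use the equivalence of the $\ell^1$ and $\ell^2$ norms on $\R^2$ to retain the denominator $(\vert r-\bar r\vert+\vert u-\bar u\vert)^{2+2\beta}$ up to a multiplicative constant.

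Second, I would decouple the quadruple integral by integrating out one barred variable in each of the two terms. In the first term I integrate over $\bar u$ and bound
\begin{align*}
\int_0^t\frac{\mathrm{d}\bar u}{(\vert r-\bar r\vert+\vert u-\bar u\vert)^{2+2\beta}}\leq\int_{\R}\frac{\mathrm{d}v}{(\vert r-\bar r\vert+\vert v\vert)^{2+2\beta}}=\frac{2}{(1+2\beta)\,\vert r-\bar r\vert^{1+2\beta}},
\end{align*}
and symmetrically over $\bar r$ in the second term. After these reductions the full integral is dominated, up to a constant, by
\begin{align*}
\Vert\psi\Vert_{L^2([0,t])}^2\int_{[0,s]^2}\frac{\vert\phi(r)-\phi(\bar r)\vert^2}{\vert r-\bar r\vert^{1+2\beta}}\,\mathrm{d}r\,\mathrm{d}\bar r+\Vert\phi\Vert_{L^2([0,s])}^2\int_{[0,t]^2}\frac{\vert\psi(u)-\psi(\bar u)\vert^2}{\vert u-\bar u\vert^{1+2\beta}}\,\mathrm{d}u\,\mathrm{d}\bar u,
\end{align*}
so that the claim reduces to the one-dimensional statements $\phi\in\mathbb{G}^2_\beta([0,s];\R)$ and $\psi\in\mathbb{G}^2_\beta([0,t];\R)$.

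Third, I would establish the one-dimensional fractional Sobolev regularity of the Molchan-type kernel $\phi=K_{H_1}(\cdot,s)$. From the explicit expression in Proposition \ref{prop:FracKernel} one reads off that $\phi$ is smooth on the open interval $(0,s)$ and singular only at the two endpoints: near $r=s$ the term $F_{H_1}(r,s)$ dominates and $\phi(r)\sim c_{H_1}(s-r)^{H_1-1/2}$ (the correction $\int_r^sF_{H_1}(r,w)w^{-1}\,\mathrm{d}w$ being of order $(s-r)^{H_1+1/2}$), while near $r=0$ the correction term dominates and $\phi(r)\sim C\,r^{H_1-1/2}$. Both endpoints thus carry the same model singularity $x\mapsto x^{H_1-1/2}$, with exponent $H_1-\tfrac12\in(-\tfrac12,0)$. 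A scaling (dyadic) analysis shows that $x^{H_1-1/2}$ lies in $\mathbb{G}^2_\beta$ near $0$ precisely when $2(H_1-\tfrac12)+1-2\beta>0$, i.e. for every $\beta<H_1$, the smooth interior contributing finitely for any such $\beta$. The same reasoning gives $\psi=K_{H_2}(\cdot,t)\in\mathbb{G}^2_\beta([0,t];\R)$ for $\beta<H_2$. Choosing any $\beta\in(0,\min\{H_1,H_2\})\subset(0,1/2)$ makes both one-dimensional seminorms finite and proves the lemma.

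The main obstacle is this last step, namely controlling $\vert K_{H_1}(r,s)-K_{H_1}(\bar r,s)\vert$ uniformly near the two endpoint singularities. A rigorous argument would replace the heuristic scaling by the pointwise derivative bound $\vert\partial_rK_{H_1}(r,s)\vert\leq C\big(r^{H_1-3/2}+(s-r)^{H_1-3/2}\big)$ on $(0,s)$, splitting $[0,s]^2$ into a near-diagonal region, where the mean value theorem combined with this bound is used, and an off-diagonal region, where the numerator is estimated directly by $\vert\phi(r)\vert^2+\vert\phi(\bar r)\vert^2$ together with the endpoint $L^2$-integrability coming from $2H_1-1>-1$; it is exactly these estimates that force $\beta<H_1$.
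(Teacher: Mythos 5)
Your proposal is correct and reaches the same conclusion ($\beta<\min\{H_1,H_2\}$ works), and it opens exactly as the paper does, by telescoping the product kernel $K_{H_1}K_{H_2}$ and applying $(a+b)^2\le 2a^2+2b^2$. After that the two arguments diverge in a genuine way. The paper splits the denominator via the Young-type inequality $\vert r-\bar r\vert^{1/\delta}\vert u-\bar u\vert^{1/\eta}\le C(\vert r-\bar r\vert+\vert u-\bar u\vert)$ with conjugate exponents tuned to an auxiliary $\beta_1>\beta$; this factorises the quadruple integral into a Gagliardo seminorm of one kernel factor (with exponent $\beta_1$, quoted from Lemma A.4 of \cite{BNP18}) times a \emph{weighted} $L^2$ integral of the other factor, $\int_{[0,s]^2}\vert K_{H_1}(s,r)\vert^2\vert r-\bar r\vert^{2(\beta_1-\beta)-1}\,\mathrm{d}r\mathrm{d}\bar r$, whose finiteness is then checked using the pointwise bound $\vert K_{H_1}(s,r)\vert\le C(s-r)^{H_1-1/2}r^{H_1-1/2}$. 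You instead integrate out the barred variable that does not appear in the numerator, which yields the cleaner bound $\Vert\psi\Vert_{L^2}^2$ times the Gagliardo seminorm of $\phi$ at the full exponent $\beta$ (and symmetrically), avoiding the auxiliary $\beta_1$ and the weighted integral altogether; this is a mild simplification. The second difference is that the paper cites \cite[Lemma A.4]{BNP18} for the one-dimensional statement $K_{H_1}(\cdot,s)\in\mathbb{G}^2_{\beta}([0,s];\R)$, whereas you sketch it directly from the endpoint asymptotics $r^{H_1-1/2}$ and $(s-r)^{H_1-1/2}$ of the Molchan kernel. Your asymptotics and the resulting threshold $\beta<H_1$ are correct (and consistent with the cited lemma), but as you acknowledge, making the near-diagonal/off-diagonal splitting rigorous via the derivative bound $\vert\partial_rK_{H_1}(r,s)\vert\le C(r^{H_1-3/2}+(s-r)^{H_1-3/2})$ is precisely the content of the external lemma the paper invokes; if you want a self-contained proof you would need to carry that step out in full, but there is no gap in the logic.
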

\begin{proof} 
	Let $H=(H_1,H_2)\in(0,1/2)$, $0\leq \bar r,r\leq s \leq T$ and $0\leq\bar u,u\leq t\leq T$, we have
	\begin{align}\label{EqLemC4a}
		&\notag\vert K_H(s,t,r,u)-K_H(s,t,\bar r,\bar u)\vert^2\\=&\notag\vert K_{H_1}(s,r)K_{H_2}(t,u)-K_{H_1}(s,\bar r)K_{H_2}(t,\bar u)\vert^2\\=&\notag\left\vert K_{H_1}(s,r)(K_{H_2}(t,u)-K_{H_2}(t,\bar u))+(K_{H_1}(s,r)-K_{H_1}(s,\bar r))K_{H_2}(t,\bar u)\right\vert^2\\ \leq&2\left(\vert K_{H_1}(s,r)\vert^2\vert K_{H_2}(t,u)-K_{H_2}(t,\bar u)\vert^2+\vert K_{H_1}(s,r)-K_{H_1}(s,\bar r)\vert^2\vert K_{H_2}(t,\bar u)\vert^2\right).
	\end{align} 
	By Lemma A.4 in \cite{BNP18}, there exists $\beta_1,\,\beta_2\in(0,1/2)$ such that 
	\begin{align*}
		&	\int_{[0,s]^2}\frac{\vert K_{H_1}(s,r)-K_{H_1}(s,\bar r)\vert^2}{\vert r-\bar r\vert^{1+2\beta_1}}\,\mathrm{d}r\mathrm{d}\bar r<\infty
	\end{align*}
	and
	\begin{align*}
		\int_{[0,t]^2}\frac{\vert K_{H_2}(t,u)-K_{H_2}(t,\bar u)\vert^2}{\vert u-\bar u\vert^{1+2\beta_2}}\,\mathrm{d}u\mathrm{d}\bar u	<\infty.
	\end{align*}
	Let $\beta\in(0,\min\{\beta_1,\beta_2\})$. Applying Young's inequality $xy\leq\frac{1}{\eta}x^{\eta}+\frac{1}{\delta}x^{\delta}$ with $\eta=\frac{2+2\beta}{1+2\beta_1}$, $\delta=\frac{\eta}{\eta-1}=\frac{2+2\beta}{1-2(\beta_1-\beta)}$, $x=\vert r-\bar r\vert^{1/\delta}$, $y=\vert u-\bar u\vert^{1/\eta}$, we have
	
	\begin{align}\label{EqLemC4b}
		\frac{\vert r-\bar r\vert+\vert u-\bar u\vert}{\min\{\eta,\delta\}}\geq\frac{\vert r-\bar r\vert}{\delta}+\frac{\vert u-\bar u\vert}{\eta} \geq\vert r-\bar r\vert^{1/\delta}\vert u-\bar u\vert^{1/\eta}.
	\end{align}
	We also have the following estimates (see e.g. \cite[Page 45]{BNP18})
	\begin{align}\label{EqLemC4c}
		\vert K_{H_1}(s,r)\vert\leq C_{H_1,T}\vert s-r\vert^{H_1-1/2}r^{H_1-1/2}
	\end{align}
	and
	\begin{align}\label{EqLemC4d}
		\vert K_{H_2}(t,\bar u)\vert\leq C_{H_2,T}\vert t-\bar u\vert^{H_1-1/2}\bar u^{H_1-1/2}
	\end{align}
	for every $0<r<s<T$, $0<\bar u<t<T$ and some positive constants $C_{H_1,T},\,C_{H_2,T}$.
	We deduce from \eqref{EqLemC4b} and \eqref{EqLemC4c} that
	\begin{align}
		&\notag\int_{[0,s]^2}\int_{[0,t]^2}\frac{\vert K_{H_1}(s,r)\vert^2\vert K_{H_2}(t,u)-K_{H_2}(t,\bar u)\vert^2}{(\vert r-\bar r\vert+\vert u-\bar u\vert)^{2+2\beta}}\mathrm{d}u\mathrm{d}\bar u\mathrm{d}r\mathrm{d}\bar r\\ \notag\leq&C_{\eta,\delta}\Big(\int_{[0,s]^2}\frac{\vert K_{H_1}(s,r)\vert^2}{\vert r-\bar r\vert^{(2+2\beta)/\delta}}\,\mathrm{d}r\mathrm{d}\bar r\Big)\Big(\int_{[0,t]^2}\frac{\vert K_{H_2}(t,u)-K_{H_2}(t,\bar u)\vert^2}{\vert u-\bar u\vert^{(2+2\beta)/\eta}}\,\mathrm{d}u\mathrm{d}\bar u\Big)\\ \notag=&C_{\eta,\delta}\Big(\int_{[0,s]^2}\frac{\vert K_{H_1}(s,r)\vert^2}{\vert r-\bar r\vert^{1-2(\beta_1-\beta)}}\,\mathrm{d}r\mathrm{d}\bar r\Big)\Big(\int_{[0,t]^2}\frac{\vert K_{H_2}(t,u)-K_{H_2}(t,\bar u)\vert^2}{\vert u-\bar u\vert^{1+2\beta_1}}\,\mathrm{d}u\mathrm{d}\bar u\Big)
		\\ \leq&C_{\eta,\delta}C^2_{H_1,T}\Big(\int_{[0,s]^2}\vert s-r\vert^{2H_1-1}r^{2H_1-1}\vert r-\bar r\vert^{2(\beta_1-\beta)-1}\,\mathrm{d}\bar r\mathrm{d}r\Big)\label{EqLemC4e}\\&\qquad\times\Big(\int_{[0,t]^2}\frac{\vert K_{H_2}(t,u)-K_{H_2}(t,\bar u)\vert^2}{\vert u-\bar u\vert^{1+2\beta_1}}\,\mathrm{d}u\mathrm{d}\bar u\Big)
		<\infty.\notag
	\end{align}
	Similarly, using \eqref{EqLemC4d} we show that
	\begin{align}\label{EqLemC4f}
		\int_{[0,s]^2}\int_{[0,t]^2}\frac{\vert K_{H_1}(s,r)-K_{H_1}(s,\bar r)\vert^2\vert K_{H_2}(t,\bar u)\vert^2}{(\vert r-\bar r\vert+\vert u-\bar u\vert)^{2+2\beta}}\mathrm{d}u\mathrm{d}\bar u\mathrm{d}r\mathrm{d}\bar r<\infty.
	\end{align}
	It follows from \eqref{EqLemC4a}, \eqref{EqLemC4e} and \eqref{EqLemC4f} that
	\begin{align*}
		&\int_{[0,s]^2}\int_{[0,t]^2}\frac{\vert K_H(s,t,r,u)-K_H(s,t,\bar r,\bar u)\vert^2}{(\vert r-\bar r\vert+\vert u-\bar u\vert)^{2+2\beta}}\mathrm{d}u\mathrm{d}\bar u\mathrm{d}r\mathrm{d}\bar r\\ \leq&2\Big\{
		\int_{[0,s]^2}\int_{[0,t]^2}\frac{\vert K_{H_1}(s,r)\vert^2\vert K_{H_2}(t,u)-K_{H_2}(t,\bar u)\vert^2}{(\vert r-\bar r\vert+\vert u-\bar u\vert)^{2+2\beta}}\mathrm{d}u\mathrm{d}\bar u\mathrm{d}r\mathrm{d}\bar r\\&\qquad+\int_{[0,s]^2}\int_{[0,t]^2}\frac{\vert K_{H_1}(s,r)-K_{H_1}(s,\bar r)\vert^2\vert K_{H_2}(t,\bar u)\vert^2}{(\vert r-\bar r\vert+\vert u-\bar u\vert)^{2+2\beta}}\mathrm{d}u\mathrm{d}\bar u\mathrm{d}r\mathrm{d}\bar r
		\Big\}<\infty,
	\end{align*}
	which completes the proof.
\end{proof}

\begin{lem}\label{ExistEstLem2}
	Let $\esp>0$, $H=(H_1,H_2)\in(0,1/2)^2$, $\esp\leq r,\bar r<s\leq T$, $\esp\leq u,\bar u<t\leq T$ $\sigma,\pi\in\mathcal{P}_m$ and $(\ve_1,\ldots,\ve_m)\in\{0,1\}^m$ be fixed. Let $v_j,\,w_j$ be real numbers such that $v_j +(H_1-\frac{1}{2}-\gamma)\ve_{\sigma^{-1}(j)}>-1$ and   $w_j +(H_2-\frac{1}{2}-\gamma)\ve_{\pi^{-1}(j)}>-1$ for all $j=1,\ldots,m$. Then there exists a positive constant $C=C_{H,T}$ such that
	\begin{align*}
		&\int_{\nabla^{m}_{r,s}}\int_{\nabla^{m}_{u,t}}\prod\limits_{j=1}^m\vert K_H(s_{\sigma(j)},t_{\pi(j)},r,u)-K_H(s_{\sigma(j)},t_{\pi(j)},\bar r,\bar u)\vert^{\ve_j}\\&\qquad\qquad\times\vert s_j-s_{j+1}\vert^{v_j}\vert t_j-t_{j+1}\vert^{w_j}\mathrm{d}t_1\ldots\mathrm{d}t_m\mathrm{d}s_1\ldots\mathrm{d}s_m\\&\leq C^m  \Pi_{\gamma,m}(v,H_1)\Pi_{\gamma,m}(w,H_2)\\&\quad\times\Big[r^{H_1-\frac{1}{2}-\gamma}\bar u^{H_2-\frac{1}{2}-\gamma}\Big( \frac{\vert u-\bar u\vert^{\gamma}}{(u\bar u)^{\gamma}} +\frac{\vert r-\bar r\vert^{\gamma}}{(r\bar r)^{\gamma}} \Big)\Big]^{\sum_{j=1}^m\ve_j}\\&\quad\times(s-r)^{(H_1-\frac{1}{2}-\gamma)\sum_{j=1}^m \ve_{j}+\sum_{j=1}^mv_j+m}(t-u)^{(H_2-\frac{1}{2}-\gamma)\sum_{j=1}^m \ve_{j}+\sum_{j=1}^mw_j+m},
	\end{align*}
	where    
	\begin{align*}
		\Pi_{\gamma,m}(v,H_1):=\frac{\prod\limits_{j=2}^{m}\Gamma\left(1+v_j\right) \prod\limits_{j=1}^{m}\Gamma\left((H_1-\frac{1}{2}-\gamma) \ve_{\sigma^{-1}(j)}+ v_j+1\right)}{\Gamma\Big((H_1-\frac{1}{2}-\gamma)\sum\limits_{j=1}^{m} \ve_{j}+ \sum\limits_{j=1}^{m}v_{j}+m+1\Big)}   
	\end{align*}
	and
	\begin{align*}
		\Pi_{\gamma,m}(w,H_2):=\frac{\prod\limits_{j=2}^{m}\Gamma\left(1+w_j\right) \prod\limits_{j=1}^{m}\Gamma\left((H_2-\frac{1}{2}-\gamma) \ve_{\pi^{-1}(j)}+ w_j+1\right)}{\Gamma\Big((H_2-\frac{1}{2}-\gamma)\sum\limits_{j=1}^{m} \ve_{j}+ \sum\limits_{j=1}^{m}w_{j}+m+1\Big)}.   
	\end{align*}
	
\end{lem}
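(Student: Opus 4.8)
The plan is to exploit the product structure of the kernel, $K_H(a,b,r,u)=K_{H_1}(a,r)K_{H_2}(b,u)$ from Proposition~\ref{prop:FracKernel}, in order to decouple the $s$- and $t$-integrations and then reduce each resulting one-parameter integral over the ordered simplex $\nabla^m$ to an iterated Beta (Dirichlet) integral. First I would split, for each $j$,
\begin{align*}
&K_H(s_{\sigma(j)},t_{\pi(j)},r,u)-K_H(s_{\sigma(j)},t_{\pi(j)},\bar r,\bar u)\\
=&K_{H_1}(s_{\sigma(j)},r)\big(K_{H_2}(t_{\pi(j)},u)-K_{H_2}(t_{\pi(j)},\bar u)\big)+\big(K_{H_1}(s_{\sigma(j)},r)-K_{H_1}(s_{\sigma(j)},\bar r)\big)K_{H_2}(t_{\pi(j)},\bar u),
\end{align*}
so that raising to the power $\ve_j\in\{0,1\}$ and expanding the product over $j$ yields a finite sum of terms, each a product of a pure $s$-factor and a pure $t$-factor. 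The cross expression $\tfrac{|u-\bar u|^{\gamma}}{(u\bar u)^{\gamma}}+\tfrac{|r-\bar r|^{\gamma}}{(r\bar r)^{\gamma}}$ in the conclusion is exactly the trace of this two-term splitting, collected after taking $\ve_j$-th powers, the combinatorial multiplicities being absorbed into the constant $C^m$.

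The second ingredient consists of pointwise estimates on the one-parameter kernel, read off directly from the explicit formula in Proposition~\ref{prop:FracKernel}; these are the counterparts of the bounds already used in Lemma~\ref{ExistEstLem1} (cf.\ \eqref{EqLemC4c}--\eqref{EqLemC4d}). Concretely, for a sufficiently small $\gamma\in(0,H_1)$ and all $\esp\le r,\bar r<a\le T$, I would use the absolute bound $|K_{H_1}(a,r)|\le C\,(a-r)^{H_1-\frac12}r^{H_1-\frac12}$ together with a H\"older-type refinement of the form
\begin{align*}
|K_{H_1}(a,r)-K_{H_1}(a,\bar r)|\le C\,(a-r\wedge\bar r)^{H_1-\frac12-\gamma}\,r^{H_1-\frac12-\gamma}\,\frac{|r-\bar r|^{\gamma}}{(r\bar r)^{\gamma}},
\end{align*}
and the analogous statements for $K_{H_2}$. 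Inserting these into the expansion of the first step, every factor carrying $\ve_j=1$ contributes the boundary weight $r^{H_1-\frac12-\gamma}\bar u^{H_2-\frac12-\gamma}$ times the cross expression above, together with an interior decay factor $(s_{\sigma(j)}-r)^{H_1-\frac12-\gamma}$ (respectively $(t_{\pi(j)}-u)^{H_2-\frac12-\gamma}$), while each factor with $\ve_j=0$ contributes only a constant.

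It then remains to integrate the resulting products of power functions over $\nabla^m_{r,s}$ and $\nabla^m_{u,t}$, with the conventions $s_{m+1}=r$, $t_{m+1}=u$. Bounding each interior decay factor by the adjacent gap and combining it with the weight $|s_j-s_{j+1}|^{v_j}$, the position $p=\sigma(j)$ acquires the effective gap exponent $v_p+(H_1-\frac12-\gamma)\ve_{\sigma^{-1}(p)}$; the hypotheses $v_j+(H_1-\frac12-\gamma)\ve_{\sigma^{-1}(j)}>-1$ and $w_j+(H_2-\frac12-\gamma)\ve_{\pi^{-1}(j)}>-1$ are precisely the integrability conditions these exponents must satisfy. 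An iterated Beta integral over the ordered simplex, performed from the innermost variable outward and telescoping in the usual way, then produces the factor $(s-r)^{(H_1-\frac12-\gamma)\sum_j\ve_j+\sum_j v_j+m}$ times the Gamma-ratio $\Pi_{\gamma,m}(v,H_1)$, and symmetrically for the $t$-variables, giving the product $\Pi_{\gamma,m}(v,H_1)\Pi_{\gamma,m}(w,H_2)$. The additional factors $\prod_{j=2}^m\Gamma(1+v_j)$ built into $\Pi_{\gamma,m}(v,H_1)$ arise from a mild over-estimation at each integration step (splitting the partial sum $s_{\sigma(j)}-r$ rather than evaluating the sharp Dirichlet integral); this loss is harmless, since only the constant $C^m$ and the factorial-type growth of the Gamma terms matter in the applications (Propositions~\ref{RegulariseProp1}--\ref{RegulariseProp3}).

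\textbf{Main obstacle.} The delicate point is the uniform H\"older estimate on $K_{H_1}$, $K_{H_2}$ in the second argument, valid on the whole strip $[\esp,T]$ and capturing the correct singular boundary behaviour in $r,\bar r$ and $u,\bar u$; controlling the singular integral term $\int_r^a F_{H_1}(r,v)/v\,\diffns v$ from Proposition~\ref{prop:FracKernel} uniformly, and producing the exact H\"older weight $r^{H_1-\frac12-\gamma}|r-\bar r|^{\gamma}(r\bar r)^{-\gamma}$, is where the analytic care concentrates. A secondary difficulty is the combinatorial bookkeeping of the permutations $\sigma,\pi$: one must track how the kernel-induced exponents are redistributed among the gaps of the simplex so that the iterated Beta integrals telescope into precisely the ratios $\Pi_{\gamma,m}(v,H_1)\Pi_{\gamma,m}(w,H_2)$, with a constant $C=C_{H,T}$ that is independent of $m$ and of the exponents $v_j,w_j$.
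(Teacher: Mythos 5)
Your proposal follows essentially the same route as the paper's proof: the same splitting of the kernel difference $K_{H_1}(K_{H_2}(\cdot,u)-K_{H_2}(\cdot,\bar u))+(K_{H_1}(\cdot,r)-K_{H_1}(\cdot,\bar r))K_{H_2}$, the same pointwise and H\"older-type bounds on the one-parameter kernels (which the paper simply imports from the appendix of \cite{BNP18}, so the ``main obstacle'' you flag is resolved by citation), and the same reduction to iterated Beta integrals over the ordered simplices followed by a Gamma-ratio bound. The only minor inaccuracy is your attribution of the factor $\prod_{j=2}^m\Gamma(1+v_j)$ to an over-estimation: in the paper this factor is part of the exact Dirichlet-integral evaluation (Claim 1), and the over-estimation enters only afterwards when the telescoping Gamma ratios are bounded by $C_1^m\prod_j\Gamma((H_1-\tfrac12-\gamma)\ve^{\sigma}_j+v_j+1)$ via the monotonicity properties of $\Gamma$ (Claim 2).
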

\begin{proof} 
	Set
	\begin{align*}
		\mathcal{J}^{(m)}_{\sigma,\pi}:=&\int_{\nabla^{m}_{r,s}}\int_{\nabla^{m}_{u,t}}\prod\limits_{j=1}^m\vert K_H(s_{\sigma(j)},t_{\pi(j)},r,u)-K_H(s_{\sigma(j)},t_{\pi(j)},\bar r,\bar u)\vert^{\ve_j}\\&\qquad\qquad\times\vert s_j-s_{j+1}\vert^{v_j}\vert t_j-t_{j+1}\vert^{w_j}\mathrm{d}t_1\ldots\mathrm{d}t_m\mathrm{d}s_1\ldots\mathrm{d}s_m.
	\end{align*}	
	We start by recalling the following estimates that have been proved in \cite[Appendix]{BNP18}. For any $0\leq \bar r,r\leq s\leq T$, $0\leq \bar u,u\leq t\leq T$ and $\gamma\in(0,\min\{H_1,H_2\})$,
	\begin{align}\label{BanApp1a}
		\vert K_{H_1}(s,r)\vert\leq C_{H_1,T}(  s-r)^{H_1-\frac{1}{2}}r^{H_1-\frac{1}{2}},
	\end{align}
	\begin{align}\label{BanApp1b}
		\vert K_{H_2}(t,u)\vert\leq C_{H_2,T}(  t-u)^{H_2-\frac{1}{2}}u^{H_2-\frac{1}{2}},
	\end{align}
	\begin{align}\label{BanApp1c}
		\vert K_{H_1}(s,r)-K_{H_1}(s,\bar r)\vert\leq C_{H_1,T}\frac{\vert r-\bar r\vert^{\gamma}}{(r \bar r)^{\gamma}}r^{H_1-\frac{1}{2}-\gamma}(s-r)^{H_1-\frac{1}{2}-\gamma}
	\end{align}	
	and
	\begin{align}\label{BanApp1d}
		\vert K_{H_2}(t,u)-K_{H_2}(t,\bar u)\vert\leq C_{H_2,T}\frac{\vert u-\bar u\vert^{\gamma}}{(u \bar u)^{\gamma}}u^{H_2-\frac{1}{2}-\gamma}(t-u)^{H_2-\frac{1}{2}-\gamma}.
	\end{align}
	It follows from \eqref{BanApp1a} and \eqref{BanApp1b} that for any $\gamma\in(0,H_1\wedge H_2)$ and $j\in\{1,\ldots,m\}$,
	\begin{align*}
		&\vert K_H(s_{\sigma(j)},t_{\pi(j)},r,u)-K_H(s_{\sigma(j)},t_{\pi(j)},\bar r,\bar u)\vert\\=&\vert K_{H_1}(s_{\sigma(j)},r)(K_{H_2}(t_{\pi(j)},u)-K_{H_2}(t_{\pi(j)},\bar u))\\&\quad+K_{H_2}(t_{\pi(j)},\bar u)(K_{H_1}(s_{\sigma(j)},r)-K_{H_1}(s_{\sigma(j)},\bar r))\vert\\ \leq&C^{(1)}_{T,H}\Big(r^{H_1-\frac{1}{2}}u^{H_2-\frac{1}{2}-\gamma}\frac{\vert u-\bar u\vert^{\gamma}}{(u\bar u)^{\gamma}}+r^{H_1-\frac{1}{2}-\gamma}\bar u^{H_2-\frac{1}{2}}\frac{\vert r-\bar r\vert^{\gamma}}{(r\bar r)^{\gamma}}\Big)\\&\times\left[(s_{\sigma(j)}-r)^{H_1-\frac{1}{2}}(t_{\pi(j)}-u)^{H_2-\frac{1}{2}-\gamma}+(s_{\sigma(j)}-r)^{H_1-\frac{1}{2}-\gamma}(t_{\pi(j)}-\bar u)^{H_2-\frac{1}{2}}\right]\\ \leq&C^{(2)}_{T,H}\Big(r^{H_1-\frac{1}{2}}u^{H_2-\frac{1}{2}-\gamma}\frac{\vert u-\bar u\vert^{\gamma}}{(u\bar u)^{\gamma}}+r^{H_1-\frac{1}{2}-\gamma}\bar u^{H_2-\frac{1}{2}}\frac{\vert r-\bar r\vert^{\gamma}}{(r\bar r)^{\gamma}}\Big)\\&\times(s_{\sigma(j)}-r)^{H_1-\frac{1}{2}-\gamma}(t_{\pi(j)}-u)^{H_2-\frac{1}{2}-\gamma}.	
	\end{align*}
	As a consequence,
	\begin{align*}
		\mathcal{J}^{(m)}_{\sigma,\pi}\leq& (C^{(2)}_{T,H})^m\Big(r^{H_1-\frac{1}{2}}u^{H_2-\frac{1}{2}-\gamma}\frac{\vert u-\bar u\vert^{\gamma}}{(u\bar u)^{\gamma}}+r^{H_1-\frac{1}{2}-\gamma}\bar u^{H_2-\frac{1}{2}}\frac{\vert r-\bar r\vert^{\gamma}}{(r\bar r)^{\gamma}}\Big)^{\sum_{j=1}^m\ve_j}\\&\quad\times\Big(\int_{\nabla^{m}_{r,s}}\prod\limits_{j=1}^m(s_{\sigma(j)}-r)^{(H_1-\frac{1}{2}-\gamma)\ve_j}\vert s_j-s_{j+1}\vert^{v_j}\,\mathrm{d}s_1\ldots\mathrm{d}s_m\Big)\\&\quad\times\Big(\int_{\nabla^{m}_{u,t}}\prod\limits_{j=1}^m(t_{\pi(j)}-r)^{(H_2-\frac{1}{2}-\gamma)\ve_j}\vert t_j-t_{j+1}\vert^{w_j}\,\mathrm{d}t_1\ldots\mathrm{d}t_m\Big)\\=&(C^{(2)}_{T,H})^m\Big(r^{H_1-\frac{1}{2}}u^{H_2-\frac{1}{2}-\gamma}\frac{\vert u-\bar u\vert^{\gamma}}{(u\bar u)^{\gamma}}+r^{H_1-\frac{1}{2}-\gamma}\bar u^{H_2-\frac{1}{2}}\frac{\vert r-\bar r\vert^{\gamma}}{(r\bar r)^{\gamma}}\Big)^{\sum_{j=1}^m\ve_j}\\&\quad\times\Big(\int_{\nabla^{m}_{r,s}}\prod\limits_{j=1}^m(s_{j}-r)^{(H_1-\frac{1}{2}-\gamma)\ve^{\sigma}_j}\vert s_j-s_{j+1}\vert^{v_j}\,\mathrm{d}s_1\ldots\mathrm{d}s_m\Big)\\&\quad\times\Big(\int_{\nabla^{m}_{u,t}}\prod\limits_{j=1}^m(t_{j}-r)^{(H_2-\frac{1}{2}-\gamma)\ve^{\pi}_j}\vert t_j-t_{j+1}\vert^{w_j}\,\mathrm{d}t_1\ldots\mathrm{d}t_m\Big),
	\end{align*}
	where $\ve^{\sigma}_j=\ve_{\sigma^{-1}(j)}$ and $\ve^{\pi}_j=\ve_{\pi^{-1}(j)}$.
	
	Now one can see that
	\begin{align}
		&\notag r^{H_1-\frac{1}{2}}u^{H_2-\frac{1}{2}-\gamma}\frac{\vert u-\bar u\vert^{\gamma}}{(u\bar u)^{\gamma}}+r^{H_1-\frac{1}{2}-\gamma}\bar u^{H_2-\frac{1}{2}}\frac{\vert r-\bar r\vert^{\gamma}}{(r\bar r)^{\gamma}}\\\leq& C^{(3)}_{T,H}r^{H_1-\frac{1}{2}-\gamma}\bar u^{H_2-\frac{1}{2}-\gamma}\Big(\frac{\vert u-\bar u\vert^{\gamma}}{(u\bar u)^{\gamma}}+\frac{\vert r-\bar r\vert^{\gamma}}{(r\bar r)^{\gamma}}\Big).
	\end{align}
	Further, one has
	\\
	\textbf{Claim 1. }
	For any $m\geq2$,
	\begin{align}
		&\notag\int_{\nabla^{m}_{r,s}}\prod\limits_{j=1}^m( s_{j}-r)^{(H_1-\frac{1}{2}-\gamma)\ve^{\sigma}_{j}}\vert  s_{j}-s_{j+1}\vert^{v_j}\,\mathrm{d} s_{m}\ldots\mathrm{d} s_{1}\\=&\notag\int_r^s\int_r^{ s_1}\cdots\int_{r}^{ s_{m-1}}\prod\limits_{j=1}^m(\ s_{j}-r)^{(H_1-\frac{1}{2}-\gamma)\ve^{\sigma}_{j}}\vert  s_{j}-s_{j+1}\vert^{ v_j}\,\mathrm{d} s_{m}\ldots\mathrm{d} s_{1}\\=&\label{BetaFormula2a}\frac{\prod\limits_{j=2}^m\Gamma\left(1+v_j\right) \prod\limits_{j=1}^{m}\Gamma\left((H_1-\frac{1}{2}-\gamma)\sum\limits_{\ell=1}^{j}\ve^{\sigma}_{m-\ell+1}+\sum\limits_{\ell=1}^{j}v_{m-\ell+1} +j\right)}{\prod\limits_{j=2}^{m}\Gamma\left((H_1-\frac{1}{2}-\gamma)\sum\limits_{\ell=1}^{j-1}\ve^{\sigma}_{m-\ell+1}+ \sum\limits_{\ell=1}^{j}v_{m-\ell+1}+j\right)} \\&\qquad\times\frac{( s_{}-r)^{(H_1-\frac{1}{2}-\gamma)\sum\limits_{\ell=0}^{m-1}\ve^{\sigma}_{m-\ell}+\sum\limits_{\ell=0}^{m-1}v_{m-\ell}+m }}{\Gamma\left((H_1-\frac{1}{2}-\gamma)\sum\limits_{\ell=0}^{m-1}\ve^{\sigma}_{m-\ell}+\sum\limits_{\ell=0}^{m-1}v_{m-\ell}+m+1 \right)}\notag
	\end{align}
	and
	\begin{align}
		&\notag\int_{\nabla^{m}_{u,t}}\prod\limits_{j=1}^m( t_{j}-u)^{(H_2-\frac{1}{2}-\gamma)\ve^{\pi}_{j}}\vert  t_{j}- t_{j+1}\vert^{ w_j}\,\mathrm{d} t_{m}\ldots\mathrm{d} t_{1}\\=&\label{BetaFormula2b}\frac{\prod\limits_{j=2}^m\Gamma\left(1+w_j\right) \prod\limits_{j=1}^{m}\Gamma\left((H_2-\frac{1}{2}-\gamma)\sum\limits_{\ell=1}^{j}\ve^{\pi}_{m-\ell+1}+\sum\limits_{\ell=1}^{j}w_{m-\ell+1} +j\right)}{\prod\limits_{j=2}^{m}\Gamma\left((H_2-\frac{1}{2}-\gamma)\sum\limits_{\ell=1}^{j-1}\ve^{\pi}_{m-\ell+1}+ \sum\limits_{\ell=1}^{j}w_{m-\ell+1}+j\right)} \\&\qquad\times\frac{( t_{}-u)^{(H_2-\frac{1}{2}-\gamma)\sum\limits_{\ell=0}^{m-1}\ve^{\pi}_{m-\ell}+\sum\limits_{\ell=0}^{m-1}w_{m-\ell}+m }}{\Gamma\left((H_2-\frac{1}{2}-\gamma)\sum\limits_{\ell=0}^{m-1}\ve^{\pi}_{m-\ell}+\sum\limits_{\ell=0}^{m-1}w_{m-\ell}+m+1 \right)}\notag,
	\end{align}
	where $s_{m+1}=r$ and $t_{m+1}=u$.
	\begin{proof}[Proof of the claim 1.]
		We first prove by induction on $k=1,\ldots,m-1$ that
		\begin{align}
			&\notag\int_r^{ s_{m-k}}\cdots\int_{r}^{ s_{m-1}}(s_m-r)^{v_m}\prod\limits_{j=m-k+1}^m( s_{j}-r)^{(H_1-\frac{1}{2}-\gamma)\ve^{\sigma}_{j}}\vert  s_{j-1}- s_{j}\vert^{v_{j-1}}\\&\notag\qquad\qquad\qquad\times\mathrm{d} s_{m}\ldots\mathrm{d} s_{m-k+1}\\=&\prod\limits_{j=1}^{k}\frac{\Gamma\left((H_1-\frac{1}{2}-\gamma)\sum\limits_{\ell=0}^{j-1}\ve^{\sigma}_{m-\ell}+ \sum\limits_{\ell=0}^{j-1}v_{m-\ell}+j\right)\Gamma\left(1+v_{m-j+1}\right)}{\Gamma\left((H_1-\frac{1}{2}-\gamma)\sum\limits_{\ell=0}^{j-1}\ve^{\sigma}_{m-\ell}+ \sum\limits_{\ell=0}^{j}v_{m-\ell}+j+1\right)}\label{ClaimBeta1}\\&\qquad\times( s_{m-k}-r)^{(H_1-\frac{1}{2}-\gamma)\sum_{\ell=0}^{k-1}\ve^{\sigma}_{m-\ell}+\sum_{\ell=0}^kv_{m-\ell}+k}.\notag
		\end{align}
		It follows from the well-known formula 
		\begin{align*}
			&\int_r^{ s_{m-1}}( s_m-r)^{(H_1-\frac{1}{2}-\gamma)\ve^{\sigma}_m+v_m}(   s_{m-1}- s_{m})^{ v_{m-1}}\,\mathrm{d} s_m\\=&\frac{\Gamma\left((H_1-\frac{1}{2}-\gamma)\ve^{\sigma}_m+v_m+1\right)\Gamma( v_{m-1}+1)}{\Gamma\left((H_1-\frac{1}{2}-\gamma)\ve^{\sigma}_m+v_{m-1}+v_m+2\right)}( s_{m-1}-r)^{(H_1-\frac{1}{2}-\gamma)\ve^{\sigma}_m+v_{m-1}+v_m+1}
		\end{align*}
		that \eqref{ClaimBeta1} holds for $k=1$. Moreover, if we suppose that \eqref{ClaimBeta1} is valid for some $k=1,\ldots,m-2$, then
		\begin{align}
			&\notag\int_r^{ s_{m-k-1}}\cdots\int_{r}^{ s_{m-1}}(s_m-r)^{v_m}\prod\limits_{j=m-k}^m( s_{j}-r)^{(H_1-\frac{1}{2}-\gamma)\ve^{\sigma}_{j}}(  s_{j-1}- s_{j})^{ v_{j-1}}\\&\qquad\times\mathrm{d} s_{m}\ldots\mathrm{d} s_{m-k}\\
			&\notag=\notag\int_r^{ s_{m-k-1}}\Big(\int_r^{ s_{m-k}}\cdots\int_{r}^{ s_{m-1}}(s_m-r)^{v_m}\\&\notag\quad\times\prod\limits_{j=m-k+1}^m( s_{j}-r)^{(H_1-\frac{1}{2}-\gamma)\ve^{\sigma}_{j}} (s_{j-1}- s_{j})^{ v_{j-1}}\mathrm{d} s_{m}\ldots\mathrm{d} s_{m-k+1}\Big)\\&\notag\quad\times( s_{m-k}-r)^{(H_1-\frac{1}{2}-\gamma)\ve^{\sigma}_{m-k}}  (s_{m-k-1}- s_{m-k})^{ v_{m-k-1}}\,\mathrm{d} s_{m-k}\\=&\prod\limits_{j=1}^{k}\frac{\Gamma\left((H_1-\frac{1}{2}-\gamma)\sum\limits_{\ell=0}^{j-1}\ve^{\sigma}_{m-\ell}+ \sum\limits_{\ell=0}^{j-1}v_{m-\ell}+j\right)\Gamma\left(v_{m-k+1}+1\right)}{\Gamma\left((H_1-\frac{1}{2}-\gamma)\sum\limits_{\ell=0}^{j-1}\ve^{\sigma}_{m-\ell}+ \sum\limits_{\ell=0}^{j}v_{m-\ell}+j+1 \right)}\label{Claim1Beta2}\\&\notag\times\int_r^{ s_{m-k-1}}( s_{m-k}-r)^{(H_1-\frac{1}{2}-\gamma)\sum\limits_{\ell=0}^{k}\ve^{\sigma}_{m-\ell}+ \sum\limits_{\ell=0}^{k}v_{m-\ell}+k}\\&\notag\qquad\qquad\times( s_{m-k-1}- s_{m-k-1})^{ v_{m-k}}\mathrm{d} s_{m-k}.\notag
		\end{align}
		We deduce from \eqref{Claim1Beta2} and the identity
		\begin{align*}
			&\int_r^{ s_{m-k-1}}( s_{m-k}-r)^{(H_1-\frac{1}{2}-\gamma)\sum\limits_{\ell=0}^{k}\ve^{\sigma}_{m-\ell}+ \sum\limits_{\ell=0}^{k}v_{m-\ell}+k}( s_{m-k-1}- s_{m-k})^{ v_{m-k-1}}\\&\qquad\qquad\times\mathrm{d} s_{m-k}\\=&
			\frac{\Gamma\left((H_1-\frac{1}{2}-\gamma)\sum\limits_{\ell=0}^{k}\ve^{\sigma}_{m-\ell}+ \sum\limits_{\ell=0}^{k}v_{m-\ell}+k+1\right)\Gamma\left( v_{m-k-1}+1\right)}{\Gamma\left((H_1-\frac{1}{2}-\gamma)\sum\limits_{\ell=0}^{k}\ve^{\sigma}_{m-\ell}+ \sum\limits_{\ell=0}^{k+1}v_{m-\ell}+k+2 \right)}\\&\quad\times( s_{m-k-1}-r)^{(H_1-\frac{1}{2}-\gamma)\sum_{\ell=0}^{k}\ve^{\sigma}_{m-\ell}+\sum_{\ell=0}^{k+1}v_{m-\ell}+k+1}
		\end{align*}
		that
		\begin{align*}
			&\notag\int_r^{ s_{m-k-1}}\cdots\int_{r}^{ s_{m-1}}(s_m-r)^{v_m}\prod\limits_{j=m-k}^m( s_{j}-r)^{(H_1-\frac{1}{2}-\gamma)\ve^{\sigma}_{j}}(  s_{j-1}- s_{j})^{ v_{j-1}}\\&\qquad\qquad\times\mathrm{d} s_{m}\ldots\mathrm{d} s_{m-k}\\=&\prod\limits_{j=1}^{k+1}\frac{\Gamma\left((H_1-\frac{1}{2}-\gamma)\sum\limits_{\ell=0}^{j-1}\ve^{\sigma}_{m-\ell}+ \sum\limits_{\ell=0}^{j-1}v_{m-\ell}+j\right)\Gamma\left(v_{m-j+1}+1\right)}{\Gamma\left((H_1-\frac{1}{2}-\gamma)\sum\limits_{\ell=0}^{j-1}\ve^{\sigma}_{m-\ell}+\sum\limits_{\ell=0}^{j}v_{m-\ell}+j+1  \right)}\\&\quad\times( s_{m-k-1}-r)^{(H_1-\frac{1}{2}-\gamma)\sum_{\ell=0}^{k+1}\ve^{\sigma}_{m-\ell}+k+1},
		\end{align*}
		which means that \eqref{ClaimBeta1} is still valid for $k+1$. 
		Applying \eqref{ClaimBeta1} with $k=m-1$, we have
		\begin{align*}
			&\int_r^{ s_1}\cdots\int_{r}^{ s_{m-1}}(s_m-r)^{v_m}\prod\limits_{j=2}^m(\ s_{j}-r)^{(H_1-\frac{1}{2}-\gamma)\ve^{\sigma}_{j}}\vert  s_{j-1}-s_{j}\vert^{ v_{j-1}}\,\mathrm{d} s_{m}\ldots\mathrm{d} s_{2}\\=&\prod\limits_{j=1}^{m-1}\frac{\Gamma\left((H_1-\frac{1}{2}-\gamma)\sum\limits_{\ell=0}^{j-1}\ve^{\sigma}_{m-\ell}+\sum\limits_{\ell=0}^{j}v_{m-\ell}+j \right)\Gamma\left(1+v_{m-j+1}\right)}{\Gamma\left((H_1-\frac{1}{2}-\gamma)\sum\limits_{\ell=0}^{j-1}\ve^{\sigma}_{m-\ell}+\sum\limits_{\ell=0}^{j}v_{m-\ell}+j+1  \right)} \\&\qquad\times( s_{1}-r)^{(H_1-\frac{1}{2}-\gamma)\sum\limits_{\ell=0}^{m-2}\ve^{\sigma}_{m-\ell}+ \sum\limits_{\ell=0}^{m-1}v_{m-\ell}+m-1}.
		\end{align*}
		Hence,
		\begin{align*}
			&\int_r^s\int_r^{ s_1}\cdots\int_{r}^{ s_{m-1}}\prod\limits_{j=1}^m(\ s_{j}-r)^{(H_1-\frac{1}{2}-\gamma)\ve^{\sigma}_{j}}\vert  s_{j}-s_{j+1}\vert^{ v_j}\,\mathrm{d} s_{m}\ldots\mathrm{d} s_{1}\\=&\int_r^s\Big(\int_r^{ s_1}\cdots\int_{r}^{ s_{m-1}}(s_m-r)^{v_m}\prod\limits_{j=2}^m(\ s_{j}-r)^{(H_1-\frac{1}{2}-\gamma)\ve^{\sigma}_{j}}\vert  s_{j-1}-s_{j}\vert^{ v_{j-1}}\\&\qquad\qquad\times\mathrm{d} s_{m}\ldots\mathrm{d} s_{2}\Big)\,(s_1-r)^{(H_1-\frac{1}{2}-\gamma)\ve^{\sigma}_1}\,\mathrm{d}s_1\\=&\prod\limits_{j=1}^{m-1}\frac{\Gamma\left((H_1-\frac{1}{2}-\gamma)\sum\limits_{\ell=0}^{j-1}\ve^{\sigma}_{m-\ell}+\sum\limits_{\ell=0}^{j-1}v_{m-\ell}+j  \right)\Gamma\left(1+v_{m-j+1}\right)}{\Gamma\left((H_1-\frac{1}{2}-\gamma)\sum\limits_{\ell=0}^{j-1}\ve^{\sigma}_{m-\ell}+ \sum\limits_{\ell=0}^{j}v_{m-\ell}+j+1 \right)} \\&\qquad\times\int_r^s( s_{1}-r)^{(H_1-\frac{1}{2}-\gamma)\sum\limits_{\ell=0}^{m-1}\ve^{\sigma}_{m-\ell}+ \sum\limits_{\ell=0}^{m-1}v_{m-\ell}+m-1}\,\mathrm{d}s_1\\=&\frac{\prod\limits_{j=2}^m\Gamma\left(1+v_j\right) \prod\limits_{j=1}^{m}\Gamma\left((H_1-\frac{1}{2}-\gamma)\sum\limits_{\ell=1}^{j}\ve^{\sigma}_{m-\ell+1}+\sum\limits_{\ell=1}^{j}v_{m-\ell+1} +j\right)}{\prod\limits_{j=2}^{m}\Gamma\left((H_1-\frac{1}{2}-\gamma)\sum\limits_{\ell=1}^{j-1}\ve^{\sigma}_{m-\ell+1}+ \sum\limits_{\ell=1}^{j}v_{m-\ell+1}+j\right)} \\&\qquad\times\frac{( s_{}-r)^{(H_1-\frac{1}{2}-\gamma)\sum\limits_{\ell=0}^{m-1}\ve^{\sigma}_{m-\ell}+\sum\limits_{\ell=0}^{m-1}v_{m-\ell}+m }}{\Gamma\left((H_1-\frac{1}{2}-\gamma)\sum\limits_{\ell=0}^{m-1}\ve^{\sigma}_{m-\ell}+\sum\limits_{\ell=0}^{m-1}v_{m-\ell}+m+1 \right)},
		\end{align*}
		where in the last equality we have used the exact value of the integral and the formula $\frac{1}{\alpha}=\frac{\Gamma(\alpha)}{\Gamma(\alpha+1)}$, $\alpha>0$.
		The proof of \eqref{BetaFormula2a} is completed. The proof of \eqref{BetaFormula2b} follows analogously.
	\end{proof}
	
	\textbf{Claim 2.} Define
	\begin{align*}
		\check \Pi_{\gamma}(H_1,m):=\frac{  \prod\limits_{j=1}^{m}\Gamma\left((H_1-\frac{1}{2}-\gamma)\sum\limits_{\ell=1}^{j}\ve^{\sigma}_{m-\ell+1}+\sum\limits_{\ell=1}^{j}v_{m-\ell+1}+j \right)}{\prod\limits_{j=2}^{m}\Gamma\left((H_1-\frac{1}{2}-\gamma)\sum\limits_{\ell=1}^{j-1}\ve^{\sigma}_{m-\ell+1}+\sum\limits_{\ell=1}^{j}v_{m-\ell+1} +j\right)}
	\end{align*}
	and
	\begin{align*}
		\hat \Pi_{\gamma}(H_2,m):=\frac{  \prod\limits_{j=1}^{m}\Gamma\left((H_2-\frac{1}{2}-\gamma)\sum\limits_{\ell=1}^{j}\ve^{\pi}_{m-\ell+1}+\sum\limits_{\ell=1}^{j}v_{m-\ell+1}+j  \right)}{\prod\limits_{j=2}^{m}\Gamma\left((H_2-\frac{1}{2}-\gamma)\sum\limits_{\ell=1}^{j-1}\ve^{\pi}_{m-\ell+1}+ \sum\limits_{\ell=1}^{j}v_{m-\ell+1}+j\right)}.
	\end{align*}
	Then there exists a positive constant $C_1$ such that
	\begin{align}
		&\check \Pi_{\gamma}(H_1,m)\leq C_1^m\prod\limits_{j=1}^m\Gamma\left((H_1-\frac{1}{2}-\gamma)\ve^{\sigma}_{j}+v_{j}+1\right)\label{BetaFormula4}
	\end{align}
	and
	\begin{align}
		\hat\Pi_{\gamma}(H_2,m)\leq C_1^m\prod\limits_{j=1}^m\Gamma\left((H_2-\frac{1}{2}-\gamma)\ve^{\pi}_{j}+v_{j}+1\right).\label{BetaFormula4b}
	\end{align}
	
	\begin{proof}[Proof of Claim 2.]
		We only show \eqref{BetaFormula4} since the proof of \eqref{BetaFormula4b} follows similarly.   We first show that there exists a positive constant $C_2$ such that for any $j=2,\ldots,m$, 
		\begin{align}\label{GammaVarIneq}
			&\notag\frac{\Gamma\left((H_1-\frac{1}{2}-\gamma)\sum\limits_{\ell=1}^{j}\ve^{\sigma}_{m-\ell+1}+\sum\limits_{\ell=1}^{j}v_{m-\ell+1}+j\right)}{\Gamma\left((H_1-\frac{1}{2}-\gamma)\sum\limits_{\ell=1}^{j-1}\ve^{\sigma}_{m-\ell}+\sum\limits_{\ell=1}^{j}v_{m-\ell+1}+j\right)}\\\leq& C_2\,\Gamma\left((H_1-\frac{1}{2}-\gamma)\ve^{\sigma}_{m-j+1}+v_{m-j+1}+1\right).
		\end{align} 
		Since the function $\Gamma$ is strictly convex on $(0,\infty)$ and $\Gamma(1)=1=\Gamma(2)$, there exists $\tau\in(1,2)$ such that the function $\Gamma$ is non-increasing on $(0,\tau)$ and non-decreasing on $(\tau,\infty)$. Suppose
		\begin{align*}
			(H_1-\frac{1}{2}-\gamma)\sum\limits_{\ell=1}^{j}\ve^{\sigma}_{m-\ell+1}+\sum\limits_{\ell=1}^{j}v_{m-\ell+1}+j<\tau.
		\end{align*}
		Then we have
		\begin{align}
			&\notag\Gamma\left((H_1-\frac{1}{2}-\gamma)\sum\limits_{\ell=1}^{j}\ve^{\sigma}_{m-\ell+1}+\sum\limits_{\ell=1}^{j}v_{m-\ell+1}+j\right)\\<&\Gamma\left((H_1-\frac{1}{2}-\gamma)\ve^{\sigma}_{m-j+1}+v_{m-j+1}+1\right)\label{GammaVariation1}
		\end{align}
		since
		\begin{align*}
			(H_1-\frac{1}{2}-\gamma)\sum\limits_{\ell=1}^{j-1}\ve^{\sigma}_{m-\ell+1}+\sum\limits_{\ell=1}^{j-1}v_{m-\ell+1}+j-1>0
		\end{align*}
		and the function $\Gamma$ is non-increasing on $(0,\tau)$. We deduce from \eqref{GammaVariation1} and the inequality
		\begin{align*}
			\Gamma\left((H_1-\frac{1}{2}-\gamma)\sum\limits_{\ell=1}^{j-1}\ve^{\sigma}_{m-\ell+1}+\sum\limits_{\ell=1}^{j}v_{m-\ell+1}+j\right)>\Gamma(\tau)
		\end{align*}
		that
		\begin{align*}
			&\frac{\Gamma\left((H_1-\frac{1}{2}-\gamma)\sum\limits_{\ell=1}^{j}\ve^{\sigma}_{m-\ell+1}+\sum\limits_{\ell=1}^{j}v_{m-\ell+1}+j\right)}{\Gamma\left((H_1-\frac{1}{2}-\gamma)\sum\limits_{\ell=1}^{j-1}\ve^{\sigma}_{m-\ell+1}+\sum\limits_{\ell=1}^{j}v_{m-\ell+1}+j\right)}\\\leq& \frac{1}{\Gamma(\tau)}\Gamma\left((H_1-\frac{1}{2}-\gamma)\ve^{\sigma}_{m-j+1}+v_{m-j+1}+1\right).
		\end{align*}
		\\ Suppose on the contrary that
		\begin{align*}
			(H_1-\frac{1}{2}-\gamma)\sum\limits_{\ell=1}^{j}\ve^{\sigma}_{m-\ell+1}+\sum\limits_{\ell=1}^{j}v_{m-\ell+1}+j\geq\tau.
		\end{align*}
		Then
		\begin{align*}
			&\Gamma\left((H_1-\frac{1}{2}-\gamma)\sum\limits_{\ell=1}^{j}\ve^{\sigma}_{m-\ell+1}+\sum\limits_{\ell=1}^{j}v_{m-\ell+1}+j\right)\\\leq&\Gamma\left((H_1-\frac{1}{2}-\gamma)\sum\limits_{\ell=1}^{j}\ve^{\sigma}_{m-\ell+1}+\sum\limits_{\ell=1}^{j-1}v_{m-\ell+1}+j\right)
		\end{align*}
		since $(H_1-\frac{1}{2}-\gamma)\ve^{\sigma}_{m-j+1}<0$ and the function $\Gamma$ is non-decreasing on $(\tau,\infty)$. As a consequence, we obtain
		\begin{align*}
			&\frac{\Gamma\left((H_1-\frac{1}{2}-\gamma)\sum\limits_{\ell=1}^{j}\ve^{\sigma}_{m-\ell+1}+\sum\limits_{\ell=1}^{j}v_{m-\ell+1}+j\right)}{\Gamma\left((H_1-\frac{1}{2}-\gamma)\sum\limits_{\ell=1}^{j}\ve^{\sigma}_{m-\ell+1}+\sum\limits_{\ell=1}^{j-1}v_{m-\ell+1}+j\right)}\\\leq&1<\Gamma\left((H_1-\frac{1}{2}-\gamma)\ve^{\sigma}_{m-j+1}+v_{m-j+1}+1\right).
		\end{align*}
		The proof of \eqref{GammaVarIneq} is completed by choosing $C_2=1/\Gamma(\tau)$. 
		
		It follows from \eqref{GammaVarIneq} that
		\begin{align*}
			&\check \Pi_{\gamma}(H_1,m)\\:=&\Gamma\left((H_1-\frac{1}{2}-\gamma)\ve^{\sigma}_m+1+v_m\right)\\&\times\frac{  \prod\limits_{j=2}^{m}\Gamma\left((H_1-\frac{1}{2}-\gamma)\sum\limits_{\ell=1}^{j}\ve^{\sigma}_{m-\ell+1}+\sum\limits_{\ell=1}^{j}v_{m-\ell+1}+j\right)}{\prod\limits_{j=2}^{m}\Gamma\left((H_1-\frac{1}{2}-\gamma)\sum\limits_{\ell=1}^{j}\ve^{\sigma}_{m-\ell+1}+\sum\limits_{\ell=1}^{j-1}v_{m-\ell+1}+j\right)}\\\leq&C_2^m\prod\limits_{j=1}^m\Gamma\left((H_1-\frac{1}{2}-\gamma) \ve^{\sigma}_{m-j+1}+ v_{m-j+1}+1\right)\\=&C_2^m\prod\limits_{j=1}^m\Gamma\left((H_1-\frac{1}{2}-\gamma) \ve^{\sigma}_{j}+ v_{j}+1\right).
		\end{align*}
		Then one can choose $C_1=C_2$ to obtain \eqref{BetaFormula4}. 
		This ends the proof of Claim 2.
	\end{proof}
	{Thanks to  Claims 1 and 2, we have}
	\begin{align*}
		&\int_{\nabla^{(m)}_{r,s}}\prod\limits_{j=1}^m(s_j-r)^{(H_1-\frac{1}{2}-\gamma)\ve^{\sigma}_j}( s_j-s_{j+1})^{v_j}\,\mathrm{d}s_1\ldots\mathrm{d}s_m\\ \leq& C_1^m\frac{\prod\limits_{j=2}^{m}\Gamma\left(1+v_j\right) \prod\limits_{j=1}^{m}\Gamma\left((H_1-\frac{1}{2}-\gamma) \ve^{\sigma}_{j}+ v_j+1\right)}{\Gamma\left((H_1-\frac{1}{2}-\gamma)\sum\limits_{j=1}^{m}\ve^{\sigma}_{j}+ \sum\limits_{j=1}^{m}v_{j}+m+1\right)}\\&\times  ( s_{}-r)^{(H_1-\frac{1}{2}-\gamma)\sum\limits_{j=1}^{m}\ve^{\sigma}_{j}+ \sum\limits_{j=1}^{m}v_{j}+m}\frac{}{}\\=&
		C_1^m\frac{\prod\limits_{j=2}^{m}\Gamma\left(1+v_j\right) \prod\limits_{j=1}^{m}\Gamma\left((H_1-\frac{1}{2}-\gamma) \ve^{\sigma}_{j}+ v_j+1\right)}{\Gamma\left((H_1-\frac{1}{2}-\gamma)\sum\limits_{j=1}^{m} \ve_{j}+ \sum\limits_{j=1}^{m}v_{j}+m+1\right)}\\&\times  ( s_{}-r)^{(H_1-\frac{1}{2}-\gamma)\sum\limits_{j=1}^{m} \ve_{j}+ \sum\limits_{j=1}^{m}v_{j}+m}
	\end{align*}
	and
	\begin{align*}
		&\int_{\nabla^{(m)}_{u,t}}\prod\limits_{j=1}^m(t_j-r)^{(H_2-\frac{1}{2}-\gamma)\ve^{\pi}_j} (t_j-t_{j+1})^{w_j}\,\mathrm{d}t_1\ldots\mathrm{d}t_m\\ \leq&C_1^m\frac{\prod\limits_{j=2}^{m}\Gamma\left(1+w_j\right) \prod\limits_{j=1}^{m}\Gamma\left((H_2-\frac{1}{2}-\gamma) \ve^{\pi}_{j}+ w_j+1\right)}{\Gamma\left((H_2-\frac{1}{2}-\gamma)\sum\limits_{j=1}^{m} \ve_{j}+ \sum\limits_{j=1}^{m}w_{j}+m+1\right)}\\&\times  ( t_{}-u)^{(H_2-\frac{1}{2}-\gamma)\sum\limits_{j=1}^{m} \ve_{j}+ \sum\limits_{j=1}^{m}w_{j}+m}.
	\end{align*}
	The proof is completed by choosing $C=C^2_1C^{(2)}_{H,T}$.
\end{proof}

By similar arguments as in the proof of the preceding lemma we derive the  next two estimates.  
\begin{lem}\label{ExistEstLem3}
	Let $\esp>0$, $(H_1,H_2)\in(0,1/2)^2$, $\esp\leq u<t\leq T$, $\esp\leq r<s\leq T$, $\sigma,\,\pi\in\mathcal{P}_m$ and $(\ve_1,\ldots,\ve_m)\in\{0,1\}^m$ be fixed. Let $v_j$ and $w_j$ be real numbers satisfying
	$v_j+(H_1-\frac{1}{2})\ve_{\sigma^{-1}(j)}>-1$ and  $w_j+(H_2-\frac{1}{2})\ve_{\pi^{-1}(j)}>-1$ for all $j=1,\ldots,m$. Then there exists a finite constant $C=C_{\ve,H,T}>0$ such that
	\begin{align}
		&\notag\int_{\nabla^{m}_{r,s}}\int_{\nabla^{m}_{u,t}}\prod\limits_{j=1}^mK_H(s_{\sigma(j)},t_{\pi(j)}, r, u)(s_j-s_{j+1})^{v_j}(t_j-t_{j+1})^{w_j}\\&\notag\times\mathrm{d}t_1\ldots\mathrm{d}t_m\mathrm{d}s_1\ldots\mathrm{d}s_m\\\notag\leq& C^m_{H,T}\Pi_{0,m}(H_1,v)\Pi_{0,m}(H_2,w)(r^{H_1-\frac{1}{2}}u^{H_2-\frac{1}{2}})^{\sum_{j=1}^m\ve_j}\\& \times(s-r)^{(H_1-\frac{1}{2})\sum_{j=1}^m\ve_j+\sum_{j=1}^mv_j+m}(t-u)^{(H_2-\frac{1}{2})\sum_{j=1}^m\ve_j+\sum_{j=1}^mw_j+m},
	\end{align}
	where
	\begin{align*}
		\Pi_{0,m}(v,H_1):=\frac{\prod\limits_{j=2}^{m}\Gamma\left(1+v_j\right) \prod\limits_{j=1}^{m}\Gamma\left((H_1-\frac{1}{2}) \ve_{\sigma^{-1}(j)}+ v_j+1\right)}{\Gamma\Big((H_1-\frac{1}{2})\sum\limits_{j=1}^{m} \ve_{j}+ \sum\limits_{j=1}^{m} v_{j}+m+1\Big)}   
	\end{align*}
	and
	\begin{align*}
		\Pi_{0,m}(w,H_2):=\frac{\prod\limits_{j=2}^{m}\Gamma\left(1+w_j\right) \prod\limits_{j=1}^{m}\Gamma\left((H_2-\frac{1}{2}) \ve_{\pi^{-1}(j)}+ w_j+1\right)}{\Gamma\Big((H_2-\frac{1}{2})\sum\limits_{j=1}^{m} \ve_{j}+ \sum\limits_{j=1}^{m} w_{j}+m+1\Big)}.   
	\end{align*}
	
\end{lem}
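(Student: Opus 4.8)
The plan is to follow the proof of Lemma \ref{ExistEstLem2} almost verbatim, the point being that here the kernel enters directly (to the power $\ve_j$) rather than through a difference $|K_H(\cdot,r,u)-K_H(\cdot,\bar r,\bar u)|$, so the H\"older-type bounds \eqref{BanApp1c}--\eqref{BanApp1d} and the regularising parameter $\gamma$ are simply dropped and one works with $\gamma=0$ throughout. First I would use the factorisation $K_H(s_{\sigma(j)},t_{\pi(j)},r,u)=K_{H_1}(s_{\sigma(j)},r)K_{H_2}(t_{\pi(j)},u)$ from Proposition \ref{prop:FracKernel}, together with the pointwise bounds \eqref{BanApp1a} and \eqref{BanApp1b}, to estimate each factor $K_H(s_{\sigma(j)},t_{\pi(j)},r,u)^{\ve_j}$ by a constant times $(r^{H_1-\frac12}u^{H_2-\frac12})^{\ve_j}(s_{\sigma(j)}-r)^{(H_1-\frac12)\ve_j}(t_{\pi(j)}-u)^{(H_2-\frac12)\ve_j}$. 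Taking the product over $j$ produces the prefactor $(r^{H_1-\frac12}u^{H_2-\frac12})^{\sum_j\ve_j}$ and reduces the integrand to a product of $\prod_j(s_{\sigma(j)}-r)^{(H_1-\frac12)\ve_j}(s_j-s_{j+1})^{v_j}$ in the $s$-block and the analogous expression in the $t$-block.

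Since, after this bound, the integrand factorises across the $s$- and $t$-variables, the double simplex integral over $\nabla^m_{r,s}\times\nabla^m_{u,t}$ splits as a product of two single simplex integrals. I would then reindex each factor through the bijection $k=\sigma(j)$ (respectively $k=\pi(j)$), rewriting $\prod_j(s_{\sigma(j)}-r)^{(H_1-\frac12)\ve_j}=\prod_j(s_j-r)^{(H_1-\frac12)\ve^{\sigma}_j}$ with $\ve^{\sigma}_j=\ve_{\sigma^{-1}(j)}$, and likewise $\ve^{\pi}_j=\ve_{\pi^{-1}(j)}$ for the $t$-block, while the telescoping factors $(s_j-s_{j+1})^{v_j}$ and $(t_j-t_{j+1})^{w_j}$ are left untouched. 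This is exactly the shape treated by Claim~1 of the proof of Lemma \ref{ExistEstLem2}, equations \eqref{BetaFormula2a}--\eqref{BetaFormula2b}, specialised to $\gamma=0$; the iterated Beta-integral computation there evaluates both integrals explicitly as ratios of products of Gamma functions, multiplied by the powers $(s-r)^{(H_1-\frac12)\sum_j\ve_j+\sum_jv_j+m}$ and $(t-u)^{(H_2-\frac12)\sum_j\ve_j+\sum_jw_j+m}$, which are precisely the powers appearing in the statement. The convergence of each innermost one-dimensional integral is guaranteed by the hypotheses $v_j+(H_1-\frac12)\ve_{\sigma^{-1}(j)}>-1$ and $w_j+(H_2-\frac12)\ve_{\pi^{-1}(j)}>-1$, so no additional assumptions are needed.

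Finally I would invoke Claim~2 of the same proof, equations \eqref{BetaFormula4}--\eqref{BetaFormula4b} with $\gamma=0$, to bound the telescoping Gamma-quotients $\check\Pi_{0}(H_1,m)$ and $\hat\Pi_{0}(H_2,m)$ by $C_1^m\prod_j\Gamma((H_i-\frac12)\ve^{\sigma}_j+v_j+1)$ and the corresponding product in $w_j$; this collapses the explicit ratios into the quantities $\Pi_{0,m}(v,H_1)$ and $\Pi_{0,m}(w,H_2)$ defined in the statement. Absorbing the constants $C_{H_1,T}$, $C_{H_2,T}$ and $C_1$ into a single $C=C_{\ve,H,T}$ and multiplying the two blocks then yields the asserted inequality.

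I expect the only delicate point, just as in Lemma \ref{ExistEstLem2}, to be the permutation bookkeeping: verifying that after the substitution the exponents $\ve^{\sigma}_j$ and $\ve^{\pi}_j$ occupy the correct slots so that Claims~1 and~2 apply verbatim, and that the partial sums in the Gamma arguments match those in $\Pi_{0,m}$. Everything else is a routine specialisation ($\gamma=0$, a single kernel factor in place of a difference) of machinery already established, so no genuinely new estimate is required.
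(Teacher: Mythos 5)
Your proposal is correct and follows essentially the same route as the paper, which proves Lemma \ref{ExistEstLem3} by the remark that it is ``similar to that of the previous lemma'': factorise $K_H=K_{H_1}K_{H_2}$, apply the pointwise bounds \eqref{BanApp1a}--\eqref{BanApp1b} in place of \eqref{BanApp1c}--\eqref{BanApp1d}, split the simplex integral into its $s$- and $t$-blocks, and run Claims 1 and 2 of the proof of Lemma \ref{ExistEstLem2} with $\gamma=0$. The permutation reindexing $\prod_j(s_{\sigma(j)}-r)^{(H_1-\frac12)\ve_j}=\prod_j(s_j-r)^{(H_1-\frac12)\ve_{\sigma^{-1}(j)}}$ and the verification that the hypotheses $v_j+(H_1-\frac12)\ve_{\sigma^{-1}(j)}>-1$, $w_j+(H_2-\frac12)\ve_{\pi^{-1}(j)}>-1$ make every iterated Beta integral converge are exactly the points the paper relies on, so no gap remains.
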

\begin{proof}
	The proof is similar to that of the previous lemma.
\end{proof}
\begin{lem}\label{ExistEstLem4}
	Let $\ve>0$, $(H_1,H_2)\in(0,1/2)^2$, $\ve\leq\bar u< u<t\leq T$, $\ve\leq\bar r< r<s\leq T$, $\sigma,\,\pi\in\mathcal{P}_{m+\ell}$ and $(\ve_1,\ldots,\ve_m)\in\{0,1\}^m$ be fixed. Let $v_j,\,w_j\leq0$  satisfying
	$v_j+(H_1-\frac{1}{2})\ve_{\sigma^{-1}(j)}>-1$ and $w_j+(H_2-\frac{1}{2})\ve_{\pi^{-1}(j)}>-1$ for all $j=1,\ldots,m$. Then there exists a finite constant $C=C_{\ve,H,T}>0$ such that
	\begin{align}
		&\int_{\nabla^{m+\ell}_{\bar r,s}} \int_{\Delta^{m,\ell}_{\bar u,u,t}} \prod\limits_{j=1}^{m+\ell}K_H(s_{\sigma(j)},t_{\pi(j)},\bar r,\bar u) (s_j-s_{j+1})^{v_j}(t_j-t_{j+1})^{w_j} \\&\notag\quad\times\mathrm{d}t_{m+\ell}\ldots\mathrm{d}t_{m+1}\mathrm{d}t_{m}\ldots\mathrm{d}t_1\mathrm{d}s_{m+\ell}\ldots\mathrm{d}s_{1}\\\notag\leq& C^{m+\ell}_{H,T}\Pi_{0,m+\ell}(v,H_1)\widetilde\Pi_{m,\ell}(w,H_2)(\bar r^{H_1-\frac{1}{2}}\bar u^{H_2-\frac{1}{2}})^{\sum_{j=1}^{m+\ell}\ve_j}\\&\notag\times(s-\bar r)^{(H_1-\frac{1}{2})\sum_{j=1}^{m+\ell}\ve_j+\sum_{j=1}^{m+\ell}v_j+m+\ell}(t- u)^{(H_2-\frac{1}{2})\sum_{k=1}^{m}\ve_{\pi^{-1}(k)}+\sum_{k=1}^{m}w_k+m} \\&\notag\times\vert u-\bar u\vert^{(H_2-\frac{1}{2})\sum_{j=1}^{\ell}\ve_{\pi^{-1}(m+j)}+\sum_{j=1}^{\ell}w_{m+j}+\ell},
	\end{align}
	where
	\begin{align*}
		\Pi_{0,m+\ell}(v,H_1):=\frac{\prod\limits_{j=2}^{m+\ell}\Gamma\left(1+v_j\right) \prod\limits_{j=1}^{m+\ell}\Gamma\left((H_1-\frac{1}{2}) \ve_{\sigma^{-1}(j)}+ v_j+1\right)}{\Gamma\Big((H_1-\frac{1}{2})\sum\limits_{j=1}^{m+\ell} \ve_{j}+\sum_{j=1}^{m+\ell}v_j+m+\ell +1\Big) }   
	\end{align*}
	and
	\begin{align*}
		\widetilde\Pi_{m,\ell}(w,H_2):=&\frac{\prod\limits_{j=2}^{m+\ell}\Gamma\left(1+w_j\right)}{\Gamma\Big((H_2-\frac{1}{2})\sum\limits_{k=1}^{m} \ve_{\pi^{-1}(k)}+\sum\limits_{k=1}^{m}w_k+m  +1\Big)}\\&\times\frac{ \prod\limits_{j=1}^{m+\ell}\Gamma\left((H_2-\frac{1}{2}) \ve_{\pi^{-1}(j)}+w_j+1\right)}{\Gamma\Big((H_2-\frac{1}{2})\sum\limits_{j=1}^{\ell} \ve_{\pi^{-1}(m+j)}+\sum\limits_{j=1}^{\ell}w_{m+j} +\ell+1\Big)}.
	\end{align*}
\end{lem}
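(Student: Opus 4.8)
The plan is to treat this as an extension of the Beta-integral estimate of Lemma~\ref{ExistEstLem2} (equivalently Lemma~\ref{ExistEstLem3}), the only genuine novelty being the split time-domain $\Delta^{m,\ell}_{\bar u,u,t}=\{\bar u<t_{m+\ell}<\cdots<t_{m+1}<u<t_m<\cdots<t_1<t\}$. First I would factor the kernel as $K_H(s_{\sigma(j)},t_{\pi(j)},\bar r,\bar u)=K_{H_1}(s_{\sigma(j)},\bar r)K_{H_2}(t_{\pi(j)},\bar u)$. Since the ordering constraints defining $\nabla^{m+\ell}_{\bar r,s}$ (on the $s_j$) and $\Delta^{m,\ell}_{\bar u,u,t}$ (on the $t_j$) do not interact, Fubini's theorem splits the integral into a pure $s$-integral and a pure $t$-integral. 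Writing $\ve^{\sigma}_j=\ve_{\sigma^{-1}(j)}$ and $\ve^{\pi}_j=\ve_{\pi^{-1}(j)}$ and applying the kernel bounds \eqref{BanApp1a} and \eqref{BanApp1b}, namely $|K_{H_1}(s_{\sigma(j)},\bar r)|\leq C_{H_1,T}(s_{\sigma(j)}-\bar r)^{H_1-\frac{1}{2}}\bar r^{H_1-\frac{1}{2}}$ and the analogue for $K_{H_2}$, one extracts the prefactors $\bar r^{(H_1-\frac12)\sum_j\ve_j}$ and $\bar u^{(H_2-\frac12)\sum_j\ve_j}$ (using $\sum_j\ve^{\sigma}_j=\sum_j\ve_j$), thereby reducing the claim to two power-function integrals over simplices.

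The $s$-integral over $\nabla^{m+\ell}_{\bar r,s}$ of $\prod_{j}(s_j-\bar r)^{(H_1-\frac12)\ve^{\sigma}_j}(s_j-s_{j+1})^{v_j}$ is handled exactly as in Lemma~\ref{ExistEstLem3}: the iterated Beta identity \eqref{BetaFormula2a} (with $\gamma=0$) evaluates it in closed form, and the Gamma-ratio bound \eqref{BetaFormula4} (Claim~2) converts the resulting telescoping quotient into $\Pi_{0,m+\ell}(v,H_1)$ together with the power $(s-\bar r)^{(H_1-\frac12)\sum_j\ve_j+\sum_j v_j+m+\ell}$.

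The hard part will be the $t$-integral over $\Delta^{m,\ell}_{\bar u,u,t}$, because the consecutive factor $(t_m-t_{m+1})^{w_m}$ couples the upper block $(t_1,\dots,t_m)\in(u,t]$ to the lower block $(t_{m+1},\dots,t_{m+\ell})\in(\bar u,u)$, and moreover the reference point of $K_{H_2}$ is $\bar u$ rather than the split point $u$. I would decouple these using the sign hypotheses. Since $H_2-\frac{1}{2}<0$ and $\ve^{\pi}_j\geq0$, for the upper indices $j\leq m$ one has $t_j-\bar u\geq t_j-u>0$, hence $(t_j-\bar u)^{(H_2-\frac12)\ve^{\pi}_j}\leq(t_j-u)^{(H_2-\frac12)\ve^{\pi}_j}$; and since $w_m\leq0$ and $t_m-t_{m+1}\geq t_m-u$, one has $(t_m-t_{m+1})^{w_m}\leq(t_m-u)^{w_m}$. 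After these bounds the integrand factorizes across the two blocks and the domain factorizes as $\nabla^{m}_{u,t}\times\nabla^{\ell}_{\bar u,u}$, so a second application of Fubini yields a product of two independent simplex integrals, one over $(u,t]$ with reference $u$ and one over $(\bar u,u)$ with reference $\bar u$. Applying \eqref{BetaFormula2b} and Claim~2 to each block then produces the two power factors $(t-u)^{(H_2-\frac12)\sum_{k=1}^{m}\ve_{\pi^{-1}(k)}+\sum_{k=1}^{m}w_k+m}$ and $|u-\bar u|^{(H_2-\frac12)\sum_{j=1}^{\ell}\ve_{\pi^{-1}(m+j)}+\sum_{j=1}^{\ell}w_{m+j}+\ell}$, and, after collecting the Gamma products together with the two normalizing denominators (one per block), the quantity $\widetilde\Pi_{m,\ell}(w,H_2)$; any mismatched Gamma factor arising from the block boundary (such as the $j=m+1$ gap term) is bounded since $w_{m+1}\in(-1,0]$ and is absorbed into $C^{m+\ell}_{H,T}$. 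Collecting the $s$- and $t$-contributions and combining all constants into a single $C=C_{H,T}$ then yields the stated estimate.
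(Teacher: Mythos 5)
Your proposal matches the paper's own proof essentially step for step: bound the factored kernel via \eqref{BanApp1a}--\eqref{BanApp1b} to pull out the $\bar r,\bar u$ prefactors, split the $s$- and $t$-integrals by Fubini, and then decouple the two $t$-blocks using exactly the monotonicity observations $(t_m-t_{m+1})^{w_m}\leq(t_m-u)^{w_m}$ and $(t_j-\bar u)^{(H_2-\frac12)\ve^{\pi}_j}\leq(t_j-u)^{(H_2-\frac12)\ve^{\pi}_j}$ before invoking the iterated Beta identity and the Gamma-ratio bound (Claims 1 and 2 of Lemma \ref{ExistEstLem2}) on each block. The argument is correct and no further comment is needed.
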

\begin{proof} 
	Define
	\begin{align*}
		\mathcal{I}_{m,\ell}:=&\int_{\nabla^{m+\ell}_{\bar r,s}} \int_{\Delta^{m,\ell}_{\bar u,u,t}} \prod\limits_{j=1}^{m+\ell}K_H(s_{\sigma(j)},t_{\pi(j)},\bar r,\bar u) (s_j-s_{j+1})^{v_j}(t_j-t_{j+1})^{w_j} \\&\notag\quad\times\mathrm{d}t_{m+\ell}\ldots\mathrm{d}t_{m+1}\mathrm{d}t_{m}\ldots\mathrm{d}t_1\mathrm{d}s_{m+\ell}\ldots\mathrm{d}s_{1}.
	\end{align*}
	Using \eqref{BanApp1a},
	\begin{align*}
		&\mathcal{I}_{m,\ell}\\\leq& C^{m+\ell}(\bar r^{H_1-\frac{1}{2}}\bar u^{H_2-\frac{1}{2}})^{\sum_{j=1}^{m+\ell}\ve_j}\\&\times\Big(\int_{\nabla^{m+\ell}_{\bar r,s}} \prod\limits_{j=1}^{m+\ell}(s_{j}-\bar r)^{(H_1-\frac{1}{2}-\gamma)\ve^{\sigma}_j}\vert s_j-s_{j+1}\vert^{v_j}\,\mathrm{d}s_{m+\ell}\ldots\mathrm{d}s_{m+1}\mathrm{d}s_m\ldots\mathrm{d}s_1\Big)\\&\times\Big(\int_{\Delta^{m,\ell}_{\bar u,u,t}}\prod\limits_{j=1}^{m+\ell}(t_{j}-\bar u)^{(H_2-\frac{1}{2}-\gamma)\ve^{\pi}_j}\vert t_j-t_{j+1}\vert^{w_j}\,\mathrm{d}t_{m+\ell}\ldots\mathrm{d}t_{m+1}\mathrm{d}t_m\ldots\mathrm{d}t_1\Big),
	\end{align*}
	where $s_{m+\ell+1}=\bar r$, $t_{m+\ell+1}=\bar u$, $\ve^{\sigma}_j=\ve_{\sigma^{-1}(j)}$, $\ve^{\pi}_j=\ve_{\pi^{-1}(j)}$ for $j=1,\ldots,m+\ell$.\\
	Since $w_j\leq0$, $H_2-\frac{1}{2}-\gamma<0$, $t_m-t_{m+1}\geq t_m-u$ and $t_j-\bar u\geq t_j-u$ for all $j=1,\ldots,m$, we have
	\begin{align*}
		&	\int_{\Delta^{m,\ell}_{\bar u,u,t}}\prod\limits_{j=1}^{m+\ell}(t_{j}-\bar u)^{(H_2-\frac{1}{2}-\gamma)\ve^{\pi}_j}\vert t_j-t_{j+1}\vert^{w_j}\,\mathrm{d}t_{m+\ell}\ldots\mathrm{d}t_{m+1}\mathrm{d}t_m\ldots\mathrm{d}t_1\\
		=&\int_{\nabla^{m}_{u,t}}\int_{\nabla^{\ell}_{\bar u,u}}\prod\limits_{j=1}^{m+\ell}(t_{j}-\bar u)^{(H_2-\frac{1}{2}-\gamma)\ve^{\pi}_j}\vert t_j-t_{j+1}\vert^{w_j}\,\mathrm{d}t_{m+\ell}\ldots\mathrm{d}t_{m+1}\mathrm{d}t_m\ldots\mathrm{d}t_1\\=&\int_{\nabla^{m}_{u,t}}(t_m-\bar u)^{(H_2-\frac{1}{2}-\gamma)\ve^{\pi}_m}\prod\limits_{k=1}^{m-1}(t_{k}-\bar u)^{(H_2-\frac{1}{2}-\gamma)\ve^{\pi}_k}\vert t_k-t_{k+1}\vert^{w_k}\\&\times\Big(\int_{\nabla^{\ell}_{\bar u,u}}(t_m-t_{m+1})^{w_m}\prod\limits_{j={m+1}}^{m+\ell}(t_{j}-\bar u)^{(H_2-\frac{1}{2}-\gamma)\ve^{\pi}_j}\vert t_j-t_{j+1}\vert^{w_j}\\&\times\mathrm{d}t_{m+\ell}\ldots\mathrm{d}t_{m+1}\Big)\mathrm{d}t_m\ldots\mathrm{d}t_1\\\leq&\int_{\nabla^{m}_{u,t}}(t_m- u)^{(H_2-\frac{1}{2}-\gamma)\ve^{\pi}_m}\prod\limits_{k=1}^{m-1}(t_{k}- u)^{(H_2-\frac{1}{2}-\gamma)\ve^{\pi}_k}\vert t_k-t_{k+1}\vert^{w_k}\\&\times\Big(\int_{\nabla^{\ell}_{\bar u,u}}(t_m-u)^{w_m}\prod\limits_{j={m+1}}^{m+\ell}(t_{j}-\bar u)^{(H_2-\frac{1}{2}-\gamma)\ve^{\pi}_j}\vert t_j-t_{j+1}\vert^{w_j}\\&\times\mathrm{d}t_{m+\ell}\ldots\mathrm{d}t_{m+1}\Big)\mathrm{d}t_m\ldots\mathrm{d}t_1\\=&
		\Big(\int_{\nabla^{m}_{u,t}} \prod\limits_{k=1}^{m}(\check t_{k}- u)^{(H_2-\frac{1}{2}-\gamma) \ve_{\pi^{-1}(k)}}\vert \check t_k-\check t_{k+1}\vert^{w_k}\mathrm{d}\check t_m\ldots\mathrm{d}\check t_1\Big)\\&\quad\times\Big(\int_{\nabla^{\ell}_{\bar u,u}} \prod\limits_{j=1}^{\ell}(\hat t_{j}- \bar u)^{(H_2-\frac{1}{2}-\gamma) \ve_{\pi^{-1}(m+j)}}\vert \hat t_j-\hat t_{j+1}\vert^{w_{m+j}}\mathrm{d}\hat t_{\ell}\ldots\mathrm{d}\hat t_1\Big),
	\end{align*}
	where $\check t_k=t_k$ for $k=1,\ldots,m$ and $\check t_{m+1}=u$, $\hat t_j=t_{m+j}$ for $j=1,\ldots,\ell$ and $\hat t_{\ell+1}=\bar u$. Then the desired estimates follows from Claims 1 and 2 in the proof of Lemma 22. 
\end{proof}


\begin{thebibliography}{1}
	
	\bibitem {AMN01}
	Alos, E., Mazet, O., and Nualart, D. (2001).
	\newblock Stochastic calculus with respect to {G}aussian processes.
	\newblock {\em The Annals of Probability}, 29(2):766--801.
	
	\bibitem {ABP17a}
	Amine, O., Ba{\~n}os, D., and Proske, F. (2017).
	\newblock {$C^{\infty}$} regularization by noise of singular {ODE's}.
	\newblock {\em arXiv preprint arXiv:1710.05760}.
	
	\bibitem {AMP23}
	Amine, O., Mansouri, A.-R., and Proske, F. (2023).
	\newblock Well-posedness of the deterministic transport equation with singular
	velocity field perturbed along fractional brownian paths.
	\newblock {\em Journal of Differential Equations}, 362:106--172.
	
	\bibitem {AGGS17}
	Anari, N., Gurvits, L., Gharan, S.~O., and Saberi, A. (2017).
	\newblock Simply exponential approximation of the permanent of positive
	semidefinite matrices.
	\newblock In {\em 2017 IEEE 58th Annual Symposium on Foundations of Computer
		Science (FOCS)}, pages 914--925. IEEE.
	
	\bibitem {ALL23}
	Anzeletti, L., L{\^e}, K., and Ling, C. (2023).
	\newblock Path-by-path uniqueness for stochastic differential equations under
	krylov-r$\backslash$" ockner condition.
	\newblock {\em arXiv preprint arXiv:2304.06802}.
	
	\bibitem {BBMBP23}
	Ba{\~n}os, D., Bauer, M., Meyer-Brandis, T., and Proske, F. (2023).
	\newblock Restoration of well-posedness of infinite-dimensional singular {ODE}s
	via noise.
	\newblock {\em Potential Analysis}, pages 1--47.
	
	\bibitem {BaNi16}
	Ba{\~n}os, D. and Nilssen, T. (2016).
	\newblock Malliavin and flow regularity of sdes. application to the study of
	densities and the stochastic transport equation.
	\newblock {\em Stochastics}, 88(4):540--566.
	
	\bibitem {BNP18}
	Ba{\~n}os, D., Nilssen, T., and Proske, F. (2020).
	\newblock Strong existence and higher order fr{\'e}chet differentiability of
	stochastic flows of fractional {B}rownian motion driven {SDE}s with singular
	drift.
	\newblock {\em Journal of Dynamics and Differential Equations}, 32:1819--1866.
	
	\bibitem {BOPP22}
	Ba{\~n}os, D., Ortiz-Latorre, S., Pilipenko, A., and Proske, F. (2022).
	\newblock Strong solutions of stochastic differential equations with
	generalized drift and multidimensional fractional brownian initial noise.
	\newblock {\em Journal of Theoretical Probability}, pages 1--58.
	
	\bibitem {BHR23}
	Bechtold, F., Harang, F.~A., and Rana, N. (2023).
	\newblock Non-linear young equations in the plane and pathwise regularization
	by noise for the stochastic wave equation.
	\newblock {\em Stochastics and Partial Differential Equations: Analysis and
		Computations}, pages 1--41.
	
	\bibitem {BFGM14}
	Beck, L., Flandoli, F., Gubinelli, M., and Maurelli, M. (2019).
	\newblock Stochastic {ODE}s and stochastic linear {PDE}s with critical drift:
	regularity, duality and uniqueness.
	\newblock {\em Electronic Journal of Probability}, 24.
	
	\bibitem {BDM21a}
	Bogso, A.-M., Dieye, M., and Menoukeu~Pamen, O. (2022a).
	\newblock Path-by-path uniqueness of multidimensional sde's on the plane with
	nondecreasing coefficients.
	\newblock {\em Electronic Journal of Probability}, 27:1--26.
	
	\bibitem {BDMP22}
	Bogso, A.-M., Dieye, M., Menoukeu~Pamen, O., and Proske, F. (2022b).
	\newblock Smoothness of solutions of hyperbolic stochastic partial differential
	equations with $ l^{\infty}$-vector fields.
	\newblock {\em arXiv preprint arXiv:2212.08466}.
	
	\bibitem {BMP22}
	Bogso, A.-M. and Menoukeu~Pamen, O. (2022).
	\newblock Malliavin differentiability of solutions of hyperbolic stochastic
	partial differential equations with irregular drifts.
	\newblock {\em arXiv preprint arXiv:2210.04694}.
	
	\bibitem {BDM25}
	Bogso, A.-M., Dieye, M. and Menoukeu Pamen, O. (2025).
	\newblock Supplement to ``Strong solutions of fractional Brownian sheet driven SDEs with integrable drift".
	
	\bibitem {Ca72}
	Cairoli, R. (1972).
	\newblock Sur une {\'e}quation diff{\'e}rentielle stochastique.
	\newblock {\em C.R. Acad. Sci. Paris S{\'e}rie A}, 274:1739--1742.
	
	\bibitem {CG16}
	Catellier, R. and Gubinelli, M. (2016).
	\newblock Averaging along irregular curves and regularisation of {ODE}s.
	\newblock {\em Stochastic Process. Appl.}, 126:2323--2366.
	
	\bibitem {CrRi22}
	Criens, D. and Ritter, M. (2022).
	\newblock On a theorem by a. s. cherny for semilinear stochastic partial
	differential equations.
	\newblock {\em Journal of Theoretical Probability}, 35(3):2052--2067.
	
	\bibitem {CD82}
	Cuzick, J. and DuPreez, J.~P. (1982).
	\newblock Joint continuity of gaussian local times.
	\newblock {\em The Annals of Probability}, 10(3):810--817.
	
	\bibitem {DaPMN92}
	Da~Prato, G.~D., Malliavin, P., and Nualart, D. (1992).
	\newblock Compact families of wiener functionals.
	\newblock {\em Comptes rendus de l'Acad{\'e}mie des sciences. S{\'e}rie 1,
		Math{\'e}matique}, 315(12):1287--1291.
	
	\bibitem {Da07}
	Davie, A.~M. (2007).
	\newblock Uniqueness of solutions of stochastic differential equations.
	\newblock {\em Int. Math. Res. Not.}, 24(Article ID rnm 124,):26p.
	
	\bibitem {DNPV12}
	Di~Nezza, E., Palatucci, G., and Valdinoci, E. (2012).
	\newblock Hitchhiker{'s} guide to the fractional sobolev spaces.
	\newblock {\em Bulletin des sciences math{\'e}matiques}, 136(5):521--573.
	
	\bibitem {ENO03}
	Erraoui, M., Ouknine, Y., and Nualart, D. (2003).
	\newblock Hyperbolic stochastic partial differential equations with additive
	fractional {B}rownian sheet.
	\newblock {\em Stochastics and Dynamics}, 3(02):121--139.
	
	\bibitem {FaNu93}
	Farr{\'e}, M. and Nualart, D. (1993).
	\newblock Nonlinear stochastic integral equations in the plane.
	\newblock {\em Stochastic processes and their applications}, 46(2):219--239.
	
	\bibitem {Fla10}
	Flandoli, F. (2010).
	\newblock {\em Random Perturbation of PDE's and Fluid Dynamic Models: {\'E}cole
		d'{\'e}t{\'e} de Probabilit{\'e}s de Saint-Flour XL 2010}, volume 2015.
	\newblock Springer Science \& Business Media.
	
	\bibitem {Gess}
	Gess, B. (2016).
	\newblock Regularization and well-posedness by noise for ordinary and partial
	differential equations.
	\newblock In Eberle, A., Grothaus, M., Hoh, W., Kassmann, M., Stannat, W., and
	Trutnau, G., editors, {\em International Conference on Stochastic Partial
		Differential Equations and Related Fields: Stochastic Partial Differential
		Equations and Related Fields}, volume 229 of {\em Springer Proceedings in
		Mathematics and Statistics}, pages 43--67. Springer, Cham.
	
	\bibitem {HP14}
	Haadem, S. and Proske, F. (2014).
	\newblock On the construction and malliavin differentiability of solutions of
	levy noise driven sde's with singular coefficients.
	\newblock {\em Journal of functional analysis}, 266(8):5321--5359.
	
	\bibitem {Ka13}
	Kato, T. (2013).
	\newblock {\em Perturbation theory for linear operators}, volume 132.
	\newblock Springer Science \& Business Media.
	
	\bibitem {KhXi07}
	Khoshnevisan, D. and Xiao, Y. (2007).
	\newblock Images of the brownian sheet.
	\newblock {\em Transactions of the American Mathematical Society},
	359(7):3125--3151.
	
	\bibitem{KXW06}
	Khoshnevisan, D., Xiao, Y., and Wu, D. (2006).
	\newblock {Sectorial Local Non-Determinism and the Geometry of the Brownian
		Sheet}.
	\newblock {\em Electronic Journal of Probability}, 11(none):817 -- 843.
	
	\bibitem{Ku07}
	Kurtz, T.~G. (2007).
	\newblock The yamada-watanabe-engelbert theorem for general stochastic
	equations and inequalities.
	\newblock {\em Electronic Journal of Probability}, 12:951--965.
	
	\bibitem{Khoa}
	L\^{e}, K. (2020).
	\newblock Stochastic sewing lemma and applications.
	\newblock {\em Electronic Journal of Probability}, 25(38):1--55.
	
	\bibitem{Le82}
	Lewis, R.~T. (1982).
	\newblock Singular elliptic operators of second order with purely discrete
	spectra.
	\newblock {\em Transactions of the American Mathematical Society},
	271(2):653--666.
	
	\bibitem{LW12}
	Li, W.~V. and Wei, A. (2012).
	\newblock A gaussian inequality for expected absolute products.
	\newblock {\em Journal of Theoretical Probability}, 25(1):92--99.
	
	\bibitem{Li01}
	Lizorkin, P.~I. (2001).
	\newblock {\em Fractional integration and differentiation}.
	\newblock Encyclopedia of Mathematics, Springer.
	
	\bibitem{Ma63}
	Marcus, M. (1963).
	\newblock The permanent analogue of the hadamard determinant theorem.
	\newblock {\em Bulletin of the American Mathematical Society}, 69(4):494--496.
	
	\bibitem{MMNPZ13}
	Menoukeu-Pamen, O., Meyer-Brandis, T., Nilssen, T., Proske, F., and Zhang, T.
	(2013).
	\newblock A variational approach to the construction and malliavin
	differentiability of strong solutions of {SDE}'s.
	\newblock {\em Math. Ann.}, 357(2):761--799.
	
	\bibitem{MoNP15}
	Mohammed, S. E.~A., Nilssen, T., and Proske, F. (2015).
	\newblock Sobolev differentiable stochastic flows for sdes with singular
	coefficients: Applications to the transport equation.
	\newblock {\em The Annals of Probability}, 43(3):1535--1576.
	
	\bibitem{Nua06}
	Nualart, D. (2006).
	\newblock {\em The {M}alliavin Calculus and Related Topics}.
	\newblock Springer Berlin, 2nd edition.
	
	\bibitem{NuaSan85}
	Nualart, D. and Sanz-Sole, M. (1985).
	\newblock Malliavin calculus for two-parameter {W}iener functionals.
	\newblock {\em Z. Wahrsch. Verw. Gebiete}, 70:573--590.
	
	\bibitem{NuTi97}
	Nualart, D. and Tindel, S. (1997).
	\newblock Quasilinear stochastic hyperbolic differential equations with
	nondecreasing coefficient.
	\newblock {\em Potential Analysis}, 7(3):661--680.
	
	\bibitem{PSV13}
	Palatucci, G., Savin, O., and Valdinoci, E. (2013).
	\newblock Local and global minimizers for a variational energy involving a
	fractional norm.
	\newblock {\em Annali di matematica pura ed applicata}, 192(4):673--718.
	
	\bibitem{QuTi07}
	Quer-Sardanyons, L. and Tindel, S. (2007).
	\newblock The 1-d stochastic wave equation driven by a fractional brownian
	sheet.
	\newblock {\em Stochastic processes and their applications},
	117(10):1448--1472.
	
	\bibitem{SKM93}
	Samko, S.~G., Kilbas, A.~A., and Marichev, O.~L. (1993).
	\newblock {\em Fractional integrals and derivatives}, volume~1.
	\newblock Gordon and breach science publishers.
	
	\bibitem{Sh16}
	Shaposhnikov, A.~V. (2016).
	\newblock Some remarks on davi{e}'s uniqueness theorem.
	\newblock {\em Proceedings of the Edinburgh Mathematical Society},
	59(4):1019--1035.
	
	\bibitem{TuTu03}
	Tudor, C. and Tudor, M. (2003).
	\newblock On the two-parameter fractional brownian motion and stieltjes
	integrals for h{\"o}lder functions.
	\newblock {\em Journal of mathematical analysis and applications},
	286(2):765--781.
	
	\bibitem{Wa84}
	Walsh, J.~B. (1986).
	\newblock An introduction to stochastic partial differential equations.
	\newblock In {\em {\'E}cole d'{\'E}t{\'e} de Probabilit{\'e}s de Saint Flour
		XIV-1984}, pages 265--439. Springer.
	
	\bibitem{WX07}
	Wu, D. and Xiao, Y. (2007).
	\newblock Geometric properties of fractional brownian sheets.
	\newblock {\em Journal of Fourier Analysis and Applications}, 13(1):1--37.
	
	\bibitem{Xi97}
	Xiao, Y. (1997).
	\newblock H{\"o}lder conditions for the local times and the hausdorff measure
	of the level sets of gaussian random fields.
	\newblock {\em Probability theory and related fields}, 109:129--157.
	
	\bibitem{XZ02}
	Xiao, Y. and Zhang, T. (2002).
	\newblock Local times of fractional brownian sheet.
	\newblock {\em Probability Theory and Related Fields}, 124(2):204--226.
	
	\bibitem{Ye81}
	Yeh, J. (1981).
	\newblock Existence of strong solutions for stochastic differential equations
	in the plane.
	\newblock {\em Pacific Journal of Mathematics}, 97(1):217--247.
	
	\bibitem{Ye85}
	Yeh, J. (1985).
	\newblock Existence of weak solutions to stochastic differential equations in
	the plane with continuous coefficients.
	\newblock {\em Transactions of the American Mathematical Society},
	290(1):345--361.
	
	\bibitem{Ye87}
	Yeh, J. (1987).
	\newblock Uniqueness of strong solutions to stochastic differential equations
	in the plane with deterministic boundary process.
	\newblock {\em Pacific journal of mathematics}, 128(2):391--400.
	
\end{thebibliography}
\end{document}